      \numberwithin{equation}{section}
\Crefname{conjecture}{Conjecture}{Conjectures}
\DeclareSymbolFont{cmarrows}{OMS}{cmsy}{m}{n}
\DeclareMathSymbol{\cmminus}{\mathbin}{cmarrows}{"00}
\DeclareMathSymbol{\leftrightarrow}{\mathrel}{cmarrows}{"24}
\DeclareMathSymbol{\leftarrow}{\mathrel}{cmarrows}{"20}
\DeclareMathSymbol{\rightarrow}{\mathrel}{cmarrows}{"21}
   \let\to=\rightarrow
\DeclareMathSymbol{\mapstochar}{\mathrel}{cmarrows}{"37}
   \def\mapsto{\mapstochar\rightarrow}
\DeclareSymbolFont{usualmathcal}{OMS}{cmsy}{m}{n}
\DeclareSymbolFontAlphabet{\mathcal}{usualmathcal}
\DeclareMathAlphabet\BCal{OMS}{cmsy}{b}{n}
\newcommand{\mylabel}[2]{#2\def\@currentlabel{#2}\label{#1}}
\definecolor{cornellred}{rgb}{0.7, 0.11, 0.11}
\definecolor{britishracinggreen}{rgb}{0.0, 0.26, 0.15}
\definecolor{cobalt}{rgb}{0.0, 0.28, 0.67}
\DeclareMathOperator{\rk}{rk}
\newcommand{\Gr}{\mathrm{Gr}}
\newcommand{\Coef}{\mathsf{Coef}}
\newcommand{\mult}{\mathrm{mult}}
\newcommand{\into}{\hookrightarrow}
\newcommand{\onto}{\twoheadrightarrow}
\newcommand{\HH}{\mathrm{H}}
\newcommand{\OO}{\mathscr O}
\newcommand{\OF}{\mathscr F}
\newcommand{\OI}{\mathscr I}
\newcommand{\OZ}{\mathscr Z}
\newcommand{\OH}{\mathscr H}
\newcommand{\boldit}[1]{\boldsymbol{#1}}
\newcommand{\CCoh}{\mathscr{C}\kern-0.25em {o}\kern-0.2em{h}}
\DeclareMathOperator{\bh}{{\boldit{h}}}
\DeclareMathOperator{\ba}{{\boldit{a}}}
\DeclareMathOperator{\by}{{\boldit{y}}}
\DeclareMathOperator{\Hilb}{Hilb}
\DeclareMathOperator{\Exp}{Exp}
\newcommand{\embdim}{\mathsf{ed}}
\DeclareMathOperator{\Sym}{Sym}
\DeclareMathOperator{\Var}{Var}
\DeclareMathOperator{\In}{In}
\DeclareMathOperator{\Quot}{Quot}
\DeclareMathOperator{\Mot}{Mot}
\DeclareMathOperator{\CHMot}{CHMot}
\DeclareMathOperator{\HS}{HS}
\DeclareMathOperator{\Spec}{Spec}
\DeclareMathOperator{\id}{id}
\DeclareMathOperator{\pt}{pt}
\newcommand{\BA}{{\mathbb{A}}}
\newcommand{\BC}{{\mathbb{C}}}
\newcommand{\BG}{{\mathbb{G}}}
\newcommand{\BL}{{\mathbb{L}}}
\newcommand{\BN}{{\mathbb{N}}}
\newcommand{\BP}{{\mathbb{P}}}
\newcommand{\BQ}{{\mathbb{Q}}}
\newcommand{\BZ}{{\mathbb{Z}}}
\newcommand{\CA}{{\mathcal A}}
\newcommand{\CZ}{{\mathcal Z}}
\newcommand{\Fm}{{\mathfrak{m}}}
\tikzset{commutative diagrams/.cd,
mysymbol/.style={start anchor=center,end anchor=center,draw=none}}
\tikzset{
shift up/.style={
to path={([yshift=#1]\tikztostart.east) -- ([yshift=#1]\tikztotarget.west) \tikztonodes}
}
}
\theoremstyle{definition}
\newtheorem*{lemma*}{Lemma}
\newtheorem*{theorem*}{Theorem}
\newtheorem*{example*}{Example}
\newtheorem*{fact*}{Fact}
\newtheorem*{notation*}{Notation}
\newtheorem*{definition*}{Definition}
\newtheorem*{prop*}{Proposition}
\newtheorem*{remark*}{Remark}
\newtheorem*{corollary*}{Corollary}
\newtheorem*{conventions*}{Conventions}
\newtheorem{definition}{Definition}[section]
\newtheorem{example}[definition]{Example}
\newtheorem{question}[definition]{Question}
\newtheorem{remark}[definition]{Remark}
\newtheoremstyle{thm} % <name> % (ambienti con dimostrazione)
        {3mm}% <Space above>
        {3mm}% <Space below>
        {\slshape}% <Body font> % 
        {0mm}% <Indent amount>
        {\bfseries}% <Theorem head font>
        {.}% <Punctuation after theorem head>
        {1mm}% <Space after theorem head>
        {}% <Theorem head spec (can be left empty, meaning 'normal')> 
\theoremstyle{thm}
\newtheorem{theorem}[definition]{Theorem}
\newtheorem{corollary}[definition]{Corollary}
\newtheorem{lemma}[definition]{Lemma}
\newtheorem{prop}[definition]{Proposition}
\newtheorem{conjecture}[definition]{Conjecture}
\newtheorem{thm}{Theorem}
\newtheorem*{Acknowledgments*}{Acknowledgments}
\newcommand{\motive}[1]{ \left[ #1 \right]
}
\newcommand{\tartaglia}[1]{
\begin{tikzpicture}
\foreach \i in {0,...,{#1}}
    \foreach \j in {0,...,\i}
            \pgfmathtruncatemacro{\ciao}{factorial(\i)/(factorial(\j)*factorial(\i-\j))}
	       \node at ( \j*0.8-\i/2*0.8,-\i*0.8) {\textcolor{gray}{\ciao}};
\end{tikzpicture}
}
\title[The motive of the Hilbert scheme of points in all dimensions]{The motive of the Hilbert scheme of points \\ in all dimensions}
\author{Michele Graffeo, Sergej Monavari, Riccardo Moschetti, Andrea T. Ricolfi}
\keywords{Hilbert schemes, Hilbert--Samuel functions, Motives, Grothendieck ring of varieties}
\subjclass[2020]{Primary 14C05; Secondary 14C15.}
\begin{document}
\begin{abstract}
We prove a closed formula for the generating function $\mathsf Z_d(t)$ of the motives $[\Hilb^d(\BA^n)_0] \in K_0(\Var_{\BC})$ of punctual Hilbert schemes, summing over $n$, for fixed $d>0$. The result is an expression for  $\mathsf Z_d(t)$ as the product of the zeta function of $\BP^{d-1}$ and a polynomial $\mathsf P_d(t)$, which in particular implies that $\mathsf Z_d(t)$ is  a rational function. Moreover, we reduce the complexity of $\mathsf P_d(t)$ to the computation of $d-8$ initial data, and therefore give explicit formulas for $\mathsf Z_d(t)$ in the cases $d \leq 8$, which in turn yields a formula for $[\Hilb^{\leq 8}(X)]$ for any smooth variety $X$. We perform a similar analysis for the Quot scheme of points, obtaining explicit formulas for the full generating function (summing over all ranks and dimensions) for $d \leq 4$. In the  limit $n \to \infty$, 
we prove that the motives $[\Hilb^d(\BA^n)_0]$ stabilise to the class of the infinite Grassmannian $\Gr(d-1,\infty)$. Finally, exploiting our geometric methods, we conjecture (and partially confirm) a structural result on the `error' measuring the discrepancy between the count of higher dimensional partitions and MacMahon's famous guess. 
\end{abstract}

\maketitle
{\hypersetup{linkcolor=black}\tableofcontents}

%%%%%%%%%%%%%%%%%%%%%%%%%%%%%%%%%%%%%%%%%%%%%%
%%%%%%%%%%%%%%%%%%%%%%%%%%%%%%%%%%%%%%%%%%%%%%
\section{Introduction}
\subsection{Structure theorem for Hilbert series}
Let $X$ be a complex smooth quasiprojective variety. The \emph{Hilbert scheme of $d$ points} $\Hilb^d(X)$ is the fine moduli space of $0$-dimensional closed subschemes $Z \subset X$ whose length $\chi(\OO_Z)$ is equal to $d$ \cite{Grothendieck_Quot}. As soon as $\dim X > 2$, the geometry of $\Hilb^d(X)$ is to date largely inaccessible, due to the presence of (yet to be understood) singularities and (yet to be found but definitely present) unexpected components; it is nevertheless known that Murphy's Law \cite{Murphy} holds for $\Hilb^{\mathsf{pts}}(\BA^{n})$ up to retraction for $n\geq 16$ \cite{Jelisiejew-pathologies}. 

In this paper we study the motivic class $[\Hilb^d(X)]$ in the Grothendieck ring of varieties $K_0(\Var_{\BC})$. To state our first main result, we denote by $\Hilb^d(\BA^m)_0\subset \Hilb^d(\BA^m)$ the \emph{punctual Hilbert scheme}, parametrising subschemes $Z \subset \BA^m$ entirely supported at the origin $0 \in \BA^m$, and we form the generating function
\[
\mathsf Z_d(t) = \sum_{n\geq 0}\,[\Hilb^d(\BA^{n+1})_0] t^n.
\]

\begin{thm}[\Cref{main-in-body}]\label{MAIN-THEOREM-1}
Fix an integer $d>0$. Then
\[
\mathsf Z_d(t) = \zeta_{\BP^{d-1}}(t) \cdot \mathsf{P}_d(t)
\]
in $K_0(\Var_{\BC})\llbracket t \rrbracket$, where 
\[
\zeta_{\BP^{d-1}}(t) = \prod_{i=0}^{d-1}\,\frac{1}{1-\BL^it}
\]
is the zeta function of $\BP^{d-1}$ and $\mathsf P_d(t) \in K_0(\Var_{\BC})[t]$ is a polynomial such that
\begin{itemize}
\item [\mylabel{mainthm-1}{\normalfont{(1)}}] $\mathsf{P}_d(t)=1$ if $d\leq 3$, and,
\item [\mylabel{mainthm-2}{\normalfont{(2)}}] if $d>3$, then
\[
\mathsf{P}_d(t) = \sum_{i=0}^{d-2} a_i^{(d)}t^i
\] 
has degree at most $d-2$, and its coefficients are given by
\[
a_i^{(d)} = \sum_{\alpha=0}^i\,(-1)^\alpha [\Hilb^d(\BA^{i-\alpha+1})_0][\Gr(\alpha,d)]\BL^{\binom{\alpha}{2}}, \qquad 0\leq i \leq d-2.
\]
\end{itemize}
In particular, $\mathsf Z_d(t)$ is a rational function. 
\end{thm}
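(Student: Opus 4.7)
The plan is to prove a geometric decomposition of $\Hilb^d(\BA^{n+1})_0$ refined by embedding dimension, and then to extract the claimed factorisation of $\mathsf Z_d(t)$ via a standard Grassmannian generating-function identity combined with Gauss' $q$-binomial theorem.

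I would first stratify $\Hilb^d(\BA^{n+1})_0 = \bigsqcup_{e=0}^{d-1} H^{=e}$ by the embedding dimension $e$ of the local algebra $\OO_{Z,0}$, and attach to each $Z$ its tangent space $T_0 Z \subset T_0 \BA^{n+1} = \bfk^{n+1}$. The resulting $\GL_{n+1}$-equivariant morphism $\tau_e \colon H^{=e} \to \Gr(e, n+1)$ is Zariski-locally trivial since the Grassmannian is a homogeneous space $\GL_{n+1}/P$. The key geometric claim is that the fiber $\tau_e^{-1}(L)$ is a vector bundle of rank $(d-1-e)(n+1-e)$ over the ``intrinsic'' open stratum $\Hilb^d(\BA^e)_0^{=e}$: choosing coordinates so that the annihilator of $L$ in $\Fm/\Fm^2$ is $\Span(x_{e+1}, \ldots, x_{n+1})$, the formal implicit function theorem identifies any $I$ in the fiber with the data of a point $[I''] \in \Hilb^d(\BA^e)_0^{=e}$ together with, for each $j > e$, a ``normal lift'' $f_j \in \Fm_A^2$ (writing $A = \bfk[[x_1, \ldots, x_e]]/I''$), encoding the relation $x_j = f_j$ in $\bfk[[x_1, \ldots, x_{n+1}]]/I$. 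Since $\dim \Fm_A^2 = d-1-e$, these normal lifts assemble, as $I''$ varies, into the claimed vector bundle. Writing $h_e := [\Hilb^d(\BA^e)_0^{=e}]$, this yields in $K_0(\Var_\BC)$ the identity
\[
[\Hilb^d(\BA^{n+1})_0] = \sum_{e=0}^{d-1} [\Gr(e, n+1)] \cdot \BL^{(d-1-e)(n+1-e)} \cdot h_e.
\]

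Next, summing this identity against $t^n$, reindexing by $m = n+1-e$, and applying the classical identity $\sum_{m \geq 0}[\Gr(e, e+m)] s^m = \prod_{i=0}^e (1-\BL^i s)^{-1}$ with $s = \BL^{d-1-e} t$ produces
\[
\mathsf Z_d(t) = \sum_{e \geq 1} h_e\, t^{e-1} \prod_{j=d-1-e}^{d-1} \frac{1}{1-\BL^j t}.
\]
Factoring out the common divisor $\zeta_{\BP^{d-1}}(t) = \prod_{j=0}^{d-1}(1-\BL^j t)^{-1}$, which is the denominator arising from the $e = d-1$ summand, leaves $\mathsf P_d(t) = \sum_{e \geq 1} h_e t^{e-1} \prod_{j=0}^{d-2-e}(1-\BL^j t)$, each of whose summands has degree $(e-1)+(d-1-e) = d-2$. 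This proves rationality of $\mathsf Z_d(t)$ and the bound $\deg \mathsf P_d \leq d-2$. To extract the closed formula for $a_i^{(d)}$, I would instead multiply $\mathsf Z_d(t)$ directly by $\prod_{i=0}^{d-1}(1-\BL^i t)$, expand the product via Gauss' $q$-binomial theorem as $\sum_\alpha (-1)^\alpha \BL^{\binom{\alpha}{2}}[\Gr(\alpha, d)] t^\alpha$, and read off the coefficient of $t^i$. Part (1) then follows by direct evaluation: for $d \leq 3$ every nonzero $h_e$ equals $1$ (realised by the unique algebras $\bfk[t]/(t^d)$ and, for $d = 3$, also $\bfk[x,y]/(x,y)^2$), and $\mathsf P_d$ telescopes to $1$.

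The main obstacle is the geometric content of the first step: upgrading the fiber-dimension count for $\tau_e$ into a genuine vector-bundle statement in $K_0(\Var_\BC)$. The delicate points are (i) that $(\Fm_A^2)^{n+1-e}$ varies algebraically with $I''$ to yield a vector bundle of constant rank $(d-1-e)(n+1-e)$ over $\Hilb^d(\BA^e)_0^{=e}$ (exploiting that $\dim \Fm_A^2$ is constant within each embedding-dimension stratum and that the universal family supplies the coherent-sheaf structure), and (ii) that Zariski-local triviality over $\Gr(e, n+1)$ propagates through the $\GL_{n+1}$-equivariance. Once these are secured, every subsequent step is routine generating-function algebra.
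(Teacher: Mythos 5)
Your proposal takes the same geometric approach as the paper: both start from the embedding-dimension stratification of $\Hilb^d(\BA^{n+1})_0$ and the key motivic identity relating the stratum $Y^{n+1}_{e,d}$ to $[\Gr(e,n+1)]$, a Lefschetz power, and the ``intrinsic'' class $[Y^e_{e,d}]$ --- this is exactly the paper's \Cref{thm:motive-Y-stratum} (with the indices $n\mapsto n+1$, $k\mapsto e$). From there, the reindexing via \Cref{ex:gen-fun-grass} to pull out $\zeta_{\BP^{d-1}}(t)$ is identical, as is the degree bound $\le d-2$ on each summand $h_e\, t^{e-1}\zeta_{\BP^{d-2-e}}(t)^{-1}$.

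Where you genuinely depart from the paper is in the final step: you read off $a_i^{(d)}$ directly as the $i$-th Cauchy coefficient of $\mathsf Z_d(t)\cdot\zeta_{\BP^{d-1}}(t)^{-1}$, using only the $\BL$-binomial theorem (\Cref{lemma:zeta-inverse}) and the tautology $[t^m]\mathsf Z_d(t)=[\Hilb^d(\BA^{m+1})_0]$. This bypasses the paper's \Cref{lemma:inversion} (inversion from Hilb-data to $Y$-data) and \Cref{prop:coefficient-of-p_d} (which in turn uses the refined hockey-stick identity \Cref{lemma:refined-stick}). Your shortcut is correct: since the $Y$-decomposition already shows $\mathsf Z_d(t)\zeta_{\BP^{d-1}}(t)^{-1}$ is a polynomial of degree $\le d-2$, the Cauchy product immediately identifies its coefficients with the stated $a^{(d)}_i$ for $0\le i\le d-2$, and the vanishing for $i\ge d-1$ is forced by the degree bound. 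This is a real simplification of the algebraic bookkeeping; the paper's extra machinery is not logically needed for \Cref{main-in-body}, though the inversion formula is used elsewhere (e.g.~\Cref{Cor:P(1)=1}, \Cref{prop:easyY}).

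One remark on the geometric input you flag as the ``main obstacle''. You aim to show $\tau_e^{-1}(L)\to Y^e_{e,d}$ is a genuine vector bundle, but this is more than is needed and more than the paper proves: in the proof of \Cref{thm:motive-Y-stratum} the paper exhibits, locally on a cover of the Hilbert--Samuel strata, bijective morphisms $G_B\times\BG_a^r\to F_B$ (via \Cref{lemma:standard-basis} and the universal-ideal construction), which give the identity in $K_0(\Var_\BC)$ by key feature \ref{key-a} without asserting any vector-bundle structure. So the worry in your point (i) is overcautious --- aiming for a piecewise-trivial fibration or bijective morphism, stratum by Hilbert--Samuel stratum, is both sufficient and what actually gets proved. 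Your point (ii), Zariski local triviality of $\tau_e$ over the Grassmannian, is exactly \Cref{tau-is-ZLT}, established by an explicit trivialization over the standard affine charts rather than by an appeal to $\GL_{n+1}$-equivariance; the equivariance argument would need the (true, but worth stating) fact that $\GL_{n+1}\to\Gr(e,n+1)$ is Zariski-locally trivial.
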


We now comment on some consequences and interpretations of \Cref{MAIN-THEOREM-1}, and we give an outline of its proof.

\subsubsection{Motives of singular Hilbert schemes}
The `first' singular Hilbert scheme is $\Hilb^4(\BA^3)$, and the first reducible one (in dimension $n>3$) is $\Hilb^8(\BA^n)$. Motives of singular or reducible varieties are known to be hard to compute in general. \Cref{MAIN-THEOREM-1} computes the motives of infinitely many reducible (or irreducible but singular) Hilbert schemes.

\subsubsection{Amplification to smooth $n$-folds}
The knowledge of the motives $[\Hilb^i(\BA^n)_0]$ for all $i \leq d$ is equivalent to the knowledge of the motives $[\Hilb^i(X)]$ for all $i \leq d$ for any given smooth $n$-fold $X$ (cf.~\Cref{sec:Exp-and-stuff}). Using the results in \Cref{sec:consequences of thm A}, where we reduce the complexity of $\mathsf P_d(t)$ to the computation of simply $d-8$ `data' (cf.~\Cref{ques:howmanydata} and \Cref{rmk:d-8-data}), we provide in \Cref{app:Pd-explicit} explicit closed formulas for $\mathsf P_{\leq 8}(t)$, showing they are polynomials in $\BL$ (see also \Cref{conj:L-subalgebra}). Therefore an immediate consequence of \Cref{MAIN-THEOREM-1} is that one can compute the motives $[\Hilb^{\leq 8}(X)]$ fully explicitly for \emph{any} smooth variety $X$, and these classes are polynomials in $\BL$ as soon as $[X] \in K_0(\Var_{\BC})$ is itself a polynomial in $\BL$.  

\subsubsection{Relations among the coefficients of $\mathsf P_d(t)$}
The coefficients of $\mathsf P_d(t)$  
share relations among themselves. This is for instance reflected in a recursion involving Hilb-classes, cf.~\Cref{cor:hilbrel}. The easiest of these relations is the identity $\mathsf P_d(1)=1$ in $K_0(\Var_{\BC})$, that we prove in \Cref{Cor:P(1)=1}. See \Cref{rem: relations from Y} for more on this.

\subsubsection{The advantage of fixing the number of points}
\Cref{MAIN-THEOREM-1} says that the knowledge of a \emph{finite} number of punctual motives allows one to compute infinitely many motivic classes (for any fixed $d$). This feature is achieved thanks to our approach fixing the number of points instead of the ambient dimension: in the latter case one is instead led to compute the generating function
\[
\mathsf{Hilb}_{n,0}(t) = \sum_{d \geq 0}\,[\Hilb^d(\BA^n)_0]t^d,
\]
but a closed formula for $\mathsf{Hilb}_{n,0}(t)$ is currently out of reach for $n>2$ (see \Cref{sec:curves-and-surfaces} for the case $n\leq 2$). Even at the level of Euler characteristics, if $n>3$ there is currently no known explicit formula for the generating function $\chi\mathsf{Hilb}_{n,0}(t)$. It is again thanks to this approach that we were able to formulate \Cref{conj:chi-omega-intro} (discussed in \Cref{sec:higher-dim-partitions}), which is a new structural prediction on the discrepancy between $\chi\mathsf{Hilb}_{n,0}(t)$ and the famous guess by MacMahon. 

\subsubsection{$\Omega$-classes}
We conjecture that the (effective) classes $\Omega^n_d \in K_0(\Var_{\BC})$ defined through the relation
\begin{equation}\label{omega-classes-intro}
\mathsf{Hilb}_{n,0}(t) = \Exp \left(\sum_{d>0} \Omega^{n}_{d}t^d\right)
\end{equation}
satisfy the same recursion as the classes of punctual Hilbert schemes (\Cref{cor:hilbrel}). This is the content of \Cref{conjecture-on-Omega-GF} or, equivalently, \Cref{FormulaOmega}, which we prove for $d\leq 8$ (see also \Cref{subsec:motiveomega} for explicit formulas).

\subsubsection{Strategy of the proof of \texorpdfstring{\Cref{MAIN-THEOREM-1}}{}}
We briefly sketch the main ingredients in the proof of \Cref{MAIN-THEOREM-1}.
The starting point is the stratification
\[
\Hilb^d(\BA^n)_0 = \coprod_{k=1}^{d-1} Y^n_{k,d},
\]
where $Y^n_{k,d} \subset \Hilb^d(\BA^n)_0$ are the strata consisting of fat points $Z \subset \BA^n$ of embedding dimension \emph{exactly} $k$. These strata obey the key recursion 
\begin{equation}\label{eqn:Y-strata}
[Y^n_{k,d}] = [\Gr(k,n)]\BL^{(n-k)(d-k-1)}[Y^k_{k,d}]\,\in\,K_0(\Var_{\BC}),
\end{equation}
cf.~\Cref{thm:motive-Y-stratum}. This relation is obtained by analysing a further  stratification of $Y^n_{k,d}$ into a suitable union of Hilbert--Samuel strata $H^n_{\bh} \subset \Hilb^d(\BA^n)_0$, parametrising fat points with prescribed Hilbert--Samuel function $\bh = (1,h_1,\ldots)$. Each Hilbert--Samuel stratum admits a morphism $H^n_{\bh} \to \Gr(n-h_1,n)$, which can be interpreted as the projection of a fat point onto its tangent space, and we prove that this morphism is a Zariski locally trivial fibration, cf.~\Cref{tau-is-ZLT}. \Cref{eqn:Y-strata} is finally obtained by relating the motive of a preferred  fiber of the latter fibration to some simpler Hilbert--Samuel strata.

The next crucial ingredient is the `inversion formula' (cf.~\Cref{lemma:inversion})
\begin{equation}\label{inversion-formula-intro}
[Y_{k,d}^k]=\sum_{j=1}^{k}\, (-1)^{k+j}[\Gr(j,k)]\BL^{(k-j) (d-j-1)-\binom{k-j}{2}}[\Hilb^d(\BA^j)_0]
\end{equation}
defining an inverse transform from Hilb-data back to $Y$-data. This is used in \Cref{prop:coefficient-of-p_d} to conveniently rewrite the coefficients $a_i^{(d)}$ of $\mathsf P_d(t)$ in terms of classes of Grassmannians and $Y$-classes. Finally, in \Cref{sec:motives-of-grass} we prove a series of motivic identities on zeta functions and classes of Grassmannians, that in \Cref{sec:proof-thm-A} we combine with \Cref{eqn:Y-strata} in order to obtain the proof of \Cref{MAIN-THEOREM-1}.

\subsection{Stabilisation theorem for \texorpdfstring{$[\Hilb^d(\BA^n)_0]$}{}}
Our second main result is about the \emph{stabilisation} of $[\Hilb^d(\BA^n)_0]$ as $n \to \infty$. We work in the $\BL$-adic completion $\widehat{K}_0(\Var_\BC)$, where, as we prove, the motivic class of the `infinite Grassmannian' and of the `infinite punctual Hilbert scheme'  
\begin{align*}
\Gr(d-1,\infty)&=\varinjlim \Gr(d-1,n),\\
\Hilb^d(\BA^\infty)_0&=\varinjlim \Hilb^d(\BA^n)_0,
\end{align*}
viewed naturally as ind-schemes, make sense. The classes of infinite Grassmannians are determined by the relation \cite{{GLMstacks}}
\[
\sum_{k=0}^\infty \,[\Gr(k,\infty)]t^k=\prod_{i=0}^\infty\frac{1}{1-\BL^{i}t}\in \widehat{K}_0(\Var_\BC)\llbracket t \rrbracket.
\]
We prove the following stabilisation theorem.
\begin{thm}[{\Cref{thm: stab motives}, \Cref{cor:limit-Hilb}, \Cref{cor:wt-polynomials}}]\label{MAIN-THM-STAB}
Let $d,n\geq 1$ be integers.
Then
\[
[\Hilb^d(\BA^n)_0] =[\Gr(d-1,n)] \in K_0(\Var_\BC)/(\BL^{n-d+2}).
\]
In particular, there is an identity
\[
\sum_{d\geq 0}\,[\Hilb^d(\BA^\infty)_0]t^d =t\cdot \prod_{i=0}^\infty\frac{1}{1-\BL^{i}t}\,\in\, \widehat{K}_0(\Var_\BC)\llbracket t \rrbracket.
\]
At the level of the weight polynomial specialisation, there is an identification with the signed Poincaré polynomial
\begin{align*}
\mathsf w(\Hilb^d(\BA^n)_0,z^{1/2}) 
&= \mathsf p(\Hilb^d(\BA^n)_0,-z^{1/2})\\
&=\frac{\prod_{k=1}^n (z^k-1)}{\prod_{k=1}^{d-1} (z^k-1) \prod_{k=1}^{n-d+1} (z^k-1)}
\end{align*}
in $\BZ[z^{1/2}]/z^{n-d+2}$.
\end{thm}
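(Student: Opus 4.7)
The plan is to derive all three assertions from the embedding-dimension stratification and the key recursion \eqref{eqn:Y-strata}, by exploiting the fact that the top stratum of $\Hilb^d(\BA^n)_0$ is itself a Grassmannian.

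Starting from the decomposition $\Hilb^d(\BA^n)_0 = \coprod_{k=1}^{d-1} Y^n_{k,d}$, I first observe that the top stratum $Y^n_{d-1,d}$ parametrises fat points $Z$ of maximal embedding dimension $d-1$, equivalently with $\mathfrak{m}_Z^2=0$; such $Z$ are determined by their tangent space at the origin, so $Y^n_{d-1,d} \cong \Gr(d-1,n)$. In terms of \eqref{eqn:Y-strata}, this is encoded by $[Y^{d-1}_{d-1,d}]=1$ together with the exponent $(n-k)(d-k-1)$ vanishing at $k=d-1$. For $1 \leq k \leq d-2$ the same recursion yields a factor $\BL^{(n-k)(d-k-1)}$, and an elementary manipulation gives
\[
(n-k)(d-k-1) - (n-d+2) = (n-k+1)(d-k-2) \geq 0
\]
throughout the specified range. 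Consequently each lower stratum $[Y^n_{k,d}]$ lies in the ideal $(\BL^{n-d+2})$, and summing yields
\[
[\Hilb^d(\BA^n)_0] - [\Gr(d-1,n)] = \sum_{k=1}^{d-2} [Y^n_{k,d}] \in (\BL^{n-d+2}),
\]
which is the first assertion.

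To obtain the generating function identity I would pass to the $\BL$-adic completion $\widehat{K}_0(\Var_\BC)$. The Schubert cell decomposition expresses $[\Gr(d-1,n)]$ as the sum of $\BL^{|\lambda|}$ over partitions $\lambda$ inscribed in a $(d-1)\times(n-d+1)$ box, so the difference $[\Gr(d-1,n)] - [\Gr(d-1,\infty)]$ has $\BL$-adic valuation at least $n-d+2$ (the smallest weight of a partition escaping the box). Combined with part 1, this shows that $[\Hilb^d(\BA^n)_0]$ is a Cauchy sequence converging to $[\Gr(d-1,\infty)]$ in $\widehat{K}_0(\Var_\BC)$ as $n \to \infty$, so the ind-scheme class $[\Hilb^d(\BA^\infty)_0]$ is well-defined and equal to $[\Gr(d-1,\infty)]$. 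The generating series identity then follows from the recalled formula for $\sum_k [\Gr(k,\infty)] t^k$ after the index shift $k \mapsto d-1$ (equivalently, multiplication by $t$).

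The third assertion follows by applying the weight-polynomial ring homomorphism $\mathsf w(-,z^{1/2})\colon K_0(\Var_\BC) \to \BZ[z^{1/2}]$, sending $\BL \mapsto z$, to the congruence of part 1; this yields equality of weight polynomials modulo $z^{n-d+2}$. Since $\Gr(d-1,n)$ is smooth and projective, its weight polynomial coincides with the signed Poincaré polynomial, and the classical $q$-binomial expression $[\Gr(d-1,n)] = \binom{n}{d-1}_{\BL}$ under $\BL \mapsto z$ gives precisely the product formula in the statement. The main obstacle here is essentially bookkeeping: verifying the non-negativity $(n-k+1)(d-k-2) \geq 0$ in the relevant range and — more subtly — aligning the two sources of $\BL$-adic convergence (from the Hilbert-scheme side via part 1 and from the Grassmannian side via Schubert cells) at the common rate $\BL^{n-d+2}$, so that the limit identification and the truncation degree of the weight polynomial match exactly.
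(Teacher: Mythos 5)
Your argument matches the paper's: part (1) uses the same embedding-dimension stratification and the recursion $[Y^n_{k,d}] = [\Gr(k,n)]\BL^{(n-k)(d-k-1)}[Y^k_{k,d}]$, isolating $k=d-1$ as $[\Gr(d-1,n)]$ and packing the remaining terms into $\BL^{n-d+2} \cdot (\text{effective})$ via the exponent identity $(n-k)(d-k-1)-(n-d+2)=(n-k+1)(d-k-2)$, and parts (2)–(3) follow by passing to the $\BL$-adic completion and applying the weight-polynomial specialisation, exactly as in the paper. The only cosmetic difference is that you verify the $\BL$-adic convergence of $[\Gr(d-1,n)]$ via Schubert cells, whereas the paper invokes the Tartaglia recursion and cites the known formula for $[\Gr(k,\infty)]$; both establish the same fact.
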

As we discuss in \Cref{sec:Stabilisation}, this stabilisation result is the motivic counterpart of the $\BA^1$-homotopy equivalence $\Hilb^d(\BA^\infty) \simeq \Gr(d-1,\infty)$ proved in \cite[Thm.~2.1]{Hilb^infinity}.

\subsection{Relation to other motivic theories}
Besides the classical motivic measures reviewed in \Cref{sec:K0(Var)}, the Grothendieck ring $K_0(\Var_{\BC})$ has natural maps into other K-theories, such as the K-ring $K_0(\CHMot_{\BC})$ of the category of Chow motives, the K-ring $K_0(\Mot_{\BC})$ of Voevodsky's geometrical effective motives \cite[Sec.~3.2.4]{Gillet-Soulé}, and the K-ring $K_0(\textrm{dg-Cat}_{\BC})$ of dg-categories \cite{Bondal-Larsen-Lunts}. Therefore all relations proved for the motivic classes $[-] \in K_0(\Var_{\BC})$ also hold in these other K-theories.

\subsection{Generalisations: arbitrary $n$-folds and Quot schemes}
We propose amplifications of our main formulas for punctual Hilbert schemes of $\BA^n$ in two directions: replacing $\BA^n$ with a smooth $n$-fold $X$, and considering higher rank moduli spaces, namely \emph{Quot schemes of points} (again on arbitrary $n$-folds, but starting from $\BA^n$). For the latter, we again consider the Hilbert--Samuel stratification of the punctual Quot scheme $\Quot_{\BA^n}(\OO^{\oplus r},d)_0$. Using apolarity for modules, we establish the following formulas.

\begin{thm}[\Cref{sec:quot}]\label{MAIN-THM-QUOT}
There are identities
\[
\sum_{{r}\ge 0}\sum_{n\ge 0} \,\bigl[\Quot_{\BA^{n+1}}(\OO^{\oplus r+1},d)_0 \bigr]x^ny^r = 
\begin{cases}
\zeta_{\BP^0}(x)\zeta_{\BP^1}(y) & \mbox{if }d=1\\
\zeta_{\BP^1}(x)\zeta_{\BP^2}(y)(1-\BL xy) & \mbox{if }d=2\\
\zeta_{\BP^2}(x)\zeta_{\BP^3}(y)(1-\BL xy)(1-\BL^2xy) & \mbox{if }d=3 \\
\zeta_{\BP^3}(x)\zeta_{\BP^4}(y)\zeta_{\BL^4}(x)\mathsf U_4(x,y) & \mbox{if }d=4
\end{cases}
\]
where $\mathsf U_4(x,y) \in \BZ[\BL,x,y]$ is an explicit polynomial, given in \eqref{U-polynomial}.
\end{thm}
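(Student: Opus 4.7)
The plan is to extend the embedding-dimension and Hilbert--Samuel stratification of $\Hilb^d(\BA^n)_0$ used in the proof of \Cref{MAIN-THEOREM-1} to the Quot setting. A length-$d$ quotient $\OO^{\oplus r+1}\onto M$ supported at the origin of $\BA^{n+1}$ carries two Grassmannian-valued invariants: the embedding dimension $k$ of the support $R/\mathrm{Ann}(M)$ (with $R=\OO_{\BA^{n+1},0}$), and the minimal number of generators $s=\dim_{\bfk}M/\Fm M$ of $M$ over $R$. Refining these by the full Hilbert--Samuel sequence $\bh=(h_0,h_1,\ldots)$, with $h_0=s$ and $\sum_i h_i=d$, gives a locally closed stratification of $\Quot_{\BA^{n+1}}(\OO^{\oplus r+1},d)_0$ into strata $H^{n+1,r+1}_{\bh}$.

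The first step is to construct, for each $\bh$, a Zariski locally trivial fibration
\[
H^{n+1,r+1}_{\bh}\,\longrightarrow\,\Gr(k,n+1)\times\Gr(s,r+1),
\]
whose two factors record the tangent space of the support and the image of a minimal generating set inside $\OO^{\oplus r+1}/\Fm\OO^{\oplus r+1}=\bfk^{r+1}$. This is the two-parameter counterpart of the fibration of \Cref{tau-is-ZLT}, and the argument should proceed by fixing complements to both flags and then applying a module-theoretic apolarity (Macaulay inverse systems for the free module $\OO^{\oplus s}$) to identify the fiber with an affine bundle over a ``core'' stratum on $\Quot_{\BA^{k}}(\OO^{\oplus s},d)_0$. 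The resulting factorisation
\[
[H^{n+1,r+1}_{\bh}]=[\Gr(k,n+1)]\cdot[\Gr(s,r+1)]\cdot\BL^{e(\bh,n,r)}\cdot[H^{k,s}_{\bh}]
\]
is the double-rank analogue of \eqref{eqn:Y-strata}. Summing over $n$ and $r$ and applying standard Grassmannian generating function identities, the double series assembles, for each fixed $\bh$, into a product of two zeta functions times a monomial in $\BL,x,y$, with the common prefactor $\zeta_{\BP^{d-1}}(x)\zeta_{\BP^d}(y)$ visible in the theorem emerging in the aggregate.

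For $d\le 3$ there are only a few Hilbert--Samuel types and each core class $[H^{k,s}_{\bh}]$ is an explicit polynomial in $\BL$, so a direct geometric-series manipulation collapses the sum to the stated products $(1-\BL xy)$ and $(1-\BL xy)(1-\BL^2 xy)$. The case $d=4$ is genuinely richer: the core strata on $\Quot_{\BA^k}(\OO^{\oplus s},4)_0$ now carry positive-dimensional moduli (for instance $\bh=(1,3)$ involves a Grassmannian of cubic forms, and $\bh=(2,2)$ contributes a separate component), and one must enumerate all admissible types and compute each $[H^{k,s}_{\bh}]$ by apolarity. The main obstacle will be verifying that these contributions assemble, after careful $\BL$-bookkeeping, into the stated closed form with the single extra factor $\zeta_{\BL^4}(x)=1/(1-\BL^4 x)$ and the polynomial $\mathsf U_4(x,y)$ of \eqref{U-polynomial}; once the fibration statement and the apolarity dictionary for free modules are in place, the remaining computation is a matter of organised tabulation.
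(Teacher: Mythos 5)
The key structural claim in your plan is a Zariski locally trivial fibration
\[
H^{n+1,r+1}_{\bh}\longrightarrow \Gr(k,n+1)\times\Gr(s,r+1),
\]
where $k$ is the embedding dimension of the support $R/\mathrm{Ann}(M)$. This is not well-defined: for a fixed module Hilbert--Samuel function $\bh$, the embedding dimension $k$ of the schematic support is \emph{not} constant. For example, for $\bh=(2,1)$ and a module $M=\langle e_1,e_2,z\rangle$ with $z\in \Fm M$, the embedding dimension of the support equals the rank of the multiplication map $R_1\to\Hom(M/\Fm M,\Fm M/\Fm^2 M)\cong\BC^2$, which can be $1$ or $2$. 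So there is no map from $H^{n,r}_{(2,1)}$ to a single $\Gr(k,n)$; you would need a further stratification by $k$.

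Even granting that refinement, the conjectured factorisation $[H^{n,r}_{\bh}]=[\Gr(k,n)]\cdot[\Gr(s,r)]\cdot\BL^{e}\cdot[\text{core}]$ does not match the actual stratum motives. In the paper's Tables for $d=3,4$, the $n$-dependence enters through factors such as $[\BP^{2n-1}]$, $[\Gr(2,2n)]$, $[\BP^{3n-1}]$: these are Grassmannians inside $R_1\otimes(M/\Fm M)$, whose dimension is $sn$, not $n$. The quotient $[\BP^{2n-1}]/[\BP^{n-1}]=\BL^n+1$ depends on $n$, so no power of $\BL$ can compensate. The correct module-theoretic analogue of the Hilbert scheme map $\rho^n_{\bh}$ records the linear part of the kernel $K\subset R^{\oplus s}$ (a subspace of $R_1^{\oplus s}$), not a $k$-plane in $R_1$; your proposed invariant is the wrong one.

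The paper's actual argument does not prove any structural fibration result for Quot. It stratifies $\Quot_{\BA^n}(\OO^{\oplus r},d)_0$ by $s=\bh_F(0)$ and then by the Hilbert--Samuel function, and computes the motive of each stratum directly (using apolarity for modules in the sense of \cite{Joachim-Sivic} and explicit Macaulay2 computation), recording the closed forms in Tables~\ref{tab:stratQuot} and~\ref{tab:stratQuot4} and summing the resulting double geometric series. Your plan, besides resting on a factorisation that fails, also ends by deferring the stratum computations to ``organised tabulation''; but that tabulation — the content of the two tables — \emph{is} the proof, so the plan does not actually discharge any of the work. If you want to recover the theorem, drop the $\Gr(k,n+1)$ fibration, stratify by the module HS function, parametrise the degree-one kernel data inside $R_1^{\oplus s}$ via apolarity for modules, compute each stratum motive, and sum.
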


Going back to Hilbert schemes, in \Cref{sec:Exp-and-stuff} we explain how to pass from $\Hilb^\bullet(\BA^n)_0$ to $\Hilb^\bullet(X)$ for an arbitrary smooth $n$-dimensional quasiprojective variety. This procedure naturally generalises to Quot schemes via the `$\Omega$-classes' \eqref{omega-classes-intro} (and their higher rank analogues), which govern the motivic Hilbert/Quot scheme theory of smooth $n$-folds via the theory of power structures \cite{GLMps}. We provide two worked out examples for $X=\BP^3$, leading to the explicit formulas
\begin{align*}
[\Hilb^8(\BP^3)] 
&=
\BL^{24}
 + 2 \BL^{23} 
 + 7 \BL^{22}
 + 18 \BL^{21} 
 + 48 \BL^{20} 
 + 111 \BL^{19} 
 + 251 \BL^{18} 
 + 498 \BL^{17} 
 + 891 \BL^{16} \\
&\quad 
+ 1368 \BL^{15} 
 + 1847 \BL^{14} 
 + 2132 \BL^{13} 
 + 2150 \BL^{12} 
 + 1853 \BL^{11} 
 + 1395 \BL^{10} 
 + 904 \BL^9 \\
&\quad
+ 522 \BL^8
+ 272 \BL^7
+ 136 \BL^6
+ 66 \BL^5
+ 32 \BL^4
+ 14 \BL^3
+ 6 \BL^2
+ 2 \BL
+1 \\
[\Quot_{\BP^3}(E,4)] &= \BL^{20} + 3\BL^{19} + 13\BL^{18} + 39\BL^{17} + 102\BL^{16} + 202\BL^{15} + 346L^{14} + 480\BL^{13} \\
&\quad+ 581\BL^{12} + 590\BL^{11} + 533\BL^{10} + 415\BL^9 + 297\BL^8 + 187\BL^7 
+ 113\BL^6 + 60\BL^5 \\
&\quad+ 32\BL^4 + 14\BL^3 + 6\BL^2 + 2\BL + 1,
\end{align*}
where the second formula (cf.~\Cref{subsubsec:Quot-P^3}) holds for any locally free sheaf $E$ of rank $3$ on $\BP^3$.

\subsection{Relation with existing work}
Let $X$ be a smooth quasiprojective variety of dimension $n$. The motive of $\Hilb^d(X)$, for $n\leq 2$, admits an explicit closed formula (only involving $\BL$ and $[X]$) for all $d$ (cf.~\Cref{sec:curves-and-surfaces}).
The motives of the Hilbert schemes of $\leq 3$ points can be computed by localisation (see \cite[Rem.~4.5]{BBS} and \cite{Cheah96}). The motive of $\Hilb^4(\BA^3)$ can be easily extracted from the Pfaffian description of the singularity of this Hilbert scheme \cite{Katz-Hilb4,Dimca_Szendroi}. See Zhan \cite{zhan2022punctual} for a recent computation of $[\Hilb^d(\BA^3)_0]$ for $d \leq 5$.\footnote{We were informed by A.~Gautam that he computed the motive $[\Hilb^6(\BA^3)_0]$.}  We are not aware of similar computations for higher dimension or number of points. We refer the reader to \cite{GLMps,GLMHilb,ricolfi2019motive,Fantechi-Ricolfi-motivic} for structural results on motivic generating functions. Variations of the Hilbert scheme of points have been studied, from a motivic point of view, in \cite{BFP19,mozgovoy2019motivic,MR_nested_Quot,double-nested-1, monavari2024hyperquot}. See also \cite{BBS,RefConifold,BR18,DavisonR,Cazzaniga:2020aa,cazzaniga2020higher,Virtual_Quot} for the \emph{virtual} point of view and the link with motivic Donaldson--Thomas theory.

\subsection{Open questions}
We close the introduction proposing a few open problems and conjectures around the topics discussed in this paper.

First of all, note that in \Cref{MAIN-THEOREM-1} the polynomial $\mathsf P_d(t)$ is stated to have degree \emph{at most} $d-2$. In all the examples we know, the coefficient $a^{(d)}_{d-2}$ does not vanish. We therefore propose the following. 

\begin{conjecture}\label{conj:degree-of-P_d}
For every $d>3$, the polynomial $\mathsf P_d(t)$ has degree \emph{exactly} $d-2$.    
\end{conjecture}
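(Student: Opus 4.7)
My plan is to rewrite the coefficient $a_{d-2}^{(d)}$ in terms of the simpler embedding-dimension strata $[Y^k_{k,d}]$ and then identify an uncancellable contribution. Combining the partition $\Hilb^d(\BA^n)_0=\coprod_{k}Y^n_{k,d}$ with the structural identity \eqref{eqn:Y-strata} gives
\[
[\Hilb^d(\BA^m)_0]=\sum_{k=1}^{\min(m,d-1)}[\Gr(k,m)]\BL^{(m-k)(d-k-1)}[Y^k_{k,d}].
\]
Substituting this into the formula for $a_{d-2}^{(d)}$ in \Cref{MAIN-THEOREM-1} and exchanging the order of summation gives $a_{d-2}^{(d)}=\sum_{k=1}^{d-1}[Y^k_{k,d}]\cdot S_k$, where
\[
S_k=\sum_{\alpha=0}^{d-1-k}(-1)^{\alpha}[\Gr(k,d-1-\alpha)]\,[\Gr(\alpha,d)]\,\BL^{(d-1-\alpha-k)(d-k-1)+\binom{\alpha}{2}}.
\]
A short $q$-binomial manipulation gives $S_{d-1}=1$ (consistent with $Y^{d-1}_{d-1,d}=\pt$) and $S_{d-2}=-1$; one expects $S_k$ to admit a closed product form in general.

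As a sanity check, for $d=4$ one computes $S_1=\BL$, $S_2=-1$, $S_3=1$ and, using $[Y^2_{2,4}]=[\BP^2]$, obtains $a_2^{(4)}=\BL-(1+\BL+\BL^2)+1=-\BL^2\neq 0$. The conjecture follows from the explicit formulas in \Cref{app:Pd-explicit} for $d\leq 8$, so the real content is the range $d\geq 9$. The most promising route for general $d$ is to establish a closed formula for $S_k$ (of $q$-hypergeometric type) and then isolate a single stratum $k_0$ whose contribution $[Y^{k_0}_{k_0,d}]\cdot S_{k_0}$ carries an extremal power of $\BL$ that cannot be cancelled by the other strata; the low-$d$ computations suggest $k_0=d-2$. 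An alternative route is to apply a motivic measure: specialising at $\BL=1$ and using $[\Hilb^d(\BA^n)_0]\vert_{\BL=1}=p_n(d)$, the number of $n$-dimensional partitions of $d$ (by $\TT$-localisation on monomial ideals), the conjecture reduces to the non-vanishing of
\[
\sum_{\alpha=0}^{d-2}(-1)^{\alpha}p_{d-1-\alpha}(d)\binom{d}{\alpha},
\]
which for $d=4$ equals $13-20+6=-1$.

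The main obstacle on the motivic side is the lack of any explicit description of the classes $[Y^k_{k,d}]\in\BZ[\BL]$ outside the extremes $k=1$ and $k=d-1$, which makes it hard to rule out accidental cancellations in $\sum_{k}[Y^k_{k,d}]\cdot S_k$. The arithmetic side is equally delicate, since the integers $p_n(d)$ for $n\geq 3$ are famously inaccessible -- this is precisely the obstruction underlying the failure of MacMahon's guess discussed elsewhere in the paper. Nevertheless, the pattern emerging from the explicit low-$d$ computations strongly suggests that the top monomial of $a_{d-2}^{(d)}$ in $\BL$ is determined by a single principal $Y$-stratum, and a careful $q$-analytic analysis of the $S_k$ should suffice to pin it down.
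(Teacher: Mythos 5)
This statement is a \emph{conjecture} in the paper, not a theorem: the authors only verify it by computer for $d\le 26$ and leave it open. So a complete ``proof'' was not to be expected, and your submission is honest about being a proof \emph{strategy} rather than a proof. Still, let me point out where your sketch can be tightened, and where the genuine obstacle lies.

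First, a missed simplification. Your sum $S_k$ does admit a closed form, and it is simpler than you anticipated: by the paper's \Cref{prop:coefficient-of-p_d} with $i=d-2$, one has
\[
a^{(d)}_{d-2}=\sum_{j=1}^{d-1}(-1)^{d-1-j}\,\BL^{\binom{d-1-j}{2}}\,[Y^j_{j,d}],
\]
because $[\Gr(d-2-k,d-2-k)]=1$. Hence $S_k=(-1)^{d-1-k}\BL^{\binom{d-1-k}{2}}$, consistent with your computed values $S_1=\BL$, $S_2=-1$, $S_3=1$ when $d=4$. Your plan to ``establish a closed formula for $S_k$'' is therefore already done by the paper's machinery; this is not where the difficulty is.

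Second, and this is the real gap, the closed form does not by itself show $a^{(d)}_{d-2}\neq 0$: it is an alternating sum of effective classes $[Y^j_{j,d}]$, and you have no control over the middle range $3\le j\le d-6$. Your hope to ``isolate a single stratum $k_0$ whose contribution carries an extremal power of $\BL$ that cannot be cancelled'' is plausible but not substantiated: you would need at minimum a degree bound (or a leading-term computation) for $[Y^j_{j,d}]$ valid for all $j$, and nothing in the paper --- see \Cref{prop:easyY}, which only treats $j\le 2$ and $j\ge d-5$, and \Cref{rmk:339} on $Y^3_{3,9}$ --- gives this. Indeed the paper's \Cref{example:definition C} and the discussion in \Cref{subsec:indiani} make clear that the middle strata are precisely the hard part.

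Third, the Euler-characteristic reduction
\[
a^{(d)}_{d-2}\big|_{\BL=1}=\sum_{\alpha=0}^{d-2}(-1)^{\alpha}\chi(\Hilb^d(\BA^{d-1-\alpha})_0)\binom{d}{\alpha}
\]
is a correct specialisation (note your indexing of partition numbers is shifted relative to the paper's $p_{n-1}(d)=\chi(\Hilb^d(\BA^n)_0)$), and you correctly observe that non-vanishing of this integer would suffice. But as you acknowledge, this rests on the arithmetic of higher-dimensional partition numbers, which is no more tractable than the motivic problem; moreover non-vanishing at $\BL=1$ is \emph{not} necessary for the conjecture (a priori $a^{(d)}_{d-2}$ could be a nonzero polynomial in $\BL$ vanishing at $\BL=1$), so this route at best proves a strictly stronger statement that might even fail for some $d$.

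In short: your reduction to $Y$-classes and your specialisations are in line with the paper's own techniques, but the decisive step --- ruling out cancellation in $\sum_j(-1)^{d-1-j}\BL^{\binom{d-1-j}{2}}[Y^j_{j,d}]$ --- is precisely what is missing both in your proposal and in the paper, which is why the statement remains a conjecture there too.
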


Using computer software SAGE and Macaulay2 \cite{sagemath,M2} and the database of partitions \cite{ThePartitionsProjectWebsite,GOVINDARAJAN2013600,Indiani-asintotici}, we were able to verify that \Cref{conj:degree-of-P_d} holds  true for $d \leq 26$.

\smallbreak
Another open problem is the following. It is not known (for $n>2$) whether the motive of the stack $\CCoh^d(\BA^n)_0$ of $0$-dimensional coherent sheaves of length $d$, supported at the origin, is a rational function in $\BL$ (cf.~\cite[Open Problem 5.9]{Fantechi-Ricolfi-motivic}). By \cite[Thm.~1.7]{Huang-Jiang}, this statement is equivalent to the polynomiality in $\BL$ of the motive of $\Hilb^d(\BA^n)_0$. We therefore ask the following.

\begin{question}\label{question-polynomiality}
For which $n$ and $d$ is $[\Hilb^d(\BA^n)_0]$ a polynomial in $\BL$?
\end{question}

It is worth mentioning that the recent work \cite{farkas2024irrational} proves the existence of irrational components on some Hilbert scheme of points $\Hilb^d(\BA^n)$. It would be interesting to explore how this fact is linked to \Cref{question-polynomiality}. A related (and in some sense harder) open question is the following.

\begin{question}\label{conj:L-subalgebra}
For which values of $d$ does the polynomial $\mathsf P_d(t)$ lie in the subalgebra $\BZ[\BL,t]\subset K_0(\Var_{\BC})[t]$? And if $\mathsf P_d(t) \in \BZ[\BL,t]$, is it true that $\Hilb^d(\BA^n)$ has no irrational components for every $n$?
\end{question}

We discuss in \Cref{subsec:indiani} a possible strategy to answer the following question.
\begin{question} \label{ques:howmanydata}
What is the smallest amount of \emph{data} required for computing the motives $[\Hilb^d(\BA^n)_0]$ for every $n\geq 1$ up to a certain $d$?
\end{question}

As a consequence of \Cref{cor:hilbrel} we see that, once we fix an integer $d\geq 1$, the motives $[\Hilb^d(\BA^n)_0]$ for $n\geq d$ can be computed from the first $d-1$ motives $[\Hilb^d(\BA^1)_0]$, $\dots$, $[\Hilb^d(\BA^{d-1})_0]$. As a consequence, the answer to \Cref{ques:howmanydata} is bounded above by $d(d-1)/2$, where `data' is taken to mean precisely the motives of  $[\Hilb^d(\BA^i)_0]$ for $i$ in a suitable range. By the inductive nature of this reasoning, we can in fact rephrase this in a better way, speaking of the \emph{number of new data} required for computing the motives $[\Hilb^d(\BA^n)_0]$ for $n\geq d$, where `new' refers to the assumption of having already computed $[\Hilb^{e}(\BA^n)_0]$ for $1 \leq e < d$. Again, thanks to \Cref{cor:hilbrel}, this number is bounded above by $d-1$.\footnote{In fact, \Cref{prop:easyY} shows that only $d-8$ new data are needed.} Our expectation is that this bound can be approximately halved.

\begin{conjecture} \label{conj:halfdata}
For $d>3$, the \emph{number of new data} required for computing the polynomial $\mathsf P_d(t)$, assuming $\mathsf P_{e}(t)$ is known for $1 \leq e <d$, is bounded above by $\left\lceil\frac{d-1}{2}\right\rceil$.
\end{conjecture}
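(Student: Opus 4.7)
The plan is to interpret the target count $\lceil (d-1)/2 \rceil$ via a pairing argument. By \Cref{prop:coefficient-of-p_d} and the inversion formula \Cref{lemma:inversion}, determining $\mathsf{P}_d(t)$ is equivalent to computing the $d-1$ classes $[Y^k_{k,d}] \in K_0(\Var_{\BC})$ for $k = 1, \ldots, d-1$, modulo inductive data of the form $\{[\Hilb^e(\BA^n)_0]\}_{e<d}$. Since $\lceil (d-1)/2 \rceil$ is exactly the number of orbits under the involution $k \leftrightarrow d-k$ on $\{1, \ldots, d-1\}$, it is natural to look for a motivic relation expressing each $[Y^k_{k,d}]$ in terms of $[Y^{d-k}_{d-k,d}]$ and the inductive data.

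Concretely, I would first refine the Hilbert--Samuel stratification $Y^k_{k,d} = \coprod_{\bh} H^k_{\bh}$ over admissible functions $\bh = (1, k, h_2, \ldots)$ with $\sum_i h_i = d$, and analyse stratum by stratum whether $H^k_{\bh}$ pairs with a corresponding stratum in $Y^{d-k}_{d-k,d}$. The candidate pairing would come from a motivic enhancement of Macaulay's inverse systems, exploiting the apolarity action already used in the proof of \Cref{MAIN-THM-QUOT}. On the Gorenstein locus Macaulay duality provides a bijection between strata that respects quotients by the $\GL$-actions; refining by socle type and socle degree should extend this duality to the non-Gorenstein locus.

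Complementarily, I would extend the boundary analysis behind \Cref{prop:easyY} to compute further values of $[Y^k_{k,d}]$ that are \emph{free}, i.e.\ expressible purely from inductive data. The seven easy cases already handled there correspond to the boundary regions near $k=1$ and $k = d-1$; one could attempt to add, say, $k=3$ and $k = d-6$ by a careful stratum-by-stratum analysis. Combined with the pairing, even a few additional free values would push the number of genuinely new data down to the conjectured $\lceil (d-1)/2 \rceil$, with the possibility that several `paired' $[Y^k_{k,d}]$ end up being fully determined by inductive data on both sides.

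The main obstacle is the construction of the motivic pairing. Macaulay duality is a bijection of closed points and not a morphism of schemes, and its interaction with the Hilbert--Samuel stratification is best understood on the Gorenstein locus. Producing identifications of motivic classes in the non-Gorenstein regime, especially at intermediate embedding dimensions $k \approx d/2$ where the combinatorics of admissible Hilbert--Samuel functions is richest and the involution has a fixed point, is delicate. A successful proof may have to proceed via an explicit stratification of $Y^k_{k,d}$ into locally closed pieces carrying group actions that identify paired strata as Zariski locally trivial fibrations with matching fibers; the verification of \Cref{conj:halfdata} for $d \leq 26$ by computer algebra provides a natural testing ground for such a candidate identification.
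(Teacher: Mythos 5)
Note first that the statement you are addressing is a \emph{conjecture}: the paper offers no proof, only a strategy and partial evidence (Section \ref{subsec:indiani} and \Cref{example:definition C}). So what is at stake is a comparison of strategies, and yours is genuinely different from the one in the paper.

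The paper's route mirrors Govindarajan's combinatorial proof of the numerical bound (\Cref{thm:indiani}). It further refines the embedding-dimension stratification $Y^k_{k,k+e+1}=\coprod_i C^k_{e,i}$ using the invariant $\iota(I)$ (minimal number of linear forms spanning degree two), and the expectation is that each $[C^k_{e,i}]$ is given by a universal expression in finitely many auxiliary classes $\Gamma_{e,i}$ which are independent of $k$ (verified for $e\leq 2$ in \Cref{example:definition C}). Counting the genuinely new $\Gamma$'s then tracks Govindarajan's counting argument, which is where $\lceil (d-1)/2\rceil$ really comes from. Your route instead starts from the numerological observation that $\lceil (d-1)/2\rceil$ equals the number of orbits of the involution $k\mapsto d-k$ on $\{1,\ldots,d-1\}$, and then posits a motivic pairing $[Y^k_{k,d}]\leftrightarrow[Y^{d-k}_{d-k,d}]$ implemented by Macaulay duality.

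That last step is where your proposal has a concrete gap. Apolarity does not induce any pairing swapping embedding dimension $k$ with $d-k$. On the Gorenstein locus, Macaulay duality is a correspondence between a single algebra $A=R/\langle f\rangle^\perp$ and its cyclic inverse system; the Hilbert--Samuel symmetry it produces is $h_i=h_{s-i}$ (with $s$ the socle degree), which \emph{preserves} the embedding dimension $h_1$ rather than sending it to $d-h_1$. There is no second algebra of complementary embedding dimension being produced, and for non-Gorenstein ideals the situation does not improve. The orbit count is a coincidence of arithmetic, not a shadow of a geometric duality, so the central pillar of your plan would need an entirely new mechanism. Your complementary suggestion of extending the boundary computations of \Cref{prop:easyY} is sound but insufficient on its own: as observed in \Cref{rmk:d-8-data}, those techniques give $d-8$ new data, which only beats $\lceil (d-1)/2\rceil$ for $d\leq 16$. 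The paper's $\Gamma$-class strategy, while also unproven, at least has Govindarajan's theorem as a guide to what the missing universal structure should look like, whereas the involutive pairing you are after is not suggested by any structure currently visible in the problem.
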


More explicitly, `new data' refers precisely to the motives   $[\Hilb^d(\BA^i)]$, for $i=1,\ldots, \left\lceil\frac{d-1}{2}\right\rceil$. One should see  \Cref{conj:halfdata} as the motivic refinement of the main result of  \cite{GOVINDARAJAN2013600}, which says that the number of \emph{new data} needed to compute the number of $(n-1)$-dimensional partitions of size $d$ is bounded above by $\left\lceil\frac{d-1}{2}\right\rceil$.\footnote{Recall that the number of higher dimensional partitions are precisely the topological Euler characteristics $\chi(\Hilb^d(\BA^n))$ by a classical torus localisation argument.}

Finally, we propose in \Cref{sec:higher-dim-partitions} a qualitative description for the discrepancy between the two generating functions
\[
\chi\mathsf{Hilb}_{n,0}(t) = \prod_{d>0}\,\left(1-t^d\right)^{-\chi(\Omega^n_d)},\qquad  \prod_{d>0}\,\left(1-t^d\right)^{-\binom{d+n-3}{n-2}},
\]
where the second one was MacMahon's proposed product formula for the generating function $\Pi_n(t)$ of $(n-1)$-dimensional partitions. 

\begin{conjecture}[{\Cref{conj:erroreMac}}]\label{conj:chi-omega-intro}
For every $d\ge 6 $ there exists an \emph{irreducible} polynomial $r_d(t)\in \BQ[t]$  of degree $d-6$ such that
\[
\binom{d+n-3}{n-2}-\chi(\Omega^n_d) =\binom{n}{4}r_d(n)
\]
for all $n\ge 1$. 
\end{conjecture}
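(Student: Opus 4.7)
My strategy splits the conjecture into a structural/polynomial part, which should follow from the motivic structure theorem, and an irreducibility part, which I expect to be the true obstacle.

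First, I would show that for fixed $d$ the function $n \mapsto \chi(\Omega^n_d)$ is a polynomial in $n$ of degree $d-1$. Specialising \Cref{MAIN-THEOREM-1} at $\BL = 1$ gives $\chi\mathsf Z_d(t) = (1-t)^{-d}\chi\mathsf P_d(t)$ with $\chi\mathsf P_d(t) \in \BZ[t]$ of degree at most $d-2$ and $\chi\mathsf P_d(1) = 1$ by \Cref{Cor:P(1)=1}. Extracting the coefficient of $t^n$ expresses $\chi(\Hilb^d(\BA^{n+1})_0)$ as a fixed integer combination of binomials $\binom{n-i+d-1}{d-1}$, manifestly a polynomial in $n$ of degree $d-1$ with leading coefficient $1/(d-1)!$. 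Applying the plethystic logarithm to \eqref{omega-classes-intro} then writes $\chi(\Omega^n_d)$ as a universal $\BQ$-combination of products $\chi(\Hilb^{d_1}(\BA^n)_0)\cdots\chi(\Hilb^{d_k}(\BA^n)_0)$ with $\sum d_i \leq d$, which is in turn a $\BQ$-polynomial in $n$ of degree at most $d-1$. Since $\chi(\Omega^n_d) \in \BZ$ for every integer $n \geq 1$, this polynomial is integer-valued, and I would verify that the cancellations force the leading term to agree with MacMahon's leading term $n^{d-1}/(d-1)!$.

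Second, I would verify that the discrepancy $D_d(n) := \binom{d+n-3}{n-2} - \chi(\Omega^n_d)$ vanishes at $n = 0, 1, 2, 3$, which is exactly the range in which MacMahon's product formula is correct: trivially for $n \in \{0,1\}$ (the Hilbert schemes being a point or empty), by Göttsche's formula for $n = 2$ giving $\chi(\Omega^2_d) = 1$, and by MacMahon's classical plane-partition identity for $n = 3$ giving $\chi(\Omega^3_d) = d$. Hence $\binom{n}{4}$ divides $D_d(n)$ in $\BQ[n]$. For the precise degree, I would invoke the stabilisation theorem (\Cref{MAIN-THM-STAB}), which at $\BL = 1$ yields $\chi(\Hilb^d(\BA^n)_0) = \binom{n}{d-1} + O(n^{d-2})$, and trace this matching asymptotic through the plethystic log to confirm that the leading coefficient of $\chi(\Omega^n_d)$ agrees with MacMahon up to (but not including) degree $d-2$; the polynomial $r_d(n) = D_d(n)/\binom{n}{4}$ then has degree exactly $d-6$.

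The main obstacle is irreducibility of $r_d(t)$, which is the real arithmetic content of \Cref{conj:chi-omega-intro}. For $d \leq 8$ one can compute $r_d$ from the closed formulas for $\mathsf P_{\leq 8}(t)$ in \Cref{app:Pd-explicit} and check irreducibility by brute factorisation (the \emph{partial confirmation} mentioned in the abstract). Beyond this range I do not see any structural mechanism in the motivic approach — no visible Galois symmetry, reciprocity, or reduction-mod-$p$ argument — that would force irreducibility uniformly in $d$. I would expect a genuine proof to require an arithmetic interpretation of the roots of $r_d$, perhaps via an asymptotic expansion of the counts of higher-dimensional partitions in the spirit of \cite{GOVINDARAJAN2013600}, and this is where the serious difficulty would have to be concentrated.
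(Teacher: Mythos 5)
The statement is a \emph{conjecture} in the paper (Conjecture~\ref{conj:erroreMac}): the paper's own verification is purely computational, for $d\le 26$ only (Proposition~\ref{thm:conjfino26}), and you correctly identify irreducibility of $r_d$ as the genuine unresolved content. Your outline for the structural half (polynomiality, divisibility by $\binom{n}{4}$, degree count) would therefore go beyond the paper if completed, but three of its steps are not sound as written.

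First, the vanishing of $D_d(n)$ at $n=0$ is not a triviality: $\chi(\Omega^n_d)$ is only defined for $n\ge 1$, and what you need is that the interpolating polynomial $P_d(n)$ satisfies $P_d(0)=0$. This does hold — it follows from $\chi\mathsf{Hilb}_{0,0}(t)=1+t$ together with a short M\"obius computation showing $P_d(0)=0$ for every $d\ge 3$ — but it is a calculation, not an immediate consequence of ``the Hilbert schemes being a point or empty.'' Second, ``specialising the stabilisation theorem at $\BL=1$'' does not parse: Theorem~\ref{thm: stab motives} is a congruence modulo $\BL^{n-d+2}$, and the map $\BL\mapsto 1$ annihilates this truncation, so no information survives from the statement of the theorem. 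The asymptotic $\chi(\Hilb^d(\BA^n)_0)=\binom{n}{d-1}+O(n^{d-2})$ is true, but it must be read off the $Y$-stratum identity $\chi(\Hilb^d(\BA^n)_0)=\sum_{k=1}^{d-1}\binom{n}{k}\chi(Y^k_{k,d})$ coming from \eqref{eqn:strata-emb-dim} and Theorem~\ref{thm:motive-Y-stratum}, not from the stabilisation statement. Third — and most seriously — your argument only gives $\deg D_d\le d-2$, from which you cannot conclude $\deg r_d=d-6$. The conjecture asserts equality, i.e.\ that the $n^{d-2}$ coefficients of $\chi(\Omega^n_d)$ and of $\binom{d+n-3}{n-2}$ genuinely differ; nothing in your plan forces this lower bound, and it is a new nontrivial assertion that needs its own argument (indeed, for $d\le 5$ the two polynomials agree identically, so the lower bound cannot be purely formal).
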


We prove the conjecture for $d\leq 26$ via computer software and the database of partitions \cite{ThePartitionsProjectWebsite,GOVINDARAJAN2013600,Indiani-asintotici}.

\subsection*{Acknowledgements}
We would like to thank Eugenio Bellini, Giorgio Gubbiotti, Tony Iarrobino,  Joachim Jelisiejew, Alexander Kuznetsov,  Paolo Lella, Danilo Lewanski, Alessio Sammartano, Israel Vainsencher, Tanguy Vernet, for useful discussions.  S.M. is supported by the Chair or Arithmetic Geometry, EPFL.  R.M. is partially supported by the PRIN project 2022L34E7W ``Moduli Spaces and Birational Geometry''. A.R. is partially supported by the PRIN project 2022BTA242 ``Geometry of algebraic structures: moduli, invariants, deformations''. All the authors are members of the GNSAGA - INdAM.

%%%%%%%%%%%%%%%%%%%%%%%%%%%%%%%%%%%%%%%%%%%%%%
%%%%%%%%%%%%%%%%%%%%%%%%%%%%%%%%%%%%%%%%%%%%%%
\section{Background material}

%%%%%%%%%%%%%%%%%%%%%%%%%%%%%%%%%%%%%%%%%%%%%%
\subsection{Grothendieck ring of varieties}\label{sec:K0(Var)}

The Grothendieck ring of complex varieties $K_0(\Var_{\BC})$ is the free abelian group generated by isomorphism classes $[X]$ of $\BC$-varieties, modulo the \emph{scissor relations}, namely the relations 
\[
[X] = [Y] + [X \setminus Y]
\]
whenever $Y \subset X$ is a closed subvariety. The group structure agrees, on generators, with the disjoint union of varieties and has neutral element $0 = [\emptyset]$. The fibre product defines a ring structure on $K_0(\Var_{\BC})$, with neutral element $1 = [\Spec \BC]$. Every constructible subset $Z \subset X$ of a variety $X$ has a well-defined motivic class $[Z]$, independent upon the decomposition of $Z$ into locally closed subsets.  One could also perform a similar construction with schemes (or algebraic spaces) over $\BC$ instead of varieties, but the resulting Grothendieck rings would come out isomorphic to $K_0(\Var_\BC)$. In particular, the motive of a scheme agrees with the motive of its reduction.

The classes $[X]$ of honest $\BC$-varieties in $K_0(\Var_{\BC})$ are called \emph{effective}. A key example is the \emph{Lefschetz motive}
\[
\BL = [\BA^1]\in K_0(\Var_{\BC}).
\]

The ring $K_0(\Var_\BC)$ has the following universal property. Suppose $S$ is a ring and $w(-)$ is an $S$-valued invariant of algebraic varieties, such that 
\begin{itemize}
\item [$\circ$] $w(\pt)=1$,
\item [$\circ$] $w(\emptyset) = 0$,
\item [$\circ$] $w(X\times Y) = w(X)w(Y)$ for every two varieties $X$ and $Y$, and
\item [$\circ$] $w(X) = w(Y) + w(X\setminus Y)$ for every variety $X$ and closed subvariety $Y \subset X$. 
\end{itemize}  
Then there is precisely one ring homomorphism $w\colon K_0(\Var_\BC) \to S$ sending an effective class $[X]$ to $w(X) \in S$. Ring homomorphisms out of $K_0(\Var_{\BC})$ are called \emph{motivic measures}, or generalised Euler characteristics, or realisations \cite{DenefLoeser1,LooijengaMM}. The key examples are the following. If $Y$ is a complex variety, one can consider
\begin{itemize}
\item [\mylabel{mot-measure-i}{(i)}] The \emph{Hodge characteristic}
\[
\chi_{\mathrm{Hodge}}(Y) = \sum_{i\geq 0}\,(-1)^i [\HH^i_c(Y,\BQ)] \,\in\, K_0(\HS),
\]
where the vector space $\HH^i_c(Y,\BQ)$ is equipped with Deligne's mixed Hodge structure \cite{Deligne-Hodge-II,Deligne-Hodge-III}.
\item [\mylabel{mot-measure-ii}{(ii)}] The \emph{Hodge--Deligne polynomial}
\[
\mathsf E(Y;u,v)=\sum_{p,q}\,h^{p,q}(\HH^{p+q}_c(Y,\BQ))(-u)^p (-v)^q \,\in\,\BZ[u,v],
\]
where $h^{p,q}(\HH^i_c(Y,\BQ))$ is the dimension of the $(p,q)$-component of the mixed Hodge structure $\HH^i_c(Y,\BQ)$.
\item [\mylabel{mot-measure-iii}{(iii)}] The \emph{weight polynomial} ($u,v \mapsto z^{1/2}$)
\[
\mathsf w(Y,z^{1/2}) = \sum_{p,q}\,h^{p,q}(\HH^{p+q}_c(Y,\BQ))(-z^{1/2})^{p+q}\,\in\,\BZ[z^{1/2}],
\]
which coincides with the signed \emph{Poincar\'e polynomial}
\[
\mathsf p(Y,-z^{1/2}) = \sum_{i\geq 0}\,\dim_{\BQ}\HH^i(Y,\BQ) (-z^{1/2})^{i}
\]
if $Y$ is smooth and projective.
\item [\mylabel{mot-measure-iv}{(iv)}] The \emph{compactly supported Euler characteristic} ($z^{1/2} \mapsto 1$)
\[
\chi_c(Y) = \mathsf w(Y,1) = \sum_{i\geq 0} \,(-1)^i \dim_{\BQ} \HH^i_c(Y,\BQ)\,\in\,\BZ,
\]
which can be shown to equal the topological Euler characteristic $\chi(Y)$ \cite[p.~95 and pp.~141--142]{fulton_toric}.
\end{itemize}
\begin{remark}\label{rmk:pre-lambda-maps}
It is possible to obtain homomorphisms
\[
K_0(\Var_{\BC}) \to K_0(\HS) \to \BZ[u,v] \to \BZ[z^{1/2}] \to \BZ
\]
with slightly different specialisations, but with our choice we ensured that all specialisations are homomorphisms of pre-$\lambda$-rings, just as in \cite[Sec.~1.4]{DavisonR}.
Note that these homomorphisms, the way they have been defined, send
\[
\BL \mapsto [\HH^2_c(\BA^1,\BQ)]\mapsto uv \mapsto z\mapsto 1.
\]
\end{remark}
The key features of $K_0(\Var_{\BC})$ to keep in mind, which will be used constantly throughout, are the following:
\begin{itemize}
    \item [\mylabel{key-a}{(a)}] If $X \to B$ is a bijective morphism, then $[X] = [B]$ in $K_0(\Var_{\BC})$.
    \item [\mylabel{key-b}{(b)}] If $X \to B$ is a Zariski locally trivial fibration with fibre $F$, then $[X] = [B][F]$ in $K_0(\Var_{\BC})$.
\end{itemize}
As a special case of \ref{key-a}, consider a scheme $X$ along with locally closed subschemes $Z_1,\ldots, Z_r$ such that the disjoint union of the locally closed immersions $Z_1\amalg \cdots\amalg Z_r \to X$ is a bijective morphism (in this case we say that $Z_1,\ldots, Z_r$ form a \emph{stratification} of $X$, and we write $X = \coprod_i Z_i$ with a slight abuse of notation). One then has an identity $[X] = \sum_{1\leq i\leq r}[Z_i]$ in $K_0(\Var_{\BC})$.

\begin{example}
The class of the Grassmannian $\Gr(k,n)$ of $k$-planes in $\BC^n$ is
\begin{equation}\label{eqn:motive-grassmannian}
[\Gr(k,n)] = \frac{[n]_{\BL}!}{[k]_{\BL}![n-k]_{\BL}!}, \qquad [a]_{\BL}! = \prod_{k=1}^a \,\left(\BL^k-1\right).
\end{equation}
Throughout we adopt the conventions $[\Gr(k,n)] = 0 = [\BP^e]$ if $k>n$ or $e<0$.
\end{example}
Every complex variety $X$ has an associated \emph{zeta function} $\zeta_X(t)$, introduced by Kapranov \cite{Kapranov_rational_zeta}. It is defined as
\[
\zeta_X(t) = \sum_{d \geq 0}\,[\Sym^d(X)]t^d \in 1+tK_0(\Var_{\BC})\llbracket t\rrbracket.
\]
It satisfies, for every integer $n \geq 0$, the identities
\[
\zeta_{\BA^n \times X}(t) = \zeta_X(\BL^n t),\qquad \zeta_{X \amalg Y}(t) = \zeta_X(t) \zeta_Y(t),
\]
from which one deduces
\begin{align*}
\zeta_{\BA^n}(t) = \frac{1}{1-\BL^nt}, \qquad \zeta_{\BP^n}(t) = \prod_{i=0}^n\frac{1}{1-\BL^it}.
\end{align*}
%%%%%%%%%%%%%%%%%%%%%%%%%%%%%%%%%%%%%%%%%%%%%%
%%%%%%%%%%%%%%%%%%%%%%%%%%%%%%%%%%%%%%%%%%%%%%
\subsection{Punctual Hilbert schemes}
We exploit the first part of this subsection to set up some notation that will be used throughout. First of all, we set $R=\BC[x_1,\ldots,x_n]$ and we denote by $\Fm = (x_1,\ldots,x_n) \subset R$ the maximal ideal of the origin $0 \in \BA^n = \Spec R$.

For each $d \in \BZ_{\geq 0}$, the \emph{punctual Hilbert scheme} $\Hilb^d (\BA^n)_0$ is the fibre of the \emph{Hilbert--Chow morphism}
\[
\begin{tikzcd}
\Hilb^d (\BA^n) \arrow{r} & \Sym^d(\BA^n)
\end{tikzcd}
\]
over the point $d\cdot 0 \in \Sym^d(\BA^n)$.
It is a projective closed subscheme of $\Hilb^d(\BA^n)$, isomorphic to $\Hilb^d(\Spec \widehat{\OO}_{\BA^n,0})$, see e.g.~\cite{Fantechi-Ricolfi-motivic}. If $Z \subset \BA^n$ is a $0$-dimensional closed subscheme of length $d$, defined by an ideal $I \subset R$, we will denote by $[Z]$ or by $[I]$ the corresponding point of $\Hilb^d(\BA^n)$. A subscheme $Z=\Spec R/I \subset \BA^n$ corresponding to a point $[Z] \in \Hilb^d (\BA^n)_0$ will be called a \emph{fat point}. 
Algebraically, it corresponds to a local Artinian $\BC$-algebra $(A,\Fm_A)$ with residue field $\BC$, namely
\begin{equation}\label{eqn:local-artin}
(A,\Fm_A) = (R/I,\Fm/I).
\end{equation}

\subsection{Hilbert--Samuel functions}\label{sec:HS-functions}
We recall now the notion of the Hilbert--Samuel function. It is a classical invariant attached to  graded modules. We refer the reader to \cite[Sec.~5.1]{EISENBUD} and \cite[Lect.~20]{HARRIS} for more details.

We endow the polynomial ring $R=\BC[x_1,\ldots,x_n]$ with the standard grading $\deg(x_k)=1$, for $k=1,\ldots,n$. The $i$-th graded piece of $R$ will be denoted $R_i$.

\begin{definition}
Let $M=\bigoplus_{i\in\BZ} M_i$ be a finite graded $R$-module. The \emph{Hilbert--Samuel function} $\bh_M$ attached to $M$ is the function
\[
\begin{tikzcd}[row sep = tiny]
\BZ\arrow[r,"\bh_M"]& \BN\\
        i \arrow[r,mapsto] & \dim_{\BC} M_i. 
\end{tikzcd}
\]
Let $I\subset R$ be the ideal of a fat point. Form the local Artinian $\BC$-algebra $(A,\Fm_A)$ as in \eqref{eqn:local-artin}. The \emph{Hilbert--Samuel function} of $A$ (that we shall denote $\bh_I$ throughout) is defined to be the Hilbert--Samuel function of the graded $R$-module
\begin{equation}\label{eqn:grading-A-module}
\mathsf{gr}_{\Fm_A}(A) =\bigoplus_{i\geq 0} \mathfrak m_A^{i}/\mathfrak m_A^{i+1}.
\end{equation}
\end{definition}
 
We record in the next remark a few useful observations, that we exploit to set up some more notation. 

\begin{remark}\label{rmks-on-HS}
Suppose $A=R/I$ corresponds to a fat point $Z \subset \BA^n$. The function $\bh_I$ has the following properties:
\begin{itemize}
\item [\mylabel{HF1}{(i)}] It vanishes for almost all values (i.e.~it has finite support), and satisfies $\bh_{I}(0)=1$. Therefore we represent $\bh_I$ as a finite string of integers, namely $\bh_I = (1,h_1,\ldots,h_t)$, where $t$ is the largest index such that $h_t \neq 0$.
\item [\mylabel{HF2}{(ii)}] We have $\bh_{I}(1) = \dim_{\BC} \Fm_A/\Fm_A^2 = \embdim(Z)$, where $\embdim(Z)$ denotes the \emph{embedding dimension} of $Z$, namely the smallest dimension of a scheme containing $Z$ and smooth at the unique point of $Z$.  
\item [\mylabel{HF3}{(iii)}] The length $\chi(\OO_Z)=\dim_{\BC}A$ can be computed as the finite sum $\lvert \bh \rvert = 1+\sum_{i > 0} h_i$.
\item [\mylabel{HF4}{(iv)}] Let $\In I \subset R$ be the initial ideal of $I$ (i.e.~the ideal generated by the initial forms of the polynomials in $I$, cf.~\cite[Sec.~5.1]{EISENBUD}). Then the isomorphism of graded $R$-algebras $\mathsf{gr}_{\Fm_A}(A)\cong R/\In I$ shows that $\bh_{\In I} = \bh_I$.
\end{itemize}
\end{remark}

\subsection{Hilbert--Samuel strata}\label{subsec:HS-strata}
We are ready to introduce the \emph{Hilbert--Samuel strata}. We refer the reader to \cite{multigraded} for more details.

\begin{definition}
Given a function $\bh\colon\BZ\rightarrow \BN$ with finite support, and a nonnegative integer $n$, the \emph{Hilbert--Samuel stratum} $H_{\bh}^n$  is the (possibly empty) locally closed subset
\[
H_{\bh}^n=\Set{[I]\in \Hilb^{\left|\bh\right|} (\BA^n)_0 | \bh_I=\bh}\subset \Hilb^{\lvert \bh \rvert}(\BA^n)_0.
\]
We endow it with the reduced induced subscheme structure.
\end{definition}

The \emph{Hilbert--Samuel stratification}
\[
\Hilb^d (\BA^n)_0 = \coprod_{\lvert \bh \rvert = d} H_{\bh}^n
\]
will be our main tool in the computation of the motive of $\Hilb^d (\BA^n)_0$.

The standard scaling action $\BG_m\times \BA^n\rightarrow \BA^n$ lifts to the Hilbert scheme $\Hilb^{d}(\BA^n)_0$. 
Under this action, the Hilbert--Samuel strata are $\BG_m$-invariant locally closed subsets of the punctual Hilbert scheme. Hence, the action of the torus $\BG_m$ restricts to an action on $H_{\bh}^n$, for all $\bh\colon \BZ\rightarrow \BN$. Set theoretically, the $\BG_m$-fixed locus of the action on $H_{\bh}^n$ consists of \emph{homogeneous} ideals (with fixed Hilbert--Samuel function $\bh$). This locus defines a closed subset \cite[Prop.~1.5]{multigraded}
\[
\OH_{\bh}^n \subset H_{\bh}^n,
\]
which we endow with the reduced induced subscheme structure. 

\begin{prop}[{\cite[p.~773]{8POINTS}}]
Fix $\bh = (1, h_1, \ldots , h_t)$ and $n>0$. There is a morphism
\begin{equation}\label{initial-ideal-map}
    \begin{tikzcd}
        H_{\bh}^n \arrow[r,"\pi_{\bh}^n"] &\OH_{\bh}^n
    \end{tikzcd}
    \end{equation}
which, on closed points, sends an ideal to its initial ideal.
\end{prop}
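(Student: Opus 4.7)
The plan is to construct $\pi_{\bh}^n$ as the restriction to $H_{\bh}^n$ of a globally defined ``Gröbner limit'' morphism $\lambda\colon \Hilb^d(\BA^n)_0\to \Hilb^d(\BA^n)_0$ that, on closed points, sends $[I]$ to $[\In I]$. The existence of such a $\lambda$ will follow from Gröbner degeneration performed in families, using the projectivity of $\Hilb^d(\BA^n)_0$ and the $\BG_m$-scaling action.

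First I would exploit the scaling $\BG_m$-action on $\Hilb^d(\BA^n)_0$ inherited from $\BA^n$. Starting from the universal ideal sheaf $\OI\subset \OO_{\Hilb^d(\BA^n)_0}\otimes R$, I pull back along the scaling $R[t,t^{-1}]$-automorphism $x_i\mapsto t x_i$ to obtain a family of ideals over $\BG_m\times \Hilb^d(\BA^n)_0$, classified by the action morphism $\mu\colon \BG_m\times \Hilb^d(\BA^n)_0\to \Hilb^d(\BA^n)_0$. By the Gröbner degeneration construction (saturation of the pulled-back ideal inside $R[t]\otimes\OO_{\Hilb^d(\BA^n)_0}$), this family admits a canonical flat extension across $\{t=0\}$ whose fibre at $t=0$ over a point $[I]$ is precisely $[\In I]$. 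By the universal property of the Hilbert scheme, this flat family is classified by a morphism $\widetilde{\mu}\colon \BA^1\times\Hilb^d(\BA^n)_0\to \Hilb^d(\BA^n)_0$ extending $\mu$, and I set $\lambda=\widetilde{\mu}|_{\{0\}\times\Hilb^d(\BA^n)_0}$.

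Next I would conclude by restriction. By \ref{HF4} of \Cref{rmks-on-HS}, the Hilbert--Samuel function is preserved under passage to the initial ideal, so $\lambda$ maps $H_{\bh}^n$ set-theoretically into the subset of homogeneous ideals with Hilbert--Samuel function $\bh$. Moreover, since $\lambda$ lands in the $\BG_m$-fixed locus (limits of orbits are automatically fixed points), its restriction to $H_{\bh}^n$ has image contained in $\OH_{\bh}^n$ as a set. As $H_{\bh}^n$ is reduced by construction and $\OH_{\bh}^n$ carries the reduced induced subscheme structure, this set-theoretic factorisation promotes uniquely to a morphism of schemes $\pi_{\bh}^n\colon H_{\bh}^n\to \OH_{\bh}^n$, which by construction sends $[I]$ to $[\In I]$ on closed points.

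The main step requiring care is the flat-extension statement identifying the fibre at $t=0$ with $\In I$ uniformly in $I$; this is the content of Gröbner degeneration in families, and once granted, the remaining arguments (Hilbert--Samuel invariance and descent to reduced structures) are formal.
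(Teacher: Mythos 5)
Your plan is to build a \emph{global} morphism $\lambda\colon \Hilb^d(\BA^n)_0 \to \Hilb^d(\BA^n)_0$ with $\lambda([I])=[\In I]$ on closed points, and then restrict it to $H_{\bh}^n$. This cannot work: the set-theoretic map $[I]\mapsto[\In I]$ is not a morphism on the whole punctual Hilbert scheme — it is not even Zariski-continuous, because the initial ideal jumps as $[I]$ moves across a Hilbert--Samuel stratum boundary. Concretely, in $\Hilb^3(\BA^2)_0$ consider the one-parameter family $I_a=(y^2,\,xy,\,x^2-ay)$, $a\in\BA^1$. One checks $\In I_a=(y,x^3)$ for all $a\neq 0$, while $\In I_0=\Fm^2$; so $a\mapsto[\In I_a]$ is constant on a dense open and jumps to a different point at $a=0$, which is impossible for a morphism $\BA^1\to\Hilb^3(\BA^2)_0$. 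Equivalently, the flat extension $\widetilde\mu$ of the $\BG_m$-action across $t=0$ that you invoke does not exist: if it did, restricting to the slice $\BA^1\times\{[I]\}$ and using separatedness would force $\widetilde\mu(0,[I])=\lim_{t} t\cdot[I]$, and the example above shows this limit is not a morphism of $[I]$ in either orientation of the torus. The ``saturation'' you propose inside $R[t]\otimes\OO_{\Hilb}$ produces an ideal sheaf over $\BA^1\times\Hilb^d(\BA^n)_0$ whose fibre length jumps upward precisely at points $(0,[I])$ where $[I]$ sits on a deeper stratum, so it is not flat there — this is exactly where your ``main step requiring care'' breaks.

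The correct order of operations is the opposite of yours: restrict to $H_{\bh}^n$ \emph{first}, and only then perform the degeneration. The reason this repairs the argument is the constancy of the Hilbert--Samuel function along $H_{\bh}^n$. Starting from the universal ideal $\OI\subset\OO_{H_{\bh}^n}[x_1,\ldots,x_n]$, one forms the fibrewise associated-graded ideal $\In\OI$ (equivalently, the special fibre of the Rees/Gr\"obner family over $H_{\bh}^n\times\BA^1$). Because every fibre has the same Hilbert--Samuel function $\bh$, the graded pieces of $\OO/\In\OI$ have locally constant rank, so $\In\OI$ is a coherent ideal sheaf defining a family that is flat over $H_{\bh}^n$ (constant length $d$ over a reduced base). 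By the universal property of the Hilbert scheme this classifies a morphism $H_{\bh}^n\to\Hilb^d(\BA^n)_0$ sending $[I]\mapsto[\In I]$, which lands in $\OH_{\bh}^n$ by \Cref{rmks-on-HS}\ref{HF4} and $\BG_m$-fixedness, exactly as you argue at the end. Your final two paragraphs — Hilbert--Samuel invariance and descent to reduced induced structures — are fine; it is the attempted globalisation to all of $\Hilb^d(\BA^n)_0$ in the first step that is the gap.
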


The next lemma gives sufficient conditions on $\bh$ ensuring either that $\pi^n_{\bh}$ has affine spaces as fibres or that it is a vector bundle.
 
\begin{lemma}[{\cite[Lemmas 3.2, 3.3]{Hilb_11}}]\label{lemma:tech1}
Fix $\bh = (1, h_1, \ldots , h_t)$ and assume that
\[
h_i = \bh_{R}(i), \qquad i = 1, \ldots, t - 3,
\]
Then the fibres of $\pi_{\bh}^{h_1}\colon H_{\bh} ^{h_1}\to \OH_{\bh}^{h_1}$ are affine spaces. 
Moreover, if in addition one has $h_{t-2} = \bh_{R}(t-2)$, then $\pi_{\bh}^{h_1}$ is a vector bundle of rank 
\[
\rk H_{\bh}^{h_1} = h_t \left(  \bh_{R}(t-1) - h_{t-1}\right).
\]
\end{lemma}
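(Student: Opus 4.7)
The plan is to analyse the fibres of $\pi_{\bh}^{h_1}$ over a fixed point, and then upgrade the fibrewise picture to a bundle structure. Throughout, write $R = \BC[x_1,\ldots,x_{h_1}]$ and fix a closed point $[J]\in\OH_{\bh}^{h_1}$, so $J\subset R$ is a homogeneous ideal with $\bh_J=\bh$. First I would describe the fibre $(\pi_{\bh}^{h_1})^{-1}([J])=\{[I]:\In I = J\}$ via standard lifts in the spirit of Iarrobino: each $I$ in the fibre is recovered from a choice of ``tails'' $g'\in R_{>\deg g}$, one for each element $g$ of a graded basis of $J$, via $I=(g+g')_g$. Since $R/I$ is concentrated in degrees $\le t$, any tail component in degrees $>t$ is absorbed into the ideal automatically, so the relevant tails live in a finite-dimensional subspace.

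Next I would exploit the hypotheses. The condition $h_i=\bh_R(i)$ for $i=1,\ldots,t-3$ translates into $J_i=0$ in those degrees, so every minimal generator of $J$ sits in $\{t-2,t-1,t\}$. The tail space therefore lies inside $R_{t-1}\oplus R_t$, and the flat-deformation condition $\In I = J$ (equivalently, that $I$ shares Hilbert series with $J$) cuts out, modulo the natural equivalence that absorbs terms already in the forming ideal, an affine subspace of this tail space. This establishes the first assertion.

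For the vector bundle refinement, the extra condition $h_{t-2}=\bh_R(t-2)$ further eliminates degree-$(t-2)$ generators, leaving minimal generators only in degrees $t-1$ and $t$. Degree-$t$ generators admit no effective lift (higher-degree tails are killed in the Artinian quotient), while each of the $\dim J_{t-1}=\bh_R(t-1)-h_{t-1}$ generators in degree $t-1$ is lifted by an element of $R_t$, with two lifts inducing the same ideal iff their difference lies in $J_t$ (which, as one checks, equals the degree-$t$ part of the lifted ideal). The fibre is thus naturally identified with the vector space
\[
\Hom_{\BC}\bigl(J_{t-1},\,R_t/J_t\bigr)\;\cong\;\Hom_{\BC}\bigl(J_{t-1},\,(R/J)_t\bigr),
\]
of dimension $(\bh_R(t-1)-h_{t-1})\cdot h_t$, matching the stated rank.

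The main obstacle will be promoting this pointwise description to a genuine vector bundle structure on $H_{\bh}^{h_1}\to\OH_{\bh}^{h_1}$, rather than just a morphism with constant vector-space fibres. This requires organising the families $\{J_{t-1}\}_{[J]}$ and $\{(R/J)_t\}_{[J]}$ into locally free sheaves on the base, which rests on the graded-piecewise flatness of the universal quotient (itself guaranteed by the constancy of the Hilbert function $\bh$ on the stratum). One would then identify $H_{\bh}^{h_1}$ with the total space of the internal $\Hom$ sheaf between these bundles, completing the proof.
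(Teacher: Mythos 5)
The paper does not prove this lemma itself: it is quoted verbatim from the cited reference, with no argument supplied. Your sketch follows the standard Iarrobino-style description of the fibres of $\pi_\bh$ via graded tails, which is exactly the strategy of that reference, and the vector-bundle computation is correct: once the degree hypothesis forces minimal generators of $J$ into degrees $t-1$ and $t$ only, a lift is a linear map $T\colon J_{t-1}\to R_t$; two maps give the same ideal iff they differ by a map into $J_t$ (since $I\cap\Fm^t=J_t+\Fm^{t+1}$); and \emph{every} such $T$ yields a flat deformation with $\In I=J$, because $x_k(g+T(g))$ differs from $x_kg\in J_t\subset I$ only by $x_kT(g)\in\Fm^{t+1}\subset I$. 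Hence the fibre is $\Hom(J_{t-1},(R/J)_t)$ of dimension $(\bh_R(t-1)-h_{t-1})\,h_t$, as stated. Degree-$t$ (and any higher-degree) generators contribute no moduli, just as you observe.

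The soft spot is the first assertion. You write that the condition $\In I=J$ ``cuts out \dots an affine subspace of this tail space,'' but this is the entire content of the claim, and it is precisely where the hypothesis $h_i=\bh_R(i)$ for $i\le t-3$ does its work. Concretely, when $J_{t-2}\neq 0$ a tail is a pair $\sigma_{t-2}\colon J_{t-2}\to R_{t-1}\oplus R_t$, $\sigma_{t-1}\colon J_{t-1}\to R_t$, and requiring that the span of lifted generators together with $J_t+\Fm^{t+1}$ actually be an ideal with initial ideal $J$ imposes the compatibility
\[
\sigma_{t-1}(x_k g)\ \equiv\ x_k\bigl(\sigma_{t-2}(g)\bigr)_{t-1}\pmod{J_t}\qquad\text{for all }g\in J_{t-2},\ k,
\]
which is \emph{linear} in $(\sigma_{t-2},\sigma_{t-1})$. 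The reason no quadratic (or higher) terms in $\sigma$ appear is exactly the degree restriction: a tail is never itself re-expanded, since every further monomial multiple of a tail already lies in $\Fm^{t+1}\subset I$. Without the hypothesis the conditions become genuinely nonlinear, so this needs to be made explicit rather than asserted. It also explains the two-tier conclusion: the rank of the constraint can jump as $[J]$ moves in $\OH_\bh^{h_1}$, so one only gets ``affine-space fibres'' in general, and a vector bundle precisely when $J_{t-2}=0$ and the constraint is vacuous. Your closing remark about promoting the fibrewise identification to a bundle via locally free families of the graded pieces $J_{t-1}$ and $(R/J)_t$ over the stratum is the right way to finish.
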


The next result relates the varieties $\OH_{\bh}^{h_1}$ and $\OH_{\bh}^n$.

\begin{lemma}[{\cite[Prop.~3.1]{8POINTS}}]\label{lemma:tech2}
Fix $\bh = (1, h_1, \ldots , h_t)$. Then, for every $n\geq h_1$, the map 
\begin{equation}\label{eqn:rho-map}
\begin{tikzcd}
\OH_{\bh}^n\arrow[r,"\rho^n_{\bh}"]&\Gr(n-h_1,R_1),
\end{tikzcd}
\end{equation}
sending a homogeneous ideal $[I]\in\OH_{\bh}^n$ to its linear part $[I_1]\in \Gr(n-h_1,R_1)$ is a Zariski locally trivial fibration with fibre $\OH_{\bh}^{h_1}$.
\end{lemma}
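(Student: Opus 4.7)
The plan is to verify that $\rho^n_{\bh}$ is a well-defined scheme morphism, identify its fibre over a distinguished point with $\OH_{\bh}^{h_1}$, and then bootstrap Zariski local triviality by exploiting $\GL_n$-equivariance together with the homogeneity of the Grassmannian.

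First I would check well-definedness: for $[I] \in \OH_{\bh}^n$, applying \Cref{rmks-on-HS}\ref{HF2} to the homogeneous ideal $I$ gives $\dim_{\BC}(R_1/I_1) = h_1$, where $I_1 := I \cap R_1$, so $\dim I_1 = n - h_1$ and $[I_1] \in \Gr(n-h_1, R_1)$. Algebraicity of the assignment $[I] \mapsto [I_1]$ follows from the moduli description of $\OH_{\bh}^n$ (intersect the universal ideal with the sheaf of linear forms).

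Next, at the distinguished point $[V_0] \in \Gr(n-h_1, R_1)$ with $V_0 = \Span(x_{h_1+1}, \ldots, x_n)$, I would identify the fibre. A homogeneous ideal $I \subset R$ with $I \cap R_1 = V_0$ must contain $x_{h_1+1}, \ldots, x_n$ and hence decomposes uniquely as $I = V_0 \cdot R + J \cdot R$, where $J = I \cap \BC[x_1, \ldots, x_{h_1}]$. The induced graded isomorphism $R/I \simeq \BC[x_1, \ldots, x_{h_1}]/J$ forces $\bh_J = \bh$, identifying the fibre with $\OH_{\bh}^{h_1}$; both sides being reduced, and the assignment $I \leftrightarrow J$ being algebraic, this is a scheme isomorphism.

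Finally, for Zariski local triviality I would invoke the $\GL_n$-action. The group $\GL_n$ acts on $R$ by graded algebra automorphisms preserving Hilbert--Samuel functions (hence on $\OH_{\bh}^n$), and transitively on $\Gr(n-h_1, R_1)$, with $\rho^n_{\bh}$ equivariant. Since $\Gr(n-h_1, R_1) \cong \GL_n/P$ for the stabiliser $P$ of $V_0$, and this quotient is a Zariski locally trivial principal $P$-bundle, there exist explicit sections $\sigma_S \colon U_S \to \GL_n$ on the standard affine cover $\{U_S\}$ of $\Gr(n-h_1, R_1)$ indexed by $(n-h_1)$-subsets $S \subset \{1,\ldots,n\}$, given by block-matrix formulas. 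Pulling back along $\sigma_S$ yields trivialisations
\[
\tau_S \colon U_S \times \OH_{\bh}^{h_1} \simto (\rho^n_{\bh})^{-1}(U_S), \qquad (V, [J]) \mapsto \sigma_S(V) \cdot [V_0 \cdot R + J \cdot R],
\]
with inverse $[I] \mapsto (I_1,\, \sigma_S(I_1)^{-1} \cdot I)$ under the identification of the fibre over $V_0$ with $\OH_{\bh}^{h_1}$. The hard part I anticipate is the scheme-theoretic verification in the fibre identification, since $\OH^n_\bh$ and $\OH^{h_1}_\bh$ are defined only as reduced subschemes of their respective punctual Hilbert schemes; the cleanest route is to exhibit the inverse $[J] \mapsto [V_0 \cdot R + J \cdot R]$ as an algebraic morphism coming from a universal family construction, after which the equivariant spread via $\sigma_S$ is routine.
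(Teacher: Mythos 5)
Your proof is correct and matches the approach the paper takes: the paper does not prove this lemma directly (it cites \cite[Prop.~3.1]{8POINTS}), but the tautological-bundle trivialisation used in its proof of the companion Lemma~\ref{tau-is-ZLT} is exactly the same idea as your $\GL_n$-equivariance argument, since a local trivialisation of $\mathscr S \subset \OO_G \otimes R_1$ is precisely a local section $U \to \GL_n$ of the principal $P$-bundle $\GL_n \to \Gr(n-h_1,R_1)$. Your additional care over the scheme-theoretic fibre identification $\OH^{h_1}_{\bh} \cong (\rho^n_{\bh})^{-1}([V_0])$ (writing $I = V_0\cdot R + J\cdot R$ and using that both sides are reduced) is the right way to settle it.
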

 
While proving \Cref{thm:motive-Y-stratum} we will show that it is also possible to relate the Hilbert--Samuel strata $H_{\bh}^{h_1}$ and $H_{\bh}^n$.

Even though the map $\pi_{\bh}^n$ from \eqref{initial-ideal-map} is not well-behaved in general, post-composition with $\rho_{\bh}^n$ yields a fibration, as we prove in the following lemma, which will be used in the proof of \Cref{thm:motive-Y-stratum}.

\begin{lemma}\label{tau-is-ZLT}
 Fix $\bh = (1, h_1, \ldots , h_t)$. The composite morphism
\[
\begin{tikzcd}
\tau\colon H^n_{\bh} \arrow{r}{\pi^n_{\bh}} & \OH^n_{\bh} \arrow{r}{\rho^n_{\bh}} & \Gr(n-h_1,R_1)
\end{tikzcd}
\]
is Zariski locally trivial.
\end{lemma}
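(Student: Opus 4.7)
The plan is to exploit the natural $\GL_n$-equivariance of $\tau$ and reduce to the classical fact that the quotient map $\GL_n \to \GL_n/P$ is a Zariski locally trivial principal bundle for any parabolic subgroup $P \subset \GL_n$.

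First I would verify the equivariance. The group $G = \GL(R_1) \cong \GL_n$ acts linearly on $R_1$, extends uniquely to a graded $\BC$-algebra automorphism action on $R = \BC[x_1,\ldots,x_n]$, and hence acts on $\BA^n$ fixing the origin. The induced action on $\Hilb^{|\bh|}(\BA^n)_0$ preserves the Hilbert--Samuel function at $0$ (since each $g \in G$ acts by an automorphism of the local ring at the origin), so it restricts to an action on $H^n_{\bh}$; as $G$ preserves the standard grading, it descends further to $\OH^n_{\bh}$ and commutes with the initial-ideal operation, i.e.\ $\In(g\cdot I) = g\cdot \In I$ for all $g \in G$ and $I \subset R$. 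This gives $G$-equivariance of $\pi^n_{\bh}$, while $\rho^n_{\bh}$ is manifestly $G$-equivariant. Hence $\tau$ is $G$-equivariant.

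Next, $G$ acts transitively on $\Gr(n-h_1, R_1)$ with stabilizer at a fixed point $[W_0]$ a (maximal) parabolic subgroup $P \subset G$, so $\Gr(n-h_1,R_1) \cong G/P$. Setting $F = \tau^{-1}([W_0])$, equivariance identifies $\tau$ with the associated fibre bundle
\[
H^n_{\bh} \;\cong\; G\times^P F \;\longrightarrow\; G/P \;\cong\; \Gr(n-h_1, R_1).
\]
Since $G \to G/P$ is a Zariski locally trivial $P$-bundle — one can cover $\Gr(n-h_1,R_1)$ by the affine open Schubert cells $U_S$ indexed by $(n-h_1)$-subsets $S \subset \{1,\ldots,n\}$, and on each cell choose an algebraic section $s_S\colon U_S \to G$ sending $[W_0]$ to the moving subspace — pulling back along $s_S$ yields trivializations $\tau^{-1}(U_S) \cong U_S \times F$, proving Zariski local triviality of $\tau$.

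There is no substantial obstacle: the only point to verify carefully is the equivariance of $\pi^n_{\bh}$, which follows immediately from the fact that $G$ acts by graded-preserving ring automorphisms of $R$. In spirit, the argument is parallel to that of \Cref{lemma:tech2}, where the same $\GL_n$-equivariance together with transitivity on the Grassmannian was used to produce a Zariski locally trivial fibration.
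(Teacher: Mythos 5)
Your proposal is correct, and it reaches the conclusion by a route that is formally different from the paper's, though driven by the same underlying mechanism. The paper proceeds by an explicit hands-on construction: it fixes a trivializing open $U$ of the tautological subbundle $\mathscr S \subset \OO_G \otimes R_1$ over $G = \Gr(n-h_1,R_1)$, upgrades the resulting bundle automorphism $\psi$ of $U \times R_1$ to a graded ring automorphism $\alpha = \Sym^\bullet \psi$, pulls it back along $\tau$, and applies it to the universal ideal sheaf to manufacture a flat family, from which a trivialization $\tau^{-1}(U) \cong U \times \tau^{-1}(p)$ is extracted. You instead observe that $\tau$ is $\GL(R_1)$-equivariant (which you correctly justify: graded linear automorphisms commute with taking initial ideals and with the linear-part map $\rho^n_{\bh}$), identify the target with $G/P$, and invoke the Zariski local triviality of $G \to G/P$ to trivialize $\tau$ by pulling back along local sections. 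The two arguments are secretly the same in mechanism: the paper's $\psi$ is nothing but a local section $U \to \GL(R_1)$ of the orbit map, packaged as a trivialization of the tautological bundle. What your version buys is conceptual economy — the fibrewise verification (flatness, that the map factors through the fiber, that it is invertible) is absorbed into the general fact that any $G$-equivariant morphism to a Zariski-locally-trivial $G/P$ is Zariski locally trivial with fiber the preimage of the basepoint. What the paper's version buys is that it stays entirely elementary and matches the style of the cited reference \cite[Prop.~3.1]{8POINTS}, and it writes down the trivializing isomorphism explicitly, which the authors reuse implicitly in later constructions (e.g.\ in the proof of \Cref{thm:motive-Y-stratum}). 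Either argument would be acceptable here.
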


\begin{proof}
This can be proved along the same lines of \cite[Prop.~3.1]{8POINTS}. We provide full details for the sake of completeness.

Set $G=\Gr(n-h_1,R_1)$. Fix a point $p \in G$, corresponding to a linear subspace $V \subset R_1$, and an open neighbourhood $p \in U \subset G$ trivialising the inclusion
\begin{equation*}
\mathscr S \subset \OO_{G} \otimes_{\BC}R_1,
\end{equation*}
where $\mathscr S$ denotes the tautological subbundle. If $\Sigma \to G$ denotes the total space of $\mathscr S$, the trivialisation yields a commutative diagram
\begin{equation}\label{eqn: comm}
    \begin{tikzcd}[column sep=large,row sep=large]
\Sigma|_U\arrow[d]\arrow[r, hook] & U\times R_1\arrow[d,swap,"\psi"'] \\
   U\times V \arrow[r,swap, hook,"\id_U \times \iota"'] & U\times R_1
\end{tikzcd}
\end{equation}
where $\iota \colon V \into R_1$ is the natural inclusion, $\Sigma|_U = \Sigma \cap (U\times R_1)$ and the vertical maps are isomorphisms. 
Note that $\psi$ induces an isomorphism 
\[
\begin{tikzcd}
\alpha=\Sym^\bullet_{\OO_U} \psi\colon \OO_U[x_1,\ldots,x_n] \arrow{r}{\sim} & \OO_U[x_1,\ldots,x_n].
\end{tikzcd}
\]
Set $W=\tau^{-1}(U)$. It is an open subscheme of $H^n_{\bh}$. Let $\mathscr J \subset \OO_W[x_1,\ldots,x_n]$ be the ideal sheaf corresponding to the open immersion $W \into H^n_{\bh}$. Pulling $\alpha$ back along $\tau\colon W \to U$ we obtain an isomorphism
\[
\begin{tikzcd}
\beta = \tau^\ast \alpha\colon \OO_W[x_1,\ldots,x_n] \arrow{r}{\sim} & \OO_W[x_1,\ldots,x_n].
\end{tikzcd}
\]
The ideal $\beta^{-1}(\mathscr J) \subset \OO_W[x_1,\ldots,x_n]$ is $W$-flat and thus defines a morphism $f\colon W \to H^n_{\bh}$. By the commutativity of \eqref{eqn: comm}, the composition $\tau \circ f$ is constant with value $p \in G$, therefore $f$ factors through the fibre $\tau^{-1}(p)$. We have obtained a morphism 
\[
\begin{tikzcd}[column sep=large]
    W=\tau^{-1}(U) \arrow{r}{(\tau,f)} & U \times \tau^{-1}(p),
\end{tikzcd}
\]
which is the required trivialisation.
\end{proof}
    
\subsection{Apolarity}
A powerful tool for parametrising families of ideals is \emph{Macaulay duality}, also known as \emph{apolarity} \cite{Macaulay}. We now present some results on apolarity that we shall need in \Cref{sec:consequences of thm A,sec:expectation}. The reader can consult \cite{8POINTS,Hilb_11,EMSALEM,Geramita,Iarrobook} for more details. Working in characteristic $0$ is crucial in this section.

Let us set
\[
R=\BC[x_1,\ldots,x_n], \quad R^*=\BC[y_1,\ldots,y_n].
\]
We view $R^*$ as an $R$-module via the action 
\[
\begin{tikzcd}[row sep =tiny]
    R\times R^*\arrow[r]& R^* \\
    (x_1^{\alpha_1}\cdots x_n^{\alpha_n},p(y_1,\ldots,y_n))\arrow[r,mapsto] & \displaystyle\frac{\partial^{\sum_{i=1}^n\alpha_i} }{\partial^{\alpha_1} y_1\cdots \partial^{\alpha_n} y_n}p,
\end{tikzcd}\]
where $\alpha_i\in\BZ_{\geq 0}$ for $i=1,\ldots,n$. This induces, for every  $k\ge 0$, a perfect pairing
\[
\begin{tikzcd}
R_k\times R_k^* \arrow{r} & R_0^* = \BC
\end{tikzcd}
\]
and, consequently, a notion of orthogonality. 
\begin{definition}[{\cite{Hilb_11}}]
 An \emph{inverse system} is a graded vector subspace $T\subset R^*$ closed under differentiation. If $S\subset R^*$ is a finite subset then the inverse system generated by $S$ is the smallest subspace $\langle S\rangle\subset R^*$ containing $S$ and closed under differentiation. The \emph{apolar ideal} attached to $T$ is  
\[
T^\perp=\Set{r\in  R  | r\cdot T =0 }\subset R.
\]
If $I\subset R$ is a homogeneous ideal, its associated inverse system is 
\[
I^\perp =\Set{r^*\in  R^*  | I \cdot r^* =0 }\subset R^*.
\]
\end{definition} 
\begin{remark}
Notice that if $I\subset R$ is a homogeneous ideal, then $I^\perp\subset R^*$ is a graded subspace closed under differentiation. Conversely, every graded vector subspace $T\subset R^*$ closed under differentiation is orthogonal to the homogeneous ideal $T^\perp\subset R$.
Moreover, if $V\subset R^*_j$ is a vector subspace, then
\[
\dim_\BC (V^\perp)_j =\dim_\BC {R^*_j}/ V,
\]
which yields an isomorphism of graded vector spaces $R/I\cong I^\perp$, see \cite[Sec.~2]{8POINTS}.
\end{remark}
 
\begin{prop}[{\cite[Lemma 2.12]{Iarrobook}}, {\cite{CARLINI}}] \label{prop:gorapolar}
Let $f \in  R^*$ be a homogeneous form of degree
$d\ge 0$. Then the Hilbert--Samuel function of the ideal $\langle f\rangle^\perp$ is
symmetric, i.e.~$\bh_{\langle f\rangle^\perp}(i) = \bh_{\langle f\rangle^\perp}(d - i)$ for all $i=0,\ldots,d$. 

Suppose also that $\bh_{\langle f\rangle^\perp}$ has the form
\[
\bh_{\langle f\rangle^\perp}=(1,h_1,h_2,\ldots,k,1),
\]
for some $k\ge 1$. In particular, in this case $d\ge2$. Then, there are linearly
independent linear forms $\ell_1,\ldots,\ell_k\in R^*_1 $ and a homogeneous form $g$ such that
$f = g(\ell_1,\ldots,\ell_k)$.
\end{prop}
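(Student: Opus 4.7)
The plan is to deduce both statements from a single construction: the perfect pairing between opposite graded pieces of the apolar quotient $R/\langle f\rangle^\perp$ induced by evaluation at the ``socle'' $f$. Writing $I = \langle f\rangle^\perp \subset R$, I would consider, for each $0\le i\le d$, the bilinear map
\[
R_i \times R_{d-i} \longrightarrow \BC,\qquad (r,s)\longmapsto (rs)\cdot f,
\]
valued in $R^*_0 = \BC$. Since $I$ is an ideal, elements of $I_i$ or $I_{d-i}$ annihilate the map, so it descends to a pairing $R_i/I_i \times R_{d-i}/I_{d-i} \to \BC$. Perfectness is the key check: if $r\in R_i$ kills every $s\in R_{d-i}$, then $r\cdot f \in R^*_{d-i}$ is annihilated by all of $R_{d-i}$ under the standard perfect pairing $R_{d-i}\times R^*_{d-i}\to \BC$, forcing $r\cdot f = 0$, i.e.~$r\in I_i$. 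Nondegeneracy on the second factor is symmetric, so $\bh_I(i) = \bh_I(d-i)$.

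For the structural statement, the symmetry just established upgrades the hypothesis $\bh_I(d-1)=k$ into $\bh_I(1) = k$, so $I_1\subset R_1$ has codimension $k$. I would then choose a basis $x_1',\ldots,x_n'$ of $R_1$ with $I_1 = \Span(x_{k+1}',\ldots,x_n')$ and let $\ell_1,\ldots,\ell_n\in R^*_1$ be the dual basis. In these coordinates, the apolar action of $R$ on $R^*$ sends $x_j'$ to $\partial/\partial \ell_j$, so the inclusions $x_{k+1}',\ldots,x_n'\in I$ translate into the conditions $\partial f/\partial \ell_j = 0$ for $j>k$. Since $R^* = \BC[\ell_1,\ldots,\ell_n]$, this forces $f\in \BC[\ell_1,\ldots,\ell_k]$, yielding the desired presentation $f = g(\ell_1,\ldots,\ell_k)$ with $g$ homogeneous of degree $d$. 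The inequality $d\ge 2$ is automatic from the fact that the shape $(1,h_1,\ldots,k,1)$ forces distinct entries in positions $0$, $d-1$ and $d$.

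The conceptual heart of the argument is the perfect pairing, which is the standard expression of Gorenstein self-duality for the graded Artinian algebra $R/\langle f\rangle^\perp$; once that is in place, the structural consequence reduces to a short dimension count combined with an unwinding of the apolarity action. I do not foresee any serious obstacle, but the point requiring some care, although routine, is the equivalence ``$r\cdot f = 0 \Leftrightarrow r\in I_i$'' used in the perfectness argument, which is nothing but the graded version of the defining property of the apolar ideal and deserves to be stated explicitly.
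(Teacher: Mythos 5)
Your proof is correct. Note that the paper does not actually prove this proposition; it cites it to Iarrobino's book and to Carlini, so there is no in-paper argument to compare against. What you have written is the standard proof underlying those references: Gorenstein self-duality of $A=R/\langle f\rangle^\perp$ realised via the pairing $(r,s)\mapsto (rs)\cdot f$, followed by the change of coordinates adapted to $I_1$. A few remarks on the places that warrant care. First, you correctly identify $\langle f\rangle^\perp$ with the annihilator $\{r\in R : r\cdot f=0\}$; this uses that $\langle f\rangle$ is closed under differentiation, so annihilating $f$ already annihilates all of $\langle f\rangle$. Second, the perfectness of the descended pairing relies on the contraction pairing $R_{d-i}\times R^*_{d-i}\to\BC$ being perfect, which holds in characteristic $0$ (the paper flags this assumption explicitly at the start of the apolarity subsection). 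Third, the assertion that $x_j'$ acts as $\partial/\partial \ell_j$ on $\BC[\ell_1,\ldots,\ell_n]$, where $\ell_\bullet$ is the dual basis to $x_\bullet'$ under contraction, is indeed correct but deserves the one-line computation (expanding both bases in the standard coordinates and using the dual-basis relation) since it is the linchpin of the reduction $f\in\BC[\ell_1,\ldots,\ell_k]$. Your parenthetical on $d\ge 2$ is not load-bearing and is a statement about the hypothesis of the proposition, not its conclusion. Overall the argument is complete, and it is the expected one.
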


%%%%%%%%%%%%%%%%%%%%%%%%%%%%%%%%%%%%%%%%%%%%%%
%%%%%%%%%%%%%%%%%%%%%%%%%%%%%%%%%%%%%%%%%%%%%%
\section{Stratification by embedding dimension}\label{sec:strat-emb-dim}

The goal of this subsection is to analyse a key stratification of the punctual Hilbert scheme, defined entirely in terms of the embedding dimension.

Fix three nonnegative integers $n,d,k$. Consider the subset 
\[
Y^n_{k,d}=\Set{[Z]\in \Hilb^d(\BA^n)_0 | \embdim (Z)=k}\subset \Hilb^d(\BA^n)_0,
\]
where $\embdim(-)$ denotes the embedding dimension, cf.~\Cref{rmks-on-HS}\ref{HF2}. We extend the definition of $Y^n_{k,d}$ allowing possibly negative integers $n,k,d$, but we declare $Y^n_{k,d}=\emptyset$ as soon as at least one of these integers is negative.

\begin{remark}\label{rmk:small-cases-of-Y}
The locus $Y_{k,d}^n$ is empty when $k\geq d$ and when $k=0$ and $d\neq 1$. Moreover, one has $Y^n_{0,1}=\pt=Y^{d-1}_{d-1,d}$, 
for all $n\geq 0$ and $d>0$. Notice that the unique point of $Y_{d-1,d}^{d-1} = H_{(1,d-1)}$ corresponds to the ideal $(x_1,\ldots,x_{d-1})^2 \subset \BC[x_1,\ldots,x_{d-1}]$.
\end{remark}

\begin{lemma} \label{lem:YnkdLocallyClosed}
Fix three integers $n,k,d$. Then $Y^n_{k,d}\subset \Hilb^d(\BA^n)_0$ is locally closed.
\end{lemma}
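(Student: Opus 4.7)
The plan is to realise the embedding dimension as the fibre rank of a naturally defined coherent sheaf on $\Hilb^d(\BA^n)_0$, and then to invoke upper semicontinuity of fibre dimensions of coherent sheaves.

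First, set $H=\Hilb^d(\BA^n)_0$ and consider the universal family $\mathcal Z\subset \BA^n\times H$, with projection $\pi\colon \mathcal Z\to H$. The pushforward $\mathcal A:=\pi_\ast\OO_{\mathcal Z}$ is locally free of rank $d$ over $\OO_H$. The inclusion of the maximal ideal $\Fm=(x_1,\ldots,x_n)\subset R$ induces an $\OO_H$-linear map $\Fm\otimes_{\BC}\OO_H\to\mathcal A$, whose image $\mathcal M\subset \mathcal A$ is a coherent subsheaf of $\mathcal A$ whose fibre at a closed point $[Z]\in H$, corresponding to $A=R/I$ as in \eqref{eqn:local-artin}, is the maximal ideal $\Fm_A=\Fm/I\subset A$. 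This uses the flatness of $\mathcal A$ over $\OO_H$ to commute fibres with the formation of $\mathcal M$.

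Next, the coherent sheaf $\mathcal F:=\mathcal M/\mathcal M^2$ has, by right-exactness of tensor products, fibre $\Fm_A/\Fm_A^2$ at the closed point $[Z]$, so that $\dim_{\BC}(\mathcal F\otimes\kappa([Z]))=\embdim(Z)$ in view of \Cref{rmks-on-HS}\ref{HF2}. By the standard upper semicontinuity of fibre dimensions of coherent sheaves on a Noetherian scheme, the map $H\to \BZ$ sending $[Z]$ to $\embdim(Z)$ is upper semicontinuous. Therefore the locus $\{\embdim\ge k\}$ is closed and $\{\embdim\le k\}$ is open, so $Y^n_{k,d}=\{\embdim=k\}$ is locally closed. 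The empty cases identified in \Cref{rmk:small-cases-of-Y} (for instance when one of $n,k,d$ is negative, when $k\ge d$, or when $k=0$ and $d\neq 1$) are trivially locally closed and require no further discussion.

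No step of this plan is really the main obstacle; the only technicality is the identification of $\mathcal M\otimes \kappa([Z])$ with $\Fm_A$, for which one uses flatness of $\mathcal A$. As an alternative which avoids this verification, one may instead observe that $Y^n_{k,d}$ is a \emph{finite} disjoint union of Hilbert--Samuel strata, namely $Y^n_{k,d}=\coprod_{\bh}H^n_{\bh}$ over the finitely many Hilbert--Samuel functions of the form $\bh=(1,k,h_2,\ldots,h_t)$ with $\lvert\bh\rvert=d$; each such stratum is locally closed by definition, and a finite union of locally closed subsets of the same Hilbert scheme sharing the value of $h_1$ is again locally closed.
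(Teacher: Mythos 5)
Your main argument is correct and is essentially the paper's proof: the paper also observes that $b \mapsto \embdim(\CZ_b) = \dim_{\BC}\Fm_b/\Fm_b^2$ is upper semi-continuous because it is the minimal number of generators of the maximal ideal, and then concludes by the usual open/closed argument; your construction of the coherent sheaf $\mathcal F = \mathcal M/\mathcal M^2$ just makes the sheaf whose fibre rank is being measured explicit. One small technical remark: to identify $\mathcal M \otimes \kappa([Z])$ with $\Fm_A$ inside $A$ (rather than merely surjecting onto it) what you really want is flatness of the cokernel $\mathcal A/\mathcal M$, not of $\mathcal A$ itself; since the universal fat point is supported along the zero section, $\mathcal A/\mathcal M \cong \OO_H$ is a line bundle, so this is fine, but it is worth being precise.

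The ``alternative'' you offer at the end is gappy as written. A finite union of locally closed subsets of a scheme is \emph{not} in general locally closed (e.g.\ in $\BA^2$ the set $\{(0,0)\}\cup\{x\neq 0\}$ is a disjoint union of two locally closed subsets but is not locally closed). Your closing clause, that ``a finite union of locally closed subsets of the same Hilbert scheme sharing the value of $h_1$ is again locally closed,'' is precisely the content of the lemma being proved; asserting it as an evident property of the Hilbert--Samuel stratification is circular. To make this route rigorous one would need to show that the Hilbert--Samuel strata with fixed $h_1=k$ assemble into a locally closed set, which again comes down to semicontinuity of $h_1 = \embdim$ --- i.e.\ back to your main argument.
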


\begin{proof}
Let $B$ be a scheme and $\CZ \subset \BA^n \times B$ a $B$-flat family of fat points. We claim that the locus of points $b \in B$ such that $\embdim(\CZ_b) = k$ is locally closed. Let $\Fm_b \subset \OO_{\CZ_b}$ be the maximal ideal of the coordinate ring of $\CZ_b\subset \BA^n$. The function $b \mapsto  \embdim(\CZ_b)$ is upper semi-continuous, since $\embdim(\CZ_b)=\dim_{\BC} \Fm_b / \Fm_b^2$ is the minimal number of generators of $\Fm_b$.
\end{proof}

We endow $Y^n_{k,d}$ with the reduced subscheme structure, to obtain a stratification 
\begin{equation}\label{eq:stratification}
    \Hilb^d(\BA^n)_0 = \begin{cases}
    Y^{n}_{0,1} &\mbox{ if }d=1,\\
    \coprod_{k=1}^{d-1}Y^{n}_{k,d} & \mbox{ if }d>1.
\end{cases}
\end{equation}
Moreover, notice that the strata $Y_{k,d}^n\subset \Hilb^d(\BA^n)_0$ admit a Hilbert--Samuel stratification themselves, namely
\begin{equation}\label{Y-stratification}
Y_{k,d}^n=\coprod_{\substack{\lvert \bh\rvert = d \\ h_1=k}} H_{\bh}^n.
\end{equation}

Before establishing a formula for the motive of $Y^n_{k,d}$, we prove the following lemma.

\begin{lemma}\label{lemma:standard-basis}
Let $I \subset R=\BC[x_1,\ldots,x_n]$ be the ideal of a fat point such that $\In I \supset (x_1,\ldots,x_k)$ for some $k \geq 1$. Then there exist polynomials $p_1,\ldots,p_k \in S= \BC[x_{k+1},\ldots,x_n]$ and an ideal $J \subset S$ such that 
\[
I = J\cdot R+(x_1+p_1,\ldots,x_k+p_k).
\]
\end{lemma}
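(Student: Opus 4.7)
Let $A = R/I$ with maximal ideal $\Fm_A = \Fm/I$, where $\Fm = (x_1, \ldots, x_n)$. The hypothesis $\In I \supset (x_1, \ldots, x_k)$ means precisely that for each $1 \leq i \leq k$ there exists an element of $I$ of the form $x_i + (\text{terms of order} \geq 2)$, i.e.\ the class $\bar x_i \in \Fm_A$ actually lies in $\Fm_A^2$. In particular, by Nakayama's lemma applied to $\Fm_A$ (a nilpotent ideal in the Artin local ring $A$), the residues $\bar x_{k+1}, \ldots, \bar x_n$ generate $\Fm_A$. Consequently $\Fm_A^2$ is spanned, over $\BC$, by products of these last $n-k$ residues only, so $\Fm_A^2 \subset \BC[\bar x_{k+1}, \ldots, \bar x_n] \subset A$. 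This gives, for each $i \leq k$, an element $q_i \in S = \BC[x_{k+1}, \ldots, x_n]$ (with no constant term) such that $\bar x_i = \bar q_i$, i.e.\ $x_i - q_i \in I$. Set $p_i = -q_i$, so $x_i + p_i \in I$.

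The plan is now to promote this pointwise information to a structural decomposition of the whole ideal. Define $J = I \cap S \subset S$, and consider the retraction
\[
\phi\colon R = S[x_1, \ldots, x_k] \longrightarrow S, \qquad x_i \mapsto -p_i \ (i \leq k), \quad x_j \mapsto x_j \ (j > k).
\]
A standard computation (using that $R$ is a polynomial ring over $S$ in $x_1, \ldots, x_k$) identifies $\ker \phi = (x_1 + p_1, \ldots, x_k + p_k)$. Since $\phi(x_i + p_i) = 0$, this kernel is contained in $I$.

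Now, given any $f \in I$, write
\[
f = \phi(f) + \bigl(f - \phi(f)\bigr).
\]
The second summand lies in $\ker \phi = (x_1+p_1, \ldots, x_k+p_k) \subset I$, hence $\phi(f) \in I$ as well; but $\phi(f)$ also lies in $S$, so $\phi(f) \in I \cap S = J$. This proves the inclusion
\[
I \subset J\cdot R + (x_1+p_1, \ldots, x_k+p_k),
\]
and the reverse inclusion is immediate from $J \subset I$ and $x_i + p_i \in I$, concluding the proof.

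The only substantive point is the first paragraph — extracting $p_i \in S$ (and not merely in $R$) from $\bar x_i \in \Fm_A^2$; once the retraction $\phi$ is available, the decomposition of $I$ is purely formal. I do not anticipate any serious obstacle beyond making sure that $p_i$ can indeed be chosen to involve only the variables $x_{k+1}, \ldots, x_n$, which is guaranteed by the Nakayama argument above.
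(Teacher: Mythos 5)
Your proof is correct, and it takes a genuinely different and more self-contained route than the paper's. The paper appeals to an external result on standard bases with respect to a local ordering, which immediately yields a decomposition $I = J + (x_1 + \widetilde p_1, \ldots, x_k + \widetilde p_k)$ with tails $\widetilde p_i \in \BC[x_{i+1},\ldots,x_n]$, and then recursively back-substitutes ($p_k = \widetilde p_k$, $p_{i-1} = \widetilde p_{i-1}|_{x_i = p_i}$) to push the tails into $S = \BC[x_{k+1},\ldots,x_n]$. You instead argue from scratch: the hypothesis $\In I \supset (x_1,\ldots,x_k)$ gives $\bar x_i \in \Fm_A^2$ for $i \leq k$, so by Nakayama the classes $\bar x_{k+1},\ldots,\bar x_n$ generate $\Fm_A$, the composite $S \into R \onto A$ is surjective, and you can lift $\bar x_i$ to an element $q_i \in S$ with $x_i - q_i \in I$; the retraction $\phi\colon R \to S$ killing $x_1 + p_1,\ldots,x_k + p_k$ then produces the decomposition formally. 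The key observation that makes your retraction argument go through cleanly — and that the paper has to fix by back-substitution — is that you arranged $p_i \in S$ from the start, so $\ker\phi = (x_1+p_1,\ldots,x_k+p_k)$ can be identified by an automorphism of $R$ over $S$. Your version is more elementary and avoids the black box; the paper's is terser because it cites a ready-made tool. Both are valid.
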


\begin{proof}
By the results in \cite{tandardBasis} about the existence of standard bases, $I$ can be written as
\[
I=J+(x_1+\widetilde p_1,\ldots,x_k+\widetilde p_k)
\]
where $\widetilde p_i \in \BC[x_{i+1},\ldots,x_n]$ and $J\subset S$. The polynomials $p_i$ are recursively defined as $p_k = \widetilde p_k$ and $p_{i-1} = \widetilde p_{i-1}|_{x_{i}=p_i}$.
\end{proof}

\begin{theorem}\label{thm:motive-Y-stratum}
Fix $n,d,k>0$. There is an identity 
\[
[Y^n_{k,d}] = [\Gr(k,n)]\BL^{(n-k)(d-k-1)}[Y^k_{k,d}]
\]
in $K_0(\Var_{\BC})$.
\end{theorem}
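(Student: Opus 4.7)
My plan is to refine the Hilbert--Samuel stratification \eqref{Y-stratification} of $Y^n_{k,d}$ and prove the stratum-level identity
\[
[H^n_{\bh}] = [\Gr(k,n)]\cdot\BL^{(n-k)(d-k-1)}\cdot[H^k_{\bh}]
\]
for each $\bh = (1,k,h_2,\ldots,h_t)$ with $\lvert\bh\rvert=d$. Summing over such $\bh$ would then yield the theorem, since the same stratification applies to $Y^k_{k,d}$.

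The first step is to invoke \Cref{tau-is-ZLT} applied to the composite
\[
\tau = \rho^n_{\bh}\circ\pi^n_{\bh}\colon H^n_{\bh}\longrightarrow \Gr(n-k,R_1) = \Gr(k,n),
\]
which yields $[H^n_{\bh}] = [\Gr(k,n)]\cdot[F]$, where $F=\tau^{-1}(V_0)$ is the fibre over some chosen subspace, say $V_0 = \Span(x_{k+1},\ldots,x_n)\subset R_1$. It then remains to identify $[F] = [H^k_{\bh}]\cdot\BL^{(n-k)(d-k-1)}$.

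To pin down $F$, I would parametrize it via the standard basis presentation. Setting $S=\BC[x_1,\ldots,x_k]$, \Cref{lemma:standard-basis} (applied after relabeling variables) writes each $[I]\in F$ uniquely in the form
\[
I = J\cdot R + (x_{k+1}+p_{k+1},\ldots,x_n+p_n),
\]
with $J\subset S$ a fat-point ideal and $p_i\in S$. The condition that $x_i+p_i$ have initial form exactly $x_i$ (so that the linear part of $\In I$ really is $V_0$) forces $p_i\in\Fm_S^2$, and the canonical ring isomorphism $R/I\simeq S/J$ forces $\bh_J=\bh$, so $[J]\in H^k_{\bh}$. Uniqueness of each $p_i$ modulo $J$ then exhibits $F$ as the total space of $\CE^{\oplus(n-k)}$ over $H^k_{\bh}$, where $\CE$ is the coherent sheaf whose fibre at $[J]$ is $\Fm_{S/J}^2$. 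Since $\bh$ is constant on $H^k_{\bh}$, the sheaf $\CE$ has constant fibre dimension $d-k-1 = \sum_{i\geq 2}h_i$ and sits inside a filtration of the locally free pushforward of the universal structure sheaf; it is therefore locally free. Hence $F\to H^k_{\bh}$ is a vector bundle of rank $(n-k)(d-k-1)$, which gives the desired motivic identity.

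The main obstacle I anticipate is upgrading the pointwise standard-basis description of $F$ to an honest morphism of schemes $F\to H^k_{\bh}$ that is \emph{verifiably} Zariski locally trivial (and not merely a set-theoretic affine-space fibration). The ingredients seem available---flatness and semicontinuity along the lines of those already used in the proof of \Cref{tau-is-ZLT}, combined with constancy of $\bh$ along the stratum---but one must check carefully that the standard-basis coefficients $p_i$ glue into global sections on Zariski charts of $H^k_{\bh}$ and that $\CE$ really is locally free there, so that the fibration $F\to H^k_{\bh}$ is trivialisable over such charts.
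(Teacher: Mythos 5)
Your overall strategy matches the paper's: reduce to Hilbert--Samuel strata, apply \Cref{tau-is-ZLT} to get $[H^n_{\bh}]=[\Gr(k,n)][F]$, and then identify the fibre $F$ as an affine bundle of rank $(n-k)(d-k-1)$ over $H^k_{\bh}$, with \Cref{lemma:standard-basis} controlling the parametrisation. The difference is in how that last identification is carried out, and it is a real difference.

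You assert that $F\to H^k_{\bh}$ is an honest vector bundle (the total space of $\CE^{\oplus(n-k)}$ where $\CE$ has fibre $\Fm_{S/J}^2$), and you correctly flag the two things that need to be checked: that $I\mapsto J$ is a morphism of schemes and that $\CE$ is locally free over $H^k_{\bh}$. The paper deliberately avoids proving either of these. Instead of a vector bundle structure, it introduces the closed subscheme $G\subset F$ of ideals containing $I_V$ (this is exactly your ``zero section'' $p_{k+1}=\cdots=p_n=0$), covers $F$ and $G$ by the monomial-basis charts $U_B$ of the punctual Hilbert scheme, and on each chart exhibits a bijective morphism $\Psi_B\colon G_B\times\BG_a^{(n-k)(d-k-1)}\to F_B$ built from the $\BG_a$-translation $x_j\mapsto x_j+\sum_i u_i^{(j)}m_i$. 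Surjectivity of $\Psi_B$ is your standard-basis existence argument and injectivity is a direct computation; by property \ref{key-a} of $K_0(\Var_\BC)$ (bijective morphism preserves the class) one gets $[F_B]=[G_B]\BL^{(n-k)(d-k-1)}$, then sums over the cover, and finally uses that $[I]\mapsto[I/I_V]$ is a bijective morphism $G\to H^k_{\bh}$.

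The upshot is that the paper never needs $\CE$ to be locally free, never needs the affine structure on the fibres to be linear, and never needs the chart-by-chart bijections to glue into a trivialising atlas --- all of which your plan requires and which is precisely where you anticipate trouble. Your plan is likely completable (local freeness of $\CE$ should follow from constancy of the Hilbert--Samuel function together with the fact that $H^k_{\bh}$ is taken with its reduced structure, and the projection $\BA^n\to\BA^k$ gives the morphism $F\to H^k_{\bh}$), but this is genuinely more work, and in $K_0(\Var_\BC)$ that extra structure buys you nothing: a set-theoretic affine-space fibration is already enough. The paper's route is the cheaper one, and it pays off in the proof by letting one work only with flat families of ideals and bijective morphisms rather than with locally free sheaves on singular strata.
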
 
\begin{proof}
The case $d=1$ yields the trivial identity $0 = 0$, so we assume $d>1$.
Thanks to the stratification \eqref{Y-stratification}, it suffices to prove  the identity
\begin{equation}\label{claim-decomposition}
[H^n_{\bh}]=[\Gr(n-h_1,R_1)] [H^{h_1}_{\bh}]\BL^{(n-h_1)(\lvert \bh\rvert-h_1-1)}
\end{equation}
for every function $\bh\colon \BZ \rightarrow \BN$ with finite support. Since the composition $\tau = \rho^n_{\bh} \circ \pi^n_{\bh}$ is Zariski locally trivial by \Cref{tau-is-ZLT}, we have the identity of motives
\[
[H^n_{\bh}] = [\Gr(n-h_1,R_1)][F]= [\Gr(h_1,n)][F],
\]
where $F$ is a fibre of $\tau$.
To prove the sought after relation \eqref{claim-decomposition}, it therefore suffices to show that 
\[
[F] = [H_{\bh}^{h_1}]\BL^{(n-h_1)(\lvert \bh\rvert-h_1-1)}.
\]
Fix a point $[V] \in \Gr(n-h_1,R_1)$, corresponding to an affine subspace $\BA_V\subset \BA^n$ of dimension $h_1$. 
We may assume its defining ideal is $I_V=(x_1,\ldots,x_{n-h_1})$. Let us denote by $F$ the fibre of $\tau \colon H^n_{\bh} \to \Gr(n-h_1,R_1)$ over $[V]$.

Define $G \subset F$ to be the reduced closed subscheme parametrising 0-dimensional subschemes with support in $\BA_V\subset \BA^n$, or equivalently parametrising ideals in the fiber $F$ containing $I_V$.

Let $[I] \in G$ be a point.
Let us fix monomials $m_1,\ldots, m_{\lvert \bh\rvert -h_1-1}\in R_{\ge 2}$ such that the image of 
\[
   B= \Set{1,x_{n-h_1 +1},\ldots,x_{n} , m_1,  \ldots, m_{\lvert \bh \rvert -h_1-1} } \subset R
\]
in $R/I$ via the canonical projection is a $\BC$-linear basis. By \cite[Ch.~18]{CCA}, the subset
\[
U_B=\Set{K\subset R | B \mbox{ projects onto a basis of } R/K}\subset \Hilb^{\lvert \bh \rvert}(\BA^n)_0
\]
defines a Zariski open neighbourhood of $[I]$ in the punctual Hilbert scheme. Define the open subsets $F_{B} = F\cap U_B \subset F $ and $G_{B} = G\cap U_B \subset G$. Consider the action  
\[
\begin{tikzcd}
\BG_a^{(n-h_1)({\lvert \bh \rvert}-h_1-1)}\times\BA^n \arrow[r,"\phi_B"]&\BA^n
\end{tikzcd}
\]
induced by the morphism
\begin{equation}\label{eq:actionFv}
    \begin{tikzcd}[row sep=tiny]
\BC[x_1,\ldots,x_n] \arrow[r]&\BC\left[x_1,\ldots,x_n,\left(u_1^{(j)},\ldots, u_{{\lvert \bh \rvert}-h_1-1}^{(j)}\right)_{j=1}^{n-h_1} \right]\\
x_j \arrow[r,mapsto] & \left\{\begin{matrix}
    {\displaystyle x_j+\sum_{i=1}^{{\lvert \bh \rvert}-h_1-1} u_i^{(j)} m_i  }& \mbox{if }j\le n-h_1,  \\
    {\displaystyle x_j} &\mbox{otherwise},
\end{matrix}\right.
\end{tikzcd}
\end{equation}
and define the product map
 \[
    \begin{tikzcd}[row sep = tiny , column sep= large]
        G_{B}\times \BG_a^{(n-h_1)({\lvert \bh \rvert}-h_1-1)}\times\BA^n\arrow[rr,"\Phi_B=\id\times\phi_B"] & &
       G_{B}\times \BA^n. 
    \end{tikzcd}
    \]
    Let $\OI_B\subset \OO_{G_{B}\times \BA^{n}} $  be the restriction of the universal ideal sheaf on $\Hilb^{\lvert \bh \rvert} (\BA^n)\times \BA^{n}$ to $G_{B}\times \BA^{n}$, and set $\OF_B= \Phi_B^*\OI_B$. Since $\OF_B$ is a  flat family over $  G_{B}\times \BG_a^{(n-h_1)({\lvert \bh \rvert}-h_1-1)}$ of 0-dimensional closed subschemes of $\BA^n$, it induces a morphism 
    \[
\begin{tikzcd}
    G_{B}\times \BG_a^{(n-h_1)({\lvert \bh \rvert}-h_1-1)} \arrow[r]& \Hilb^{\lvert \bh \rvert}\BA^n,
\end{tikzcd}
\]
and, by construction, a factorisation
\[
\begin{tikzcd}
G_B\times \BG_a^{(n-h_1)({\lvert \bh \rvert}-h_1-1)}\arrow{r}{\Psi_B} & F_B\arrow[hook]{r} & \Hilb^{\lvert \bh \rvert}\BA^n.
\end{tikzcd}
\]
The map $\Psi_B$ sends an ideal $([I],z)$ to the pullback of $I$ along the isomorphism induced by $z$ via \eqref{eq:actionFv}.
 
We now show that $\Psi_B$ is a bijection on points, which will imply the identity of motives
\begin{align}\label{eqn: motive F G local}
    [F_B]=[G_B] \BL^{(n-h_1)({\lvert \bh \rvert}-h_1-1)}.
\end{align}
Surjectivity of $\Psi_B$ follows directly from \Cref{lemma:standard-basis} (taken with $k=n-h_1$). For injectivity, assume that $\Psi_B([I_1],z_1)=\Psi_B([I_2],z_2)$, i.e. 
\begin{multline*}
     J_1\cdot R+\left(x_1+\sum_{i=1}^{{\lvert \bh \rvert}-h_1-1} a_{i,1}^{(1)} m_i ,\ldots,x_{n-h_1}+\sum_{i=1}^{{\lvert \bh \rvert}-h_1-1} a_{i,1}^{(n-h_1-1)} m_i\right)=\\J_2\cdot R+\left(x_1+\sum_{i=1}^{{\lvert \bh \rvert}-h_1-1} a_{i,2}^{(1)} m_i ,\ldots,x_{n-h_1}+\sum_{i=1}^{{\lvert \bh \rvert}-h_1-1} a_{i,2}^{(n-h_1-1)} m_i\right)
\end{multline*}
for some ideals $J_l \subset \BC[x_{n-h_1+1},\ldots,x_n]$ and some $a_{i,l}^{(j)}\in \BC $, for $j=1, \dots, n-h_1 $, and $i=\lvert \bh \rvert -h_1-1 $ and $l=1, 2$.  It easily follows that $J_1=J_2$, and that  
\[
\sum_{i=1}^{{\lvert \bh \rvert}-h_1-1} \left(a_{i,1}^{(j)}-a_{i,2}^{(j)}\right) m_i  \in J_1, \quad \mbox{ for } j=1, \dots, n-h_1.
\]
However, by our assumption, the quotients
\[
R/{I_1}\cong \BC[x_{n-h_1+1, \ldots, n}]/{J_1}
\]
are generated by the monomials in the basis $B$, which implies that 
$a_{i,1}^{(j)}=a_{i,2}^{(j)} $, for all  $j=1, \dots, n-h_1 $, and $i=\lvert \bh \rvert -h_1-1$.

By \cite[Ch.~18]{CCA}, the subsets $\set{U_B}_B$ form an open cover of $\Hilb^{\lvert \bh \rvert}(\BA^n)_0$, which implies that $\set{F_B}_B$ (resp.~$ \set{G_B}_B$) form an open cover of $F$ (resp. of $G$). Combined with \eqref{eqn: motive F G local}, we conclude that 
\[
    [F]=[G]\BL^{(n-h_1)({\lvert \bh \rvert}-h_1-1)}.
\]
To conclude the proof, notice that, by construction of $G$, the association $[I] \mapsto [I/I_V]$ defines a bijective morphism $
G\to H_{\bh}^{h_1}$,
which implies the required identity of motives $[G]=[H_{\bh}^{h_1}]$.
\end{proof}

\section{Motivic Grassmannian identities}\label{sec:motives-of-grass}
This section is devoted to proving a series of identities, used in several places in this paper, involving motivic classes of Grassmannians, and  
could be independently regarded as \emph{$\BL$-analogues} of numerical identities involving binomial coefficients. We omit the proofs of the results standard in the literature. 

\begin{lemma}[Motivic Tartaglia's identity]\label{prop:TARTAGLIA}
Given two integers $i,d \in \BZ_{\geq 0}$, there are identities
\begin{align*}
[\Gr(i+1,d+1)]&=[\Gr(i,d)]+\BL^{i+1}[\Gr(i+1,d)]\\
[\Gr(i+1,d+1)]&=[\Gr(i+1,d)]+\BL^{d-i}[\Gr(i,d)].
\end{align*}
\end{lemma}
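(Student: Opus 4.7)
My plan is to prove both identities by direct manipulation of the closed formula \eqref{eqn:motive-grassmannian}. Writing $[\Gr(k,n)] = [n]_{\BL}!/([k]_{\BL}![n-k]_{\BL}!)$ and clearing the common factor $[i+1]_{\BL}!\,[d-i]_{\BL}!/[d]_{\BL}!$ from both sides of the first identity, the claim reduces to the elementary relation
\[
\BL^{d+1}-1 \,=\, (\BL^{i+1}-1) + \BL^{i+1}(\BL^{d-i}-1)
\]
in $\BZ[\BL]$, which is obvious. The symmetric manipulation handles the second identity, reducing it to $\BL^{d+1}-1 = (\BL^{d-i}-1) + \BL^{d-i}(\BL^{i+1}-1)$. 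Boundary cases, where an index falls outside $[0,d]$, are handled by the convention $[\Gr(k,n)] = 0$ for $k>n$; at the extreme, $i=d-1$ recovers the standard cell decomposition $[\BP^d] = [\BP^{d-1}] + \BL^d$.

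A more geometric proof---closer in spirit to the name ``motivic Tartaglia''---proceeds by a Schubert-style stratification. For the first identity, fix a line $L \subset \BC^{d+1}$ and write $\Gr(i+1,d+1) = Z_1 \amalg Z_2$, where $Z_1$ (resp.~$Z_2$) parametrises $(i+1)$-planes that contain (resp.~do not contain) $L$. The assignment $W \mapsto W/L$ identifies $Z_1 \cong \Gr(i,d)$, while on $Z_2$ the composition $W \into \BC^{d+1} \onto \BC^{d+1}/L$ yields a morphism $Z_2 \to \Gr(i+1,d)$ whose fibre over $\bar W$ is the set of splittings of the short exact sequence $0 \to L \to p^{-1}(\bar W) \to \bar W \to 0$ (with $p$ the quotient map), i.e.~$\Hom(\bar W, L) \cong \BA^{i+1}$. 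This morphism is Zariski locally trivial (exactly as in the proof of \Cref{tau-is-ZLT}), so property \ref{key-b} yields the first identity. The second identity is dual: fix a hyperplane $H \subset \BC^{d+1}$, stratify by whether $W \subset H$ or not, and observe that $W \mapsto W \cap H$ gives a Zariski locally trivial morphism $\{W \not\subset H\} \to \Gr(i,d)$ with fibre $\BA^{d-i}$.

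I expect no serious obstacle. The algebraic route is essentially a computation and is the motivic refinement of the classical proof of Pascal's identity for $q$-binomial coefficients. In the geometric route, the only non-trivial point is verifying that the two affine-space fibrations are Zariski locally trivial rather than merely set-theoretic affine bundles, which follows by trivialising the tautological subbundle, paralleling the argument of \Cref{tau-is-ZLT}.
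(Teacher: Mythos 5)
The paper explicitly omits a proof of this lemma, flagging it as standard (``We omit the proofs of the results standard in the literature''), so there is no argument in the paper to compare against. Both of your proofs are correct and are the two standard ones. The algebraic route cleanly reduces the identity, via the closed formula \eqref{eqn:motive-grassmannian} and the convention $[\Gr(k,n)]=0$ for $k>n$, to the trivial telescoping relation in $\BZ[\BL]$, and is exactly the motivic lift of Pascal's rule for Gaussian binomials. The geometric route via the Schubert-type stratification by a fixed line (resp.~hyperplane) is also sound: the fibre of $Z_2 \to \Gr(i+1,d)$ is indeed the torsor of splittings of $0 \to L \to p^{-1}(\bar W) \to \bar W \to 0$, which is an affine bundle over $\Gr(i+1,d)$ associated to the extension class of the tautological sequence, hence Zariski locally trivial; the dual argument for the second identity works the same way. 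Either proof alone would suffice, and both match the level of rigour used elsewhere in \Cref{sec:motives-of-grass}.
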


The next two lemmas relate the generating series of motives of Grassmanians to zeta functions of projective spaces.
\begin{lemma}[$\BL$-binomial Theorem]\label{lemma:zeta-inverse}
For every $d\in \BZ_{>0}$ there is an identity
\[
\zeta_{\BP^{d-1}}(t)^{-1}=\sum_{i=0}^d(-t)^i[\Gr(i,d)] \BL^{\binom{i}{2}}.
\]
\end{lemma}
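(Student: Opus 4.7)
The plan is to prove this identity by induction on $d$. First I would rewrite the left-hand side explicitly as the finite product
\[
\zeta_{\BP^{d-1}}(t)^{-1} = \prod_{i=0}^{d-1}(1-\BL^i t),
\]
using the formula for $\zeta_{\BP^{d-1}}(t)$ recalled earlier in \Cref{sec:K0(Var)}. Write $P_d(t) = \sum_{i=0}^{d}(-t)^i[\Gr(i,d)]\BL^{\binom{i}{2}}$ for the right-hand side. The base case $d=0$ is immediate: $P_0(t)=[\Gr(0,0)]=1$, matching the empty product. Thus it suffices to establish the one-step recursion $P_d(t) = (1-\BL^{d-1}t)\,P_{d-1}(t)$.

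For the inductive step, I would expand
\[
(1-\BL^{d-1}t)\,P_{d-1}(t) = \sum_{i=0}^{d-1}(-t)^i[\Gr(i,d-1)]\BL^{\binom{i}{2}} + \sum_{j=1}^{d}(-t)^{j}[\Gr(j-1,d-1)]\BL^{\binom{j-1}{2}+d-1},
\]
where the second sum was obtained by the index shift $j=i+1$. Using the arithmetic identity $\binom{j-1}{2}+d-1 = \binom{j}{2}+(d-j)$, the coefficient of $(-t)^j$ for $0<j<d$ becomes
\[
\BL^{\binom{j}{2}}\bigl([\Gr(j,d-1)] + \BL^{d-j}[\Gr(j-1,d-1)]\bigr).
\]
By the second form of motivic Tartaglia's identity in \Cref{prop:TARTAGLIA} (with $d\mapsto d-1$ and $i\mapsto j-1$), the parenthesised expression collapses precisely to $[\Gr(j,d)]$, matching the $j$-th coefficient of $P_d(t)$. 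The boundary cases $j=0$ and $j=d$ follow from $[\Gr(0,d)]=[\Gr(d,d)]=1$ (only the first, respectively the second, sum contributes).

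No genuine obstacle is expected: this is the motivic incarnation of the classical $q$-binomial theorem, and the argument is a routine telescoping induction driven entirely by the already-established motivic Pascal relation. The only points requiring care are the bookkeeping of the binomial exponents of $\BL$ under the shift $i \mapsto i+1$, and handling the endpoint coefficients $j=0,d$ separately from the generic ones.
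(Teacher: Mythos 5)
Your proof is correct. The paper itself omits the proof of this lemma, stating at the start of \Cref{sec:motives-of-grass} that proofs of results standard in the literature are omitted, so there is no in-paper argument to compare against. Your one-step recursion $P_d(t)=(1-\BL^{d-1}t)P_{d-1}(t)$, closed via the second motivic Tartaglia identity (\Cref{prop:TARTAGLIA}), together with the exponent bookkeeping $\binom{j-1}{2}+d-1=\binom{j}{2}+(d-j)$ and the correct treatment of the endpoint coefficients $j=0,d$, is exactly the standard inductive route one would expect, and everything checks out.
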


\begin{lemma}\label{ex:gen-fun-grass}
For every $k \in \BZ_{\geq 0}$ we have
\begin{equation}\label{eqn:zeta_P^k}
\zeta_{\BP^k}(t)=\sum_{n\geq 0}\,[\Gr(k,n+k)]t^{n}.
\end{equation}
More generally, for every $\alpha\in\BZ_{>0}$ there is an identity
\begin{equation}\label{eqn:grass-shiftate}
\sum_{n\geq k} \,[\Gr(k,n)]\BL^{\alpha n}t^n =
\BL^{k\alpha}t^k\frac{\zeta_{\BP^{\alpha+k}}(t)}{\zeta_{\BP^{\alpha-1}}(t)}.
\end{equation}
\end{lemma}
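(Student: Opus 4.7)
The plan is to prove \eqref{eqn:zeta_P^k} first by induction on $k$ using the second form of the motivic Tartaglia identity from \Cref{prop:TARTAGLIA}, and then to deduce \eqref{eqn:grass-shiftate} from it by a simple substitution $t \mapsto \BL^{\alpha}t$ combined with an index shift.

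For \eqref{eqn:zeta_P^k}, set $f_k(t) = \sum_{n\geq 0}[\Gr(k,n+k)]t^n \in K_0(\Var_{\BC})\llbracket t\rrbracket$. The base case $k=0$ is immediate: $[\Gr(0,n)]=1$ for every $n\geq 0$, so $f_0(t)=\sum_{n\geq 0}t^n=(1-t)^{-1}=\zeta_{\BP^0}(t)$. For the inductive step, apply the second identity of \Cref{prop:TARTAGLIA} with $(i+1,d+1)=(k,n+k)$ to get
\[
[\Gr(k,n+k)] = [\Gr(k,n+k-1)] + \BL^{n}[\Gr(k-1,n+k-1)].
\]
Since $[\Gr(k,k-1)]=0$, the first summand contributes $tf_k(t)$ to $f_k(t)$, while the second summand contributes $\sum_{n\geq 0}\BL^n[\Gr(k-1,n+(k-1))]t^n = f_{k-1}(\BL t)$ after recognising $n+k-1 = n+(k-1)$. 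This yields the functional equation
\[
(1-t)\,f_k(t) = f_{k-1}(\BL t).
\]
Using the inductive hypothesis $f_{k-1}(t)=\prod_{i=0}^{k-1}(1-\BL^i t)^{-1}$, substitution gives $f_{k-1}(\BL t)=\prod_{j=1}^{k}(1-\BL^j t)^{-1}$, and multiplying by $(1-t)^{-1}$ produces exactly $\zeta_{\BP^k}(t)$, closing the induction.

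For \eqref{eqn:grass-shiftate}, reindex the sum by $m=n-k$ and factor out $\BL^{\alpha k}t^k$:
\[
\sum_{n\geq k}[\Gr(k,n)]\BL^{\alpha n}t^n = \BL^{\alpha k}t^k \sum_{m\geq 0}[\Gr(k,m+k)](\BL^\alpha t)^m = \BL^{\alpha k}t^k\,\zeta_{\BP^k}(\BL^\alpha t),
\]
where the last equality is \eqref{eqn:zeta_P^k} applied at $\BL^\alpha t$. Finally,
\[
\zeta_{\BP^k}(\BL^\alpha t) = \prod_{i=0}^{k}\frac{1}{1-\BL^{i+\alpha}t} = \prod_{j=\alpha}^{\alpha+k}\frac{1}{1-\BL^j t} = \frac{\zeta_{\BP^{\alpha+k}}(t)}{\zeta_{\BP^{\alpha-1}}(t)},
\]
which gives the desired formula.

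Both steps are essentially bookkeeping; the only point that requires care is choosing the asymmetric form of Tartaglia (the second one in \Cref{prop:TARTAGLIA}) so that the $\BL^n$ factor appears on the $[\Gr(k-1,n+k-1)]$ term, which is precisely what is needed to convert a shift of the argument by $\BL$ into the induction. Once that is in place, the second identity is a one-line consequence of the first.
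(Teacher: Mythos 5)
Your proof is correct. The paper disposes of this lemma in two sentences, declaring \eqref{eqn:zeta_P^k} ``standard'' and noting that \eqref{eqn:grass-shiftate} follows easily from it; your argument supplies exactly the details the authors omit. The induction via the second motivic Tartaglia identity is a clean way to establish \eqref{eqn:zeta_P^k} — the functional equation $(1-t)f_k(t)=f_{k-1}(\BL t)$ is precisely the right recursion, and your observation about which form of Tartaglia to use (so the $\BL^n$ weight lands on the $[\Gr(k-1,\,\cdot\,)]$ term) is the key choice. An alternative ``standard'' route the authors may have had in mind is the Cell decomposition / Schubert stratification of $\Gr(k,n+k)$, or equivalently the generating-function identity for Gaussian binomials, but your inductive proof is at least as short. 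Your derivation of \eqref{eqn:grass-shiftate} by reindexing and substituting $t\mapsto \BL^\alpha t$ is exactly what ``follows easily'' cashes out to.
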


\begin{proof}
The second statement follows easily from the first one, which is standard.
\end{proof}

\begin{prop}[Refined hockey stick identity]\label{lemma:refined-stick}
Fix integers $\alpha,\gamma,g \in \BZ_{\geq 0}$ such that $\gamma \leq g$. There is an identity
\begin{equation*}\label{stick-l=l}
[\Gr(g+1,g+1+\alpha)] = \sum_{k=0}^\alpha\,[\Gr(\gamma,\gamma+\alpha-k)][\Gr(g-\gamma,g-\gamma+k)]\BL^{k(\gamma+1)}.
\end{equation*}
In particular, the right hand side is independent on $\gamma$.
\end{prop}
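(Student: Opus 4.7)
The plan is to identify the generating function (in an auxiliary variable $t$) whose $t^{\alpha}$-coefficient is each side of the claimed identity, and then observe that the generating function of the right hand side is, remarkably, independent of $\gamma$.

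More precisely, fix $\gamma$ and $g$ with $\gamma\leq g$, and treat $\alpha$ as a free variable. For the left hand side, \Cref{ex:gen-fun-grass} (applied with $k=g+1$) immediately gives
\[
\sum_{\alpha\geq 0}[\Gr(g+1,g+1+\alpha)]\,t^{\alpha}=\zeta_{\BP^{g+1}}(t).
\]
For the right hand side, I would exchange the order of summation, set $a=\alpha-k$, recognise the sum as a Cauchy product in $t$, and apply \Cref{ex:gen-fun-grass} twice (once in the variable $t$ and once in the variable $\BL^{\gamma+1}t$) to obtain
\[
\sum_{\alpha\geq 0}\,\sum_{k=0}^{\alpha}[\Gr(\gamma,\gamma+\alpha-k)][\Gr(g-\gamma,g-\gamma+k)]\BL^{k(\gamma+1)}t^{\alpha}
=\zeta_{\BP^{\gamma}}(t)\cdot\zeta_{\BP^{g-\gamma}}(\BL^{\gamma+1}t).
\]
Here the extension of the inner sum to all $k\geq 0$ is harmless, since $[\Gr(\gamma,\gamma+\alpha-k)]=0$ whenever $k>\alpha$ by our convention.

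The key step is then the telescoping
\[
\zeta_{\BP^{\gamma}}(t)\cdot\zeta_{\BP^{g-\gamma}}(\BL^{\gamma+1}t)=\prod_{i=0}^{\gamma}\frac{1}{1-\BL^{i}t}\cdot\prod_{j=\gamma+1}^{g+1}\frac{1}{1-\BL^{j}t}=\prod_{i=0}^{g+1}\frac{1}{1-\BL^{i}t}=\zeta_{\BP^{g+1}}(t),
\]
which uses the explicit product formula for $\zeta_{\BP^{k}}(t)$ recalled at the end of \Cref{sec:K0(Var)}. This equality is manifestly independent of $\gamma$, which yields both the main identity (by extracting the coefficient of $t^{\alpha}$) and the final assertion that the right hand side does not depend on $\gamma$.

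There is no serious obstacle: once one sees that the RHS is structured as a convolution whose factors are zeta functions of projective spaces, the proof reduces to a one-line product manipulation. The only small technical point to verify is the convention-handling that allows replacing $\sum_{k=0}^{\alpha}$ by $\sum_{k\geq 0}$ before recognising the Cauchy product, which is taken care of by the standing conventions $[\Gr(k,n)]=0$ for $k>n$.
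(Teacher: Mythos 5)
Your proof is correct and is essentially the same argument as the paper's: both identify the two sides as the $t^{\alpha}$-coefficients of $\zeta_{\BP^{g+1}}(t)$ and of $\zeta_{\BP^{\gamma}}(t)\cdot\zeta_{\BP^{g-\gamma}}(\BL^{\gamma+1}t)$ via \Cref{ex:gen-fun-grass}, and then use the telescoping factorisation of $\zeta_{\BP^{g+1}}(t)$. You merely present the argument in the reverse direction (coefficients first, zeta identity second), which is cosmetic.
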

\begin{proof}
    Notice the identity of zeta functions
\begin{align}\label{eqn: identiy of zeta}
\begin{split}
\zeta_{\BP^{g+1}}(t)&=\frac{1}{1-t}\cdots \frac{1}{1-t\BL^{\gamma}}\cdot \frac{1}{1-t\BL^{\gamma+1}}\cdots \frac{1}{1-t\BL^{\gamma+1}\BL^{g-\gamma}}\\
&= \zeta_{\BP^{\gamma}}(t)\cdot  \zeta_{\BP^{g-\gamma}}(t \BL^{\gamma+1}).
\end{split}
\end{align}
The conclusion follows by extracting the coefficient of $t^{\alpha}$ from both sides of \eqref{eqn: identiy of zeta}, and applying \Cref{ex:gen-fun-grass}.
\end{proof}

As a corollary of \Cref{lemma:refined-stick} we obtain an identity of binomial coefficients. 
\begin{corollary}\label{cor:refinedhockey}
Fix integers $\alpha,\gamma,g \in \BZ_{\geq 0}$ such that $\gamma \leq g$. Then
\[
\binom{g+1+\alpha}{g+1} = \sum_{k=0}^\alpha \binom{g-\gamma+k}{g-\gamma}\binom{\gamma+\alpha-k}{\gamma}.
\]
In particular, the right hand side is independent on $\gamma$.
\end{corollary}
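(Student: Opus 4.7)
The plan is to deduce the corollary directly from Proposition~4.5 by specialising the motivic identity to the (compactly-supported) Euler characteristic. Recall from item \ref{mot-measure-iv} of Section~2.1 that $\chi_c\colon K_0(\Var_{\BC})\to \BZ$ is a ring homomorphism sending $\BL\mapsto 1$. Applied to the explicit formula \eqref{eqn:motive-grassmannian}, it sends $[\Gr(k,n)]\mapsto \binom{n}{k}$: the $\BL$-factorials specialise, after canceling common factors of $(\BL-1)$, to the ordinary factorials in the binomial coefficient.

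Armed with this, I would apply $\chi_c$ term by term to the identity
\[
[\Gr(g+1,g+1+\alpha)] = \sum_{k=0}^\alpha\,[\Gr(\gamma,\gamma+\alpha-k)][\Gr(g-\gamma,g-\gamma+k)]\BL^{k(\gamma+1)}
\]
of Proposition~4.5. The factor $\BL^{k(\gamma+1)}$ maps to $1$, each Grassmannian class maps to the corresponding binomial coefficient, and the resulting identity is exactly the one claimed. The stated independence on $\gamma$ is inherited from the motivic statement (it was proved there without invoking the numerical identity).

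As a double-check, a completely elementary proof mirroring the generating-function argument for Proposition~4.5 is available: factor
\[
(1-t)^{-(g+2)} = (1-t)^{-(\gamma+1)}\cdot (1-t)^{-(g-\gamma+1)}
\]
and extract the coefficient of $t^\alpha$ via the negative binomial series $(1-t)^{-(m+1)}=\sum_{j\ge 0}\binom{m+j}{j}t^j$. This is the classical identity underlying the hockey stick formula, recovered at $\BL=1$.

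There is essentially no obstacle: the only thing that must be verified is the specialisation $\chi_c([\Gr(k,n)])=\binom{n}{k}$, which is immediate from \eqref{eqn:motive-grassmannian}. The corollary is thus a one-line consequence of the motivic identity already established.
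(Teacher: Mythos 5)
Your proof is correct and takes the same approach the paper intends: Corollary 4.6 is stated as an immediate consequence of Proposition 4.5, obtained by applying a motivic measure sending $\BL\mapsto 1$, and you carry this out explicitly via $\chi_c$, correctly noting that the $\BL$-binomial formula \eqref{eqn:motive-grassmannian} specialises to the ordinary binomial coefficient after the $(\BL-1)$-factors cancel. The generating-function double-check you add is the numerical shadow of the zeta-function argument used to prove Proposition 4.5 itself, so it is consistent with the paper's method though not needed once the specialisation is in place.
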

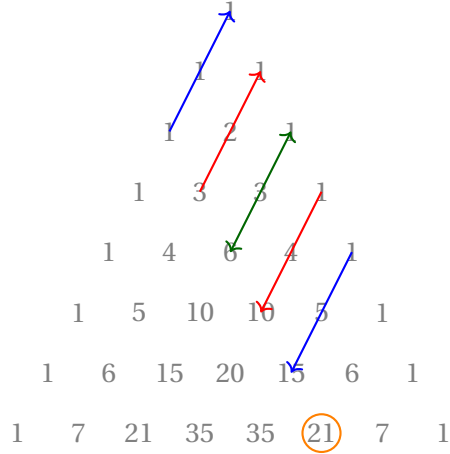
\begin{figure}[h!]
        \centering
        \begin{tikzpicture}
        \node at (0,0) {\tartaglia{7}};
        \draw[blue,thick,->] (2*0.8,-0.4)--(0.8,-2);
        
        \draw[red,thick,->] (1.5*0.8,0.4)--(0.5*0.8,-1.2);
        
        \draw[black!60!green,thick,<->] (1*0.8,1.2)--(0,-0.4);
        
        \draw[red,thick,<-] (0.5*0.8,2)--(-0.5*0.8,0.4);
        
        \draw[blue,thick,<-] (0,2.8)--(-0.8,1.2);

        \draw[orange,thick] (1.2,-2.8) circle  (0.25)   ;
    \end{tikzpicture}
\caption{Visual representation of \Cref{cor:refinedhockey}. In the picture $\alpha=2$, $g=4$. Green, red and blue illustrate the values $\gamma=2, 3, 4$, respectively. The latter (blue) recovers the classical \emph{hockey stick identity}.}

        \label{fig:refinedsticky}
    \end{figure}

\begin{lemma}\label{lemma:induzionestep}
Let $d, k\geq 1$ be integers. Then
\begin{equation*} 
\sum_{j=0}^{k}\,(-1)^{j} [\Gr(j,  j +d-1)][\Gr(k-j,d)]\BL^{\binom{j}{2} -j(k-1)}=0.
\end{equation*}
\end{lemma}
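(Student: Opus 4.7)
The plan is to recognise the sum $S(k,d) := \sum_{j=0}^{k}(-1)^{j} [\Gr(j,j+d-1)][\Gr(k-j,d)]\BL^{\binom{j}{2} -j(k-1)}$ as a unit multiple of the coefficient of $t^k$ in the trivial product $\zeta_{\BP^{d-1}}(t)\cdot \zeta_{\BP^{d-1}}(t)^{-1}=1$, which of course vanishes for $k\geq 1$.

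First, I would rewrite each of the two series so that the relevant Grassmannian classes appear naturally as coefficients. By Grassmannian duality $[\Gr(d-1,n+d-1)]=[\Gr(n,n+d-1)]$ combined with \eqref{eqn:zeta_P^k} of \Cref{ex:gen-fun-grass},
\[
\zeta_{\BP^{d-1}}(t)=\sum_{n\geq 0}[\Gr(n,n+d-1)]\,t^{n},
\]
while \Cref{lemma:zeta-inverse} yields
\[
\zeta_{\BP^{d-1}}(t)^{-1}=\sum_{i=0}^{d}(-1)^{i}\BL^{\binom{i}{2}}[\Gr(i,d)]\,t^{i}.
\]
Multiplying these two series and extracting the coefficient of $t^k$ (which vanishes for $k\geq 1$, since the product equals $1$) gives the identity
\[
\sum_{j=0}^{k}(-1)^{k-j}\BL^{\binom{k-j}{2}}[\Gr(j,j+d-1)][\Gr(k-j,d)]=0
\]
in $K_0(\Var_\BC)$.

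Finally, a quick algebraic manipulation converts this into the statement we want. A direct computation shows the binomial identity
\[
\binom{k-j}{2}=\binom{k}{2}+\binom{j}{2}-j(k-1),
\]
so that $(-1)^{k-j}\BL^{\binom{k-j}{2}}=(-1)^{k}\BL^{\binom{k}{2}}\cdot (-1)^{j}\BL^{\binom{j}{2}-j(k-1)}$. Factoring out the unit $(-1)^k\BL^{\binom{k}{2}}$, the above vanishing becomes $(-1)^k\BL^{\binom{k}{2}}\cdot S(k,d)=0$; since the statement $S(k,d)=0$ is naturally interpreted in the localisation $K_0(\Var_{\BC})[\BL^{-1}]$ (where negative powers of $\BL$ already appear on its left-hand side), we may cancel the unit and conclude $S(k,d)=0$.

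There is no real obstacle here: once one spots that $S(k,d)$ secretly comes from the pair (zeta function, inverse zeta function) of $\BP^{d-1}$, the proof is a single coefficient comparison. The only potentially delicate point is the use of duality to rewrite $\zeta_{\BP^{d-1}}(t)$ so that the Grassmannians appearing in its coefficients match $[\Gr(j,j+d-1)]$, after which the binomial identity $\binom{k-j}{2}-\binom{j}{2}+j(k-1)=\binom{k}{2}$ makes the sign and $\BL$-weight conversion automatic.
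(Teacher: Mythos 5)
Your proof is correct and is essentially the same as the paper's: both extract the coefficient of $t^k$ from the trivial identity $\zeta_{\BP^{d-1}}(t)\cdot\zeta_{\BP^{d-1}}(t)^{-1}=1$, using \Cref{ex:gen-fun-grass} and \Cref{lemma:zeta-inverse}, and then factor out the unit $(-1)^k\BL^{\binom{k}{2}}$ via the identity $\binom{k-j}{2}=\binom{k}{2}+\binom{j}{2}-j(k-1)$. Your explicit mention of Grassmannian duality and of working in $K_0(\Var_{\BC})[\BL^{-1}]$ is slightly more careful than the paper's phrasing, but the argument is the same.
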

\begin{proof}
    Consider the trivial identity
    \begin{align}\label{eqn: zeta trivial}
    \zeta_{\BP^{d-1}}(t)\zeta_{\BP^{d-1}}(t)^{-1}=1.    
    \end{align}
    By Lemmas \ref{lemma:zeta-inverse}, \ref{ex:gen-fun-grass}, extracting the 
     coefficient of $t^k$ of \eqref{eqn: zeta trivial} yields
    \begin{align*}
       0&= \sum_{j=0}^k\,[\Gr(d-1,j+d-1)](-1)^{k-j}[\Gr(k-j,d)] \BL^{\binom{k-j}{2}}\\
       &= (-1)^k\BL^{\frac{k^2-k}{2}}\sum_{j=0}^k\, (-1)^{j}[\Gr(j,j+d-1)][\Gr(k-j,d)] \BL^{\binom{j}{2}-j(k-1)},
    \end{align*}
    which is the required vanishing.
\end{proof}

As a corollary of \Cref{lemma:induzionestep} we obtain an identity of binomial coefficients. 
\begin{corollary} \label{cor:crossbinomial}
Let $d\geq k\geq 1$ be integers. Then
\[
\sum_{j=0}^k {(-1)^j\binom{j+d-1}{j}\binom{d}{k-j}} = 0.
\]
\end{corollary}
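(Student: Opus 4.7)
The plan is to deduce this numerical identity directly by applying a suitable motivic measure to the motivic identity of \Cref{lemma:induzionestep}. Specifically, I would use the compactly supported Euler characteristic $\chi_c\colon K_0(\Var_{\BC}) \to \BZ$ reviewed in \Cref{sec:K0(Var)}\ref{mot-measure-iv}, which is a ring homomorphism sending $\BL \mapsto 1$. The key input is that $\chi_c(\Gr(a,b)) = \binom{b}{a}$ for every Grassmannian, which follows either from its Schubert cell decomposition into $\binom{b}{a}$ affine cells, or from the explicit motivic formula \eqref{eqn:motive-grassmannian} after passing to the limit $\BL \to 1$ in the $\BL$-factorials (writing $\BL^i - 1 = (\BL-1)(1+\BL+\cdots+\BL^{i-1})$, the prefactors $(\BL-1)$ cancel between numerator and denominator, and the remaining expression specialises at $\BL=1$ to $n!/(k!(n-k)!)$).

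Applying $\chi_c$ termwise to the identity of \Cref{lemma:induzionestep}, the powers of $\BL$ collapse to $1$, the class $[\Gr(j,j+d-1)]$ becomes $\binom{j+d-1}{j}$, and $[\Gr(k-j,d)]$ becomes $\binom{d}{k-j}$. This produces exactly the desired vanishing
\[
\sum_{j=0}^{k} (-1)^{j} \binom{j+d-1}{j}\binom{d}{k-j}=0.
\]
The hypothesis $d \geq k \geq 1$ is precisely what is needed for the index range to be legitimate and to match that of \Cref{lemma:induzionestep}.

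There is no genuine obstacle here: the substantive content is entirely contained in the underlying motivic identity, and this corollary is simply its numerical shadow obtained by specialisation, in complete analogy with how \Cref{cor:refinedhockey} is extracted from \Cref{lemma:refined-stick}. If desired, one could alternatively pass through the Hodge--Deligne polynomial or the weight polynomial and then specialise further, but the Euler characteristic route is the most economical.
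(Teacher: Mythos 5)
Your proof is correct and is exactly the paper's intended derivation: apply the Euler-characteristic realisation $\chi_c$, which sends $\BL\mapsto 1$ and $[\Gr(a,b)]\mapsto\binom{b}{a}$, to the motivic vanishing of \Cref{lemma:induzionestep} (the negative powers of $\BL$ are harmless since $\chi_c$ factors through $K_0(\Var_{\BC})[\BL^{-1}]$). One minor inaccuracy in your closing remark: the hypothesis $d\ge k$ is not actually forced by \Cref{lemma:induzionestep}, which holds for all $d,k\ge 1$, nor by the binomial identity itself (for $k>d$ the extraneous binomials $\binom{d}{k-j}$ simply vanish), so the restriction in the corollary is presentational rather than essential.
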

\begin{figure}[h!]
        \centering
        \begin{tikzpicture}
        \node at (0,0) {\tartaglia{7}};
        \draw[blue,thick,->] (2*0.8,-0.4)--(-2*0.8,-0.4);
        \draw[blue,thick,->] (-1.5*0.8,0.4)--(0.4,-2.8);
        \draw[red,thick,->] (2*0.2,-1.2)--(-2*1,-1.2);
        \draw[red,thick,->] (-2*0.8,-0.4)--(-0.4,-2.8);
        \draw[black!60!green,thick,->] (2*0.4,1.2)--(-2*0.4,1.2);
        \draw[black!60!green,thick,->] (-2*0.2,2)--(0.4,0.4); 
    \end{tikzpicture}
\caption{Visual representation of \Cref{cor:crossbinomial}. Green, red and blue illustrate the values $(d,k)=(2,2)$, $(3,5)$, $(4,4)$, respectively.}  
    \end{figure}
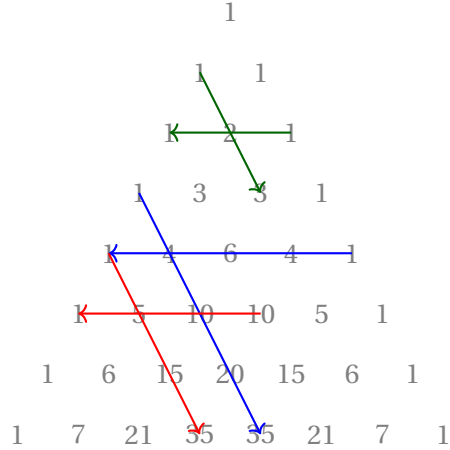
    
\begin{prop}\label{lemma:yetanotheridentity}
For every $1 \leq \varepsilon \leq d\leq m$ there is an identity
\[
[\Gr(d-\varepsilon,m-\varepsilon)][\Gr(\varepsilon-1,m)]  =\sum_{j=0}^{d-\varepsilon}\,(-1)^{j} [\Gr(d-1,  j +m )][\Gr(\varepsilon+j,d)]\BL^{\binom{j+1}{2} -(j+1)(d-\varepsilon)}.
\]
\end{prop}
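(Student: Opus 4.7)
The plan is to rewrite the right-hand side as a truncated convolution of zeta functions, evaluate its complement via the tautology $\zeta_{\BP^{d-1}}(t)^{-1}\cdot \zeta_{\BP^{d-1}}(t)=1$, and match the resulting expression to the left-hand side using an auxiliary identity coming from the factorization $\zeta_{\BP^{d-1}}(t)=\zeta_{\BP^{\varepsilon-1}}(t)\cdot\zeta_{\BP^{k-1}}(\BL^\varepsilon t)$, where $k:=d-\varepsilon$.

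Concretely, I first substitute $i=k-j$ in the right-hand side; the Grassmannian duality $[\Gr(\varepsilon+j,d)]=[\Gr(d-i,d)]=[\Gr(i,d)]$ together with the elementary identity $\binom{j+1}{2}-(j+1)k=\binom{i}{2}-\binom{k+1}{2}$ transform it into $(-1)^k\BL^{-\binom{k+1}{2}}\,S$, where
\[
S=\sum_{i=0}^{k}(-1)^i[\Gr(i,d)]\BL^{\binom{i}{2}}[\Gr(d-1,m+k-i)]
\]
is a truncation of the coefficient of $t^{m-\varepsilon+1}$ in $\zeta_{\BP^{d-1}}(t)^{-1}\cdot\zeta_{\BP^{d-1}}(t)=1$, which vanishes since $m\ge d$ forces $m-\varepsilon+1\ge 1$. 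Thus $S$ equals minus its tail $\sum_{i=k+1}^d$, which after the change $i\mapsto d-i'$ becomes
\[
S=(-1)^{d-1}\sum_{i'=0}^{\varepsilon-1}(-1)^{i'}[\Gr(i',d)]\BL^{\binom{d-i'}{2}}[\Gr(d-1,m-\varepsilon+i')].
\]
To match this with the left-hand side, I invoke the identity
\[
\sum_{l=0}^{k}(-1)^l[\Gr(l,k)]\BL^{\binom{l}{2}+l\varepsilon}[\Gr(d-1,m+k-l)]=[\Gr(\varepsilon-1,m)],
\]
obtained as the coefficient of $t^{m-\varepsilon+1}$ in $\zeta_{\BP^{k-1}}(\BL^\varepsilon t)^{-1}\cdot\zeta_{\BP^{d-1}}(t)=\zeta_{\BP^{\varepsilon-1}}(t)$. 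Multiplying this by $[\Gr(k,m-\varepsilon)]$ realizes the left-hand side of the target as a sum indexed by $l\in[0,k]$, and applying the refined hockey stick (\Cref{lemma:refined-stick}) to $[\Gr(d-1,m-\varepsilon+i')]$ with $\gamma=\varepsilon-1$ (to extract a factor $[\Gr(\varepsilon-1,\cdot)]$) followed by the $\BL$-Vandermonde collapse $[\Gr(N,d)]=\sum_{a+b=N}[\Gr(a,\varepsilon)][\Gr(b,k)]\BL^{b(\varepsilon-a)}$ identifies the two expressions.

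The main obstacle will be the final matching: the reorganization of the resulting double sum demands careful bookkeeping of $\BL$-powers and summation ranges, and one must verify that the Vandermonde collapse extracts precisely the factor $[\Gr(k,m-\varepsilon)]$ needed on the left-hand side. A conceptually cleaner alternative would be an induction on $m$: the base case $m=d-1$ is exactly \Cref{lemma:induzionestep} (there the left-hand side vanishes because $[\Gr(k,k-1)]=0$), and the Tartaglia identities of \Cref{prop:TARTAGLIA} relate the target at $m+1$ to the one at $m$ modulo auxiliary instances of the proposition with smaller $d$; a simultaneous induction on $d$ should then close the argument.
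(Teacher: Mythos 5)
The algebra in your primary route is sound as far as you carry it out: the substitution $i=k-j$ with Grassmannian duality and the exponent identity $\binom{j+1}{2}-(j+1)k=\binom{i}{2}-\binom{k+1}{2}$ correctly recasts the right-hand side as $(-1)^k\BL^{-\binom{k+1}{2}}S$; the vanishing of the coefficient of $t^{m-\varepsilon+1}$ in $\zeta_{\BP^{d-1}}(t)^{-1}\zeta_{\BP^{d-1}}(t)=1$ correctly turns $S$ into minus the tail over $i\in[k+1,d]$; and the auxiliary identity $\sum_{l=0}^{k}(-1)^l[\Gr(l,k)]\BL^{\binom{l}{2}+l\varepsilon}[\Gr(d-1,m+k-l)]=[\Gr(\varepsilon-1,m)]$ does follow from the factorization $\zeta_{\BP^{d-1}}(t)=\zeta_{\BP^{\varepsilon-1}}(t)\zeta_{\BP^{k-1}}(\BL^\varepsilon t)$. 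The gap is the final matching, which is where the actual content of the proposition lies. Applying \Cref{lemma:refined-stick} with $\gamma=\varepsilon-1$ to $[\Gr(d-1,m-\varepsilon+i')]$ extracts a factor $[\Gr(\varepsilon-1,m+i'-d-k')]$ indexed by a new variable $k'$, not the wanted $[\Gr(\varepsilon-1,m)]$; after substituting the auxiliary identity for $[\Gr(\varepsilon-1,m)]$ on the other side you are comparing a double sum in $(i',k')$ with a product of $[\Gr(k,m-\varepsilon)]$ and a sum in $l$, and the claim that a Vandermonde collapse reorganizes the former into the latter is asserted but not demonstrated. You flag this yourself as the main obstacle, and an acknowledged obstacle is still a gap.

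Your fallback sketch is, in substance, the paper's proof: a double induction, outer on $d$ (base $d=\varepsilon=1$) and inner on $m$, with the inner base case reducing to \Cref{lemma:induzionestep} after duality and factoring out a power of $\BL$ (the paper takes $m=d$, but your $m=d-1$ is equally valid since both sides of the target then vanish), and the inner inductive step driven by \Cref{prop:TARTAGLIA}. However, the paper's step from $m$ to $m+1$ is not a single Tartaglia application: it uses both Tartaglia identities, invokes the inner induction hypothesis once and the outer hypothesis at $d-1$ twice, peels off and re-indexes a boundary term, and closes with an explicit $\BL$-factorial simplification followed by one more Tartaglia move. The phrase ``a simultaneous induction on $d$ should then close the argument'' does not substitute for that chain of manipulations. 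In short, you have correctly identified the relevant tools --- the $\zeta\zeta^{-1}=1$ coefficient vanishing, the zeta factorization, \Cref{lemma:induzionestep} as base case, and Tartaglia for the step --- but neither of your two routes is carried to completion.
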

\begin{proof} 
We will use an induction on $d$. The base step is the case $d=\varepsilon=1$, which yields the identity $1=1$ for every $m\geq 1$.

Now assume the statement holds for $d-1$ and prove it for $d$. In this part we want to proceed by induction on $m$. The base step is the proof of the formula for $m=d$:
\[
[\Gr(\varepsilon-1,d)]  =\sum_{j=0}^{d-\varepsilon}\,(-1)^{j} [\Gr(d-1,  j +d )][\Gr(\varepsilon+j,d)]\BL^{\binom{j+1}{2} -(j+1)(d-\varepsilon)}.
\]
This is \Cref{lemma:induzionestep} after a simple manipulation.

We are left to assume the statement holds for $m$ and prove it for $m+1$. First apply the motivic Tartaglia's identities to $[\Gr(d-1,j+m+1)]$, so that the right hand side of the sought after identity becomes
\begin{multline}\label{eqn:step1}
\sum_{j=0}^{d-\varepsilon}\,(-1)^{j}[\Gr(d-2,  j +m  )][\Gr(\varepsilon+j,d)]\BL^{\binom{j+1}{2} -(j+1)(d-\varepsilon)} \\
+\BL^{d-1}\sum_{j=0}^{d-\varepsilon}\,(-1)^{j}[\Gr(d-1,  j +m )][\Gr(\varepsilon+j,d)]\BL^{\binom{j+1}{2} -(j+1)(d-\varepsilon)}.
\end{multline}
Using the induction hypothesis on $m$ applied to the second sum, \eqref{eqn:step1} becomes
\begin{multline}\label{eqn:step2}
\BL^{d-1}[\Gr(d-\varepsilon,m-\varepsilon)][\Gr(\varepsilon-1,m)]\\
+\sum_{j=0}^{d-\varepsilon}\,(-1)^{j}[\Gr(d-2,  j +m  )][\Gr(\varepsilon+j,d)]\BL^{\binom{j+1}{2} -(j+1)(d-\varepsilon)}.
\end{multline}
Applying the motivic Tartaglia's identities to $[\Gr(\varepsilon+j,d)]$ in the second sum, \eqref{eqn:step2} becomes
\begin{multline}\label{eqn:step3}
    \BL^{d-1}[\Gr(d-\varepsilon,m-\varepsilon)][\Gr(\varepsilon-1,m)] \\ 
    +\BL^{\varepsilon-1}\sum_{j=0}^{d-\varepsilon} (-1)^{j} [\Gr(d-2,  j +m  )] [\Gr(\varepsilon+j,d-1)]\BL^{\binom{j+1}{2} -(j+1)(d-\varepsilon-1)}\\
    +\sum_{j=0}^{d-\varepsilon} (-1)^{j} [\Gr(d-2,  j +m  )] [\Gr(\varepsilon+j-1,d-1)]\BL^{\binom{j+1}{2} -(j+1)(d-\varepsilon)}.
\end{multline}
Applying the induction hypothesis on $d$ to the first sum, \eqref{eqn:step3} becomes 
\begin{multline}\label{eqn:step4}
\BL^{d-1}[\Gr(d-\varepsilon,m-\varepsilon)][\Gr(\varepsilon-1,m)]  +\BL^{\varepsilon-1}[\Gr(d-\varepsilon-1,m-\varepsilon)][\Gr(\varepsilon-1,m)]  \\ +\sum_{j=0}^{d-\varepsilon}\,(-1)^{j}  [\Gr(d-2,  j +m  )]   [\Gr(\varepsilon+j-1,d-1)] \BL^{\binom{j+1}{2} -(j+1)(d-\varepsilon)}.
\end{multline}
Extract the term in $j=0$ of the remaining sum in \eqref{eqn:step4}, and relabel the indices to start again summing from $0$, so that \eqref{eqn:step4} becomes
\begin{multline}\label{eqn:step5}
\BL^{\varepsilon-1} [\Gr(d-\varepsilon,m+1-\varepsilon)]   [\Gr(\varepsilon-1,m)] + [\Gr(d-2,  m  )]   [\Gr(\varepsilon -1,d-1)] \BL^{ - (d-\varepsilon)} \\ 
\BL^{-(d-\varepsilon)}\sum_{j=0}^{d-\varepsilon-1}\,(-1)^{j+1}  [\Gr(d-2,  j+1 +m  )]   [\Gr(\varepsilon+j ,d-1)] \BL^{\binom{j+1}{2} -(j+1)(d-\varepsilon-1)}.
\end{multline}
Using again the induction hypothesis on $d-1$, \eqref{eqn:step5} becomes
\begin{multline}\label{eqn:step6}
\BL^{\varepsilon-1} [\Gr(d-\varepsilon,m+1-\varepsilon)]   [\Gr(\varepsilon-1,m)] \\ 
+\BL^{\varepsilon-d}\big([\Gr(d-2,m)]   [\Gr(\varepsilon -1,d-1)]  -  [\Gr(d-1-\varepsilon,m+1-\varepsilon)][\Gr(\varepsilon-1,m+1)]\big).
\end{multline}
Expanding the second line of \eqref{eqn:step6} using \eqref{eqn:motive-grassmannian}, we get
\begin{multline}    \label{eqn:lunghissimaprevious}
    \BL^{\varepsilon-1} [\Gr(d-\varepsilon,m+1-\varepsilon)]   [\Gr(\varepsilon-1,m)] +\\
    \quad \quad \BL^{ \varepsilon-d}\left( \frac{[m]_{\BL}!}{[d-2]_{\BL}![m-d+2]_{\BL}!}\frac{[d-1]_{\BL}!}{[\varepsilon-1]_{\BL}![d-\varepsilon]_{\BL}!}\right.-\\
    \left.\frac{[m+1-\varepsilon]_{\BL}!}{[d-1-\varepsilon]_{\BL}![m-d+2]_{\BL}!}  \frac{[m+1]_{\BL}!}{[\varepsilon-1]_{\BL}![m-\varepsilon+2]_{\BL}!}  \right).
\end{multline}
The second summand can be simplified by using the definition of $[a]_{\BL}!$ in \eqref{eqn:motive-grassmannian}, and then arranged until a product of Grassmannian is found:
\begin{align*}
    \BL^{ \varepsilon-d}&\frac{[m]_{\BL}!}{ [m-d+2]_{\BL}![\varepsilon-1]_{\BL}![d-1-\varepsilon]_{\BL}! }\left(\frac{  \BL^{d-1}-1 }{  \BL^{d-\varepsilon}-1}-\frac{ \BL^{m+1}-1}{\BL^{m-\varepsilon+2}-1}\right) \\
    &=\frac{[m]_{\BL}!}{ [m-d+1]_{\BL}![\varepsilon-2]_{\BL}![d-\varepsilon]_{\BL}!} \frac{1}{(\BL^{m-\varepsilon+2}-1)} \\
    &=\frac{[m]_{\BL}![m-\varepsilon+1]_{\BL}!}{[m-d+1]_{\BL}![\varepsilon-2]_{\BL}![d-\varepsilon]_{\BL}! [m-\varepsilon+2]_{\BL}!}   \\
    &=[\Gr(\varepsilon-2,m)][\Gr(d-\varepsilon,m-\varepsilon+1)].
\end{align*}
Finally, we go back to \eqref{eqn:lunghissimaprevious} and apply one last time the motivic Tartaglia's identity to get the required identity
\begin{multline*}
\BL^{\varepsilon-1} [\Gr(d-\varepsilon,m+1-\varepsilon)]   [\Gr(\varepsilon-1,m)] +  [\Gr(\varepsilon-2,m )][\Gr(d-\varepsilon,m-\varepsilon+1 )]\\
=[\Gr(d-\varepsilon,m+1-\varepsilon)]   [\Gr(\varepsilon-1,m+1)].
\end{multline*}
The proof of the inductive step, and hence of the proposition, is complete.
\end{proof}
 
\begin{prop}\label{motivic-vanishing-double-grass}
Fix $h,k \in \BZ_{\geq 0}$, with $h<k$. Then
\[
\sum_{j=h}^{k}(-1)^j[\Gr(h,j)][\Gr(j,k)]\BL^{\binom{j+1}{2}-j(h+1)} = 0.
\]
\end{prop}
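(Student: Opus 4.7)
The plan is to reduce the given alternating sum to a value of the $\BL$-binomial theorem (\Cref{lemma:zeta-inverse}) evaluated at $t=1$, which vanishes because the resulting product $\prod_{i=0}^{k-h-1}(1-\BL^i)$ contains the factor $1-\BL^0=0$.

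The key step will be the motivic factorization identity
\[
[\Gr(h,j)][\Gr(j,k)] = [\Gr(h,k)]\,[\Gr(j-h,\, k-h)]
\]
in $K_0(\Var_\BC)$. This is a straightforward consequence of the explicit formula \eqref{eqn:motive-grassmannian}: writing both sides in terms of $[a]_{\BL}!$, the factor $[j]_{\BL}!$ cancels on the left while $[k-h]_{\BL}!$ cancels on the right, leaving $[k]_{\BL}!/([h]_{\BL}![j-h]_{\BL}![k-j]_{\BL}!)$ in both cases. One may view this as the motivic upgrade of the classical binomial identity $\binom{j}{h}\binom{k}{j}=\binom{k}{h}\binom{k-h}{j-h}$.

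Substituting the factorization into the target sum, pulling $[\Gr(h,k)]$ outside, and reindexing with $i=j-h$, I would rewrite the sum as
\[
(-1)^h [\Gr(h,k)]\, \BL^{-\binom{h+1}{2}}\sum_{i=0}^{k-h}(-1)^i [\Gr(i,k-h)]\,\BL^{\binom{i}{2}},
\]
using the elementary identity
\[
\binom{i+h+1}{2}-(i+h)(h+1)=\binom{i}{2}-\binom{h+1}{2},
\]
which one checks by a one-line expansion. The negative power of $\BL$ is merely a bookkeeping device (the original sum lives in $K_0(\Var_\BC)$ after cancellations), so we may freely work in the localization $K_0(\Var_\BC)[\BL^{-1}]$ throughout.

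By \Cref{lemma:zeta-inverse} evaluated at $t=1$, the remaining inner sum equals
\[
\zeta_{\BP^{k-h-1}}(1)^{-1}=\prod_{i=0}^{k-h-1}(1-\BL^i),
\]
and since the hypothesis $h<k$ ensures $k-h-1\geq 0$, this product contains the factor $1-\BL^0=0$ and therefore vanishes. The only mild obstacle is spotting the Grassmannian factorization above; once in place, the rest is a routine manipulation of $\BL$-powers and a direct appeal to the $\BL$-binomial theorem.
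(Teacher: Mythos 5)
Your argument is correct, and it is genuinely different from the paper's proof. The paper obtains the vanishing by observing that $\zeta_{\BP^{h}}(t\BL^{k-h-1})\cdot\zeta_{\BP^{k-1}}(t)^{-1}=(1-t)\cdots(1-t\BL^{k-h-2})$ is a polynomial of degree $k-h-1$, and then extracts the coefficient of $t^{k-h}$ (necessarily zero) via the convolution given by \Cref{lemma:zeta-inverse} and \Cref{ex:gen-fun-grass}; the target identity appears after relabeling and peeling off a global power of $\BL$. You instead use the $\BL$-analogue of the Chu--Vandermonde factorization $[\Gr(h,j)][\Gr(j,k)]=[\Gr(h,k)][\Gr(j-h,k-h)]$, which collapses the double-Grassmannian sum to a single one in $[\Gr(i,k-h)]$, and then specialize the $\BL$-binomial theorem at $t=1$ to hit the vanishing factor $1-\BL^0$. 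I checked the factorization directly against \eqref{eqn:motive-grassmannian}, the exponent bookkeeping $\binom{i+h+1}{2}-(i+h)(h+1)=\binom{i}{2}-\binom{h+1}{2}$, and the sign manipulation; all are right, and the hypothesis $h<k$ is exactly what makes the product $\prod_{i=0}^{k-h-1}(1-\BL^i)$ contain the factor $1-\BL^0$. Your route is arguably more transparent, since it isolates the Grassmannian Vandermonde identity as the key structural ingredient and uses the $\BL$-binomial theorem in the most direct way (as a polynomial identity evaluated at $t=1$) rather than via coefficient extraction from a tailored zeta-function product; the paper's choice, on the other hand, fits more uniformly with the style of the surrounding lemmas (\Cref{lemma:induzionestep}, \Cref{lemma:horisticky}), all of which extract coefficients from zeta-function identities. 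On the localization caveat: both your proof and the paper's leave a residual power of $\BL$ multiplying the sum, so both implicitly invoke that $\BL$ is not a zero divisor in the ambient ring; since all classes involved lie in the subring $\BZ[\BL,\BL^{-1}]$, where this is automatic, the concern you flagged is harmless for either proof.
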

\begin{proof}
    Notice the identity 
    \begin{align}\label{eqn: zeta for 4.11}
        \zeta_{\BP^{h}}(t\BL^{k-h-1})\cdot\zeta_{\BP^{k-1}}(t)^{-1}=(1-t)\cdots (1-t\BL^{k-h-2}),
    \end{align}
    where the right-hand-side is a polynomial of degree $k-h-1$. By Lemmas \ref{lemma:zeta-inverse}, \ref{ex:gen-fun-grass},  extracting the coefficient of $t^{k-h}$ from \eqref{eqn: zeta for 4.11} yields
    \begin{align*}
        0&=\sum_{\alpha=0}^{k-h}\,[\Gr(\alpha,\alpha+h)]\BL^{\alpha(k-h-1)}\cdot (-1)^{k-h-\alpha}[\Gr(k-h-\alpha,k)]\BL^{\binom{k-h-\alpha}{2}}\\
        &=\sum_{j=h}^{k}\,[\Gr(j-h,j)]\BL^{(j-h)(k-h-1)}\cdot (-1)^{k-j}[\Gr(k-j,k)]\BL^{\binom{k-j}{2}}\\
        &=(-1)^{k}\BL^{\frac{k(k-1)}{2}-h(k-h-1)}\sum_{j=h}^{k}\,(-1)^j[\Gr(h,j)][\Gr(j,k)]\BL^{\binom{j+1}{2}-j(h+1)},
    \end{align*}
    which is the required vanishing.
\end{proof}

\begin{corollary}\label{lemma:triangle-vanishing}
Fix $h,k \in \BZ_{\geq 0}$, with $h<k$. Then
\[
\sum_{j=h}^{k}(-1)^j \binom{j}{h}\binom{k}{j} = 0.
\]
\end{corollary}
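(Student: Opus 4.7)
The plan is to obtain this binomial identity as the Euler characteristic specialisation of the motivic identity just proved in \Cref{motivic-vanishing-double-grass}. Concretely, I would apply the ring homomorphism $\chi_c \colon K_0(\Var_{\BC}) \to \BZ$ (equivalently, the composition of motivic measures discussed in \Cref{rmk:pre-lambda-maps} that sends $\BL \mapsto 1$) to the identity
\[
\sum_{j=h}^{k}(-1)^j[\Gr(h,j)][\Gr(j,k)]\BL^{\binom{j+1}{2}-j(h+1)} = 0.
\]

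The three observations that do all the work are: the powers of $\BL$ collapse to $1$ under the specialisation $\BL \mapsto 1$; and the Euler characteristic of the Grassmannian $\Gr(a,b)$ equals $\binom{b}{a}$ (for instance from the explicit formula \eqref{eqn:motive-grassmannian}, since $[a]_{\BL}!$ specialises to $a!$ after extracting the factor $(\BL-1)^a$ and then sending $\BL\to 1$, or directly by counting $\TT$-fixed points of the standard torus action). Applying the ring homomorphism termwise and using that it is additive therefore yields
\[
\sum_{j=h}^{k}(-1)^j \binom{j}{h}\binom{k}{j} = 0,
\]
which is exactly the statement of the corollary.

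There is essentially no obstacle: once \Cref{motivic-vanishing-double-grass} is established, the corollary is a one-line specialisation. The only conceptual point to record is that the identity holds in $K_0(\Var_{\BC})$ and so is preserved under every motivic measure; the choice of $\chi_c$ is simply the cleanest one for recovering the classical binomial identity. (Alternatively, one could give a direct combinatorial proof using the Vandermonde-style identity or the generating function $(1-t)^{k-h-1}$, mirroring \eqref{eqn: zeta for 4.11}, but invoking \Cref{motivic-vanishing-double-grass} is the natural route in this paper.)
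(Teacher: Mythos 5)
Your proof is correct and is exactly the paper's intended argument: the corollary is stated immediately after \Cref{motivic-vanishing-double-grass} with no separate proof precisely because it is the $\BL\mapsto 1$ (equivalently, Euler-characteristic) specialisation of that motivic identity. Your bookkeeping — the powers of $\BL$ collapse to $1$ and $\chi(\Gr(a,b))=\binom{b}{a}$, with the removable singularity of $[\Gr(k,n)]$ at $\BL=1$ handled by first cancelling the factors of $(\BL-1)$ — is all in order.
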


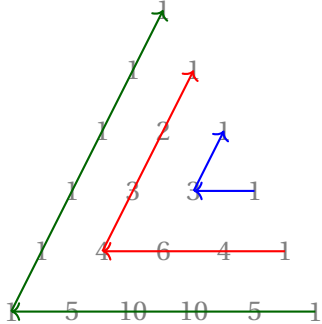
\begin{figure}[h!]
        \centering
        \begin{tikzpicture}
            \node at (0,0) {\tartaglia{5}};
            \draw[black!60!green,thick,->] (2,-2)--(-2,-2); %G
            \draw[black!60!green,thick,->] (-2,-2)--(0,2);            
            \draw[red,thick,->] (1.6,-1.2)--(-0.8,-1.2);
            \draw[red,thick,->] (-0.8,-1.2)--(0.4,1.2);            
            \draw[blue,thick,->] (1.2,-0.4)--(0.4,-0.4);
            \draw[blue,thick,->] (0.4,-0.4)--(0.8,0.4);
        \end{tikzpicture}
        \caption{Visual representation of \Cref{lemma:triangle-vanishing}. In the picture $k=4$. Green, red and blue illustrate the values $h=0, 1, 2$, respectively.}
        \label{fig:triangles}
    \end{figure}

\begin{lemma}\label{lemma:horisticky}
Let $d>i\geq 0$ be integers. Then
\[
[\Gr(i,d-1)]\BL^{\binom{i+1}{2}}= \sum_{\alpha=0}^i\,(-1)^{\alpha+i}[\Gr(\alpha,d)]\BL^{\binom{\alpha}{2}}.
\]
\end{lemma}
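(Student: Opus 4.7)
I would give a clean generating-function proof built on \Cref{lemma:zeta-inverse}. The key observation is the factorisation
\[
\frac{\zeta_{\BP^{d-1}}(t)^{-1}}{1-t} \;=\; \prod_{i=1}^{d-1}(1-\BL^i t) \;=\; \zeta_{\BP^{d-2}}(\BL t)^{-1},
\]
which follows by peeling off the $i=0$ factor of $\zeta_{\BP^{d-1}}(t)^{-1}=\prod_{i=0}^{d-1}(1-\BL^i t)$ and then re-indexing. Expanding both sides via \Cref{lemma:zeta-inverse} and extracting the coefficient of $t^i$ yields the identity immediately: the left-hand side produces the partial sum $\sum_{\alpha=0}^{i}(-1)^\alpha[\Gr(\alpha,d)]\BL^{\binom{\alpha}{2}}$, while the right-hand side gives $(-1)^i t^i \BL^{i}\cdot\BL^{\binom{i}{2}}[\Gr(i,d-1)]$, using $i+\binom{i}{2}=\binom{i+1}{2}$. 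Multiplying by $(-1)^i$ rearranges the equality into the stated form.

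As an alternative (more self-contained) route, I would do induction on $i$. The base case $i=0$ is trivial, since both sides equal $1$. For the inductive step, define $S_i=\sum_{\alpha=0}^{i}(-1)^{\alpha+i}[\Gr(\alpha,d)]\BL^{\binom{\alpha}{2}}$ and compute
\[
S_i + S_{i-1} \;=\; [\Gr(i,d)]\BL^{\binom{i}{2}},
\]
since all intermediate terms telescope by the sign flip $(-1)^{\alpha+i}+(-1)^{\alpha+i-1}=0$. Applying the inductive hypothesis to $S_{i-1}$ reduces the goal to the identity
\[
\BL^{i}[\Gr(i,d-1)] + [\Gr(i-1,d-1)] \;=\; [\Gr(i,d)],
\]
which is precisely the first motivic Tartaglia identity in \Cref{prop:TARTAGLIA} (applied with $i \mapsto i-1$, $d\mapsto d-1$).

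There is no genuine obstacle: the only subtlety is keeping track of the exponent $\binom{i+1}{2}$ versus $\binom{i}{2}$ and the alternating signs. I would prefer the first route in the paper, since it is parallel in spirit to the proofs of \Cref{ex:gen-fun-grass}, \Cref{lemma:induzionestep} and \Cref{motivic-vanishing-double-grass}, and fits the ``extract coefficient of $t^i$ from a zeta-function identity'' paradigm of \Cref{sec:motives-of-grass}.
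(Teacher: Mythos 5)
Your first route is exactly the paper's proof: both extract the coefficient of $t^i$ from the identity $\zeta_{\BP^{d-2}}(\BL t)^{-1}=\zeta_{\BP^{d-1}}(t)^{-1}\cdot\frac{1}{1-t}$ and expand via \Cref{lemma:zeta-inverse}, with the same bookkeeping $i+\binom{i}{2}=\binom{i+1}{2}$. Your alternative induction via telescoping and \Cref{prop:TARTAGLIA} is also correct and self-contained, though the paper does not use it; it trades the uniform ``zeta-coefficient'' template of \Cref{sec:motives-of-grass} for a slightly more hands-on argument.
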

\begin{proof}
    Consider the identity
    \begin{align}\label{eqn: zeta for 4.13}
        \zeta_{\BP^{d-2}}(t\BL)^{-1}=\zeta_{\BP^{d-1}}(t)^{-1}\cdot\frac{1}{1-t}.
    \end{align}
    Extracting the coefficient of $t^i$ from \eqref{eqn: zeta for 4.13} yields
    \[
    (-1)^i\BL^{i}[\Gr(i,d-1)]\BL^{\binom{i}{2}}=\sum_{\alpha=0}^i\,(-1)^{\alpha}[\Gr(\alpha,d)]\BL^{\binom{\alpha}{2}},
    \]
    which is equivalent to the required identity.
\end{proof}

    \begin{figure}[h!]
        \centering
        \begin{tikzpicture}
            \node at ( 0,0) {\tartaglia{5}};
            \draw[red, thick] (-1.6,-1.2)--(0,-1.2) ;         
            \draw[black!60!green, thick] (-0.8,0.4)--(0,0.4) ;
            \draw[blue, thick] (-2,-2)--(1.2,-2)  ;            
        \draw[thick, red] (0.4,-0.4) circle  (0.25)  ;
        \draw[thick, blue] (1.6,-1.2) circle  (0.25)  ;
        \draw[thick, black!60!green] (0.4,1.2) circle  (0.25)  ;
        \end{tikzpicture}
        \caption{Visual representation of the numerical counterpart of \Cref{lemma:horisticky}. Green, red and blue represent the values $(i,d)=(1,2)$, $(2,4)$, $(4,5)$, respectively.}%Green is for $(i,d)=(1,2)$, blue is for $(i,d)=(2,4)$, red is for $(i,d)=(4,5)$.}
        \label{fig:horisticky}
    \end{figure}
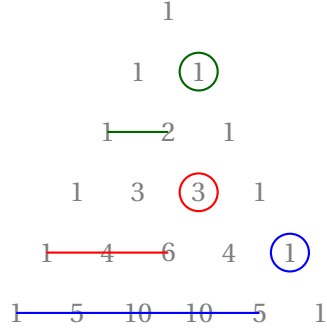
    
We present now a divisibility criterion that will be used in \Cref{subsec:indiani}.

\begin{prop}\label{lemm:congruenc}
Let $0<k\le n$ be integers. The motive $[\Gr (k,n)]$ is divisible by $[\BP^2]$ in $K_0(\Var_{\BC})$ if and only if either
\begin{itemize} 
\item [\mylabel{case-i}{\normalfont{(i)}}] $k\equiv 1 \pmod 3$, and $n\equiv 0 \pmod 3 $, or
\item  [\mylabel{case-ii}{\normalfont{(ii)}}]$k\equiv 2 \pmod 3$, and $n\not \equiv 2 \pmod 3$.
\end{itemize}
\end{prop}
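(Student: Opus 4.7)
The plan is to identify $[\BP^2] = 1 + \BL + \BL^2$ with the third cyclotomic polynomial $\Phi_3(\BL)$ and reduce the statement to a $\Phi_3$-adic valuation computation on the explicit polynomial expression \eqref{eqn:motive-grassmannian}.

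First I would argue that divisibility by $[\BP^2]$ in $K_0(\Var_{\BC})$ is equivalent to divisibility of the integer polynomial $[\Gr(k,n)](\BL)$ by $\Phi_3(\BL)$ in $\BZ[\BL]$. One direction is immediate, since the natural map $\BZ[\BL] \to K_0(\Var_{\BC})$ is a ring homomorphism. For the other direction, apply any realisation $K_0(\Var_{\BC}) \to \BZ[q]$ sending $\BL \mapsto q$ (for instance the Hodge--Deligne polynomial \ref{mot-measure-ii} with $u = v = q^{1/2}$, restricted to the subring generated by the motives of varieties whose classes are polynomials in $\BL$). Since $\Phi_3(q)$ is monic, the quotient $[\Gr(k,n)](q)/\Phi_3(q)$, if it exists in $\BQ(q)$, automatically lies in $\BZ[q]$, so a factorisation in $K_0(\Var_{\BC})$ forces one in $\BZ[\BL]$. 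This reduction step is the only subtlety, since $K_0(\Var_{\BC})$ is not known to be a domain or a polynomial ring in $\BL$.

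Next I would compute the $\Phi_3$-adic valuation of $[m]_{\BL}! = \prod_{j=1}^m (\BL^j - 1)$. Evaluating at a primitive cube root of unity $\omega$, the factor $\BL^j - 1$ vanishes precisely when $3 \mid j$, and then to order one (as $\BL^j - 1$ is separable). Hence $v_{\Phi_3}([m]_{\BL}!) = \lfloor m/3 \rfloor$, which yields the equivalence
\[
\Phi_3(\BL) \mid [\Gr(k,n)] \iff \lfloor n/3 \rfloor > \lfloor k/3 \rfloor + \lfloor (n-k)/3 \rfloor.
\]

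The proof concludes with a finite residue case analysis. Writing $k = 3a + r_k$ and $n-k = 3b + r_{n-k}$ with $r_k, r_{n-k} \in \{0,1,2\}$, one has $n = 3(a+b) + r_k + r_{n-k}$, and the displayed inequality is equivalent to $r_k + r_{n-k} \geq 3$, i.e.~$(r_k, r_{n-k}) \in \{(1,2),(2,1),(2,2)\}$. The case $(1,2)$ gives $k \equiv 1$ and $n \equiv 0 \pmod 3$, recovering \ref{case-i}; the cases $(2,1)$ and $(2,2)$ together give $k \equiv 2 \pmod 3$ with $n \equiv 0$ or $n \equiv 1 \pmod 3$, i.e.~$n \not\equiv 2 \pmod 3$, recovering \ref{case-ii}.
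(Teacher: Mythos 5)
Your proof is correct and takes essentially the same route as the paper's: identify $[\BP^2]$ with $\Phi_3(\BL)$, compute $v_{\Phi_3}([m]_{\BL}!) = \lfloor m/3 \rfloor$ by counting multiples of $3$, and finish with a residue analysis. You are more careful than the paper on the ``only if'' direction --- deducing polynomial non-divisibility from motivic non-divisibility by applying a motivic measure, a converse the paper's proof leaves implicit --- though the weight polynomial realisation \ref{mot-measure-iii}, which sends $\BL \mapsto z$ directly, would express that step more cleanly than your $u=v=q^{1/2}$ specialisation followed by a restriction to a subring.
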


\begin{proof}
Let $\omega$ be a primitive 3-rd root of unity. For every integer $a \in \BN$, we view 
\[
[a]_{\BL}! = \prod_{i=i}^a\,\left(\BL^i-1\right)
\]
as a polynomial in $\BZ[\BL]$. Since $1+\omega+\omega^2 = 0$, to prove divisibility it is enough to prove that $[\Gr (k,n)]\,\big|_{\BL = \omega} = 0$, which, by \Cref{eqn:motive-grassmannian}, is equivalent to the inequality 
\begin{equation}\label{order-vanishing}
\mult_\omega \,[n]_{\BL}! > \mult_\omega \,[k]_{\BL}!+\mult_\omega \,[n-k]_{\BL}!.
\end{equation}
For every $i \in\BN$, we have that $\omega$ is a root of $\BL^{i}-1$ if and only if $i \in 3\BN$, and moreover its multiplicity is always $1$ in this case, due to the factorisation $\BL^{3j}-1 = (\BL^3-1) \sum_{0\leq h\leq j-1} \BL^{3h}$
and to $\omega$ being primitive. Define
\[
\delta_{i,3\BN} = 
\begin{cases}
    1 & \mbox{if }i \in 3\BN \\
    0 & \mbox{if }i \notin 3\BN.
\end{cases}
\]
Then our discussion implies that
\begin{equation}\label{eqn:order}
\mult_{\omega} \,[a]_{\BL}! = \sum_{i=1}^a \delta_{i,3\BN}.
\end{equation}
Let us consider Case \ref{case-i}. Set $n=3\alpha$ and $k = 3\beta+1$. By \Cref{eqn:order}, we have
\begin{equation}\label{eqn:gesù-termosifone}
\begin{split}
  \mult_\omega [n]_{\BL}! &= \alpha \\ 
  \mult_\omega[ k]_{\BL}! &= \beta \\ 
  \mult_\omega[n-k]_{\BL}! &= \mult_\omega[3(\alpha-\beta-1)+2]_{\BL}! = \alpha-\beta-1. 
\end{split} 
\end{equation} 
Thus \eqref{order-vanishing} holds and Case \ref{case-i} is confirmed.

We move to Case \ref{case-ii}. Write $n=\varepsilon + 3\alpha$ and $k = 2+3\beta$, with $\varepsilon \in \set{0,1}$. As before, by \Cref{eqn:order}, we have the same relations as in \eqref{eqn:gesù-termosifone}, where the last identity follows after noting that $n-k = 3(\alpha-\beta-1)+1+\varepsilon$.
Thus \eqref{order-vanishing} holds and Case \ref{case-ii} is confirmed as well. 

Finally, the case $k \equiv 0 \pmod 3$ violates \eqref{order-vanishing} for every $n$, for in this case writing $k = 3 \beta$ and $n = \varepsilon + 3\alpha$ with $\varepsilon \in \set{0,1,2}$ one always gets $\mult_\omega [n-k]_{\BL}! = \alpha-\beta = \mult_\omega [n]_{\BL}! - \mult_\omega [k]_{\BL}!$.
\end{proof}

%%%%%%%%%%%%%%%%%%%%%%%%%%%%%%%%%%%%%%%%%%%%%%
%%%%%%%%%%%%%%%%%%%%%%%%%%%%%%%%%%%%%%%%%%%%%%
\section{Proof of \texorpdfstring{\Cref{MAIN-THEOREM-1}}{}}
In this section we prove our first main theorem. To do so, we will exploit the motivic relations obtained in \Cref{sec:motives-of-grass} and two more results proved in the next subsection.

\subsection{Preparation lemmas}
For $d \in \BZ_{>1}$, the stratification \eqref{eq:stratification} implies the identity of motives
\begin{equation}\label{eqn:strata-emb-dim}
[\Hilb^d(\BA^n)_0] =
    \sum_{k=1}^{d-1} \,[Y^{n}_{k,d}].
\end{equation}
We have the following `inversion formula'  of \eqref{eqn:strata-emb-dim} allowing one to pass from the classes $[\Hilb^d(\BA^k)_0]$ to the classes $[Y^k_{k,d}]$.

\begin{lemma}[Inversion Formula]\label{lemma:inversion}
For $k=1,\ldots,d-1$ we have
\begin{equation}\label{eqn:inversion-formula}
{[}Y_{k,d}^k{]}=\sum_{j=1}^{k} (-1)^{k+j}{[}\Gr(j,k){]}\BL^{(k-j) (d-j-1)-\binom{k-j}{2}}[\Hilb^d(\BA^j)_0].
\end{equation}
\end{lemma}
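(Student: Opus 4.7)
My plan is to view the claim as the explicit inversion of the lower-triangular system obtained by combining \eqref{eqn:strata-emb-dim} with \Cref{thm:motive-Y-stratum}, and to verify it by direct substitution, reducing the problem to an orthogonality relation already proved in \Cref{motivic-vanishing-double-grass}. Concretely, applying \Cref{thm:motive-Y-stratum} to each $[Y^j_{k,d}]$ appearing in \eqref{eqn:strata-emb-dim} gives, for every $1 \le j \le d-1$,
\[
[\Hilb^d(\BA^j)_0]=\sum_{\ell=1}^{j}[\Gr(\ell,j)]\BL^{(j-\ell)(d-\ell-1)}[Y^\ell_{\ell,d}],
\]
which is lower triangular in $\ell$ with units on the diagonal. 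The natural thing is to substitute this into the right-hand side of \eqref{eqn:inversion-formula} and check that the only surviving term is $[Y^k_{k,d}]$.

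After the substitution and exchanging the order of summation, the right-hand side of \eqref{eqn:inversion-formula} becomes
\[
\sum_{\ell=1}^{k}[Y^\ell_{\ell,d}]\sum_{j=\ell}^{k}(-1)^{k+j}[\Gr(j,k)][\Gr(\ell,j)]\BL^{E(j,\ell,k,d)},
\]
where $E(j,\ell,k,d)=(k-j)(d-j-1)-\binom{k-j}{2}+(j-\ell)(d-\ell-1)$. The $\ell=k$ term forces $j=k$ and contributes exactly $[Y^k_{k,d}]$, so I only need to show that the inner sum vanishes whenever $\ell<k$.

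For this I would reduce $E(j,\ell,k,d)$ modulo a $j$-independent constant: a direct expansion shows
\[
E(j,\ell,k,d)=(k-\ell)d+\binom{\ell+1}{2}-k+\binom{j}{2}-j\ell-\binom{k}{2},
\]
so that the $j$-dependent part is $\binom{j}{2}-j\ell=\binom{j+1}{2}-j(\ell+1)$, exactly the exponent appearing in \Cref{motivic-vanishing-double-grass}. Factoring out $(-1)^k$ together with the $j$-independent power of $\BL$, the remaining sum is
\[
\sum_{j=\ell}^{k}(-1)^{j}[\Gr(\ell,j)][\Gr(j,k)]\BL^{\binom{j+1}{2}-j(\ell+1)},
\]
which vanishes by \Cref{motivic-vanishing-double-grass} applied with $h=\ell<k$. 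This proves the inversion formula. The only real obstacle is the exponent bookkeeping in the reduction of $E(j,\ell,k,d)$; once one recognises the shape $\binom{j+1}{2}-j(\ell+1)$ hidden inside it, everything else is formal.
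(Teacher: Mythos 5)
Your proof is correct and follows essentially the same approach as the paper: expand $[\Hilb^d(\BA^j)_0]$ via \eqref{eqn:strata-emb-dim} and \Cref{thm:motive-Y-stratum}, swap sums, and invoke \Cref{motivic-vanishing-double-grass} to kill the off-diagonal terms. One small bookkeeping slip: in your reduction of $E(j,\ell,k,d)$ the $j$-independent constant should be $(k-\ell)d+\ell^2+\ell-k-\binom{k}{2}$ rather than $(k-\ell)d+\binom{\ell+1}{2}-k-\binom{k}{2}$ (you wrote $\binom{\ell+1}{2}$ where $\ell(\ell+1)=2\binom{\ell+1}{2}$ is needed); this does not affect the argument, since the $j$-dependent part $\binom{j}{2}-j\ell=\binom{j+1}{2}-j(\ell+1)$ is correctly identified and that is all \Cref{motivic-vanishing-double-grass} requires.
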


\begin{proof}
Denote by $\mathsf C_{k,d}$ the right hand side of \eqref{eqn:inversion-formula}. Then we compute
\begin{align*}
\mathsf C_{k,d} 
&=(-1)^k\sum_{j=1}^{k}(-1)^j[\Gr(j,k)]\BL^{(k-j) (d-j-1)-\binom{k-j}{2}}\sum_{h=1}^{d-1}\,[Y^j_{h,d}] \\
&=(-1)^k\sum_{j=1}^{k}(-1)^j[\Gr(j,k)]\BL^{(k-j) (d-j-1)-\binom{k-j}{2}}\sum_{h=1}^j\,[\Gr(h,j)]\BL^{(j-h)(d-h-1)}[Y^h_{h,d}]\\
&=(-1)^k\sum_{h=1}^k\,[Y^h_{h,d}]\sum_{j=h}^k (-1)^j [\Gr(j,k)][\Gr(h,j)] \BL^{(k-j) (d-j-1)-\binom{k-j}{2}+(j-h)(d-h-1)} \\
&=(-1)^k\sum_{h=1}^k \,[Y^h_{h,d}] \BL^{(d-1)(k-h)+h^2-\binom{k}{2}} \sum_{j=h}^k(-1)^j[\Gr(h,j)][\Gr(j,k)]\BL^{\binom{j+1}{2}-j(h+1)}
\end{align*}
where we used \Cref{eqn:strata-emb-dim} for the first identity and \Cref{thm:motive-Y-stratum} for the second identity. But the last sum vanishes for $h\neq k$ by \Cref{motivic-vanishing-double-grass}, thus $\mathsf C_{k,d} =  [Y^{k}_{k,d}]$, as required.
\end{proof}

The next proposition is a convenient rewriting of the $i$-th coefficient $a^{(d)}_i$ of $\mathsf P_d(t)$.

\begin{prop}\label{prop:coefficient-of-p_d}
For every $d>3$ and for every $i=0,\ldots,d-2$, define
\[
a^{(d)}_i = \sum_{\alpha=0}^i\, (-1)^\alpha[\Hilb^d(\BA^{i-\alpha+1})_0][\Gr(\alpha,d)]\BL^{\binom{\alpha}{2}}.
\]
There is a relation
\[
a^{(d)}_i = \sum_{k=0}^i\,(-1)^{i-k}[\Gr(i-k,d-k-2)]\BL^{\binom{i-k}{2}}[Y^{k+1}_{k+1,d}].
\]
\end{prop}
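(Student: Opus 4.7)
The plan is to reduce the statement to a clean generating-function identity, which follows by telescoping of zeta functions.

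First I would substitute the stratification $[\Hilb^d(\BA^n)_0]=\sum_{h=1}^{d-1}[Y^n_{h,d}]$ from \eqref{eqn:strata-emb-dim} into the defining formula for $a^{(d)}_i$, and then apply \Cref{thm:motive-Y-stratum} to write
\[
[Y^{i-\alpha+1}_{h,d}]=[\Gr(h,i-\alpha+1)]\BL^{(i-\alpha+1-h)(d-h-1)}[Y^h_{h,d}].
\]
After swapping the order of summation (using that $[\Gr(h,i-\alpha+1)]=0$ unless $\alpha\le i-h+1$) and relabelling $k=h-1$, $m=i-\alpha-k$, one finds
\[
a^{(d)}_i=\sum_{k=0}^{i}[Y^{k+1}_{k+1,d}]\sum_{m=0}^{i-k}(-1)^{i-k-m}[\Gr(i-k-m,d)][\Gr(k+1,m+k+1)]\BL^{\binom{i-k-m}{2}+m(d-k-2)}.
\]
Comparing with the target, the statement reduces to proving, for each fixed $k\in\{0,\dots,i\}$, the motivic identity
\[
\sum_{m=0}^{j}(-1)^{m}[\Gr(j-m,d)][\Gr(k+1,m+k+1)]\BL^{\binom{j-m}{2}+m\ell}=[\Gr(j,\ell)]\BL^{\binom{j}{2}},
\]
where $j=i-k$ and $\ell=d-k-2$.

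To prove this last identity, I would interpret the left-hand side as the coefficient of $t^j$ in a product of generating functions. Set
\[
A(t)=\sum_{n\ge 0}[\Gr(n,d)]\BL^{\binom{n}{2}}t^n,\qquad B(t)=\sum_{m\ge 0}(-1)^m[\Gr(k+1,m+k+1)]\BL^{m\ell}t^m.
\]
By the $\BL$-binomial identity (\Cref{lemma:zeta-inverse} with $t\mapsto-t$) we have $A(t)=\zeta_{\BP^{d-1}}(-t)^{-1}=\prod_{i=0}^{d-1}(1+\BL^i t)$, and by \Cref{ex:gen-fun-grass} we have $B(t)=\zeta_{\BP^{k+1}}(-\BL^{\ell}t)=\prod_{i=0}^{k+1}(1+\BL^{i+\ell}t)^{-1}$. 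Since $d=k+\ell+2$, the product $A(t)B(t)$ telescopes to
\[
A(t)B(t)=\prod_{i=0}^{\ell-1}(1+\BL^i t)=\zeta_{\BP^{\ell-1}}(-t)^{-1},
\]
whose $t^j$-coefficient equals $[\Gr(j,\ell)]\BL^{\binom{j}{2}}$ by \Cref{lemma:zeta-inverse} once more.

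The main obstacle is purely bookkeeping: making sure the signs, binomial exponents, and summation ranges all line up after the two successive reparametrisations, and handling the edge cases $k\ge d-2$ where $[\Gr(j,\ell)]$ involves a Grassmannian with non-positive ambient dimension (in which case both sides vanish by the conventions $[\Gr(a,b)]=0$ for $a>b$ and $[Y^{k+1}_{k+1,d}]=0$ for $k\ge d-1$, cf.~\Cref{rmk:small-cases-of-Y}). No new identities beyond \Cref{lemma:zeta-inverse}, \Cref{ex:gen-fun-grass}, and \Cref{thm:motive-Y-stratum} are needed; in particular the heavier \Cref{lemma:yetanotheridentity} is not required here.
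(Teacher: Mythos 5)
Your proof is correct, and it takes a genuinely different route from the paper. The paper starts from the right-hand side $\mathsf A_i^{(d)}$, substitutes the inversion formula (\Cref{lemma:inversion}) to convert $Y$-classes back into Hilb-classes, and after several reparametrisations reduces the claim to the refined hockey stick identity (\Cref{lemma:refined-stick}), applied with $\gamma=d-i-2$. You instead start from the left-hand side, substitute the stratification \eqref{eqn:strata-emb-dim} and \Cref{thm:motive-Y-stratum} to pass directly from Hilb-classes to $Y$-classes, and reduce to a coefficient-extraction statement that you verify by telescoping inverse zeta functions. The two arguments are essentially reverse directions of the same computation, and they rest on the same underlying zeta factorisation: your identity $\zeta_{\BP^{d-1}}(-t)^{-1}\zeta_{\BP^{k+1}}(-\BL^\ell t)=\zeta_{\BP^{\ell-1}}(-t)^{-1}$ with $d=k+\ell+2$ is precisely \eqref{eqn: identiy of zeta} (the identity used to prove \Cref{lemma:refined-stick}) written with $\gamma=\ell-1$, $g=d-2$, but you read off a different coefficient from it. What your approach buys is a shorter dependency chain: you bypass \Cref{lemma:inversion} (and hence also \Cref{motivic-vanishing-double-grass}, on which the inversion formula depends) and avoid the intermediate Grassmannian identity, using only \Cref{thm:motive-Y-stratum} plus the two elementary generating-function lemmas (\Cref{lemma:zeta-inverse} and \Cref{ex:gen-fun-grass}). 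Your bookkeeping of signs, exponents, and ranges is correct; in particular the observation that $\ell=d-k-2\geq 0$ always holds here (since $k\leq i\leq d-2$), so the degenerate case $\ell=0$ is the only edge case, and there both sides give $[\Gr(j,0)]\BL^{\binom j2}=\delta_{j,0}$, as you note.
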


\begin{proof}
Let us set, for simplicity,
\[
\mathsf A_{i}^{(d)} = \sum_{k=0}^i\,(-1)^{i-k}[\Gr(i-k,d-k-2)]\BL^{\binom{i-k}{2}}[Y^{k+1}_{k+1,d}].
\]
We directly compute
\begin{align*}
\mathsf A_{i}^{(d)}
&= \sum_{k=0}^i\,(-1)^{i-k}[\Gr(i-k,d-k-2)]\BL^{\binom{i-k}{2}} \\ &\qquad\qquad\cdot\sum_{j=1}^{k+1} (-1)^{k+1+j}[\Hilb^d(\BA^j)_0][\Gr(j,k+1)]\BL^{(k+1-j)(d-j-1)-\binom{k+1-j}{2}}\\
&=\sum_{k=0}^i\sum_{j=1}^{k+1}(-1)^{i+j+1}[\Hilb^d(\BA^j)_0][\Gr(j,k+1)] \\
&\qquad\qquad \cdot[\Gr(i-k,d-k-2)]\BL^{(k+1-j)(d-j-1)-\binom{k+1-j}{2}+\binom{i-k}{2}} \\
&=\sum_{j=1}^{i+1} (-1)^{i+j+1}[\Hilb^d(\BA^j)_0] \\
&\qquad\qquad\cdot \sum_{k=j-1}^i\,[\Gr(j,k+1)][\Gr(i-k,d-k-2)]\BL^{(k+1-j)(d-j-1)-\binom{k+1-j}{2}+\binom{i-k}{2}}.
\end{align*}
Using the change of variables $\alpha = i+1-j$, we obtain
\begin{align*}
\mathsf A_{i}^{(d)} &= \sum_{\alpha=0}^i\,(-1)^\alpha [\Hilb^{d}(\BA^{i-\alpha+1})_0] \\
&\qquad\qquad \cdot\sum_{k=i-\alpha}^i\,[\Gr(i-\alpha+1,k+1)][\Gr(i-k,d-k-2)]\BL^{(k+\alpha-i)(d-i+\alpha-2)-\binom{k+\alpha-i}{2}+\binom{i-k}{2}}.
\end{align*}
Using the change of variables $h = k+\alpha-i$, we obtain
\begin{align*}
\mathsf A_{i}^{(d)} 
&= \sum_{\alpha=0}^i\,(-1)^\alpha [\Hilb^{d}(\BA^{i-\alpha+1})_0] \\
&\qquad\cdot\sum_{h=0}^\alpha\,[\Gr(h,h+i-\alpha+1)][\Gr(\alpha-h,\alpha-h+i-d-2)]\BL^{h(d-i+\alpha-2)-\binom{h}{2}+\binom{\alpha-h}{2}}.
\end{align*}
Using the change of variables $k=\alpha-h$, we obtain
\begin{align*}
\mathsf A_{i}^{(d)} 
&= \sum_{\alpha=0}^i\,(-1)^\alpha [\Hilb^{d}(\BA^{i-\alpha+1})_0]\\
&\qquad \cdot\sum_{k=0}^\alpha\,[\Gr(\alpha-k,i+1-k)][\Gr(k,d+k-i-2)]\BL^{(\alpha-k)(d-i+\alpha-2)-\binom{\alpha-k}{2}+\binom{k}{2}}\\
&=\sum_{\alpha=0}^i\,(-1)^\alpha [\Hilb^{d}(\BA^{i-\alpha+1})_0]\BL^{\binom{\alpha}{2}} \\
&\qquad \cdot\sum_{k=0}^\alpha\,[\Gr(\alpha-k,i+1-k)][\Gr(k,d+k-i-2)]\BL^{(\alpha-k)(d-i-1)}.
\end{align*}
Finally, we claim that 
\[
[\Gr(\alpha,d)] = \sum_{k=0}^\alpha \,[\Gr(\alpha-k,i+1-k)][\Gr(k,d+k-i-2)]\BL^{(\alpha-k)(d-i-1)}.
\]
This is a direct application of \Cref{lemma:refined-stick} setting $\gamma = d-i-2$.
This proves the sought after identity $\mathsf A_{i}^{(d)} = a^{(d)}_{i}$.
\end{proof}

\subsection{Proof of \texorpdfstring{\Cref{MAIN-THEOREM-1}}{}}\label{sec:proof-thm-A}
We can now prove \Cref{MAIN-THEOREM-1}, which we restate here for convenience. Recall that our focus is on the generating function
\[
\mathsf Z_d(t) = \sum_{n\geq 0}\,{\motive{\Hilb^d(\BA^{n+1})_0}}  t^n \,\in\,K_0(\Var_{\BC})\llbracket t \rrbracket.
\]
\begin{theorem}\label{main-in-body}
Fix an integer $d>0$. Then
\[
\mathsf Z_d(t) = \zeta_{\BP^{d-1}}(t) \cdot \mathsf{P}_d(t)
\]
in $K_0(\Var_{\BC})\llbracket t \rrbracket$, where   $\mathsf P_d(t) \in K_0(\Var_{\BC})[t]$ is a polynomial such that
\begin{itemize}
\item [\mylabel{mainthm-body-1}{\normalfont{(1)}}] $\mathsf{P}_d(t)=1$ if $d\leq 3$, and,
\item [\mylabel{mainthm-body-2}{\normalfont{(2)}}] if $d>3$, then
\[
\mathsf{P}_d(t) = \sum_{i=0}^{d-2} a_i^{(d)}t^i
\] 
has degree at most $d-2$, and its coefficients are given by
\[
a_i^{(d)} = \sum_{\alpha=0}^i\,(-1)^\alpha [\Hilb^d(\BA^{i-\alpha+1})_0][\Gr(\alpha,d)]\BL^{\binom{\alpha}{2}}, \qquad 0\leq i \leq d-2.
\]
\end{itemize}
In particular, $\mathsf Z_d(t)$ is a rational function.
\end{theorem}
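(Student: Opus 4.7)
The plan is to \emph{define} $\mathsf P_d(t)$ as the formal power series $\zeta_{\BP^{d-1}}(t)^{-1}\mathsf Z_d(t)\in K_0(\Var_\BC)\llbracket t\rrbracket$, so that the factorisation $\mathsf Z_d(t)=\zeta_{\BP^{d-1}}(t)\mathsf P_d(t)$ holds tautologically, and then to verify the two claims \ref{mainthm-body-1} and \ref{mainthm-body-2}. As a first step I would expand $\zeta_{\BP^{d-1}}(t)^{-1}$ using the $\BL$-binomial theorem (\Cref{lemma:zeta-inverse}) and read off the coefficient of $t^i$ from the Cauchy product with $\mathsf Z_d(t)=\sum_{n\geq 0}[\Hilb^d(\BA^{n+1})_0]t^n$. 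This immediately recovers the stated expression
\[
a_i^{(d)}=\sum_{\alpha=0}^{i}(-1)^\alpha[\Hilb^d(\BA^{i-\alpha+1})_0][\Gr(\alpha,d)]\BL^{\binom{\alpha}{2}},
\]
so at the power-series level one already has $\mathsf P_d(t)=\sum_{i\geq 0}a_i^{(d)}t^i$ with the prescribed coefficients.

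The core of the proof is then to show that $a_i^{(d)}=0$ for every $i\geq d-1$, making $\mathsf P_d(t)$ a polynomial of degree at most $d-2$. Here I would invoke \Cref{prop:coefficient-of-p_d} to recast
\[
a_i^{(d)} = \sum_{k=0}^{i} (-1)^{i-k}[\Gr(i-k,d-k-2)]\BL^{\binom{i-k}{2}}[Y^{k+1}_{k+1,d}],
\]
and then exhibit a two-regime vanishing when $i\geq d-1$. For $k\geq d-1$ the class $[Y^{k+1}_{k+1,d}]$ is zero by \Cref{rmk:small-cases-of-Y}, since a fat point of embedding dimension $\geq d$ cannot have length $d$. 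For $k\leq d-2$, instead, the inequality $i-k\geq d-1-k>d-k-2$ forces $[\Gr(i-k,d-k-2)]=0$. Every summand therefore vanishes, giving the desired degree bound.

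The small cases $d\leq 3$ reduce to the direct verification that $\mathsf Z_d(t)=\zeta_{\BP^{d-1}}(t)$. For $d=1$ the punctual Hilbert scheme is a reduced point for every $n$, so $\mathsf Z_1(t)=(1-t)^{-1}=\zeta_{\BP^0}(t)$. For $d=2$ one uses the classical isomorphism $\Hilb^2(\BA^{n+1})_0\cong\BP^n$ combined with \Cref{ex:gen-fun-grass} to identify $\mathsf Z_2(t)$ with $\zeta_{\BP^1}(t)$. For $d=3$ I would apply the stratification \eqref{eq:stratification} together with \Cref{thm:motive-Y-stratum} to compute $[\Hilb^3(\BA^{n+1})_0]$ in closed form and, after summing over $n$ via \Cref{ex:gen-fun-grass}, match the resulting generating function with $\zeta_{\BP^2}(t)$. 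Rationality of $\mathsf Z_d(t)$ is then immediate from the polynomiality of $\mathsf P_d(t)$ and the rationality of $\zeta_{\BP^{d-1}}(t)$.

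The main obstacle is the degree bound. Written directly in terms of punctual Hilbert scheme classes, the alternating sum defining $a_i^{(d)}$ admits no transparent cancellation mechanism, and essentially all the work is concentrated in the change of perspective provided by \Cref{prop:coefficient-of-p_d}, which trades Hilbert classes for embedding-dimension stratum classes $[Y^{k+1}_{k+1,d}]$. Once that reformulation is available, the double vanishing above falls out cleanly from the numerical constraints on $[\Gr(\cdot,\cdot)]$ and on the $Y$-strata.
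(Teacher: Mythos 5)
Your definition of $\mathsf P_d(t) := \zeta_{\BP^{d-1}}(t)^{-1}\mathsf Z_d(t)$ and the Cauchy-product identification of the coefficients $a_i^{(d)}$ are correct, and your treatment of the small cases $d\leq 3$ is fine. But the crucial step — the degree bound — has a genuine gap. You invoke \Cref{prop:coefficient-of-p_d} for $i\geq d-1$, yet that proposition is stated \emph{only} for $0\leq i\leq d-2$, and its proof genuinely requires this restriction: the final step applies the refined hockey stick identity (\Cref{lemma:refined-stick}) with $\gamma=d-i-2$, which must be nonnegative. For $i\geq d-1$ the proof of \Cref{prop:coefficient-of-p_d} simply does not go through, so you cannot convert $a_i^{(d)}$ into the $Y$-class expression in that range. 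Your two-regime vanishing argument is perfectly sound \emph{if} applied to the $Y$-expression $\sum_{k=0}^i(-1)^{i-k}[\Gr(i-k,d-k-2)]\BL^{\binom{i-k}{2}}[Y^{k+1}_{k+1,d}]$, but you have not established that this expression equals $a_i^{(d)}$ when $i\geq d-1$. In your direct Cauchy-product form of $a_i^{(d)}$ there is, as you yourself note, no visible cancellation for large $i$.

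The paper avoids this trap by reversing the logic. Rather than defining $\mathsf P_d$ as a formal power series and trying to truncate it, the paper starts from the stratification \eqref{eqn:strata-emb-dim} together with \Cref{thm:motive-Y-stratum} and \Cref{ex:gen-fun-grass} to prove directly that $\mathsf Z_d(t)=\zeta_{\BP^{d-1}}(t)\,\widetilde{\mathsf P}_d(t)$, where
\[
\widetilde{\mathsf P}_d(t)=\sum_{k=1}^{d-1}[Y^k_{k,d}]\,t^{k-1}\,\zeta_{\BP^{d-k-2}}(t)^{-1}
\]
is \emph{manifestly} a polynomial of degree at most $d-2$ (each summand is $t^{k-1}$ times a polynomial of degree $d-k-1$ by the $\BL$-binomial theorem). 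Only then does it invoke \Cref{prop:coefficient-of-p_d}, purely within the range $0\leq i\leq d-2$ where it is valid, to match the coefficients of $\widetilde{\mathsf P}_d(t)$ with the stated $a_i^{(d)}$. To repair your argument you would need either to prove the $Y$-class formula for $a_i^{(d)}$ in the range $i\geq d-1$ by an independent argument, or — more naturally — to run the geometric computation first and obtain the polynomial bound from the explicit form of $\widetilde{\mathsf P}_d(t)$, which is precisely what the paper does.
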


\begin{proof}
We start with the case $d\leq 3$. If $d=1$, then $\Hilb^1(\BA^{n+1})_0=\Spec \BC$ for every $n\geq 0$, so $\mathsf Z_1(t)=(1-t)^{-1} = \zeta_{\BP^0}(t)$. If $d=2$, then $\Hilb^2(\BA^{n+1})_0 = \BP(\mathfrak m/\mathfrak m^2) = \BP^n = \Sym^n(\BP^1)$, where $\mathfrak m = (x_1,\ldots,x_{n+1})$ is the ideal of the origin $0 \in \BA^{n+1}$. Thus $\mathsf Z_2(t) = \zeta_{\BP^1}(t)$. If $d=3$, then a punctual subscheme $Z \subset \BA^{n+1}$ is either curvilinear or abstractly isomorphic to $\Spec \BC[u,v] / (u^2,uv,v^2)$. The latter case contributes the choice of a plane $\BA^2 \subset \BA^{n+1}$, whereas the curvilinenar locus $\mathscr{C}_3^{n+1}\subset \Hilb^3(\BA^{n+1})_0$ is the total space of an $\BA^n$-fibration over $\BP(\Fm/\Fm^2) = \BP^n$,  so that 
\begin{align*}
[\Hilb^3(\BA^{n+1})_0] 
&= [\Gr(2,n+1)] + \BL^n[\BP^n] \\
&= [\Gr(2,n+2)] & \mbox{by \Cref{prop:TARTAGLIA}} \\
&= [\Sym^n(\BP^2)]  & \mbox{by \Cref{ex:gen-fun-grass}} 
\end{align*}
and we obtain $\mathsf Z_3(t) = \zeta_{\BP^2}(t)$, which proves \ref{mainthm-body-1}. 

\smallbreak
For $d>3$, we compute directly
\begin{align*}
\mathsf Z_d(t) 
&= \sum_{n\geq 0}\,[\Hilb^d(\BA^{n+1})_0]t^n \\
&= \sum_{n\geq 0}\,\left(\sum_{k=1}^{d-1}\,[Y^{n+1}_{k,d}] \right)t^n & \mbox{by \eqref{eqn:strata-emb-dim}} \\ 
&= \sum_{k=1}^{d-1}\,\sum_{n\geq 0}\,[\Gr(k,n+1)]\BL^{(n+1-k)(d-k-1)}[Y^k_{k,d}]t^n & \mbox{by \Cref{thm:motive-Y-stratum}}\\
&= \sum_{k=1}^{d-1}\,\BL^{-k(d-k-1)}[Y^k_{k,d}]t^{-1}\sum_{n+1\geq k}\BL^{(n+1)(d-k-1)}[\Gr(k,n+1)]t^{n+1} \\
&= \sum_{k=1}^{d-1}\,\BL^{-k(d-k-1)}[Y^k_{k,d}]\BL^{k(d-k-1)}t^{k-1}\zeta_{\BP^{d-1}}(t)\zeta_{\BP^{d-k-2}}(t)^{-1} & \mbox{by \eqref{eqn:grass-shiftate}}\\
&=\zeta_{\BP^{d-1}}(t)\sum_{k=1}^{d-1}\,[Y^k_{k,d}]t^{k-1}\zeta_{\BP^{d-k-2}}(t)^{-1} & \\
&=\zeta_{\BP^{d-1}}(t)\sum_{k=1}^{d-1}\,[Y^k_{k,d}]t^{k-1} \sum_{i=0}^{d-k-1}(-t)^i[\Gr(i,d-k-1)]\BL^{\binom{i}{2}}  & \mbox{by \Cref{lemma:zeta-inverse}}.
\end{align*}
Let us set
\[
\widetilde{\mathsf P}_d(t) = \sum_{k=1}^{d-1}\,[Y^k_{k,d}]t^{k-1} \sum_{i=0}^{d-k-1}(-t)^i[\Gr(i,d-k-1)]\BL^{\binom{i}{2}}.
\]
Our claim is that 
\begin{equation}\label{eqn:equality-polynomials}
\widetilde{\mathsf P}_d(t) = \mathsf P_d(t).
\end{equation}
We calculate
\begin{align*}
\widetilde{\mathsf P}_d(t) 
&= \sum_{k=1}^{d-1}\,[Y^k_{k,d}] \sum_{i=0}^{d-k-1}(-1)^i[\Gr(i,d-k-1)]\BL^{\binom{i}{2}}t^{k+i-1} \\
&= \sum_{k=0}^{d-2}\,[Y^{k+1}_{k+1,d}]\sum_{i=0}^{d-k-2}\,(-1)^i[\Gr(i,d-k-2)]\BL^{\binom{i}{2}}t^{k+i} & \mbox{shift }k \mapsto k+1\\
&= \sum_{k=0}^{d-2}\,[Y^{k+1}_{k+1,d}] \sum_{i=k}^{d-2} \, (-1)^{i-k} [\Gr(i-k,d-k-2)]\BL^{\binom{i-k}{2}}    t^i& \mbox{shift }i \mapsto i-k \\
&= \sum_{i=0}^{d-2} \left(\sum_{k=0}^i \, (-1)^{i-k}  [\Gr(i-k,d-k-2)]\BL^{\binom{i-k}{2}}[Y^{k+1}_{k+1,d}]\right) t^i.
\end{align*}
Thus the final claim, which would confirm \eqref{eqn:equality-polynomials}, is that 
\[
\sum_{k=0}^i \, (-1)^{i-k} [\Gr(i-k,d-k-2)]\BL^{\binom{i-k}{2}}[Y^{k+1}_{k+1,d}] 
\]
agrees with the $i$-th coefficient of $\mathsf P_d(t)$. But this is precisely \Cref{prop:coefficient-of-p_d}.
\end{proof}

\subsection{Corollaries of \texorpdfstring{\Cref{MAIN-THEOREM-1}}{}}
We give two immediate corollaries of our main theorem.

The following result shows that the whole generating function $\mathsf Z_d(t)$ is determined by the finitely many motives
\begin{equation}\label{eqn:d-1 motives}
[\Hilb^d(\BA^1)_0],\,\,[\Hilb^d(\BA^2)_0],\,\,\ldots\,\,,\,\,[\Hilb^d(\BA^{d-1})_0].
\end{equation}

\begin{corollary}\label{cor:hilbrel}
For every $m\geq d > 1$ there is a relation
\[
[\Hilb^d(\BA^{m})_0]=\sum_{\gamma =1}^{d-1}\,(-1)^{d+\gamma+1}[\Hilb^d(\BA^{\gamma})_0][\Gr(m-d,m-\gamma-1)][\Gr(\gamma,m)]\BL^{\binom{d-\gamma}{2}}.
\]
\end{corollary}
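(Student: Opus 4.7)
The plan is to apply \Cref{main-in-body} directly. Since $[\Hilb^d(\BA^m)_0]$ is the coefficient of $t^{m-1}$ in $\mathsf{Z}_d(t) = \zeta_{\BP^{d-1}}(t)\cdot\mathsf{P}_d(t)$, and since $\deg \mathsf{P}_d \le d-2$ while the expansion $\zeta_{\BP^{d-1}}(t) = \sum_{n\ge 0}[\Gr(d-1,n+d-1)]t^n$ from \Cref{ex:gen-fun-grass} is available, extracting the coefficient (using $m\ge d$ so that $m-1\ge d-1$) gives
\[
[\Hilb^d(\BA^m)_0] = \sum_{i=0}^{d-2} a_i^{(d)}\,[\Gr(d-1, m+d-2-i)].
\]

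Next I would substitute the closed form of $a_i^{(d)}$ provided by \Cref{main-in-body} and exchange the order of summation by setting $\gamma = i-\alpha+1$. Then $\gamma$ ranges over $1,\ldots,d-1$ while $\alpha$ ranges over $0,\ldots,d-1-\gamma$, and one obtains
\[
[\Hilb^d(\BA^m)_0] = \sum_{\gamma=1}^{d-1} [\Hilb^d(\BA^{\gamma})_0]\cdot S(\gamma),
\]
where
\[
S(\gamma) = \sum_{\alpha=0}^{d-1-\gamma}(-1)^\alpha[\Gr(\alpha,d)]\BL^{\binom{\alpha}{2}}[\Gr(d-1,\,m+d-1-\gamma-\alpha)].
\]

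The concluding step is to recognise $S(\gamma)$ as an instance of \Cref{lemma:yetanotheridentity} specialised at $\varepsilon=\gamma+1$. Applying the reindexing $\alpha = d-1-\gamma-j$ to the sum appearing in that proposition, and using the elementary complement identities $[\Gr(\gamma+1+j,d)] = [\Gr(d-\alpha,d)] = [\Gr(\alpha,d)]$ together with $[\Gr(d-\gamma-1,m-\gamma-1)] = [\Gr(m-d,m-\gamma-1)]$ (which is where the hypothesis $m\geq d$ enters), one reads off
\[
S(\gamma) = (-1)^{d+\gamma+1}\BL^{\binom{d-\gamma}{2}}[\Gr(m-d,m-\gamma-1)][\Gr(\gamma,m)].
\]

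The only nontrivial bookkeeping — which I view as the main (though minor) obstacle — is matching the power of $\BL$ produced by \Cref{lemma:yetanotheridentity} against what the corollary requires. This reduces to the elementary identity $\binom{d-\gamma-\alpha}{2}-(d-\gamma-\alpha)(d-\gamma-1) = \binom{\alpha}{2}-\binom{d-\gamma}{2}$, which one checks by setting $e=d-\gamma$ and expanding both sides to $-\tfrac12(e-\alpha)(e+\alpha-1)$. Once this is in hand, the sign comparison $(-1)^{d-1-\gamma}=(-1)^{d+\gamma+1}$ is automatic, and plugging $S(\gamma)$ back into the formula above yields precisely the claimed identity.
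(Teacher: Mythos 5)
Your proposal is correct and follows essentially the same route as the paper: extract the coefficient of $t^{m-1}$ from $\mathsf Z_d(t) = \zeta_{\BP^{d-1}}(t)\mathsf P_d(t)$ using the formula for $a_i^{(d)}$, exchange the summation order, and identify the resulting inner sum $S(\gamma)$ with \Cref{lemma:yetanotheridentity} at $\varepsilon=\gamma+1$ via the reindexing $j = d-1-\gamma-\alpha$. The bookkeeping identity $\binom{d-\gamma-\alpha}{2}-(d-\gamma-\alpha)(d-\gamma-1) = \binom{\alpha}{2}-\binom{d-\gamma}{2}$ is verified correctly, and the sign and Grassmannian-complement manipulations all check out.
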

\begin{proof}
For every $n \geq 0$ have
    \begin{align*}
&[\Hilb^d(\BA^{n+1})_0]\\ 
&=\sum_{h=0}^{d-2}[\Gr(d-1,h+n+1)]\cdot a^{(d)}_{d-2-h} &\mbox{by \Cref{ex:gen-fun-grass},} \\
&=\sum_{h=0}^{d-2}[\Gr(d-1,h+n+1)]\cdot \left(\sum_{\alpha=0}^{d-2-h}\,(-1)^\alpha [\Hilb^d(\BA^{{d-1-h}-\alpha})_0][\Gr(\alpha,d)]\BL^{\binom{\alpha}{2}}\right)  &  \mbox{by \Cref{main-in-body},}  \\ 
&=\sum_{k=0}^{d-2}\sum_{\alpha=0}^{k}\,(-1)^\alpha [\Hilb^d(\BA^{1+k-\alpha})_0][\Gr(d-1,d-k+n-1)]  [\Gr(\alpha,d)]\BL^{\binom{\alpha}{2}}  &  h=d-2-k,\\
&=\sum_{k=0}^{d-2}\sum_{\beta=0}^{k}\,(-1)^{k-\beta} [\Hilb^d(\BA^{1+\beta})_0][\Gr(d-1,d-k+n-1)][\Gr(k-\beta,d)]\BL^{\binom{k-\beta}{2}}  &  \alpha=k-\beta, \\
&=\sum_{\beta =0}^{d-2}\sum_{k=\beta}^{d-2}\,(-1)^{k-\beta} [\Hilb^d(\BA^{1+\beta})_0][\Gr(d-1,d-k+n-1)][\Gr(k-\beta,d)]\BL^{\binom{k-\beta}{2}}  &  \\
&=\sum_{\beta =0}^{d-2}[\Hilb^d(\BA^{1+\beta})_0]\sum_{j=0}^{d-2-\beta}\,(-1)^{j} [\Gr(d-1,d-j-\beta+n-1)][\Gr(j,d)]\BL^{\binom{j}{2}}  &  k=j+\beta. 
\end{align*}
Applying the further substitution $\beta=\gamma -1$ and $n=m-1$ one obtains the identity
\[
[\Hilb^d(\BA^{m})_0]=\sum_{\gamma =1}^{d-1}\,[\Hilb^d(\BA^{\gamma})_0]\sum_{j=0}^{d-1-\gamma}\,(-1)^{j} [\Gr(d-1,d-j-\gamma+m-1)][\Gr(j,d)]\BL^{\binom{j}{2}}.
\]
The conclusion then follows from \Cref{lemma:yetanotheridentity} after substituting  $j$ with $d-\gamma-j-1$ and then $\gamma+1$ with $\varepsilon$.
\end{proof}

\begin{remark}\label{rmk:first-2-motives}
The relation found in \Cref{cor:hilbrel} is equivalent to \Cref{MAIN-THEOREM-1}. Moreover, the first two motives in \eqref{eqn:d-1 motives}, out of the $d-1$ needed to determine all the next ones, are known, as
\[
[\Hilb^d(\BA^1)_0] = 1,\qquad [\Hilb^d(\BA^2)_0] = \Coef_{t^d} \left(\prod_{m\geq 1}\,\left(1-\BL^{m-1}t^m\right)^{-1}\right).
\]
The second identity is G\"{o}ttsche's formula \cite{Gottsche-motivic}.
\end{remark}

\smallbreak
Another immediate corollary of \Cref{MAIN-THEOREM-1} is the following.

\begin{corollary}\label{Cor:P(1)=1}
For every $d\ge 1$, we have
    \[
    \mathsf{P}_d(1)=1.
    \] 
In particular, at the level of Euler characteristics, one has
\[
\sum_{n\geq 0}\,\chi(\Hilb^{d}(\BA^{n+1}))t^n = \frac{q_d(t)}{(1-t)^d},
\]
where $q_d(t) \in \BZ[t]$ is a polynomial whose coefficients sum up to $1$.
\end{corollary}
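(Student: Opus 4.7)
The plan is to evaluate $\mathsf{P}_d(1)=\sum_{i=0}^{d-2}a_i^{(d)}$ by interchanging the order of summation so that the punctual Hilbert scheme classes $[\Hilb^d(\BA^j)_0]$ appear with prefactors that can be recognised, via the inversion formula of \Cref{lemma:inversion}, as the motive of the top stratum $Y^{d-1}_{d-1,d}$. The latter is a single reduced point by \Cref{rmk:small-cases-of-Y}, which will immediately give $\mathsf{P}_d(1)=1$.

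Concretely, starting from the expression for $a_i^{(d)}$ given in \Cref{main-in-body} and reindexing via $j=i-\alpha+1$, one rewrites
\[
\mathsf{P}_d(1)=\sum_{j=1}^{d-1}[\Hilb^d(\BA^j)_0]\sum_{\alpha=0}^{d-1-j}(-1)^\alpha[\Gr(\alpha,d)]\BL^{\binom{\alpha}{2}}.
\]
The inner alternating sum is precisely the one computed in \Cref{lemma:horisticky} (with the role of the upper index played by $i=d-1-j$), and it telescopes to $(-1)^{d-1-j}[\Gr(d-1-j,d-1)]\BL^{\binom{d-j}{2}}$. Invoking the Grassmannian duality $[\Gr(d-1-j,d-1)]=[\Gr(j,d-1)]$ then gives
\[
\mathsf{P}_d(1)=\sum_{j=1}^{d-1}(-1)^{d-1+j}[\Gr(j,d-1)]\BL^{\binom{d-j}{2}}[\Hilb^d(\BA^j)_0].
\]
A short algebraic manipulation confirms that $(d-1-j)(d-j-1)-\binom{d-1-j}{2}=\binom{d-j}{2}$, so the expression above matches exactly the right hand side of \Cref{lemma:inversion} specialised to $k=d-1$. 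Therefore $\mathsf{P}_d(1)=[Y^{d-1}_{d-1,d}]$, and this class equals $1$ by \Cref{rmk:small-cases-of-Y}.

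For the statement about Euler characteristics, apply the motivic measure $\chi_c$ sending $\BL\mapsto 1$ to the identity $\mathsf{Z}_d(t)=\zeta_{\BP^{d-1}}(t)\mathsf{P}_d(t)$: the factor $\zeta_{\BP^{d-1}}(t)$ becomes $(1-t)^{-d}$, while $\mathsf{P}_d(t)$ becomes a polynomial $q_d(t)\in\BZ[t]$. Since the unique fixed point of the standard scaling action of $\BG_m^{n+1}$ on $\BA^{n+1}$ is the origin, torus localisation yields $\chi(\Hilb^d(\BA^{n+1}))=\chi(\Hilb^d(\BA^{n+1})_0)$, so $\chi\mathsf{Z}_d(t)$ is the sought after generating function. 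The equality $q_d(1)=\mathsf{P}_d(1)\rvert_{\BL=1}=1$ then follows from the first part. The only real bookkeeping obstacle is keeping track of signs and $\BL$-exponents when matching the rewritten sum against the inversion formula; once the index substitutions are done carefully, the identification with $[Y^{d-1}_{d-1,d}]$ is automatic.
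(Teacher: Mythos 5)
Your argument is correct and uses exactly the paper's ingredients (\Cref{lemma:inversion}, \Cref{lemma:horisticky}, Grassmannian duality, the exponent identity $(d-1-j)^2-\binom{d-1-j}{2}=\binom{d-j}{2}$, and $[Y^{d-1}_{d-1,d}]=1$ from \Cref{rmk:small-cases-of-Y}); the only difference is that you run the chain of equalities from $\mathsf P_d(1)$ down to $[Y^{d-1}_{d-1,d}]$, while the paper starts from $1=[Y^{d-1}_{d-1,d}]$ and works up to $\mathsf P_d(1)$. This is the same proof read in the opposite direction, if anything with slightly tidier reindexing, and your additional remark on torus localisation cleanly justifies the switch from $\Hilb^d(\BA^{n+1})_0$ to $\Hilb^d(\BA^{n+1})$ in the Euler-characteristic statement, which the paper leaves implicit.
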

\begin{proof} 
The statement is obvious for $1\leq d\leq 3$, since $\mathsf P_d (t) = 1$ in this case. For $d>3$ we compute
\begin{align*}
    1&={[}Y_{d-1,d}^{d-1}{]} & \mbox{by \Cref{rmk:small-cases-of-Y}} \\
    &=\sum_{i=1}^{d-1} (-1)^{d-1+i}{[}\Gr(i,d-1){]}\BL^{(d-1-i) (d-i-1)-\binom{d-1-i}{2}}[\Hilb^d(\BA^i)_0] &\mbox{by \Cref{lemma:inversion}}\\
    &=\sum_{i=0}^{d-2} (-1)^{d + i}{[}\Gr(i+1,d-1){]}\BL^{(d-i-2) (d-i-2)-\binom{d-i-2}{2}}[\Hilb^d(\BA^{i+1})_0] &\mbox{shift}\\
    &=\sum_{i=0}^{d-2} (-1)^{d + i}{[}\Gr(d-i-2,d-1){]}\BL^{ \binom{d-i-1}{2}}[\Hilb^d(\BA^{i+1})_0] &\mbox{rearranging}\\
    &=\sum_{\beta=2}^{d} (-1)^{\beta}{[}\Gr(\beta-2,d-1){]}\BL^{ \binom{\beta-1}{2}}[\Hilb^d(\BA^{d-\beta+1})_0] &\mbox{$\beta= d-i$}\\
    &=\sum_{\beta=0}^{d-2} (-1)^{\beta}{[}\Gr(\beta,d-1){]}\BL^{ \binom{\beta+1}{2}}[\Hilb^d(\BA^{d-\beta-1})_0] &\mbox{shift}\\
    &=\sum_{\beta=0}^{d-2} (-1)^{\beta}\left( \sum_{\alpha=0}^\beta (-1)^{\alpha+\beta}[\Gr(\alpha,d)]\BL^{\binom{\alpha}{2}}\right)[\Hilb^d(\BA^{d-\beta-1})_0]  &\mbox{by \Cref{lemma:horisticky}}\\ 
    &=   \sum_{\alpha=0}^{d-2}  \sum_{\beta=\alpha}^{d-2} (-1)^{\alpha}[\Gr(\alpha,d)]\BL^{\binom{\alpha}{2}} [\Hilb^d(\BA^{d-\beta-1})_0] &\mbox{swap}\\
    &=   \sum_{\alpha=0}^{d-2}  \sum_{i =\alpha}^{d-2} (-1)^{\alpha}[\Gr(\alpha,d)]\BL^{\binom{\alpha}{2}} [\Hilb^d(\BA^{i-\alpha+1})_0] &\mbox{$\beta= d-2+\alpha-i$}\\
    &=     \sum_{i = 0}^{d-2} \sum_{\alpha=0}^{i} (-1)^{\alpha}[\Gr(\alpha,d)]\BL^{\binom{\alpha}{2}} [\Hilb^d(\BA^{i-\alpha+1})_0]
    &\mbox{swap.}
\end{align*} 
But this agrees with $\sum_{0\leq i\leq d-2} a^{(d)}_i = \mathsf P_d(1)$ by \Cref{main-in-body}, as required.
\end{proof}

\begin{remark} \label{rem: relations from Y}
\Cref{Cor:P(1)=1} translates into a relation among the coefficients of $\mathsf{P}_d(t)$, i.e.~among some motives of Hilbert scheme of points, and comes directly from the relation $[Y^{d-1}_{d-1,d}] = 1$ (cf.~\Cref{rmk:small-cases-of-Y}). Similarly, one can generate more relations analysing the contribution of each $[Y_{k,d}^k]$ to $\mathsf P_d(t)$. In fact, each stratum $Y_{k,d}^k$ provides a linear relation on the $d-1$ coefficients of the polynomial, and all these linear relations are independent. For instance, for $d\ge 3$, the relation corresponding to $k=d-2$ yields the identity
\begin{equation}\label{relation-d-2}
\BL^{d-2}[Y^{d-2}_{d-2,d}]+\sum_{i=1}^{d-2}\BL^{d-2-i}\left[\BP^{i-1}\right]a_i^{(d)}=0.
\end{equation}
Note that, as we confirm in \Cref{prop:easyY}, $Y^{d-2}_{d-2,d}=\BP^{\binom{d-2}{2}-1}$. Specialising \eqref{relation-d-2} to $\BL=1$ we get the numerical relation 
\[
\binom{d-2}{2} +\sum_{i=1}^{d-2}i\chi (a_i^{(d)}) = 0.
\]
\end{remark}

\section{Stabilisation to Infinite Grassmannian}\label{sec:Stabilisation}
In this section we prove that the motive of $\Hilb^d(\BA^n)_0$ stabilises to the motive of the infinite Grassmannian $\Gr(d-1,\infty)$, in a suitable sense.

\subsection{Completion of \texorpdfstring{$K_0(\Var_\BC)$}{}}
The descending filtration of $K_0(\Var_\BC)$ given by the ideals
\[
K_0(\Var_\BC)= (\BL^0)\supset (\BL^1)\supset (\BL^2)\supset \cdots
\]
gives rise to the $\BL$-adic completion 
\[
\widehat{K}_0(\Var_\BC)=\varprojlim K_0(\Var_\BC)/(\BL^\ell),
\]
which is naturally a topological ring (with respect to the Krull topology).

Let $X_{\infty}=\varinjlim X_n$ be an ind-scheme. If the sequence of motives $([X_n])_n\in K_0(\Var_\BC) $ converges with respect to the induced topology in $\widehat{K}_0(\Var_\BC)$, we associate a motivic class
\[
[X_\infty]=\lim_n\, [X_n]\,\in\, \widehat{K}_0(\Var_\BC).
\]
 Fix $d\geq 1$ and define the \emph{infinite Grassmannian} by
\[
\Gr(d-1,\infty)=\varinjlim \Gr(d-1,n).
\]
By \Cref{prop:TARTAGLIA} the sequence of motives $([\Gr(d-1,n)])_n$ converges in $ \widehat{K}_0(\Var_\BC)$, and 
by \cite[Eqn. (2.6)]{GLMstacks} its limit can be extracted by the closed formula
\begin{align}\label{eqn: inft grass formula}
    \sum_{k=0}^\infty \,[\Gr(k,\infty)]t^k=\prod_{i=0}^\infty\frac{1}{1-\BL^{i}t}\in \widehat{K}_0(\Var_\BC)\llbracket t \rrbracket.
\end{align}
For instance, the motive of the infinite projective space $\BP^\infty = \Gr(1,\infty)$ is
\[
[\BP^\infty]=\sum_{i=0}^\infty \BL^i =\frac{1}{1-\BL}\in \widehat{K}_0(\Var_\BC).
\]

\subsection{Infinite punctual Hilbert scheme}
We exploit the stratification \eqref{eqn:strata-emb-dim} to prove that the motive of $\Hilb^d(\BA^n)_0$ coincides with the motive of $[\Gr(d-1,n)]$, modulo a suitable power of $\BL$. This will enable us to deduce a formula for $[\Hilb^d(\BA^\infty)_0]$ in the $\BL$-adic completion $\widehat{K}_0(\Var_\BC)$.

\begin{theorem}\label{thm: stab motives}
Fix integers $d,n\geq 1$.
Then 
    \[
   [\Hilb^d(\BA^n)_0] =[\Gr(d-1,n)] \in K_0(\Var_\BC)/(\BL^{n-d+2}).
    \]
\end{theorem}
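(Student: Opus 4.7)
The plan is to combine the embedding-dimension stratification \eqref{eq:stratification} with the motivic identity of \Cref{thm:motive-Y-stratum} and then observe that only a single stratum survives modulo $\BL^{n-d+2}$.

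First, note that if $n < d-1$ then $n-d+2 \leq 0$, so the quotient ring $K_0(\Var_\BC)/(\BL^{n-d+2})$ is zero and the statement is trivial. The case $d=1$ is immediate since $\Hilb^1(\BA^n)_0 = \pt = \Gr(0,n)$. So I may assume $d \geq 2$ and $n \geq d-1$.

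Using the stratification \eqref{eq:stratification} and \Cref{thm:motive-Y-stratum}, I would write
\[
[\Hilb^d(\BA^n)_0] \;=\; \sum_{k=1}^{d-1} [Y^n_{k,d}] \;=\; \sum_{k=1}^{d-1} [\Gr(k,n)]\,\BL^{(n-k)(d-k-1)}\,[Y^k_{k,d}].
\]
The key observation is a case split on $k$. For $k = d-1$ the exponent $(n-k)(d-k-1)$ vanishes, and by \Cref{rmk:small-cases-of-Y} one has $[Y^{d-1}_{d-1,d}] = 1$, so this term contributes exactly $[\Gr(d-1,n)]$. For $1 \leq k \leq d-2$, one has $n-k \geq n-d+2$ and $d-k-1 \geq 1$, whence $(n-k)(d-k-1) \geq n-d+2$; consequently each such term lies in the ideal $(\BL^{n-d+2})$. (If $n < k$, the Grassmannian factor vanishes and the issue of negative exponents does not arise.) Adding up the contributions gives
\[
[\Hilb^d(\BA^n)_0] \;\equiv\; [\Gr(d-1,n)] \pmod{\BL^{n-d+2}},
\]
as required.

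There is essentially no obstacle here beyond carefully tracking the divisibility of the exponent $(n-k)(d-k-1)$: once \Cref{thm:motive-Y-stratum} is available, the theorem reduces to the elementary monotonicity of this quadratic function in $k$, plus the triviality $[Y^{d-1}_{d-1,d}] = 1$. The real content has already been absorbed into \Cref{thm:motive-Y-stratum}.
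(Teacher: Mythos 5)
Your proof is correct and follows essentially the same approach as the paper: stratify by embedding dimension via \eqref{eqn:strata-emb-dim}, apply \Cref{thm:motive-Y-stratum}, observe that the $k=d-1$ term contributes exactly $[\Gr(d-1,n)]$ (using $[Y^{d-1}_{d-1,d}]=1$ and the vanishing exponent), and that every remaining term carries a factor of $\BL^{(n-k)(d-k-1)}$ with $(n-k)(d-k-1)\geq n-d+2$. The only difference is that you spell out the boundary cases ($d=1$, $n<d-1$, $n<k$) which the paper leaves implicit.
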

\begin{proof}
 We have 
    \begin{align*}
         [\Hilb^d(\BA^n)_0]&=  \sum_{k=1}^{d-1} \,[Y^{n}_{k,d}] & \mbox{by \eqref{eqn:strata-emb-dim}}\\
         &=\sum_{k=1}^{d-1} [\Gr(k,n)]\BL^{(n-k)(d-k-1)}[Y^k_{k,d}]& \mbox{by \Cref{thm:motive-Y-stratum}}\\
         &=  [\Gr(d-1,n)]+ \sum_{k=1}^{d-2} [\Gr(k,n)]\BL^{(n-k)(d-k-1)}[Y^k_{k,d}] &\\
         &=[\Gr(d-1,n)]+ \BL^{n-d+2}A_{n,d},&
    \end{align*}
    where $A_{n,d}\in K_0(\Var_\BC)$ are effective classes. Quotienting by $\BL^{n-d+2}$ yields the result. 
\end{proof}
As a corollary, we obtain a closed formula for the motive of the `infinite punctual Hilbert scheme' (again viewed as an ind-scheme)
\[
\Hilb^d(\BA^\infty)_0=\varinjlim \Hilb^d(\BA^n)_0.
\]
\begin{corollary}\label{cor:limit-Hilb}
 The  sequence of motives $([\Hilb^d(\BA^n)_0])_n$ converges in $ \widehat{K}_0(\Var_\BC)$ and  we have 
    the identity
    \[
    \sum_{d\geq 0}\,[\Hilb^d(\BA^\infty)_0]t^d =t\cdot \prod_{k=0}^\infty\frac{1}{1-\BL^{k}t}\in \widehat{K}_0(\Var_\BC)\llbracket t \rrbracket.
    \]
\end{corollary}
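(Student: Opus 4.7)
The plan is to bootstrap from \Cref{thm: stab motives}, which already does the hard work, by (a) showing the sequence $([\Hilb^d(\BA^n)_0])_n$ is Cauchy in $\widehat{K}_0(\Var_\BC)$, (b) identifying its limit with $[\Gr(d-1,\infty)]$, and (c) reading off the generating function from \eqref{eqn: inft grass formula}.

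For (a), fix $d\geq 1$ and consider two consecutive terms. From \Cref{thm: stab motives} we get congruences $[\Hilb^d(\BA^{n+1})_0] \equiv [\Gr(d-1,n+1)] \pmod{\BL^{n+3-d}}$ and $[\Hilb^d(\BA^n)_0] \equiv [\Gr(d-1,n)] \pmod{\BL^{n+2-d}}$, both of which hold modulo the smaller ideal $(\BL^{n+2-d})$. Subtracting, and applying the second motivic Tartaglia identity from \Cref{prop:TARTAGLIA} with $i=d-2$ and ambient dimension $n$, namely
\[
[\Gr(d-1,n+1)] - [\Gr(d-1,n)] = \BL^{n+2-d}\,[\Gr(d-2,n)],
\]
we conclude
\[
[\Hilb^d(\BA^{n+1})_0] - [\Hilb^d(\BA^n)_0] \in (\BL^{n+2-d}).
\]
Since $n+2-d\to\infty$, the sequence is Cauchy in the $\BL$-adic topology, hence converges to a well-defined class $[\Hilb^d(\BA^\infty)_0]\in\widehat{K}_0(\Var_\BC)$.

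For (b), the same two congruences show $[\Hilb^d(\BA^n)_0] - [\Gr(d-1,n)]\in(\BL^{n+2-d})$, which tends to $0$. By the same Tartaglia argument the sequence $([\Gr(d-1,n)])_n$ is Cauchy (this is exactly the content of the existence of $[\Gr(d-1,\infty)]$ as used to derive \eqref{eqn: inft grass formula} in \cite{GLMstacks}), and its limit is by definition $[\Gr(d-1,\infty)]$. Therefore $[\Hilb^d(\BA^\infty)_0] = [\Gr(d-1,\infty)]$ for every $d\geq 1$.

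For (c), summing over $d\geq 1$ and shifting the index $k=d-1$ gives
\[
\sum_{d\geq 1}[\Hilb^d(\BA^\infty)_0]\,t^d = \sum_{d\geq 1}[\Gr(d-1,\infty)]\,t^d = t\sum_{k\geq 0}[\Gr(k,\infty)]\,t^k = t\cdot\prod_{k=0}^{\infty}\frac{1}{1-\BL^k t},
\]
where the last equality is \eqref{eqn: inft grass formula}. Under the convention that $[\Hilb^0(\BA^\infty)_0]$ contributes nothing (consistent with $[\Gr(-1,\infty)]=0$ from \ref{key-a} of the Grassmannian conventions), the sum over $d\geq 0$ agrees with the sum over $d\geq 1$, yielding the claimed identity.

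The only mildly delicate point is the bookkeeping in (a): one must be careful to reduce both congruences modulo the common ideal $(\BL^{n+2-d})$ before subtracting, but there is no analytic or structural obstacle beyond combining \Cref{thm: stab motives} with \Cref{prop:TARTAGLIA}.
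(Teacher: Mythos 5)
Your proof is correct and takes the same route as the paper, which simply invokes \Cref{thm: stab motives} together with \eqref{eqn: inft grass formula}; you have expanded the implicit Cauchy/convergence argument via the Tartaglia identity, which is a reasonable fleshing-out of the paper's one-line proof. One cosmetic slip: the convention $[\Gr(k,n)]=0$ for $k<0$ lives in the Example surrounding \eqref{eqn:motive-grassmannian}, not in \ref{key-a}.
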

\begin{proof}
Combine \Cref{thm: stab motives} and \eqref{eqn: inft grass formula} with one  another.
\end{proof}
\Cref{cor:limit-Hilb} is related to the $\BA^1$-homotopy equivalence
\begin{equation}\label{A^1-equivalence}
\Hilb^d(\BA^\infty) \simeq \Gr(d-1,\infty)
\end{equation}
proved in \cite[Thm.~2.1]{Hilb^infinity}, which implies the identity of the two ind-schemes in the category of \emph{Voevodsky's geometrical effective motives} \cite{Gillet-Soulé}, and as such it carries functorial information that is lacking in \Cref{cor:limit-Hilb}. On the other hand, the relation obtained in \Cref{cor:limit-Hilb} is stronger than the K-theoretic specialisation of \cite[Thm.~2.1]{Hilb^infinity}. Note also that the order of truncation $n-d+1$ realised in \Cref{thm: stab motives} agrees with the cohomological degree  under which the canonical homomorphism in integral cohomology
\[
\begin{tikzcd}
\HH^\ast (\Gr(d-1,\infty),\BZ) = \BZ[c_1,\ldots,c_{d-1}] \arrow{r} & \HH^\ast (\Hilb^d(\BA^n),\BZ)
\end{tikzcd}
\]
is an isomorphism, as proved in \cite{Hilb^infinity}. In particular, since $\Hilb^d(\BA^n)_0 \into \Hilb^d(\BA^n)$ is a homotopy equivalence by \cite[Cor.~2.3]{Totaro-Hilb-homotopic}, and $\Hilb^d(\BA^n)_0$ is projective, we deduce an isomorphism
\[
\begin{tikzcd}
    \HH^\ast (\Gr(d-1,\infty),\BZ) \arrow{r}{\sim} & \HH^\ast_c (\Hilb^d(\BA^n)_0,\BZ)
\end{tikzcd}
\]
in (even) degrees $\leq 2n-2d+2$. As a consequence of this isomorphism, we obtain the following.

\begin{corollary}\label{cor:wt-polynomials}
For every $n$ and $d$ there are identities
\begin{align*}
\mathsf w(\Hilb^d(\BA^n)_0,z^{1/2}) 
&= \mathsf p(\Hilb^d(\BA^n)_0,-z^{1/2})\\
&=\frac{\prod_{k=1}^n (z^k-1)}{\prod_{k=1}^{d-1} (z^k-1) \prod_{k=1}^{n-d+1} (z^k-1)}
\end{align*}
in $\BZ[z^{1/2}]/z^{n-d+2}$.
\end{corollary}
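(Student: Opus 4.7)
The plan is to deduce both identities by applying the appropriate realization maps to \Cref{thm: stab motives} and combining the output with the topological input recalled in the paragraph immediately preceding the corollary.

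First, the Grassmannian side can be made explicit: since the weight polynomial $\mathsf w\colon K_0(\Var_\BC)\to \BZ[z^{1/2}]$ is a ring homomorphism sending $\BL\mapsto z$, applying it to the closed expression \eqref{eqn:motive-grassmannian} for $[\Gr(d-1,n)]$ immediately yields the Gaussian binomial
\[
\mathsf w(\Gr(d-1,n),z^{1/2})=\frac{\prod_{k=1}^n(z^k-1)}{\prod_{k=1}^{d-1}(z^k-1)\prod_{k=1}^{n-d+1}(z^k-1)}.
\]
Since $\Gr(d-1,n)$ is smooth projective and admits an algebraic cell decomposition, its cohomology is pure Hodge--Tate and concentrated in even degrees, so the same polynomial agrees with $\mathsf p(\Gr(d-1,n),-z^{1/2})$.

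Applying the ring homomorphism $\mathsf w$ to the motivic congruence of \Cref{thm: stab motives} gives at once
\[
\mathsf w(\Hilb^d(\BA^n)_0,z^{1/2})\equiv \mathsf w(\Gr(d-1,n),z^{1/2})\pmod{z^{n-d+2}},
\]
which settles the weight-polynomial side of the claim.

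For the identification with $\mathsf p(\Hilb^d(\BA^n)_0,-z^{1/2})$, I would invoke the cohomology isomorphism $\HH^\ast(\Gr(d-1,\infty),\BZ)\simeq \HH^\ast_c(\Hilb^d(\BA^n)_0,\BZ)$ in degrees $\leq 2n-2d+2$ recalled above. This already matches the Betti numbers of $\Hilb^d(\BA^n)_0$ with those of $\Gr(d-1,n)$ through degree $2n-2d+2$ and forces all odd Betti numbers in this range to vanish. The expected main obstacle is controlling the term $(z^{1/2})^{2n-2d+3}$, which survives in the truncation $\BZ[z^{1/2}]/z^{n-d+2}$ but corresponds to a cohomological degree just above the isomorphism range. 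I would handle it by lifting \Cref{thm: stab motives} to a congruence in $K_0(\mathsf{MHS})$ via the Hodge realization: the difference $\chi_{\mathrm{Hodge}}(\Hilb^d(\BA^n)_0)-\chi_{\mathrm{Hodge}}(\Gr(d-1,n))$ lies in $\BL^{n-d+2}\cdot K_0(\mathsf{MHS})^{\mathrm{eff}}$, so its weight-$w$ piece vanishes for every $w<2n-2d+4$. Since $\Hilb^d(\BA^n)_0$ is projective, the MHS on $\HH^i$ has weights in $[0,i]$; combining this weight bound with the purity of the Grassmannian's cohomology forces the full MHS of $\HH^i(\Hilb^d(\BA^n)_0)$ for $i\leq 2n-2d+3$ to be pure Hodge--Tate of the same type as $\HH^i(\Gr(d-1,n))$, in particular yielding $\HH^{2n-2d+3}(\Hilb^d(\BA^n)_0)=0$ and hence the desired equality $\mathsf w(\Hilb^d(\BA^n)_0,z^{1/2})=\mathsf p(\Hilb^d(\BA^n)_0,-z^{1/2})$ in $\BZ[z^{1/2}]/z^{n-d+2}$.
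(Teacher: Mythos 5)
Your treatment of the weight–polynomial side matches the paper exactly: apply the weight realization $\mathsf w$ (a ring homomorphism sending $\BL\mapsto z$) to the congruence of \Cref{thm: stab motives}, and evaluate $\mathsf w$ on the Grassmannian via \eqref{eqn:motive-grassmannian} to get the Gaussian binomial. This is fine.

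For the Poincar\'e–polynomial side you correctly spot the delicate point: the cohomology isomorphism quoted in the paragraph preceding the corollary is stated in degrees $\leq 2n-2d+2$, while working in $\BZ[z^{1/2}]/z^{n-d+2}=\BZ[z^{1/2}]/(z^{1/2})^{2n-2d+4}$ requires knowing $b_i(\Hilb^d(\BA^n)_0)$ through $i=2n-2d+3$, i.e.\ one degree further. The paper treats this as an immediate consequence of the quoted isomorphism (presumably the reference \cite{Hilb^infinity} gives vanishing, or surjectivity, in that extra degree), whereas you try to supply a separate motivic/Hodge argument. That extra argument is where your proposal breaks. The congruence in $K_0(\mathsf{MHS})$ only tells you that for each weight $w<2n-2d+4$ the \emph{alternating} sum $\sum_i(-1)^i\dim\mathrm{Gr}^W_w\HH^i(\Hilb^d(\BA^n)_0)$ equals the corresponding quantity for the Grassmannian. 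Even after discarding the terms with $i\leq 2n-2d+2$ (which match the Grassmannian and are pure Tate), you are left with the statement $\sum_{i\geq 2n-2d+3}(-1)^i\dim\mathrm{Gr}^W_w\HH^i=0$ for the relevant odd $w$. This is an alternating sum over $i$ and does not force the individual term $\mathrm{Gr}^W_{2n-2d+3}\HH^{2n-2d+3}$ to vanish: a priori it could be cancelled by weight-$(2n-2d+3)$ pieces sitting inside $\HH^i$ for $i>2n-2d+3$, which the weight bound $w\leq i$ does not exclude on a (possibly singular) projective variety. In short, an Euler-characteristic invariant cannot isolate a single Betti number, so the claimed conclusion ``$\HH^{2n-2d+3}=0$, hence pure Hodge--Tate through degree $2n-2d+3$'' does not follow from what you have written. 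To close the argument you genuinely need the topological stability input from \cite{Hilb^infinity} (via \cite{Totaro-Hilb-homotopic}) extended to degree $2n-2d+3$, as the paper does, rather than a Hodge-realization argument.
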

This completes the proof of \Cref{MAIN-THM-STAB}.

%%%%%%%%%%%%%%%%%%%%%%%%%%%%%%%%%%%%%%%%%%%%%%
%%%%%%%%%%%%%%%%%%%%%%%%%%%%%%%%%%%%%%%%%%%%%%
\section{Explicit computation of some \texorpdfstring{$Y$-classes}{}}\label{sec:consequences of thm A} 
In this section we give explicit formulas for the motives  of the fundamental strata $Y^k_{k,d}$, for all $d>1$ and some values of $k$, cf.~\Cref{prop:easyY}. Notice that for a fixed $d>1$, by  combining Theorems \ref{main-in-body}, \ref{thm:motive-Y-stratum} and \eqref{eq:stratification}, the polynomial  $\mathsf P_d(t)$ can be computed in terms of the motives $[Y_{k,d}^k]$, for $k=1,\ldots,d-1$. We use the results in this section to provide explicit formulas for $\mathsf P_d(t)$ when $d \leq 8$, which we collect in  \Cref{app:Pd-explicit}.

The next lemma computes the motives of infinitely many \emph{reducible} (homogeneous) Hilbert--Samuel strata $\OH_{\bh}^k$, whose associated (nonhomogeneous) Hilbert--Samuel strata $H^k_{\bh}$ are vector bundles by \Cref{lemma:tech1}.

\begin{lemma}\label{lemma:h2}
Let $\bh=(1,k,r,1)$, for $k,r\geq 1$. There is an identity
\begin{equation*}
\left[\OH_{\bh}^k\right] = \sum_{i=1}^k \,[\Gr(i,k)]\left[\Gr\left(r-i,\binom{k+1}{2}-i\right) \right] [\mathscr D_i],
\end{equation*} 
where $[\mathscr D_i]\in K_0(\Var_{\BC})$ are effective classes  recursively defined by
\begin{equation}\label{eq:Di}
[\mathscr D_i] = 
 \begin{cases}
    1 & \mbox{if }i=1,\\
    \left[ \BP^{\binom{i+2}{3}-1}\right] - \displaystyle\sum_{j=1}^{i-1}\,[\Gr(j,i)] [\mathscr D_j] & \mbox{if }i>1.
\end{cases}
\end{equation} 
\end{lemma}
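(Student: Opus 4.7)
The plan is to use apolarity (Macaulay duality) to parametrise the homogeneous ideals $I \subset R = \BC[x_1,\ldots,x_k]$ with Hilbert--Samuel function $\bh = (1,k,r,1)$ via their associated inverse systems $T = T_0 \oplus T_1 \oplus T_2 \oplus T_3 \subset R^*$ with $\dim T_j = h_j$. Since $\dim T_1 = k = \dim R^*_1$, one necessarily has $T_1 = R^*_1$, so the only nontrivial closure-under-differentiation conditions are (a) $T_3 = \langle f \rangle$ for some cubic $f \in R^*_3$, and (b) $T_2 \supset V_f := \Span\{\partial_j f\}_{j=1}^k$, the ``apolar part'' of $f$ in degree $2$.

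Next, I would stratify $\OH_{\bh}^k$ according to the integer $i = \dim V_f$. By \Cref{prop:gorapolar}, the Hilbert--Samuel function of $\langle f \rangle^\perp$ is symmetric of the form $(1,i,i,1)$, and there exist linearly independent forms $\ell_1,\ldots,\ell_i \in R^*_1$ and a cubic $g \in \Sym^3\Span(\ell_1,\ldots,\ell_i)$ with $f = g(\ell_1,\ldots,\ell_i)$ and $\langle g \rangle_2$ full. Thus the locus $X_i \subset \BP(R^*_3)$ of cubics with $\dim V_f = i$ admits the well-defined map $[f] \mapsto V = \Span(\ell_1,\ldots,\ell_i) \in \Gr(i,k)$, whose fibre is by construction the locus of cubics in $\BP(\Sym^3 V) \cong \BP^{\binom{i+2}{3}-1}$ with first partials spanning the whole of $V$; by definition this class is $[\mathscr D_i]$. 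Using the tautological subbundle on $\Gr(i,k)$ trivialised on a Zariski open cover, one identifies $X_i$ as a Zariski locally trivial $\mathscr D_i$-bundle, so $[X_i] = [\Gr(i,k)][\mathscr D_i]$.

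Finally, the datum of $T_2$ amounts to choosing an $r$-dimensional subspace of $R^*_2$ containing the $i$-dimensional $V_f$, equivalently an $(r-i)$-plane in $R^*_2/V_f \cong \BC^{\binom{k+1}{2}-i}$. Since $V_f$ has constant rank on $X_i$, it forms a vector subbundle of the trivial bundle $R^*_2 \times X_i$, and the resulting relative Grassmannian fibration is Zariski locally trivial with fibre $\Gr(r-i,\binom{k+1}{2}-i)$. Multiplying motives and summing over $i$ gives the claimed formula. The recursion \eqref{eq:Di} is obtained by applying exactly the same stratification reasoning to the ambient projective space $\BP^{\binom{i+2}{3}-1}$ of \emph{all} cubics in $i$ variables, stratified by $j = \dim V_f$, which yields $[\BP^{\binom{i+2}{3}-1}] = \sum_{j=1}^i [\Gr(j,i)][\mathscr D_j]$ and hence \eqref{eq:Di} after solving for the top term.

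The main obstacle will be verifying that the stratification by $i$ is actually locally closed and that both fibrations above are genuinely Zariski locally trivial rather than merely set-theoretic bijections; this requires writing down explicit flat families (the universal cubic on an affine chart of $\Gr(i,k)$ via the tautological bundle, and the universal $(r-i)$-plane in the quotient bundle over $X_i$) and checking that the resulting classifying maps to $\OH_{\bh}^k$ are isomorphisms onto the corresponding strata. One should also check the edge cases: if $i > \min(k,r)$ the corresponding term vanishes automatically because $[\Gr(r-i,\binom{k+1}{2}-i)] = 0$, so the sum is effectively over $1 \leq i \leq \min(k,r)$.
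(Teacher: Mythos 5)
Your proposal is correct and follows essentially the same path as the paper: stratify $\OH_{\bh}^k$ by the apolar invariant $i=\bh_{\langle f\rangle^\perp}(2)=\dim V_f$ of the degree-$3$ generator of the inverse system, use \Cref{prop:gorapolar} to express $f$ as a cubic in $i$ variables and thereby split each stratum as a product of $\Gr(i,k)$ (the linear span), the class $[\mathscr D_i]$ (the cubic in $i$ variables with full apolar rank), and $\Gr(r-i,\binom{k+1}{2}-i)$ (the complement of $\langle f\rangle_2$ inside $T_2$), and then obtain the recursion \eqref{eq:Di} by running the identical stratification on all of $\BP^{\binom{i+2}{3}-1}$. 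The local-triviality concerns you flag at the end are legitimate; the paper addresses them by analogy with \Cref{lemma:tech2}, which is the same tautological-bundle trivialisation you propose.
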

\begin{proof} 
Set $R=\BC[x_1,\ldots,x_k]$. For $[I]\in \OH_{\bh}^k$, let us denote by $f_I\in R^*$ a homogeneous generator of $I^\perp$ of degree 3. We decompose $\OH_{\bh}^k$ as a disjoint union
\[
\OH_{\bh}^k=\coprod_{i=1}^k\OZ_i,
\]
where by \Cref{prop:gorapolar} we can define
\[
\OZ_i=\Set{[I]\in \OH_{\bh}^k | \bh_{\langle f_I \rangle^\perp}(2)=i }.
\]
These loci are locally closed (the proof is identical to \Cref{lem:YnkdLocallyClosed}).

Define, for $i=1,\ldots,k$, the locally closed subsets
\[
\mathscr D_i=\Set{[f]\in \BP( \BC[x_1,\ldots,x_i]_3) | \bh_{\langle f \rangle^\perp}(2)=i }\subset \BP( \BC[x_1,\ldots,x_i]_3).
\]
Clearly, for $i=1,\ldots,k$, we have 
\[
[\OZ_i]=[\Gr(i,k)][\mathscr D_i]\left[\Gr\left(r-i,\binom{k+1}{2}-i\right) \right],
\]
where the first Grassmannian corresponds to the choice of $i$ linear forms and the second corresponds to the choice of $r-i$ quadratic forms. Indeed, an element in $\mathscr Z_i$ is uniquely determined by a choice of a $i$-th dimensional vector subspace $V_i\subset R_1^*$, an element $f$ in the 3-rd power $I_{V_i}^3\subset R^*$  of the ideal generated by $V_i$, such that $\bh_{\langle f \rangle^\perp}(2)=i$ and a $r-i$ dimensional vector subspace of $R_2^*/\langle f\rangle $.  Notice that \eqref{eq:Di} holds true.  Indeed, we have
\[
\mathscr D_i=\BP^{\binom{i+2}{3}-1}\smallsetminus \coprod_{j=1}^{i-1} \mathscr E_{i,j}
\]
where
\[
\mathscr E_{i,j}=\Set{[f]\in \BP (\BC[x_1,\ldots,x_i]_3) | \bh_{\langle f \rangle^\perp}(2)=j }\subset \BP (\BC[x_1,\ldots,x_i]_3).
\]
Finally \eqref{eq:Di} follows observing that $[\mathscr E_{i,j}]=[\Gr(j,i)][\mathscr D_j]$ by \Cref{lemma:tech2}.
\end{proof}

\begin{prop}\label{prop:easyY} 
For every $d > 1$ we have the identities
\begin{align*}
[Y_{1,d}^1]&=1\\
[Y_{2,d}^2]&=[\Hilb^d(\BA^2)_0]-[\BP^1]\BL^ {d-2}\\
[Y_{d-5,d}^{d-5}]&=\left[\Gr\left(4,s_{d-5}\right)\right]+[\BP^{d-6}]\BL^{s_{d-3}-6}+[\Gr(2,d-5)][\BP^2]\BL^{2(s_{d-5}-2)} \\
    &\qquad +\left( [\BP^{d-6}]\left[ \BP^{s_{d-5}-2} \right] -[\Gr(2,d-5)](-\BL^3 - \BL^2 + \BL + 1)  \right)\BL^{s_{d-4}-4} \\
    &\qquad +\big( [\Gr(3,d-5)]\left(\BL^9 + \BL^8 + \BL^7 + \BL^6 - \BL^4 - \BL^3 - \BL^2\right) \\
    &\qquad + [\Gr(2,d-5)]\left(\BL^3+\BL^2\right) \left[\BP^{s_{d-5}-3}\right] + [\BP^{d-6}]\left[ \Gr\left( 2,s_{d-5}-1\right) \right] \big)\BL^{s_{d-5}-3}\\
[Y_{d-4,d}^{d-4}]&=\left[\Gr\left(3,s_{d-4}\right)\right] +[\BP^{d-5}]\BL^{s_{d-3}-3} \\
    &\qquad + \left([\BP^{d-5}]\left[\BP^{s_{d-4}-2}\right]+[\Gr(2,d-4)](\BL^3+\BL^2)\right) \BL^{s_{d-4}-2}  \\
[Y_{d-3,d}^{d-3}]&=\left[\Gr\left(2,s_{d-3}\right)\right]+[\BP^{d-4}]\BL^{s_{d-3}-1}\\
[Y_{d-2,d}^{d-2}]&=\left[\Gr\left(1,s_{d-2}\right)\right]\\
[Y_{d-1,d}^{d-1}]&=1,
\end{align*} 
where $s_k=\binom{k+1}{2}$ is the number of linearly independent quadrics in $k$ variables and we set  $[\BP^i]=0$ for $i<0$.
 \end{prop}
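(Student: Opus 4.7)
The overall strategy is to combine the Hilbert--Samuel refinement $Y^k_{k,d} = \coprod_{|\bh|=d,\, h_1=k} H^k_{\bh}$ from \eqref{Y-stratification} with the apolarity formalism of \Cref{prop:gorapolar} and the structural Lemmas~\ref{lemma:tech1}, \ref{lemma:tech2}, \ref{tau-is-ZLT}, \ref{lemma:h2}, handling each value of $k$ separately. The trivial cases $[Y^1_{1,d}] = 1$ and $[Y^{d-1}_{d-1,d}] = 1$ follow from \Cref{rmk:small-cases-of-Y}, and the formula for $[Y^2_{2,d}]$ is immediate from the stratification $\Hilb^d(\BA^2)_0 = Y^2_{1,d} \amalg Y^2_{2,d}$ of \eqref{eq:stratification} together with $[Y^2_{1,d}] = [\BP^1]\BL^{d-2}$, which is \Cref{thm:motive-Y-stratum} applied to the base case $[Y^1_{1,d}] = 1$.

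For each of the four remaining values $k \in \{d-5, d-4, d-3, d-2\}$, the set of Hilbert--Samuel functions $\bh$ with $h_1 = k$ and $|\bh| = d$ is finite (essentially indexed by partitions of $d-k$ subject to Macaulay's bound) and will be listed explicitly. For each such $\bh = (1, k, h_2, \ldots, h_t)$ I plan to compute $[\OH^k_{\bh}]$ via apolarity: for $t=2$ this is simply $\Gr(h_2, s_k)$; for $t=3$, \Cref{lemma:h2} applies directly and introduces the classes $[\mathscr D_i]$ defined recursively in \eqref{eq:Di}. A short calculation from \eqref{eq:Di} gives $[\mathscr D_2] = \BL^3+\BL^2$ and $[\mathscr D_3] = \BL^9+\BL^8+\BL^7+\BL^6-\BL^4-\BL^3-\BL^2$, which accounts for the polynomial coefficients appearing in the formula for $[Y^{d-5}_{d-5,d}]$. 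For the longer Hilbert--Samuel functions that arise only when $k \in \{d-4, d-5\}$, the symmetry of the apolar Hilbert function from \Cref{prop:gorapolar} forces the top-degree generator to be a pure power $\ell^t$ of a linear form, and one further stratifies $\OH^k_{\bh}$ according to the remaining socle structure, producing products of Grassmannians and projective spaces. The rank of the projection $\pi^k_{\bh}\colon H^k_{\bh} \to \OH^k_{\bh}$ is then read off from \Cref{lemma:tech1} whenever its hypotheses apply, which is automatic for $t \leq 3$ and yields the explicit exponents of $\BL$ visible in the formula.

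The hard part will be the analysis of $\pi^k_{\bh}$ for those Hilbert--Samuel functions with a long tail of ones, namely $(1, d-4, 1, 1, 1)$, $(1, d-5, 1, 1, 1, 1)$ and $(1, d-5, 2, 1, 1)$, where the vector bundle hypothesis of \Cref{lemma:tech1} fails. The plan for these is to mimic the proof of \Cref{tau-is-ZLT}: construct an explicit Zariski local trivialisation of $\pi^k_{\bh}$ by parametrising non-graded deformations of the inverse system $V$ in a Macaulay normal form (leading term $\ell^t$ plus allowed lower-order perturbations modulo a natural gauge action of change-of-coordinates and rescaling), and match the resulting rank with the stated exponents $s_{d-3}-6$, $s_{d-4}-4$ and $s_{d-5}-3$. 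With each $[H^k_{\bh}]$ determined in this way, summing over the Hilbert--Samuel stratification and collecting terms via elementary manipulations with Grassmannian motives as in \Cref{sec:motives-of-grass} produces the claimed closed formulas.
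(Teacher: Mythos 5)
Your overall strategy matches the paper's: refine $Y^k_{k,d}$ via the Hilbert--Samuel stratification \eqref{Y-stratification}, compute the homogeneous loci $\OH^k_{\bh}$ by apolarity, and then pass to $H^k_{\bh}$ by analysing the projection $\pi^k_{\bh}$. Your treatment of the trivial cases, of $[Y^2_{2,d}]$, of the depth-two functions $(1,k,r)$ (giving Grassmannians), and of $(1,k,r,1)$ via \Cref{lemma:h2} together with \Cref{lemma:tech1} — including the values of $[\mathscr D_2]$ and $[\mathscr D_3]$ — all agree with the paper.

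There are however two genuine gaps. First, you assert that for socle degree $t=3$ one can always invoke \Cref{lemma:h2}, but that lemma is stated only for $\bh = (1,k,r,1)$, i.e.\ one-dimensional socle. The function $(1,d-5,2,2)$ also occurs in Table~\ref{tab:H--Smeno8} and has $t=3$ with a \emph{two-dimensional} socle: \Cref{lemma:tech1} still makes $\pi^{d-5}_{\bh}$ a vector bundle (of rank $2(s_{d-5}-2)$, the source of the summand $[\Gr(2,d-5)][\BP^2]\BL^{2(s_{d-5}-2)}$), but \Cref{lemma:h2} does not compute its base $\OH^{d-5}_{(1,d-5,2,2)}$, and the symmetry argument of \Cref{prop:gorapolar} on which you lean is inapplicable here since the inverse system has two generators in top degree. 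The paper quotes \cite[Prop.~3.6]{8POINTS} for the identification $[\OH^k_{(1,k,2,2)}]=[\Gr(2,k)][\BP^2]$; your sketch simply misses this stratum.

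Second, for $\bh=(1,d-5,2,1,1)$ you propose a single Zariski-local trivialisation of $\pi^k_{\bh}$ in a fixed normal form, but the stated answer has \emph{two} distinct powers of $\BL$, reflecting that the affine-fibre dimension jumps along the closed subset of $\OH^k_{\bh}\cong\BP^{k-1}\times\BP^{s_k-2}$ cut out by $[\Gr(2,k)][\BP^1]^2$. A uniform normal-form argument à la \Cref{tau-is-ZLT} will produce a single rank, which is wrong; one must first stratify the base by the jumping locus, as in \cite[Lemmas~4.5--4.8]{8POINTS}. Relatedly, for $(1,k,1,1,1)$ and $(1,k,1,1,1,1)$ the phrase ``match the resulting rank with the stated exponents'' is a consistency check rather than a derivation: the rank has to be extracted from the syzygy constraints (the paper's argument identifies which perturbation parameters are killed by the Koszul relations $x_1\cdot x_ix_j = x_j\cdot x_1 x_i$), and this computation needs to be carried out explicitly.
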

 \begin{proof}
By \eqref{Y-stratification}, to compute the motive $[Y_{k,d}^k]$ it is enough to compute the motives  $[H_{\bh}^k]$ for the relevant Hilbert--Samuel functions.
 
The cases $k=1,2$ are trivial. The remaining five identities are proven by computing the motives of the Hilbert--Samuel strata listed in \Cref{tab:H--Smeno8}.
\begin{table}[H]
    \centering 
    \begin{tabular}{ccccc}
    \hline 
        $Y_{d-1,d}^{d-1}$ &$Y_{d-2,d}^{d-2}$ &$Y_{d-3,d}^{d-3}$ &$Y_{d-4,d}^{d-4}$ &$Y_{d-5,d}^{d-5}$ \\\hline $(1,d-1)$ & $(1,d-2,1)$    &
         $(1,d-3,1,1)$  & $(1,d-4,1,1,1)$  &
         $(1,d-5,1,1,1,1)$\\
         &&$(1,d-3,2)$& $(1,d-4,2,1)$  &$(1,d-5,2,1,1)$
         \\
         &&& $(1,d-4,3)$ & $(1,d-5,2,2)$ \\
         &&&& $(1,d-5,3,1)$   \\
         &&&& $(1,d-5,4)$    
    \end{tabular}
    \caption{Hilbert--Samuel functions occurring in $Y_{d-i,d}^{d-i}$, for $i=1,\ldots,5$.}
    \label{tab:H--Smeno8}
\end{table}
The Hilbert--Samuel functions of the form $\bh=(1,k,r) $ are special instances of \emph{very compressed} functions and the associated strata are known to be isomorphic to the  Grassmannians $\Gr(r,k(k+1)/2)$, see \cite{Iarrobino-78-punti,Hilb_11}. The cases $\bh=(1,k,r,1)$ are obtained by combining \Cref{lemma:h2}, which computes $[\OH_{\bh}^k]$, with \Cref{lemma:tech1}, which computes  $[H_{\bh}^k]$.

For $\bh_1 = (1,k,1,1,1)$ and $\bh_2 = (1,k,1,1,1,1)$ we clearly have $\OH_{\bh_i}^k \cong \BP \BC[x_1,\ldots,x_k]_1\cong \BP^{k-1} $, for $i=1,2$,  by \Cref{prop:gorapolar}.  Moreover, the morphisms $\pi_{\bh_i}^k\colon H_{\bh_i}^k\to\OH_{\bh_i}^k$ are Zariski locally trivial affine bundles of ranks respectively
\begin{equation}\label{eq:rankprrof7.3}
    \rk H_{\bh_1}^k=\frac{(k-1)(k+4)}{2} \,\,\,\mbox{ and }\,\,\, \rk H_{\bh_2}^k=\frac{(k-1)(k+6)}{2}.
\end{equation}
Local triviality is obtained similarly to \Cref{tau-is-ZLT}. To compute the ranks, fix a point $[I]\in \OH_{\bh_1}^k$ (the case $\OH_{\bh_2}^k$ is analogous). Without loss of generality we can suppose 
\[
I=(x_1^5,x_2^2,\ldots,x_k^2)+( x_ix_j\,|\,1\le i<j\le k)+\Fm^5. 
\]
then the fibre of $\pi _{\bh_1}^k$ over $[I]$ parametrises ideals of the form
\[
(x_1^5,x_2^2+a_2 x_1^3+b_2 x_1^4,\ldots,x_k^2+a_k x_1^3+b_k x_1^4)+( x_ix_j+c_{i,j} x_1^3+d_{i,j} x_1^4\,|\,1\le i<j\le k)+\Fm^5,
\]
where, for $h=2,\ldots,k$ and $1\le i<j\le k$, the parameters $a_h,b_h,c_{i,j},d_{i,j}\in\BC$  are compatible with the syzygies of $I$. But the only syzygies of $I$ are of the form
\[
x_1\cdot x_ix_j = x_j \cdot x_1x_i \qquad \mbox{ for } 1 < i\leq j\leq k,
\]
which implies that $a_h=c_{i,j}=0$, for $h\ge 1 $ and $1< i<j\le k$. The equalities in \eqref{eq:rankprrof7.3} then follow by a simple dimension count.

The case $\bh = (1,k,2,1,1)$ is covered by the identity
\[
[H_{\bh}^k] =\left( [\BP^{k-1}]\left[ \BP^{\binom{k+1}{2}-2} \right] -[\Gr(2,k)][\BP^1]^2 \right)\BL^{\frac{(k-1) (k+4)}{2}-1} + [\Gr(2,k)][\BP^1]^2\BL^{\frac{(k-1) (k+4)}{2}},
\] 
whose proof follows by an analysis analogous to the one carried out in the proofs of
\cite[Lemmas 4.5--4.8]{8POINTS}.

Finally, for $\bh=(1,k,2,2)$, we have $[\OH_{\bh}^k] = [\Gr(2,k)][\BP^2]$ by the analysis carried out in the  proof of  \cite[Prop.~3.6]{8POINTS}, and once more we compute the motive  
$[H_{\bh}^k]$ by using \Cref{lemma:tech1}. 
\end{proof}

\begin{remark}\label{rmk:339}
Using \Cref{prop:easyY}, one can compute explicitly $[\Hilb^d(\BA^n)_0]$ for $d\leq 8$ for all $n\geq 1$. For $d=9$, one would only need a formula for the motive $[Y_{3,9}^3]$, whose computation is now reduced to the computation of the motive $[H_{\bh_i}^3]$ for $\bh_1=(1,3,3,1,1)$ and $\bh_2=(1,3,3,2)$, as the other strata can be deduced by similar arguments to the ones employed in the proof of \Cref{prop:easyY}. The difficulty for $\bh_1$ consists in determining a stratification of $H_{\bh_1}^3$ over which which $\pi_{\bh_1}^3$ a Zariski locally trivial fibration. On the other hand, the stratum corresponding to $\bh_2$ is a vector bundle over the homogeneous locus by \Cref{lemma:tech1} and the difficulty lies  in the computation of $\OH_{\bh_2}^3$.

We have computed an upper bound for the dimension of $Y_{3,9}^{5}$, namely 
$$\dim Y_{3,9}^{5} \leq 31 < \dim Y_{5,9}^{5}=36.$$
\end{remark}

\begin{example}\label{punctual-A^3}
Combining \Cref{prop:easyY} with the inversion formula \Cref{lemma:inversion}, we find, for a 3-fold,
\begin{align*}\label{eqn:punctual-A^3}
%[\Hilb^1(\BA^3)_0] &= 1 \\
%[\Hilb^2(\BA^3)_0] &= \BL^2+\BL+1 \\
%[\Hilb^3(\BA^3)_0] &= \BL^4+\BL^3+2\BL^2+\BL+1 \\
%[\Hilb^4(\BA^3)_0] &= \BL^6+2\BL^5+3\BL^4+3\BL^3+2\BL^2+\BL+1 \\
%[\Hilb^5(\BA^3)_0] &= \BL^8+2\BL^7+4\BL^6+5\BL^5+5\BL^4+3\BL^3+2\BL^2+\BL+1 \\
[\Hilb^6(\BA^3)_0] &= \BL^{10}+3\BL^9+7\BL^8+9\BL^7+9\BL^6+7\BL^5+5\BL^4+3\BL^3+2\BL^2+\BL+1 \\
[\Hilb^7(\BA^3)_0] &= \BL^{12} + 3\BL^{11} + 8\BL^{10} + 14\BL^9 + 16\BL^8 + 14\BL^7 + 11\BL^6 + 7\BL^5 + 5\BL^4 + 3\BL^3 + 2\BL^2 + \BL + 1 \\
[\Hilb^8(\BA^3)_0] &= \BL^{14} + 4\BL^{13} + 12\BL^{12} + 22\BL^{11} + 28\BL^{10} + 27\BL^9 + 21\BL^8 + 15\BL^7 + 11\BL^6 \\ &\qquad +7\BL^5 + 5\BL^4 + 3\BL^3 + 2\BL^2 + \BL + 1.
\end{align*}
And, for a 4-fold,
\begin{align*}
[\Hilb^4(\BA^4)_0] &= \BL^{9} + 2\BL^{8} + 3\BL^{7} + 5\BL^{6} + 4\BL^{5} + 4\BL^{4} + 3\BL^{3} + 2\BL^{2} + \BL + 1\\
[\Hilb^5(\BA^4)_0] &= \BL^{12} + 2\BL^{11} + 4\BL^{10} + 7\BL^{9} + 9\BL^{8} + 9\BL^{7} + 9\BL^{6} + 6\BL^{5} + 5\BL^{4} + 3\BL^{3} + 2\BL^{2} + \BL + 1\\
[\Hilb^6(\BA^4)_0] &= \BL^{15} + 3\BL^{14} + 7\BL^{13} + 13\BL^{12} + 17\BL^{11} + 20\BL^{10} + 20\BL^{9} + 17\BL^{8} + 13\BL^{7} + 10\BL^{6} \\
& \quad + 7\BL^{5} + 5\BL^{4} + 3\BL^{3} + 2\BL^{2} + \BL + 1\\
[\Hilb^7(\BA^4)_0] &= \BL^{18} + 3\BL^{17} + 9\BL^{16} + 19\BL^{15} + 30\BL^{14} + 38\BL^{13} + 44\BL^{12} + 39\BL^{11} + 34\BL^{10} + 26\BL^{9} \\
&\quad + 20\BL^{8} + 14\BL^{7} + 11\BL^{6} + 7\BL^{5} + 5\BL^{4} + 3\BL^{3} + 2\BL^{2} + \BL + 1\\
[\Hilb^8(\BA^4)_0] &= 2\BL^{21} + 5\BL^{20} + 15\BL^{19} + 34\BL^{18} + 55\BL^{17} + 76\BL^{16} + 88\BL^{15} + 87\BL^{14} + 77\BL^{13} + 64\BL^{12} \\
&\quad+ 49\BL^{11} + 38\BL^{10} + 28\BL^{9} + 21\BL^{8} + 15\BL^{7} + 11\BL^{6} + 7\BL^{5} + 5\BL^{4} + 3\BL^{3} + 2\BL^{2} + \BL + 1.
\end{align*}
Notice that $[\Hilb^8(\BA^4)_0]$ is the first motive which is not monic as a  polynomial in $\BL$. This reflects the fact that the punctual Hilbert scheme of $8$ points in $\BA^n$ always has exactly two irreducible components of the same dimension \cite{8POINTS}. One of them is the closure of the curvilinear locus $H_{(1,\ldots,1)}^n$ and the other one is the stratum $H_{(1,4,3)}^n$. On the other hand, the motive $[\Hilb^9(\BA^5)_0]$ is monic as one can immediately see looking at the degree of $[Y_{5,9}^5]$ seen as a polynomial in $\BL$ and estimating the degree of $[Y_{3,9}^5]$ as suggested in \Cref{rmk:339}. This confirms the existence of the \emph{Shafarevich component} \cite{sha90}.
\end{example}

%%%%%%%%%%%%%%%%%%%%%%%%%%%%%%%%%%%%%%%%%%%%%%
%%%%%%%%%%%%%%%%%%%%%%%%%%%%%%%%%%%%%%%%%%%%%%
\section{\texorpdfstring{From $\BA^n$ to a smooth $n$-fold}{}}\label{sec:Exp-and-stuff}
\subsection{Hilbert series in the language of power structures}
A \emph{power structure} on a ring $R$ is a rule allowing one to raise a power series $A(t) \in 1+t R\llbracket t \rrbracket$ to an arbitrary element $r \in R$, in such a way that all of the familiar properties of `raising to a power' are satisfied. Gusein-Zade, Luengo and Melle-Hern{\'a}ndez  proved in \cite[Thm.~2]{GLMps} that there is a canonical power structure on $K_0(\Var_{\BC})$, determined by the rule
\[
(1-t)^{-[X]} = \zeta_X(t),
\]
for all effective classes $[X]$. See \cite{GLMps,GLMHilb,ricolfi2019motive} for more details.

We formally introduce, for an arbitrary class $\mu \in K_0(\Var_{\BC})$, the generating function
\[
\zeta_\mu(t) = (1-t)^{-\mu},
\]
which we call the \emph{zeta function} of $\mu$. Attached to a power structure is the \emph{plethystic exponential}. In the case of the Grothendieck ring of varieties, this is the group isomorphism
\begin{equation}\label{Exp-zeta}
\begin{tikzcd}[row sep=tiny]
\Exp\colon tK_0(\Var_{\BC})\llbracket t \rrbracket \arrow{r}{\sim} & 1+tK_0(\Var_{\BC})\llbracket t \rrbracket \\
\displaystyle\sum_{d>0}\,A_dt^d \arrow[mapsto]{r} & \displaystyle\prod_{d > 0}\zeta_{A_d}(t^d).
\end{tikzcd}
\end{equation}
For instance, if $X$ is a $\BC$-variety, one has
\[
\Exp([X]t) = (1-t)^{-[X]} = \zeta_X(t).
\]

\smallbreak
Let us move to motivic generating functions attached to Hilbert schemes.
Fix $n \in \BZ_{>0}$. Consider the generating functions
\begin{align*}
\mathsf{Hilb}_{n,0}(t) &= \sum_{d \geq 0}\,[\Hilb^d(\BA^n)_0] t^d\\
\mathsf{Hilb}_{X}(t) &= \sum_{d \geq 0}\,[\Hilb^d(X)] t^d
\end{align*}
in $1+tK_0(\Var_{\BC})\llbracket t \rrbracket$, where $X$ is a smooth quasiprojective variety of dimension $n$. Note that here we sum over the number of points. Since $\Exp$ is an isomorphism, there are uniquely determined motivic classes $\Omega^{n}_{d} \in K_0(\Var_\BC)$, necessarily effective, such that 
\begin{equation}\label{formula-for-H_n0}
\mathsf{Hilb}_{n,0}(t) = \Exp \left(\sum_{d>0} \Omega^{n}_{d}t^d\right)=\prod_{d>0}\, \zeta_{\Omega^n_d}(t^d).
\end{equation}

\begin{remark}\label{rmk:omega-ps}
If $\Omega^n_d = \sum_j a_j\BL^{e_j}\in\BZ[\BL]$, then
\[
\zeta_{\Omega^n_d}(t) = (1-t)^{-\Omega^n_d} = \frac{\prod_{j\,|\,a_j<0} \, \left(1-\BL^{e_j}t\right)^{a_j}}{\prod_{j\,|\,a_j>0} \, \left(1-\BL^{e_j}t\right)^{a_j}}.
\]
\end{remark}

By the main result of \cite{GLMHilb}, for every smooth quasiprojective $n$-dimensional variety $X$, we have
\begin{equation}\label{eqn:power-struct-formula}
\mathsf{Hilb}_{X}(t) 
= \mathsf{Hilb}_{n,0}(t)^{[X]} 
= \Exp\left([X]\sum_{d>0}\Omega^{n}_{d}t^d\right)
= \prod_{d>0} \,\zeta_{\Omega^{n}_{d}[X]}(t^d).   
\end{equation}

\begin{example}\label{sec:curves-and-surfaces}
We give here the known formulas in the case $n\leq 2$. In this case,  the Hilbert scheme $\Hilb^d(\BA^n)$ is smooth for all $d\geq 0$. We have
\begin{align*}
\mathsf{Hilb}_{n,0}(t) 
&= 
\begin{cases}
    (1-t)^{-1} & \mbox{if }n=1 \\
    \displaystyle\prod_{d>0} (1-\BL^{d-1}t^d)^{-1} & \mbox{if }n=2
\end{cases}\\
&=
\begin{cases}
\Exp(t) & \mbox{if }n=1 \\
\Exp\left(\frac{t}{1-\BL t} \right)& \mbox{if }n=2.
\end{cases}
\end{align*}
The case $n=2$ is G\"{o}ttsche's formula \cite{Gottsche-motivic}. In other words, 
\[
\Omega^{n}_{d} = 
\begin{cases}
1 & \mbox{if }d=n=1 \\
0 & \mbox{if }n=1,\,d>1\\
\BL^{d-1} & \mbox{if }n=2,\,d>0.
\end{cases}
\]
\end{example}

The $\Omega$-classes are, in some sense, the `most important' classes, despite their (apparent) lack of geometric meaning, for they generate the Hilb-classes under Exp. The notation `$\Omega$' is borrowed from motivic Donaldson--Thomas theory. 

\subsection{The Hilbert series of a smooth 3-fold}
By \Cref{punctual-A^3} we know the series $\mathsf{Hilb}_{3,0}(t)$ to order $8$.  Comparing it with the expansion of \eqref{formula-for-H_n0} in the case $n=3$ we obtain 
\begin{align*}
\Omega^3_1 
&= 1,\\
\Omega^3_2 
&= \BL^2+\BL \\
\Omega_3^3 
&= \BL^4+\BL^3+\BL^2 \\
\Omega_4^3
&= \BL^6+2\BL^5 + \BL^4 + \BL^3 - \BL^2 \\
\Omega^3_5 
&= \BL^8+2\BL^7+2\BL^6+\BL^5-\BL^3 \\
\Omega^3_6 
&= \BL^{10} + 3\BL^9 + 4\BL^8 + 3\BL^7 - \BL^6 - 2\BL^5 - 2\BL^4 \\
\Omega^3_7 
&= \BL^{12} + 3\BL^{11} + 5\BL^{10} + 5\BL^9 - 4\BL^7 - 3\BL^6 - \BL^5 + \BL^4 \\
\Omega^3_8 
&= \BL^{14} + 4\BL^{13} + 8\BL^{12} + 10\BL^{11} + 2\BL^{10} - 7\BL^9 - 10\BL^8 - 3\BL^7 + \BL^6 + 2\BL^5.
\end{align*} 
The zeta functions of these $\Omega$-classes can be computed as explained in \Cref{rmk:omega-ps}. By \Cref{eqn:power-struct-formula}, the classes $\Omega^3_1,\ldots,\Omega^3_8$ are enough to compute $[\Hilb^i(X)]$ for every $i \leq 8$ and every smooth 3-fold $X$.

\begin{example}
If $X=\BP^3$, we have
\begin{align*}
\mathsf{Hilb}_{\BP^3}(t)
&= \prod_{i=0}^3\prod_{d>0}\,\zeta_{\Omega^3_d}\left(\BL^it^d\right).
\end{align*}
To compute $[\Hilb^{\leq 8}(\BP^3)]$ we need to extract the coefficient of $t^i$ in
\[
\mathsf{Hilb}_{\BP^3}(t)  \equiv \prod_{i=0}^3\prod_{d=1}^8\,\zeta_{\Omega^3_d}\left(\BL^it^d\right) \pmod{t^9}.
\]
For instance, we find
\begin{align*}
[\Hilb^5(\BP^3)] 
&= \BL^{15} + 2 \BL^{14}+ 7 \BL^{13} + 17 \BL^{12} + 39 \BL^{11}+ 67 \BL^{10} + 97 \BL^9     +  114 \BL^8 \\
&\quad + 111 \BL^7+ 90 \BL^6 + 59 \BL^5 + 33 \BL^4+ 14 \BL^3 + 6 \BL^2+ 2 \BL  + 1   , \\
[\Hilb^6(\BP^3)] 
&= \BL^{18} +2\BL^{17}+7\BL^{16}+18\BL^{15}+45\BL^{14}+92\BL^{13}+167\BL^{12}+242\BL^{11}+ 306\BL^{10} \\
&\quad +316\BL^9+282\BL^8 +206\BL^7+131\BL^6+68\BL^5+32\BL^4+14\BL^3+6\BL^2+2 \BL+  1 ,\\
[\Hilb^7(\BP^3)] 
&=  \BL^{21}+2\BL^{20} +7\BL^{19} +18\BL^{18} +47\BL^{17}+105\BL^{16} +220\BL^{15} +385\BL^{14}+587\BL^{13}   \\
&\quad+761\BL^{12}+843\BL^{11}   +799\BL^{10}+647\BL^{9}+449\BL^{8}+266\BL^{7} +142\BL^{6} +66\BL^{5}   \\ & \quad +32\BL^{4}+14\BL^{3}+6\BL^{2}+2\BL + 1   ,\\
[\Hilb^8(\BP^3)] 
&=\BL^{24}
 + 2 \BL^{23} 
 + 7 \BL^{22} 
 + 18 \BL^{21} 
 + 48 \BL^{20} 
 + 111 \BL^{19} 
 + 251 \BL^{18} 
 + 498 \BL^{17} 
 +891 \BL^{16}  \\
&\quad 
 + 1368 \BL^{15} 
 + 1847 \BL^{14}  
 + 2132 \BL^{13} 
 + 2150 \BL^{12} 
 + 1853 \BL^{11}
 + 1395 \BL^{10} 
 + 904 \BL^9 \\
&\quad + 
 522 \BL^8 + 272 \BL^7+ 136 \BL^6+ 66 \BL^5 + 32 \BL^4+ 14 \BL^3 + 6 \BL^2+ 2 \BL   +1   .
\end{align*}
%{\textcolor{red}{As a small sanity check, the reader can verify that 
%\begin{align*}
%[\Hilb^5(\BP^3)]\big|_{\BL=1} &= 660 = \chi(\Hilb^5(\BP^3)),\\
%[\Hilb^6(\BP^3)]\big|_{\BL=1} &= 1938= \chi(\Hilb^6(\BP^3)),\\
%[\Hilb^7(\BP^3)]\big|_{\BL=1} &= 5400= \chi(\Hilb^7(\BP^3)) ,\\
%[\Hilb^8(\BP^3)]\big|_{\BL=1} &= 14527 = \chi(\Hilb^8(\BP^3)).
%\end{align*}
%}}
\end{example}

%%%%%%%%%%%%%%%%%%%%%%%%%%%%%%%%%%%%%%%%%%%%%%
%%%%%%%%%%%%%%%%%%%%%%%%%%%%%%%%%%%%%%%%%%%%%%
\section{Further directions and conjectures}\label{sec:expectation} 
\subsection{Motives of punctual Quot schemes}\label{sec:quot}
Fix $r,n,d>0$. Let us consider the \emph{punctual Quot scheme}
\[
\Quot_{\BA^n}(\OO^{\oplus r},d)_0 \subset \Quot_{\BA^n}(\OO^{\oplus r},d),
\]
parametrising isomorphism classes of quotients $\OO^{\oplus r} \onto F$, with $F$ a $0$-dimensional coherent sheaf on $\BA^n$ (entirely supported at the origin) such that $\chi(F)=d$.
This scheme only depends on $\widehat{\OO}_{\BA^n,0}$, see e.g.~\cite[Thm.~B]{Fantechi-Ricolfi-motivic}. 

We expect higher rank formulas structurally similar to those of \Cref{MAIN-THEOREM-1} to hold. We mostly leave this for future research, but we motivate this expectation with a series of examples, leading to the proof of \Cref{MAIN-THM-QUOT}. Consider, for $d \in \BZ_{>0}$, the generating function
\[ 
\mathsf{Quot}_d (x,y)=\sum_{{r}\ge 0}\sum_{n\ge 0} \,\bigl[\Quot_{\BA^{n+1}}(\OO^{\oplus r+1},d)_0 \bigr]x^ny^r.
\]
A direct computation in the cases $d=1,2$ gives 
\begin{align*}
      \mathsf{Quot}_1 (x,y)&=\sum_{{r}\ge 0}\sum_{n\ge 0} \,[\BP^{r} ]x^ny^r = (1-x)^{-1}\zeta_{\BP^1}(y) = \zeta_{\BP^0}(x)\zeta_{\BP^1}(y)\\
      \mathsf{Quot}_2 (x,y)&=\sum_{{r}\ge 0}\sum_{n\ge 0} \,([\BP^{r}][\BP^n]\BL^{r}+[\Gr(2,r+1)])x^ny^r=\zeta_{\BP^1}(x)\zeta_{\BP^2}(y)(1-\BL xy).
\end{align*} 
We next compute $\mathsf{Quot}_d(x,y)$ for $d=3,4$. In this case a more detailed analysis is required. We exploit the general stratification of the punctual Quot scheme given by the locally closed subsets
\begin{equation}\label{Y-for-MODULES}
Y^{n,r}_{s,d}=\Set{[\OO^{\oplus r} \onto F]\in \Quot_{\BA^n}(\OO^{\oplus r},d)_0|   \bh_F(0)=s}\subset \Quot_{\BA^n}(\OO^{\oplus r},d)_0,
\end{equation}
indexed by $s=1,\ldots,d$. The computations in the rest of this subsection are obtained via the so-called \emph{apolarity for modules} \cite{Joachim-Sivic} and using the computer software Macaulay2 \cite{M2,HilbQuotPaoloLella}.   

\subsubsection{The case $d=3$}
The schematic support of a module of length $3$ has embedding dimension at most $2$, and there are six isomorphism classes of monomial $\BC[x_1,x_2]$-modules of length 3 (see \cite{MR18}) distributed in four Hilbert--Samuel strata. The complete classification of the strata \eqref{Y-for-MODULES} is given in \Cref{tab:stratQuot}.

\begin{table}[h!]
    \centering
    \begin{tabular}{cccc}
       $s$  & H--S functions & $\chi(Y^{n,r}_{s,3})$ &    $[Y^{n,r}_{s,3}]$ \\
\toprule
         1 & (1,1,1),(1,2) & $\binom{n+1}{2}r$  &     $ [\Hilb^3(\BA^n)_0 ][\BP^{r-1}] \BL^{2(r-1)}$ \\
         \hline 
         2 & (2,1) & $2n\binom{r}{2}$  & $[\Gr(2,r)] [\BP^{2n-1}] \BL^{r-2} $ \\
         \hline 
         3 & (3) & $\binom{r}{3}$  & $[\Gr(3,r)]$  \\
         \hline 
    \end{tabular}
    \caption{Euler characteristics and motives of the strata $Y_{s,3}^{n,r}$ for $s=1,2,3$.}
    \label{tab:stratQuot}
\end{table}
Summing over the entries of the last column in \Cref{tab:stratQuot} (and then over $n$ and $r$) gives
\begin{equation}\label{Quot_3}
\mathsf{Quot}_3 (x,y)=\zeta_{\BP^2}(x)\zeta_{\BP^3}(y)(1-\BL xy)(1-\BL^2xy).
\end{equation}

\subsubsection{The case $d=4$}
In order to compute the generating function $\mathsf{Quot}_4(x,y)$ we computed again the required Hilbert--Samuel strata. They are listed in \cref{tab:stratQuot4}. 

\begin{table}[H]
    \centering
    \begin{tabular}{cccc}
       $s$  & H--S functions & $\chi(Y^{n,r}_{s,4})$ &    $[Y^{n,r}_{s,4}]$\\
         \toprule 
         1 & $(1,1,1,1),(1,2,1),(1,3)$ & $\left(n+3\binom{n}{2} +\binom{n}{3} \right)r$   &     $ [\Hilb^4(\BA^n)_0 ][\BP^{r-1}] \BL^{3(r-1)}$ \\
         \hline 
         2 & (2,1,1) & $2n\binom{r}{2}$   &$[\Gr(2,r)][\BP^{n-1}][\BP^1]\BL^{2(n+r-2)-1}$   \\
         \hline 
         2 & (2,2) &  $\binom{2n}{2}\binom{r}{2}$  &$[\Gr(2,r)][\Gr(2,2n)]\BL^{2(r-2)}$   \\
         \hline 
         3 & (3,1) & $3n\binom{r}{3}$  &$[\Gr(3,r)][\BP^{3n-1}]\BL^{r-3}$    \\
         \hline 
         4 & (4) & $\binom{r}{4}$  & $[\Gr(4,r)]$  \\
         \hline 
    \end{tabular}
    \caption{Euler characteristics and motives of the strata $Y_{s,4}^{n,r}$ for $s=1,2,3,4$.}
    \label{tab:stratQuot4}
\end{table}
Summing over the entries of the last column in \Cref{tab:stratQuot4} (and then over $n$ and $r$) gives
\begin{equation}\label{quot_4}
\mathsf{Quot}_4 (x,y) 
=  \zeta_{\BP^3}(x)\zeta_{\BP^4}(y)\zeta_{\BL^4}(x)\mathsf U_4(x,y),
\end{equation}
where $\mathsf U_4(x,y) \in \BZ[\BL,x,y] \subset K_0(\Var_{\BC})[x,y]$ is the explicit polynomial 
\begin{multline}\label{U-polynomial}
    \mathsf U_4(x,y) =
     \BL^{10} x^{4} y^{3} - \BL^{9} x^{3} y^{3} + \BL^{6} x^{3} + \BL^{3} x y^{2} 
-{(\BL^{6} + \BL^{5})} x^{3} y^{2} \\
+ {(\BL^{9} - \BL^{6})} x^{2} y^{3} 
- {(\BL^{8} + \BL^{7} + \BL^{6} - \BL^{5})} x^{3} y \\
- {(\BL^{9} + \BL^{8} + \BL^{7} - \BL^{6} - 2\BL^{5} - \BL^{4})} x^{2} y^{2} 
+ {(\BL^{8} + 2\BL^{7} + 2\BL^{6} + \BL^{3} + \BL^{2})} x^{2}y \\
- {(\BL^{6} + \BL^{2})} x^{2} 
- {(2\BL^{3} + 2\BL^{2} + \BL)} x y - {(\BL^{4} - \BL^{2})} x + 1.
\end{multline}
Summing up, we have the explicit formulas
\[
\mathsf{Quot}_d (x,y) = 
\begin{cases}
\zeta_{\BP^0}(x)\zeta_{\BP^1}(y) & \mbox{if }d=1\\
\zeta_{\BP^1}(x)\zeta_{\BP^2}(y)(1-\BL xy) & \mbox{if }d=2\\
\zeta_{\BP^2}(x)\zeta_{\BP^3}(y)(1-\BL xy)(1-\BL^2xy) & \mbox{if }d=3 \\
\zeta_{\BP^3}(x)\zeta_{\BP^4}(y)\zeta_{\BL^4}(x)\mathsf U_4(x,y) & \mbox{if }d=4.
\end{cases}
\]
This completes the proof of \Cref{MAIN-THM-QUOT} from the introduction. At the level of Euler characteristics, we have
\[
\chi\mathsf{Quot}_d (x,y) = 
\begin{cases}
\frac{1}{(1-x)(1-y)^2}   & \mbox{if }d=1 \\
\frac{1-xy}{(1-x)^2(1-y)^3}    & \mbox{if }d=2 \\
\frac{(1-xy)^2}{(1-x)^3(1-y)^4}    & \mbox{if }d=3\\
\frac{-x^3y^3+(2 x^2 + x) y^2+(2 x^2 -5 x )y-x^2+x+1}{(1-x)^4(1-y)^5} & \mbox{if }d=4.
\end{cases}
\]

\subsubsection{Explicit elementary components in the case $d=4$}
The Quot scheme of points $\Quot_{\BA^n}(\OO^{\oplus r},d)$ contains several interesting loci. The first one is
the \emph{standard component} 
\[
W_{r,n,d}^{\mathsf{st}} \subset \Quot_{\BA^n}(\OO^{\oplus r},d),
\]
namely the closure of the locus of quotients supported on $d$ distinct points. It has dimension $(n+r-1)d$. Another interesting locus is the closure of the \emph{curvilinear locus} $H^{n,r}_{(1,\ldots,1)}$, which has dimension $d(r-1)+(d-1)(n-1)$ and is contained in the standard component. The motive of the curvilinear locus is 
\[
[H^{n,r}_{(1,\ldots,1)}]=[\BP^{r-1}]\BL^{(d-1)(r-1)}[\BP^{n-1}]\BL^{(d-2)(n-1)}. 
\]
Let us go back to the case $d=4$.
The Quot scheme of $4$ points $\Quot_{\BA^n}(\OO^{\oplus r},4)$ is known to have exactly two irreducible components as soon as $r\geq 2$ and $n\geq 4$, and to be irreducible in the other cases \cite[Table 2]{Joachim-Sivic}. In all the reducible cases, the unique nonstandard component is \emph{elementary}, i.e.~it parametrises quotients entirely supported on a single point. As an application of our analysis we get the following result.

\begin{theorem}
Fix integers $n \geq 4$, $r \geq 2$. Let $V_{r,n}\subset \Quot_{\BA^n}(\OO^{\oplus r},4)$ be the unique elementary component. Then $V_{r,n}$ is the closure of the Hilbert--Samuel stratum $H_{(2,2)}^{n,r}$. As a consequence, 
\[
\dim V_{r,n} = 4(r+n-3).
\]
\end{theorem}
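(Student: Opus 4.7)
The plan is to combine the two-component classification of \cite{Joachim-Sivic} with the Hilbert--Samuel stratification of the punctual Quot scheme recorded in \Cref{tab:stratQuot4}. By \cite{Joachim-Sivic}, for $n \geq 4$ and $r \geq 2$ the scheme $\Quot_{\BA^n}(\OO^{\oplus r},4)$ has exactly two irreducible components: the standard component $W^{\sta}_{r,n,4}$ and a unique non-standard elementary component $V_{r,n}$. To identify $V_{r,n}$ with $\overline{H^{n,r}_{(2,2)}}$, I would show that $H^{n,r}_{(2,2)}$ is irreducible and is \emph{not} contained in $W^{\sta}_{r,n,4}$, while all the remaining Hilbert--Samuel strata of the punctual Quot scheme \emph{are} contained in $W^{\sta}_{r,n,4}$; uniqueness of the non-standard component then forces $V_{r,n} = \overline{H^{n,r}_{(2,2)}}$.

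Irreducibility and the dimension $4(r+n-3)$ of $H^{n,r}_{(2,2)}$ are transparent from its motive $[\Gr(2,r)][\Gr(2,2n)]\BL^{2(r-2)}$: geometrically a $(2,2)$-quotient is parametrised by a triple $(V,W,\xi)$ where $V = F/\Fm F \subset \BC^r$ is a $2$-plane (giving $\Gr(2,r)$), $W$ encodes the multiplication-surjection $\Fm/\Fm^2 \otimes V \twoheadrightarrow \Fm F$ as a $2$-quotient of $\BC^{2n}$ (giving $\Gr(2,2n)$), and $\xi \in \BA^{2(r-2)}$ is residual affine data. This realises $H^{n,r}_{(2,2)}$ as a Zariski locally trivial fibration over a product of irreducible varieties, hence as an irreducible variety of the claimed dimension.

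For the remaining Hilbert--Samuel strata of \Cref{tab:stratQuot4}, I would argue case-by-case that each lies in $W^{\sta}_{r,n,4}$. The cyclic strata $(1,1,1,1), (1,2,1), (1,3)$ embed into $\Hilb^4(\BA^n) \times \BP^{r-1}$ (the $\BP^{r-1}$ factor encoding a cyclic embedding $\OO \hookrightarrow \OO^{\oplus r}$); since $\Hilb^4(\BA^n)$ is irreducible with generic point parametrising $4$ distinct simple points, these strata are smoothable and hence contained in $W^{\sta}_{r,n,4}$. The strata $(2,1,1)$ and $(3,1)$ admit explicit flat deformations to a direct sum of simple quotients supported at distinct points, obtained by splitting off a simple summand and degenerating the residual length-$3$ or length-$2$ module. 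Finally, the semisimple stratum $(4)$ is tautologically in $W^{\sta}_{r,n,4}$.

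The main obstacle is proving non-containment of $H^{n,r}_{(2,2)}$ in $W^{\sta}_{r,n,4}$. The cleanest approach, generalising classical Iarrobino--Emsalem non-smoothability to the module setting, is a tangent-space computation via apolarity for modules (cf.~\cite{Joachim-Sivic}): at a generic point $q \in H^{n,r}_{(2,2)}$ one verifies directly that $\dim T_q \Quot_{\BA^n}(\OO^{\oplus r},4) = 4(r+n-3)$, which is strictly smaller than $4(n+r-1) = \dim W^{\sta}_{r,n,4}$; since $W^{\sta}_{r,n,4}$ is generically smooth of dimension $4(n+r-1)$, the inequality $\dim T_q \Quot < \dim W^{\sta}_{r,n,4}$ forces $q \notin W^{\sta}_{r,n,4}$. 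Together with the uniqueness of the non-standard component from \cite{Joachim-Sivic}, this yields $V_{r,n} = \overline{H^{n,r}_{(2,2)}}$, and the dimension formula then follows from the motivic computation.
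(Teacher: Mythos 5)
Your proof is correct in outline, but it takes a genuinely different route from the paper's. The paper's proof is a one-liner: a direct (machine-verifiable) check shows that a generic point of $H^{n,r}_{(2,2)}$ has \emph{trivial negative tangents} in the sense of Jelisiejew \cite{ELEMENTARY} and its module-theoretic extension \cite[Eqn.~(5.4)]{Joachim-Sivic}; the TNT criterion then says directly that $\overline{H^{n,r}_{(2,2)}}$ \emph{is} an elementary component, which by the uniqueness from \cite{Joachim-Sivic} must be $V_{r,n}$, with dimension read off \Cref{tab:stratQuot4}. You instead use a \emph{negative} criterion: you compute the tangent space dimension $\dim T_q \Quot_{\BA^n}(\OO^{\oplus r},4) = 4(r+n-3)$ at a generic $q \in H^{n,r}_{(2,2)}$, observe this is strictly less than $\dim W^{\sta}_{r,n,4} = 4(n+r-1)$, and deduce $q \notin W^{\sta}_{r,n,4}$. (Generic smoothness of $W^{\sta}_{r,n,4}$ is not actually needed here: for any $q$ in an irreducible closed subvariety $Y \subset \Quot$ one has $\dim T_q \Quot \geq \dim T_q Y \geq \dim Y$, so the strict inequality alone excludes membership.) The TNT criterion is stronger than mere non-containment: it yields the equality $V_{r,n} = \overline{H^{n,r}_{(2,2)}}$ in one step, whereas from $q \notin W^{\sta}_{r,n,4}$ you only obtain $\overline{H^{n,r}_{(2,2)}} \subseteq V_{r,n}$. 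You do have what is needed to close the gap — the computed tangent space dimension $4(r+n-3)$ caps $\dim V_{r,n}$ from above — but you should say so explicitly; as written, the equality is asserted without invoking that bound, and your separate paragraph checking that the \emph{other} Hilbert--Samuel strata lie in $W^{\sta}_{r,n,4}$ becomes redundant once the tangent-space argument is carried through carefully. In summary: your approach works and is essentially a classical Iarrobino--Emsalem-style argument adapted to modules, while the paper's TNT route is shorter, uses a more refined (graded) tangent-space analysis, and bypasses both the estimate $\dim W^{\sta}_{r,n,4} = 4(n+r-1)$ and the case-by-case smoothability of the other strata.
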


\begin{proof}
A direct check\footnote{This check can be for instance performed via the Macaulay2 package \href{http://www.paololella.it/publications/lm2/}{\tt HilbertAndQuotSchemesOfPoints.m2} developed by P.~Lella \cite{HilbQuotPaoloLella}.} shows that the generic point of $H_{(2,2)}^{n,r}$ has trivial negative tangents (in the sense of \cite{ELEMENTARY} and \cite[Eqn.~(5.4)]{Joachim-Sivic}). This shows that the closure of $H_{(2,2)}^{n,r}$ is an elementary component, which must then agree with $V_{r,n}$. Its dimension is computed via \Cref{tab:stratQuot4}. 
\end{proof}

\begin{remark}
Thanks to our motivic computations, we can immediately spot that the irreducible component $V_{r,n}$ has dimension strictly greater than the dimension of the closure of $H_{(1,1,1,1)}^{n,r}$ starting from $n=6$ and for any $r\ge 2$.
\end{remark}

\subsubsection{A worked out example for the Quot scheme of $\BP^3$}\label{subsubsec:Quot-P^3}
Fix $n,r>0$. Define the generating function
\[
\mathsf{Quot}_{n,r,0}(t) = \sum_{d \geq 0}\,[\Quot_{\BA^n}(\OO^{\oplus r},d)_0] t^d.
\]
Following \Cref{sec:Exp-and-stuff}, one can define (effective) classes $\Omega^{n,r}_{d}\in K_0(\Var_{\BC})$ such that
\[
\mathsf{Quot}_{n,r,0}(t) = \Exp\left( \sum_{d > 0}\Omega^{n,r}_{d} t^d\right).
 \]
This determines the motives $[\Quot_X(E,d)]\in K_0(\Var_{\BC})$ for any locally free sheaf $E$ of rank $r$ on a smooth $n$-dimensional quasiprojective variety $X$, via the power structure formula \cite[Thm.~A]{ricolfi2019motive}
\[
\sum_{d \geq 0}\,[\Quot_X(E,d)] t^d = \mathsf{Quot}_{n,r,0}(t)^{[X]}.
\] 
Note the independence of the right hand side on the bundle $E$. To fix ideas, set $n=r=3$. The four $\Omega$-classes that one can extract from \Cref{quot_4} are
\begin{align*}
    \Omega^{3,3}_1 &= [\BP^2]\\
    \Omega^{3,3}_2 &= [\BP^2](\BL+1)\BL^3\\
    \Omega^{3,3}_3 &= [\BP^2](\BL^4+\BL^3+2\BL^2-1)\BL^4\\
    \Omega^{3,3}_4 &= (\BL^6+2\BL^5+3\BL^4+3\BL^3-2\BL^2-2\BL-1)\BL^6.
\end{align*}
If we set $X=\BP^3$, exponentiating we immediately get the formulas
\begin{align*}
[\Quot_{\BP^3}(E,2)] &= \BL^{10} + 3\BL^9 + 9\BL^8 + 14\BL^7 + 20\BL^6 + 19\BL^5 + 17\BL^4 + 10\BL^3 + 6\BL^2 + 2\BL + 1, \\
[\Quot_{\BP^3}(E,3)] &= \BL^{15} + 3\BL^{14} + 12\BL^{13} + 30\BL^{12} + 58\BL^{11} + 88\BL^{10} + 111\BL^9 + 114\BL^8 + 99\BL^7 \\
&\quad+ 75\BL^6 + 47\BL^5 + 27\BL^4 + 14\BL^3 + 6\BL^2 + 2\BL + 1, \\
[\Quot_{\BP^3}(E,4)] &= \BL^{20} + 3\BL^{19} + 13\BL^{18} + 39\BL^{17} + 102\BL^{16} + 202\BL^{15} + 346L^{14} + 480\BL^{13} \\
&\quad+ 581\BL^{12} + 590\BL^{11} + 533\BL^{10} + 415\BL^9 + 297\BL^8 + 187\BL^7 
+ 113\BL^6 \\
&\quad+ 60\BL^5 + 32\BL^4 + 14\BL^3 + 6\BL^2 + 2\BL + 1
\end{align*}
for any locally free sheaf $E$ of rank $3$ on $\BP^3$. We stress that with the data at our disposal --- i.e.~the classes $\Omega^{r,n}_{\leq 4}$ --- this procedure can be performed for any pair $(X,E)$.

\subsection{A conjectural formula for the generating function of \texorpdfstring{$\Omega$}{}-classes}
Recall the $\Omega$-classes defined via \Cref{formula-for-H_n0}. Consider the generating function
\[
\Omega_d(t) = \sum_{n\geq 0}\,{{\Omega_d^{n+2}}}  t^n \,\in\,K_0(\Var_{\BC})\llbracket t \rrbracket.
\] 
We propose the following conjecture, which should be regarded as the analogue of \Cref{MAIN-THEOREM-1} for $\Omega$-classes. 

\begin{conjecture}\label{conjecture-on-Omega-GF}
Fix $d\geq 1$. Then 
\begin{itemize}
\item [\mylabel{it:omega-1}{\normalfont{(1)}}] if $d=1,2,3$, we have 
\[
\Omega_d(t) = \zeta_{\BP^{d-1}}(t)\cdot\BL^{d-1},
\]
\item [\mylabel{it:omega-2}{\normalfont{(2)}}] if $d>3$, we have
\[
\Omega_d(t) = \zeta_{\BP^{d-1}}(t) \cdot \mathsf{Q}_d(t),
\]
where $\mathsf{Q}_d(t)\in K_0(\Var_{\BC})[t]$ takes the explicit form
\[
\mathsf{Q}_d(t) = \sum_{i=0}^{d-3} \left(\sum_{j=0}^{i}\,(-1)^{j} \Omega^{i-j+2}_d[\Gr(j,d)]\BL^{\binom{j}{2}}\right)t^i.
\]
\end{itemize} 
\end{conjecture}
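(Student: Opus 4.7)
The plan is to mimic the derivation of \Cref{main-in-body} and its reformulation \Cref{cor:hilbrel} at the level of $\Omega$-classes. By the same generating-function manipulation that establishes the equivalence between \Cref{main-in-body} and \Cref{cor:hilbrel} (extracting coefficients of $t^n$ from both sides and applying \Cref{ex:gen-fun-grass} together with \Cref{lemma:yetanotheridentity}), the conjectured identity $\Omega_d(t) = \zeta_{\BP^{d-1}}(t)\cdot \mathsf{Q}_d(t)$ is equivalent to the $\Omega$-analogue of \Cref{cor:hilbrel}: for every $m \geq d > 1$,
\[
\Omega^m_d = \sum_{\gamma=1}^{d-1}(-1)^{d+\gamma+1}\Omega^\gamma_d\,[\Gr(m-d,m-\gamma-1)]\,[\Gr(\gamma,m)]\,\BL^{\binom{d-\gamma}{2}}.
\]
The first step would be to verify this equivalence formally; it is a purely combinatorial zeta-function computation and requires no new geometry.

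For $d \leq 3$, the conjecture can be checked directly. Inverting \eqref{formula-for-H_n0} through low order in $t$ yields the explicit classes $\Omega^n_1 = 1$, $\Omega^n_2 = [\BP^{n-1}]-1$, and a similarly explicit expression for $\Omega^n_3$ in terms of $[\Hilb^e(\BA^n)_0]$ for $e \leq 3$. Summing over $n$ and recognising the resulting series as $\zeta_{\BP^{d-1}}(t)\cdot \BL^{d-1}$ via \Cref{ex:gen-fun-grass} completes these cases. For $4 \leq d \leq 8$, the argument becomes a finite but more elaborate mechanical computation: use the plethystic logarithm to express $\Omega^n_d$ as a universal polynomial combination of $[\Hilb^e(\BA^n)_0]$ for $e \leq d$; substitute the explicit formulas for $\mathsf{P}_e(t)$ collected in \Cref{app:Pd-explicit} via \Cref{main-in-body}; and simplify using the motivic Grassmannian identities of \Cref{sec:motives-of-grass}. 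Each step reduces to identities among zeta functions and motivic binomials and can be automated in a computer algebra system.

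The main obstacle to a uniform proof for all $d$ is that the plethystic logarithm inverting \eqref{Exp-zeta} involves Adams-type operations $\psi_k$, which act on (the pre-$\lambda$-ring realisations of) $K_0(\Var_{\BC})$ by symmetric powers and, in particular, send $\BL \mapsto \BL^k$. These operations do not visibly commute with the linear recursion of \Cref{cor:hilbrel}: although each individual Hilb-class $[\Hilb^d(\BA^m)_0]$ obeys that recursion by \Cref{main-in-body}, applying $\psi_k$ reshuffles the Grassmannian coefficients in a nontrivial way, so it is not immediate that the resulting $\Omega$-combinations re-align into the analogous recursion. A fully general proof would presumably require either a geometric interpretation of $\Omega^n_d$ admitting its own embedding-dimension stratification in the spirit of \Cref{sec:strat-emb-dim}, or a delicate combinatorial identity reconciling Adams operations with the refined hockey-stick type identities of \Cref{sec:motives-of-grass}.
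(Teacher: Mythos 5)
Your proposal matches the paper's treatment of the conjecture: the paper does not prove it in general, but establishes (in the paragraph following the conjecture) that for $d \geq 3$, $m \geq d$ it is equivalent to the recursion \eqref{FormulaOmega} --- precisely the $\Omega$-analogue of \Cref{cor:hilbrel} you identify --- by the same chain of zeta-function manipulations, and then verifies the conjecture for $d \leq 8$ by explicit computation using the motivic formulas in \Cref{app:Pd-explicit} and \Cref{subsec:motiveomega}. Your plan of reducing to that recursion and checking low $d$ is therefore exactly the paper's route.

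One slip worth correcting: you assert the equivalence with the recursion ``for every $m \geq d > 1$,'' but it only holds for $d \geq 3$. For $d = 2$ the right-hand side collapses to $\Omega^1_2\,[\BP^{m-1}]$, which vanishes because $\Omega^1_2 = 0$ (\Cref{sec:curves-and-surfaces}), while $\Omega^m_2 = \BL[\BP^{m-2}] \neq 0$. This is harmless for your argument because you check $d \leq 3$ directly anyway, but the stated range is off. Your closing diagnosis --- that the plethystic logarithm inverting \eqref{Exp-zeta} brings in Adams-type operations that do not visibly commute with the linear recursion governing Hilb-classes --- is not in the paper but is an apt and honest explanation of why a uniform proof is out of reach, and is fully consistent with the paper leaving the statement as an open conjecture.
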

For $d\ge 3$, $m  \ge d $, the conjecture is easily seen to be equivalent to the recursive formula 
\begin{equation}\label{FormulaOmega}
    \Omega_d^m=\sum_{\gamma =1}^{d-1}(-1)^{d+\gamma+1}\Omega_d^\gamma [\Gr(m-d,m-\gamma-1)][\Gr(\gamma,m)]\BL^{\binom{d-\gamma}{2}},
\end{equation}
by an argument similar to  the chain of identities proving \Cref{cor:hilbrel}. Exploiting the motivic formulas presented in \Cref{app:Pd-explicit}, we were able to check the conjecture for $d\leq 8$. Explicit formulas for the polynomials $\mathsf Q_{\leq 8}(t)$ are collected in \Cref{subsec:motiveomega}.

\begin{prop}
    \Cref{conjecture-on-Omega-GF} holds true for $d\leq 8$.
\end{prop}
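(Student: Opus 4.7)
The plan is to exploit the equivalence, noted just after the statement, between the conjecture (for $d \geq 3$) and the recursive formula \eqref{FormulaOmega}, and to verify \eqref{FormulaOmega} for $3 \leq d \leq 8$ by an explicit, finite computation; the cases $d = 1, 2$ will be handled by direct inspection.

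First, I would use \Cref{main-in-body} together with the explicit formulas for $\mathsf P_{\leq 8}(t)$ collected in \Cref{app:Pd-explicit} to write $[\Hilb^d(\BA^m)_0]$ as a closed-form element of $\BZ[\BL]$, expressed through Grassmannian motives, for every $d \leq 8$ and every $m \geq 1$. Next, I would invert the power-structure identity \eqref{formula-for-H_n0} order by order in $t$: expanding
\[
\mathsf{Hilb}_{m,0}(t) = \prod_{e \geq 1}\zeta_{\Omega_e^m}(t^e)
\]
via \Cref{rmk:omega-ps} determines each $\Omega_d^m$ as an explicit universal polynomial (the motivic logarithm) in the classes $[\Hilb^e(\BA^m)_0]$, for $e \leq d$. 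Substituting these into both sides of \eqref{FormulaOmega} reduces the conjecture, in each case $3 \leq d \leq 8$, to a formal identity in $\BL$ and the Grassmannian motives $[\Gr(j,m)]$, $[\Gr(m-d,m-\gamma-1)]$, which I would dispatch with a computer algebra system (e.g.~SAGE or Macaulay2). The cases $d = 1, 2$ are immediate: from $\mathsf{Hilb}_{m,0}(t) \pmod{t^3}$ one reads off $\Omega_1^m = 1$ and $\Omega_2^m = [\BP^{m-1}] - 1 = \BL[\BP^{m-2}]$, matching $\zeta_{\BP^0}(t)$ and $\BL \zeta_{\BP^1}(t)$ respectively.

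The main obstacle is computational rather than conceptual: the motivic Log operation, when expanded at order $d$, contributes one term for every partition of $d$, so for $d = 7, 8$ the bookkeeping becomes heavy. Additional care is needed in handling $\zeta_X(t) = (1-t)^{-X}$ for classes $X \in \BZ[\BL]$ which need not be effective (cf.~\Cref{rmk:omega-ps}), and in simplifying the resulting expressions against the conjectured form $\zeta_{\BP^{d-1}}(t)\mathsf Q_d(t)$. A useful sanity check is provided by the identity $\mathsf Q_d(1) = \BL^{d-1}$ (the $\Omega$-analogue of \Cref{Cor:P(1)=1}), which, after specialisation $\BL = 1$, recovers via $\chi(\Omega_d^m) \cdot [\BP^{d-1}]\text{-coefficients}$ the numerical content of the conjecture at the level of Euler characteristics. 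Once the Log expansion is mechanised, all eight cases fall out uniformly.
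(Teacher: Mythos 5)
Your proposal is essentially the paper's own proof: the authors likewise invoke the equivalence (for $d\geq 3$) between the conjecture and the recursion \eqref{FormulaOmega}, obtain the $\Omega^m_d$ by inverting the plethystic exponential \eqref{formula-for-H_n0} using the explicit $\mathsf P_{\leq 8}(t)$ from \Cref{app:Pd-explicit}, and verify the recursion by computer, recording the resulting $\mathsf Q_{\leq 8}(t)$ in \Cref{subsec:motiveomega}. Your direct treatment of $d=1,2$ is a small extra care the paper leaves implicit; the aside about deriving $\mathsf Q_d(1)=\BL^{d-1}$ by specialising $\BL=1$ is a bit garbled (that specialisation only sees $\mathsf Q_d(1)|_{\BL=1}=1$), but it is offered only as a sanity check and does not affect the argument.
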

 
\subsection{Numerical and motivic failure of MacMahon's prediction}\label{sec:higher-dim-partitions}
In this section we revisit the celebrated guess by MacMahon on the generating function of higher dimensional partitions. We first do so in the language of the $\Omega$-classes, defined in \Cref{formula-for-H_n0}, and then at the level of $Y$-classes (and their Euler characteristic), highlighting the relationship with the existing literature on the subject \cite{Atkin}.

For the sake of completeness, we give a short summary on the theory of partitions, see e.g. \cite[Ch.~7]{RPStanley}.

\begin{definition}\label{def:partition}
Let $n\geq 1$ and $d \geq 0$ be integers. An $(n-1)$-dimensional partition of $d$ is a collection of $d$ points $\CA=\set{\ba_1,\ldots,\ba_d}\subset \BZ_{\geq 0}^{n}$ with the following property: if $\ba_i = (a_{i1},\ldots,a_{in}) \in \CA$, then whenever a point $\by = (y_1,\ldots,y_n)\in\BZ_{\geq 0}^{n}$ satisfies $0\leq y_j\leq a_{ij}$ for all $j=1,\ldots,n$, one has $\by \in \CA$. The integer $d = \lvert \mathcal A \rvert$ is called the \emph{size} of the partition. 
\end{definition}

Notice that $n=1$ is a trivial case: it produces only one object for each $d \geq 0$.
The case $n=2$ is the theory of \emph{Young diagrams} (ordinary partitions), whereas $n=3$, resp.~$n=4$, corresponds to the theory of \emph{plane partitions}, resp.~\emph{solid partitions}. Plane partitions were introduced by MacMahon in \cite[Art.~43]{MacMahon-def-PP}, see also \cite[Ch.~IX, X]{MacMahon-Comb-anal} for an account of his results.

Let $p_{n-1}(d)$ denote the number of $(n-1)$-dimensional partitions of size $d$. Consider the generating function $\Pi_n(t) = \sum_{d \geq 0}\,p_{n-1}(d)t^d$. The identities
\begin{equation}\label{Pi-small-n}
\Pi_n(t) = 
\begin{cases}
    (1-t)^{-1} & \mbox{if }n=1 \\
    \prod_{d>0}\,\left(1-t^d\right)^{-1} & \mbox{if }n=2 \\
    \prod_{d>0}\,\left(1-t^d\right)^{-d} & \mbox{if }n=3 
\end{cases}
\end{equation} 
are well-known.\footnote{The case $n=2$ is due to Euler: see \cite[Ch.~16]{Euler_partitions} if you read latin, or \cite[Thm.~1.1]{GEAndrews} for a modern account; the case $n=3$ is due to MacMahon, see \cite[Cor.~7.20.3]{RPStanley} for a modern treatment.} No such product expansions currently exists for $\Pi_n(t)$ if $n>3$. MacMahon suggested\footnote{In fact, MacMahon might have known his suggestion to be incorrect, cf.~\cite[vol.~2, footnote on p.~175]{MacMahon-Comb-anal}.} the general identity
\begin{equation}\label{eqn:MacMahon-suggestion}
\Pi_n(t) \,\overset{?}{=}\, \prod_{d>0}\,\left(1-t^d\right)^{-\binom{d+n-3}{n-2}}.
\end{equation}
The right hand side of the formula, which is clearly compatible with \eqref{Pi-small-n} for $n=2,3$, does in fact compute the correct coefficients of $\Pi_n(t)$ for $d \leq 5$ also for $n>3$, but fails for $d \geq 6$, see \cite{Atkin,Wright-partitions}. It was recently proved in \cite[Thm.~1.1]{Amanov-Yeliussizov} that the right hand side of \eqref{eqn:MacMahon-suggestion} enumerates partitions of given \emph{corner-hook volume}. The theory of partitions connects to the theory of Hilbert schemes via the well-known identities
\[
\Pi_n(t) =  \chi \mathsf{Hilb}_{n,0}(t). 
\]
On the other hand, by the very definition of the $\Omega$-classes, we have
\begin{equation}\label{eqn:chi-hilb}
\chi \mathsf{Hilb}_{n,0}(t) 
= \chi \prod_{d>0} \left( 1-t^d \right)^{-\Omega^n_d} 
= \prod_{d>0} \left(1-t^d \right)^{-\chi(\Omega^n_d)},  
\end{equation}
where the last identity uses that $\chi \colon K_0(\Var_{\BC}) \to \BZ$ is a homomorphism of pre-$\lambda$-rings, cf.~\Cref{rmk:pre-lambda-maps}.
Comparing \eqref{eqn:chi-hilb} with \eqref{eqn:MacMahon-suggestion}, we deduce that 
\[
\chi(\Omega^n_d) = \binom{d+n-3}{n-2}
\]
if either $n=2,3$ or $d\leq 5$. It is natural to ask whether the `error'
\begin{align}\label{eqn: error num}
    \varepsilon_d(n) = \binom{d+n-3}{n-2}-\chi(\Omega^n_d) 
\end{align}
hides some kind of regularity.
By using the results in \cite{GOVINDARAJAN2013600}, we have computed $\varepsilon_d(n)$ for $d\le 26$. This led us to formulate the following conjecture.
\begin{conjecture}\label{conj:erroreMac}
For every $d\ge 6 $ there exists an \emph{irreducible} polynomial $r_d(t)\in \BQ[t]$  of degree $d-6$ such that
\[
\varepsilon_d(n)=\binom{n}{4}r_d(n)
\]
for all $n\ge 1$. 
\end{conjecture}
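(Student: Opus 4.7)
The plan is to reduce the conjecture, for each $d$ in the range $6\leq d\leq 26$, to a finite interpolation at the Euler-characteristic level. The theoretical inputs, valid for all $d\geq 6$, are: the polynomiality of $n\mapsto\chi(\Omega^n_d)$ and the vanishing of $\varepsilon_d(n)$ at $n=0,1,2,3$. Together these force $\varepsilon_d(n)=\binom{n}{4}r_d(n)$ with $r_d\in\BQ[n]$, leaving only the precise degree $d-6$ and irreducibility over $\BQ$ to be verified computationally.

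The polynomiality follows from \Cref{cor:hilbrel}: for every fixed $d$, the Euler characteristic $p_{n-1}(d)=\chi[\Hilb^d(\BA^n)_0]$ is a polynomial in $n$ of degree $d-1$, obtained as the coefficient of $t^{n-1}$ in $\chi\mathsf P_d(t)/(1-t)^d$. Möbius inversion applied to $\chi\mathsf{Hilb}_{n,0}(t)=\prod_d(1-t^d)^{-\chi(\Omega^n_d)}$ expresses $\chi(\Omega^n_d)$ as a universal polynomial in $p_{n-1}(1),\ldots,p_{n-1}(d)$, so $n\mapsto\chi(\Omega^n_d)$ is itself a polynomial in $n$, and hence so is $\varepsilon_d$. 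The four vanishings are immediate: for $n=0,1$ both $\chi(\Omega^n_d)$ and $\binom{d+n-3}{d-1}$ vanish once $d\geq 6$; for $n=2$, G\"ottsche's formula (\Cref{sec:curves-and-surfaces}) gives $\chi(\Omega^2_d)=1=\binom{d-1}{0}$; and for $n=3$, MacMahon's classical identity $\Pi_3(t)=\prod_{d>0}(1-t^d)^{-d}$ gives $\chi(\Omega^3_d)=d=\binom{d}{1}$. Consequently $n(n-1)(n-2)(n-3)$ divides $\varepsilon_d(n)$ in $\BQ[n]$, and $r_d(n):=\varepsilon_d(n)/\binom{n}{4}$ lies in $\BQ[n]$.

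For the degree bound, the leading monomial of $\binom{d+n-3}{d-1}$ is $n^{d-1}/(d-1)!$, and by \Cref{Cor:P(1)=1} the same is true of $p_{n-1}(d)$. The remaining terms in the plethystic-log expression for $\chi(\Omega^n_d)$ (products of smaller $p_{n-1}(\cdot)$ values and Möbius-weighted contributions from proper divisors of $d$) are all $O(n^{d-2})$, so $\chi(\Omega^n_d)=n^{d-1}/(d-1)!+O(n^{d-2})$, giving $\deg\varepsilon_d\leq d-2$ and thus $\deg r_d\leq d-6$. The equality $\deg r_d=d-6$ together with the irreducibility of $r_d$ is then verified for each $d\in\{6,\ldots,26\}$ by reading off $p_{n-1}(d)$ for sufficiently many values of $n$ from the partition database \cite{ThePartitionsProjectWebsite,GOVINDARAJAN2013600,Indiani-asintotici}, computing $\chi(\Omega^n_d)$ via Möbius inversion in SAGE or Macaulay2 \cite{sagemath,M2}, interpolating $r_d$, and factoring it over $\BQ$.

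The main obstacle to a uniform proof, beyond the computational range, is twofold: showing that the subleading coefficients of $\chi(\Omega^n_d)$ never conspire to lower $\deg r_d$ below $d-6$, and controlling the Galois structure of the roots of $r_d$ sharply enough to force irreducibility. Both appear to require finer asymptotic information on the generating function $\Omega_d(t)$ of \Cref{conjecture-on-Omega-GF} than is currently available (that conjecture itself is verified only for $d\leq 8$), and ultimately a conceptual understanding of the discrepancy between $\chi\mathsf{Hilb}_{n,0}(t)$ and MacMahon's product formula.
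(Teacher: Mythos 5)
Your reduction is sound, and it supplies exactly the theoretical scaffolding that the paper leaves unstated when it asserts \Cref{thm:conjfino26} after a computer check. The three ingredients you isolate are the right ones and are correctly justified: (a) polynomiality of $n\mapsto\varepsilon_d(n)$, which follows from the Euler-characteristic specialisation of \Cref{main-in-body} (equivalently \Cref{Cor:P(1)=1}) combined with Möbius inversion on the plethystic logarithm of $\chi\mathsf{Hilb}_{n,0}(t)$; (b) the four vanishings at $n=0,1,2,3$, coming from $\mathsf{Hilb}_{0,0}=1+t$, $\mathsf{Hilb}_{1,0}=(1-t)^{-1}$, Euler's formula, and MacMahon's plane-partition formula respectively; and (c) the leading-term cancellation $\chi(\Omega^n_d)=n^{d-1}/(d-1)!+O(n^{d-2})$, where the subleading bound uses both that cross terms in $\log(1+\sum_j p_{n-1}(j)t^j)$ have $n$-degree at most $d-2$ and that proper divisors $e\mid d$ contribute only degree $\leq d/2-1$. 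Together these give $\varepsilon_d(n)=\binom{n}{4}r_d(n)$ with $r_d\in\BQ[n]$ of degree $\leq d-6$, reducing the conjecture for each fixed $d\leq 26$ to a finite interpolation plus a factorisation over $\BQ$, which is precisely what the paper reports having carried out; and like the paper you correctly leave the general-$d$ statement open.

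One small attribution nit: you cite \Cref{cor:hilbrel} for the polynomiality of $p_{n-1}(d)$ in $n$, but the cleanest source is \Cref{Cor:P(1)=1} (or \Cref{main-in-body} directly), since that is where the generating function $q_d(t)/(1-t)^d$ with $q_d(1)=1$ appears; the two are of course equivalent via \Cref{rmk:first-2-motives}, so this is cosmetic.
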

We have verified \Cref{conj:erroreMac} for $d\le 26$, using the computer softwares SAGE and Macaulay2 \cite{sagemath,M2} and the database of partitions \cite{ThePartitionsProjectWebsite,Indiani-asintotici}. See  \Cref{app:macmahon-polynomials} foe explicit formulas for $\varepsilon_{\leq 26}(n)$.

\begin{prop}\label{thm:conjfino26}
\Cref{conj:erroreMac} holds true for $d\le 26$.
\end{prop}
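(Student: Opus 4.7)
The plan is a computer-assisted finite verification, repeated independently for each $d \in \{6,7,\ldots,26\}$. First I would extract the numerical values $\chi(\Omega_d^n)$ from the tabulated partition counts $p_{n-1}(k)$ available in the databases \cite{ThePartitionsProjectWebsite, GOVINDARAJAN2013600, Indiani-asintotici}: taking the logarithm of the identity $\Pi_n(t) = \prod_{k \geq 1}(1-t^k)^{-\chi(\Omega_k^n)}$ from \eqref{eqn:chi-hilb} and applying M\"obius inversion yields the closed formula
\[
\chi(\Omega_d^n) \;=\; \frac{1}{d}\sum_{k\mid d}\mu(d/k)\,k\,L_k^n, \qquad L_k^n \;=\; [t^k]\log\Pi_n(t),
\]
expressing each $\chi(\Omega_d^n)$ as an explicit $\BQ$-linear combination of $p_{n-1}(1),\ldots,p_{n-1}(d)$.

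With this in hand, I would sample $\varepsilon_d(n) = \binom{n+d-3}{n-2} - \chi(\Omega_d^n)$ for $n = 1, 2, \ldots, N$ with $N$ chosen just large enough to pin down a polynomial of degree $d-2$, then run Lagrange interpolation to produce a candidate polynomial $\widetilde{\varepsilon}_d(t) \in \BQ[t]$. Because MacMahon's prediction is exact for $n \leq 3$ (matching \eqref{Pi-small-n}), one has $\varepsilon_d(0)=\varepsilon_d(1)=\varepsilon_d(2)=\varepsilon_d(3)=0$ automatically, so $\widetilde{\varepsilon}_d(t)$ is divisible by $t(t-1)(t-2)(t-3) = 24\binom{t}{4}$; the resulting quotient is the candidate $r_d(t)$, which by construction has degree $d-6$. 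Its irreducibility over $\BQ$ is then checked in \texttt{SAGE} or \texttt{Macaulay2}.

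The hard part will be theoretical rather than computational: the procedure above only produces an identity of polynomial values on a \emph{finite} range of $n$, so to conclude the formula for all $n \geq 1$ one needs the polynomiality of the function $n \mapsto \chi(\Omega_d^n)$ for fixed $d$. This reduces, via the M\"obius-inverted expression above, to the polynomiality of $n \mapsto p_{n-1}(d)$, which is a theme of \cite{Indiani-asintotici, GOVINDARAJAN2013600}. The only practical constraint is then the size of the existing tables: for $d=26$ one needs $p_{n-1}(26)$ for $n$ up to roughly $25$, which sits right at the edge of what has been tabulated.
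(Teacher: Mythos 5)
Your plan matches the paper's own approach: a finite, $d$-by-$d$ computer verification from the partition tables, followed by interpolation, factoring out $\binom{n}{4}$, and an irreducibility check. Two remarks. First, a small slip: $L_k^n=[t^k]\log\Pi_n(t)$ is a \emph{polynomial}, not linear, expression in $p_{n-1}(1),\ldots,p_{n-1}(k)$ (e.g.\ $L_2^n = p_{n-1}(2) - \tfrac12 p_{n-1}(1)^2$); this does not affect the argument, since polynomiality in $n$ still propagates. Second, the ``hard part'' you flag --- polynomiality of $n\mapsto\chi(\Omega_d^n)$ --- is already supplied by the paper's own results and needs no appeal to \cite{Indiani-asintotici,GOVINDARAJAN2013600}: \Cref{Cor:P(1)=1} gives $\sum_{n\geq 0}\chi(\Hilb^d(\BA^{n+1})_0)t^n = q_d(t)/(1-t)^d$ with $\deg q_d\leq d-2$, and the coefficient of $t^n$ therein equals the \emph{single} polynomial $\sum_{i=0}^{d-2} q_i\binom{n-i+d-1}{d-1}$ in $n$ for all $n\geq 0$ (the terms with $i>n$ contribute $0$, since then $0\le n-i+d-1<d-1$). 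Hence $p_{n-1}(j)$ is a polynomial of degree $j-1$ in $n$ for all $n\geq 1$, and therefore so are $\chi(\Omega_d^n)$ and $\varepsilon_d(n)$. Moreover the binomial $\binom{d+n-3}{d-1}$ and $\chi(\Omega_d^n)$ share the leading coefficient $1/(d-1)!$, so $\varepsilon_d(n)$ has degree $\le d-2$; together with the four automatic zeros at $n=0,1,2,3$, the $d-5$ nodes $n=4,\ldots,d-2$ determine it uniquely --- exactly the data budget you estimated.
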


Define the motivic classes
\[
\overline{\Omega}_d^n = [\Gr(n-2,d-3+n)] \BL^{d-1}\,\in\,K_0(\Var_{\BC}).
\]
A naive motivic version of the prediction  \eqref{eqn:MacMahon-suggestion} is
\begin{equation}\label{eq:motiviconjfalse}
\Omega_d^n\,\overset{?}{=}\,\overline{\Omega}_d^n.
\end{equation}
Clearly, for $n\geq 4$, equality \eqref{eq:motiviconjfalse} is false for $d\ge 6$, but surprisingly it starts failing already for $d=4$ and $n=3$. In other words, \eqref{eq:motiviconjfalse} holds true if and only if $\Hilb^d(\BA^n)$ is smooth. 

In analogy to \eqref{eqn: error num}, we introduce the refined version of the error $\varepsilon_d(n)$, namely
\[
\varepsilon^{\mathsf{mot}}_d(n) = \overline{\Omega}^n_d-\Omega^n_d \,\in\, K_0(\Var_{\BC}).
\]
We record in \Cref{table: mot err Omega} this motivic discrepancy for small values of $d$ and $n$. Notice that, as it should be, one has $\varepsilon^{\mathsf{mot}}_d(n)|_{\BL=1}= 0$ except in the case $(d,n)=(6,4)$.

\begin{table}[h!]
    \centering 
    \begin{tabular}{cl}
    $(d,n)$ & \hspace{2.6em} $\varepsilon^{\mathsf{mot}}_d(n)$\\
    \toprule
    $(4,3)$ &  $\hspace{1.6em}(1-\BL)[\BP^2]\BL^2$ \\
    $(4,4)$ &  $\hspace{1.6em}(1-\BL)[\BP^2](\BL + 1)(\BL^2 + 1)\BL^2$ \\
    $(5,3)$ &  $\hspace{1.6em}(1-\BL)[\BP^2](\BL + 1)\BL^3$ \\
    $(5,4)$ &  $ \hspace{1.6em}(1-\BL)[\BP^2](\BL^2 + \BL + 1)(\BL + 1)(\BL^2 + 1)\BL^3$ \\
    $(6,3)$ &  $\hspace{1.6em}(1-\BL)[\BP^2](2\BL^2 + 3\BL + 2)\BL^4$\\
    $(6,4)$ &  $1+(1-\BL)[\BP^2]\big((2\BL^6 + 5\BL^5 + 6\BL^4 + 5\BL^3 + 3\BL^2 + \BL - 1)(\BL^2 + 1)\BL^3-1 \big)$\\
    \end{tabular}
    \caption{Some motivic discrepancies $\varepsilon^{\mathsf{mot}}_d(n)$.}
    \label{table: mot err Omega}
\end{table}

A possible approach towards estimating the error in MacMahon's prediction is to compute the discrepancy between the coefficients of the generating function
\[
\overline{\mathsf{Hilb}}_{n,0}(t)=\Exp \left(\sum_{d > 0}\overline{\Omega}_d^nt^d\right) = \sum_{d\geq 0}\,\overline{\Hilb}_d^n\cdot t^d
\]
and the coefficients of $\mathsf{Hilb}_{n,0}(t)$. A numerical version of this approach was proposed for instance in \cite{Atkin}, where several related questions were raised. 

It is worth  mentioning that the collections of motivic classes $(\overline{\Hilb}^n_d)_{n,d}$, resp.~$(\overline{\Omega}^n_d)_{n,d}$, verify the relations in \Cref{cor:hilbrel}, resp.~\Cref{FormulaOmega}. 
 
By formally applying the inversion formula (\Cref{lemma:inversion}), we can define motives $\overline{Y}_{d}^k$ satisfying
\[
\overline{\Hilb}^n_d=\sum_{k=1}^{d-1}\, [\Gr(k,n)]\BL^{(n-k)(d-k-1)}\overline{Y}^k_{ d}.
\]
Although the motives $\overline{Y}_d^k$ verify the numerical identity $\chi (\overline{Y}_d^{d-1})=\chi (Y_{d-1,d}^{d-1})=1$ \cite{Atkin}, it is not true that $\overline{Y}_d^{d-1}=1$ while, we have $[Y_{d-1,d}^{d-1}]=1$. 

Consider the motivic error 
\[
e_{k,d}^{\mathsf{mot}}= \overline{Y}_{ d}^k -[{Y}_{k,d}^k]\,\in\,K_0(\Var_{\BC}). 
\]
Its Euler characteristic $\chi(e_{k,d}^{\mathsf{mot}})$ agrees with the number denoted by $e_{k,d}$ in \cite{Atkin}. We record in \Cref{table: mot err HILB} some of these motivic discrepancies, which, as expected, vanish in $\BL=1$ except for the case $(d,k)=(6,4)$.
\begin{table}[h!]
    \centering 
    \begin{tabular}{cl}
    $(d,k)$ & \hspace{2.6em} $e^{\mathsf{mot}}_{k,d}$\\
    \hline
    $(4,3)$ &  $\hspace{1.6em}(1-\BL)[\BP^2]\BL^2$ \\
    $(5,3)$ &  $\hspace{1.6em}(1-\BL)[\BP^2](\BL^2 + \BL + 1)\BL^2$ \\
    $(5,4)$ &  $\hspace{1.6em}(1-\BL)[\BP^2](\BL+1)(\BL^2+1)\BL^2$ \\
    $(6,3)$ &  $\hspace{1.6em}(1-\BL)[\BP^2](2\BL^2 + \BL + 1)(\BL^2 + \BL + 1)\BL^2$ \\
    $(6,4)$ &  $1-(1-\BL)[\BP^2](\BL^9 - \BL^8 - 3\BL^7 - 5\BL^6 - 6\BL^5 - 5\BL^4 - 2\BL^3 - \BL^2 + 1)$\\
    $(6,5)$ &  $\hspace{0.9em}-(1-\BL)[\BP^4](\BL^5 - \BL^4 - \BL^3 - 3\BL^2 - 2\BL - 1)\BL^2$\\ 
    \end{tabular}
    \caption{Some motivic discrepancies $e^{\mathsf{mot}}_{k,d}$.}  
    \label{table: mot err HILB}
\end{table}

Up to date, little is known\footnote{To quote \cite[A007326]{oeis}, referred to the sequence $(e_{0,d})_d$: ``Understanding this sequence is a famous unsolved problem in the theory of partitions''.} about the structure of the discrepancy $e_{k,d}$. Based on our computational evidence, we propose the following conjecture.
\begin{conjecture}\label{conj: Andrews}
For every $k\ge 0 $ there exists a polynomial $\mathsf M_k(t)\in\BQ[t]$ such that
   \[
\sum_{i\ge 0} e_{6+k+i,4+i} t^i=\frac{\mathsf M_k(t)}{(1-t)^{2k+3}\prod_{i=0}^k(1-(2+i)t)^{k+1-i}}.
   \]

\end{conjecture}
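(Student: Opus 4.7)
The strategy is to reformulate the claim as the existence of a closed polynomial--exponential expression for the diagonal sequence $a_i := e_{6+k+i,\,4+i}$. By the standard theory of linear recurrences, the stated rationality is equivalent to
\[
a_i \;=\; P_1(i) + \sum_{m=2}^{k+2} P_m(i)\, m^i \qquad\text{for $i$ large}
\]
with $P_m \in \BQ[i]$, $\deg P_1 \le 2k+2$, and $\deg P_m \le k+2-m$ for $m\ge 2$; the denominator on the right-hand side is then exactly the characteristic polynomial of the underlying linear recurrence. I would thus aim to exhibit such a decomposition with exponential bases $\{1,2,\ldots,k+2\}$ and polynomial factors of the prescribed degrees.

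The starting point is to specialise the inversion formula (\Cref{lemma:inversion}) to Euler characteristics, yielding
\[
e_{k',d} \;=\; \sum_{j=1}^{k'}(-1)^{k'+j}\binom{k'}{j}\bigl(\chi(\overline{\Hilb}_d^{\,j}) - p_{j-1}(d)\bigr),
\]
where $p_{j-1}(d)=\chi(\Hilb^d(\BA^j)_0)$ is the number of $(j-1)$-dimensional partitions of $d$. The first summand is the coefficient of $t^d$ in MacMahon's explicit product $\prod_{m\ge 1}(1-t^m)^{-\binom{m+j-3}{j-2}}$, and hence is a $\BQ$-linear combination of terms $Q(d)\, m^d$ with integer $m\ge 1$ and $Q\in\BQ[d]$. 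Restricting to the diagonal $d=6+k+i$, $k'=4+i$, and tracking the exponential bases and polynomial degrees arising from the product structure should produce exactly the factors $(1-t)^{2k+3}\prod_{i=0}^k(1-(2+i)t)^{k+1-i}$ in the denominator, the power of $(1-mt)$ being controlled by how many times the factor $(1-t^m)$ can contribute to the product across $j\in\{1,\ldots,4+i\}$.

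The main obstacle lies in the \emph{second} summand $p_{j-1}(d)$: no closed product formula for $\Pi_n(t)$ is known when $n\ge 4$, and the entire conjecture is essentially a precise measure of the failure of such a formula. A realistic route is to proceed conditionally on \Cref{conjecture-on-Omega-GF}: the recursion \eqref{FormulaOmega} expresses $\Omega^n_d$ linearly in terms of $\Omega^\gamma_d$ for $\gamma<n$, and passing through the plethystic exponential this transfers to a recursion for $p_{j-1}(d)$. Iterating this recursion along the diagonal should collapse the sum to a finite number of ``initial'' values plus a polynomial--exponential tail of exactly the required shape, which would close the argument. As an independent check, one can use the database \cite{ThePartitionsProjectWebsite,Indiani-asintotici} to fit the polynomials $P_m$ and $\mathsf M_k(t)$ for small $k$ and then verify the closed form on many diagonal terms; this is in the spirit of the computations of \cite{Atkin}.
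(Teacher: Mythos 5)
This statement is a \emph{conjecture}; the paper offers no proof. The authors' only evidence is numerical: they exhibit $\mathsf M_0,\mathsf M_1,\mathsf M_2$ in \Cref{subs:macmahondiscrepancy} and check them against the partitions database for $k\le 2$. So there is no ``paper's own proof'' to compare against, and your last paragraph --- fitting the polynomials numerically and verifying against the database --- is precisely what the paper actually does.

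Your proposed route to an actual proof, however, has a concrete error and a concrete gap. The error is the claim that, for fixed $j$, the $d$-th coefficient of $\prod_{m\ge 1}(1-t^m)^{-\binom{m+j-3}{j-2}}$ is ``a $\BQ$-linear combination of terms $Q(d)\,m^d$.'' For $j\ge 2$ this infinite product is not a rational function of $t$: already for $j=2$ it is Euler's $\prod_m(1-t^m)^{-1}$, whose coefficients grow like $e^{c\sqrt{d}}$ and are certainly not polynomial--exponential in $d$. What \emph{could} render the diagonal sequence polynomial--exponential is the alternating binomial sum over $j$ taken on the diagonal $d=6+k+i$, $k'=4+i$ (where the gap $d-k'=k+2$ is constant, so partitions of embedding dimension $k'$ in size $d$ are tightly constrained) --- but you assert this cancellation rather than proving it, and it is exactly the content of the conjecture.

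The gap is the reliance on \Cref{conjecture-on-Omega-GF}, which is itself open (only checked for $d\le 8$). Even granting it, the step ``passing through the plethystic exponential this transfers to a recursion for $p_{j-1}(d)$'' is not justified: $\Exp$ couples all degrees nonlinearly, so a linear-in-$n$ recursion on $\Omega^n_d$ at fixed $d$ does not convert into a linear recursion on $p_{n-1}(d)$ at fixed $d$, and the diagonal under consideration requires incrementing $d$ and the ambient dimension simultaneously, which neither the proved recursion (\Cref{cor:hilbrel}, fixed $d$, varying $n$) nor the conjectural \eqref{FormulaOmega} addresses. So as written your argument neither proves nor conditionally proves the conjecture; it reduces to numerical verification, which is the state of the art in the paper as well.
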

To our knowledge, only one of the sequences $(e_{d-2,d+k})_d$, namely the one corresponding to $k=0$, is recorded in OEIS \cite[A002662]{oeis}.

We checked numerically \Cref{conj: Andrews} for low orders. More precisely, we shall exhibit in \Cref{subs:macmahondiscrepancy} three polynomials $\mathsf M_0, \mathsf M_1$ and $\mathsf M_2$ which confirm \Cref{conj: Andrews} to the maximum order testable thanks to \cite{ThePartitionsProjectWebsite}.

\subsection{Determining \texorpdfstring{$\mathsf P_d(t)$}{} with less data}\label{subsec:indiani} 
In this subsection we motivate \Cref{conj:halfdata} by introducing a stratification inspired by \cite{GOVINDARAJAN2013600}. We begin with a quick overview on the ideas behind the proof of the following result.

\begin{theorem}[Govindarajan {\cite{GOVINDARAJAN2013600}}]\label{thm:indiani}
The number of \emph{new data} needed to compute the 
 number $p_{n-1}(d)$ of $(n-1)$-dimensional partitions of size $d$  for every $n$ is bounded above by $\left\lceil\frac{d-1}{2}\right\rceil$.
\end{theorem}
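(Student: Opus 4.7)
The starting point is to specialise the motivic identity of \Cref{thm:motive-Y-stratum} to Euler characteristics. Applying $\chi$ to the stratification \eqref{eq:stratification} and using $\chi[\Gr(k,n)]=\binom{n}{k}$ together with $\chi(\BL)=1$, one obtains the explicit polynomial formula
\[
p_{n-1}(d) \;=\; \chi(\Hilb^d(\BA^n)_0) \;=\; \sum_{k=1}^{d-1}\binom{n}{k}\, y_{k,d}, \qquad y_{k,d}\mathrel{\mathop:}= \chi(Y^k_{k,d}) \in \BZ.
\]
Thus $p_{n-1}(d)$ is a polynomial in $n$ of degree $d-1$, and the binomials $\binom{n}{k}$ for $k=1,\dots,d-1$ form a $\BQ$-basis of the relevant space. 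In particular, the sequence $\{p_{n-1}(d)\}_{n\ge 1}$ is determined by the $d-1$ integers $y_{k,d}$. The question is how many of these integers constitute \emph{truly new} data, given that the partition counts $p_{n'-1}(d')$ for all $d'<d$ and all $n'$ are assumed known.

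The second step is to extract as many relations among the $y_{k,d}$ as possible from boundary strata and from apolarity (\S 2.4). On the one hand, \Cref{prop:easyY} already provides closed-form expressions for $y_{k,d}$ when $k\in\{1,2\}$ and $k\in\{d-5,d-4,d-3,d-2,d-1\}$ in terms of $[\Hilb^{d'}(\BA^{n'})_0]$ with $d'\le d$; the sole piece of genuinely new information in those formulas is $\chi(\Hilb^d(\BA^2)_0)$, known by G\"ottsche. On the other hand, Macaulay duality (cf.~\Cref{prop:gorapolar}) produces an involution on the Hilbert--Samuel strata $H^n_{\bh}\subset Y^n_{k,d}$: the Gorenstein locus is parametrised by single homogeneous inverse forms $f\in R^*$, and the associated Hilbert function is palindromic. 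Passing from $\bh$ to its reverse $\bh^{\mathrm{rev}}$ interchanges the roles of the embedding dimension $k=h_1$ and the socle degree, and the resulting bijection on Gorenstein strata translates, after taking Euler characteristics, into identities between the Gorenstein contributions to $y_{k,d}$ and to $y_{k',d'}$ with $d'<d$.

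The final step is a combinatorial tally: one enumerates Hilbert--Samuel functions of size $d$ and, using the apolarity involution together with the Hilb$\leftrightarrow Y$ inversion of \Cref{lemma:inversion}, shows that the strata can be grouped into pairs (plus possibly one fixed stratum when $d-1$ is odd), each pair contributing a single new datum. This would leave exactly $\lceil (d-1)/2\rceil$ truly new data, all concentrated in the ``upper half'' $k\ge\lceil (d+1)/2\rceil$. The main obstacle I anticipate is controlling the non-Gorenstein Hilbert--Samuel functions: apolarity only gives a clean palindromic symmetry for Gorenstein (socle dimension $1$) quotients, so for strata $H^k_{\bh}$ with a larger socle one would need to stratify by socle type and handle each layer separately, keeping careful track of parity to land on the ceiling $\lceil (d-1)/2\rceil$ rather than a weaker bound.
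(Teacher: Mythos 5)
This theorem is attributed to Govindarajan and is not proved in the present paper; Section 9.4 only sketches the key ideas behind Govindarajan's argument in order to motivate \Cref{conj:halfdata}. The relevant comparison is therefore between your plan and that sketch.

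Your first step matches the paper's setup: applying $\chi$ to the $Y$-stratification and using $\chi(\Gr(k,n))=\binom{n}{k}$ gives $p_{n-1}(d)=\sum_{k=1}^{d-1}\binom{n}{k}y_{k,d}$ with $y_{k,d}=\chi(Y^k_{k,d})$, so the sequence $n\mapsto p_{n-1}(d)$ is determined by the $d-1$ integers $y_{k,d}$. Beyond that the approaches diverge. The paper's account of Govindarajan's method refines each $Y^k_{k,k+e+1}$ by the invariant $\iota(I)$ --- the minimal number of linear forms whose squares span $\Fm_A^2/\Fm_A^3$ --- into strata $C^k_{e,i}$, $i=1,\dots,2e$, whose classes are universal in $k$ (see \eqref{eqn:stratificationC} and \Cref{example:definition C}); the bound on new data then comes from tallying which universal quantities $\Gamma_{e,i}$ are genuinely new. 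Your plan instead tries to pair Hilbert--Samuel strata via Macaulay duality and palindromy, which is a different idea.

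Unfortunately the duality step does not work, and the obstruction is prior to the non-Gorenstein difficulty you flag. A Gorenstein Hilbert--Samuel function $\bh=(1,h_1,\dots,h_{t-1},1)$ is palindromic, $h_i=h_{t-i}$, so in particular $h_1=h_{t-1}$: reversing $\bh$ gives back $\bh$ itself. Macaulay duality therefore induces the \emph{trivial} involution on Gorenstein strata at the level of Hilbert--Samuel functions and produces no new relation among the $y_{k,d}$. Worse, any correspondence induced by apolarity between a Gorenstein Artinian quotient and its inverse system preserves the total length $d=1+\sum_i h_i$, so it cannot relate the Gorenstein part of $y_{k,d}$ to $y_{k',d'}$ with $d'<d$ as your plan requires. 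That step is the crux of the argument, and without it the count does not close; the $\iota(I)$-stratification, not Gorenstein duality, is what does the work in Govindarajan's proof.
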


The phrasing `new data' means that we work under the assumption of having already computed $p_{n-1}(e)$ for $1 \leq e < d$ and every $n$. 

The number $p_{n-1}(d)$ can be computed as a sum of individual contributions, each corresponding to $(n-1)$-dimensional partitions of `embedding dimension' precisely $k$, for $k=1, \dots, d-1$. This is the numerical counterpart of our stratification of $\Hilb^d(\BA^n)_0$ into $Y$-strata. Notice, in fact, that specialising $\BL=1$ in \Cref{thm:motive-Y-stratum} and \eqref{eq:stratification} recovers the well-known relation between partitions and partitions of exact embedding dimension, see e.g. \cite{Atkin}.

One of the key steps in the proof of \Cref{thm:indiani} is to subdivide the collection of $(n-1)$-dimensional partitions of embedding dimension exactly  $k$ into a disjoint union of smaller subsets, 
and then to compute their cardinality individually. Following this idea we produce a stratification of $Y_{k,k+d}^d$ whose strata are the geometric counterpart of the subsets introduced in \cite{GOVINDARAJAN2013600}.  

The stratification goes as follows. Let $[I]\in \Hilb^d(\BA^n)_0$ be the ideal of a fat point and let $\Fm_A$ denote the maximal ideal $\Fm_A=\Fm/I\subset A=R/I$. Denote by $\iota(I)$ the minimum number of linearly independent classes of linear forms $\ell_1,\ldots,\ell_i\in A$ satisfying $\Fm_A^2/\Fm_A^3 = (\ell_1,\ldots,\ell_i)^2/\Fm_A^3$.
Consider the stratification
\begin{equation} \label{eqn:stratificationC}
Y^k_{k,k+e+1}=\coprod_{i=1}^{2e} C_{e,i}^k,
\end{equation}
where the locally closed subset $C_{e,i}^k\subset Y^k_{k,k+e+1}$ is the locus of points $[I] \in Y^k_{k,k+e+1}$ such that $\iota(I)=i$. Taking Euler characteristics and summing over these strata $ C_{e,i}^k$, we recover  \cite[Eqn.~(2.4)]{GOVINDARAJAN2013600}.

Our general expectation is to be able to find a collection of finitely many motives $\Gamma_{e,i}\in K_0(\Var_\BC)$ along with universal expressions for the motives $[C_{e,i}^k]$ in terms of these auxiliary classes. We motivate this via a working example.
\begin{example} \label{example:definition C}
We have computed the classes $[C^k_{e,i}]$ for $e \leq 2$ and arbitrary $k$. We exhibit here universal expressions for them in terms of finitely many classes $\Gamma_{e,i}$. Define
\begin{align*}
    \Gamma_{0,0}&=\Gamma_{1,1} = \Gamma_{1,2} = \Gamma_{2,1} =1,\\
    \Gamma_{2,2}&=\Gamma_{2,4} = [\BP^2],\\
    \Gamma_{2,3}&= \BL^8+\BL^7+2\BL^6+2\BL^5+2\BL^4-\BL^2-\BL. 
\end{align*}
Then we have
    \begin{align*}
    [C_{1,1}^k]&=\Gamma_{1,1} [\BP^{k-2}], \\
    [C_{1,2}^k]&=\Gamma_{1,2} \BL^{k-1} [\BP^{\binom{k}{2}}],\\
    [C_{2,1}^k]&= \Gamma_{2,1} [\Gr(1,k)]\BL^{\frac{(k-1)(k+2)}{2}} ,\\ [C_{2,2}^k]&=\Gamma_{2,2} [\Gr(2,k)],\\
    [C_{2,3}^k]&=\Gamma_{2,3} [\Gr(3,k)],\\
    [C_{2,4}^k]&=\Gamma_{2,4} \left( \frac{\left[ \Gr\left(2,\binom{k+1}{2}\right) \right]}{[\BP^2]} - [\Gr(3,k)]\BL(\BL^5 + \BL^3 + \BL^2 - 1) - [\Gr(2,k)] \right),
    \end{align*}
    where the expression for  $[C_{2,4}^k]$ is a well defined element of $K_0(\Var_{\BC})$ by \Cref{lemm:congruenc}, as  for every $k\ge 1$, we have $\binom{k+1}{2}\not\equiv 2 \pmod 3$. We highlight that this example perfectly matches, after setting $\BL=1$, the combinatorial results in \cite[Sec.~2.5]{GOVINDARAJAN2013600}.
\end{example}
To gather evidence for \Cref{conj:halfdata}, we now explain how to determine the motives of the Hilbert schemes with half of the data prescribed by \Cref{cor:hilbrel}.
Say we want to compute the motive $[\Hilb^{4}(\BA^4)_0]$. By \Cref{cor:hilbrel} we need \emph{six} independent data: three for $[\Hilb^{4}(\BA^m)_0]$, two for $[\Hilb^{3}(\BA^m)_0]$ and one for $[\Hilb^{2}(\BA^m)_0]$. 

By using \eqref{Y-stratification} and \Cref{thm:motive-Y-stratum} we can write the motive $[\Hilb^{d}(\BA^m)_0]$ in terms of the appropriate $Y^k_{k,d}$, and then by the stratification \eqref{eqn:stratificationC} we can write everything in terms of the appropriate $\Gamma_{e,i}$. The reader can easily verify that writing $[\Hilb^{d}(\BA^m)_0]$ for $d \leq 4$ requires only $\Gamma_{1,1}, \Gamma_{1,2}$ and $\Gamma_{2,1}$ (excluding $\Gamma_{0,0}$, in the same spirit of excluding $[\Hilb^{1}(\BA^m)_0]$). The number of data required is therefore halved.

\begin{remark}\label{rmk:d-8-data}
Another perspective on \Cref{ques:howmanydata} can be given by working directly with $Y$-classes instead of Hilb-classes. Indeed, by \Cref{prop:easyY} we know $[Y_{k,d}^k]$ for $k=1, 2$ and $k=d-5, \ldots, d-1$, so that $\mathsf P_d(t)$ can be determined by $d-8$ values.
\end{remark}

%%%%%%%%%%%%%%%%%%%%%%%%%%%%%%%%%%%%%%%%%%%%%%
%%%%%%%%%%%%%%%%%%%%%%%%%%%%%%%%%%%%%%%%%%%%%%
\appendix
\section{Explicit formulas}\label{app:gen-functions}
In this appendix we collect several explicit formulas:
\begin{itemize}
    \item [\mylabel{app1}{(1)}] In \Cref{app:Pd-explicit} we compute the polynomials $\mathsf P_d(t)$ explicitly for $d \leq 8$, thus leading to  closed formulas for the series $\mathsf Z_d(t)$ from \Cref{MAIN-THEOREM-1} and, as a byproduct, for all motivic measures $w\colon K_0(\Var_{\BC}) \to S$ listed in \Cref{sec:K0(Var)}.
    \item [\mylabel{app2}{(2)}] In \Cref{subsec:motiveomega} we compute the polynomials $\mathsf Q_d(t)$ appearing in \Cref{conjecture-on-Omega-GF} for $d\leq 8$. These polynomials govern the motivic classes $\Omega^\bullet_d$.
    \item [\mylabel{app3}{(3)}] In \Cref{app:macmahon-polynomials} we compute the polynomials $r_d(t)$ predicted by \Cref{conj:erroreMac} for $d \leq 26$. These polynomial capture the discrepancy between $\chi(\Omega^n_d)$ and Macmahon's mysterious exponent $\binom{d+n-3}{n-2}$.
    \item [\mylabel{app4}{(4)}] In \Cref{subs:macmahondiscrepancy} we exhibit polynomials  $\mathsf M_0, \mathsf M_1$ and $\mathsf M_2$ confirming \Cref{conj: Andrews} up to order $26,25,24$ respectively.
\end{itemize}

\subsection{The series \texorpdfstring{$\mathsf Z_d(t)$}{} for \texorpdfstring{$d\leq 8$}{}}\label{app:Pd-explicit}
Recall that by \Cref{MAIN-THEOREM-1} we have
\[
\mathsf Z_d(t) = \sum_{n\geq 0}\,[\Hilb^{d}(\BA^{n+1})_0] t^n = \zeta_{\BP^{d-1}}(t) \cdot \mathsf{P}_d(t),
\]
where $\mathsf{P}_d(t)$ is $1$ for $d\leq 3$ and a polynomial of degree at most $d-2$ if $d>3$. We list here the polynomial $\mathsf{P}_d(t)$ up $d\leq 8$ points, computed using the computer software SAGE \cite{sagemath}.
\begin{itemize}
    \item[${d=1}:$] $1$
    \item[${d=2}:$] $1$
    \item[${d=3}:$] $1$
    \item[${d=4}:$] $ 1+\BL^2t-\BL^2t^2$
    \item[${d=5}:$] $ 1+(\BL^3+\BL^2)t+(\BL^5-\BL^3-\BL^2)t^2-\BL^5t^3$
    \item[${d=6}:$] $  1+ (2\BL^4 + 2\BL^3 + \BL^2)t+ (\BL^8 + 2\BL^7 + \BL^6 - \BL^5 - 3\BL^4 - 2\BL^3 - \BL^2)t^2 - (2\BL^8 + 2\BL^7 + \BL^6 - \BL^5 - \BL^4)t^3+ \BL^8t^4 $
    \item[${d=7}:$] $  1+(\BL^2 + 2 \BL^3 + 3 \BL^4 + 2 \BL^5)t 
+(-\BL^2 - 2 \BL^3 - 4 \BL^4 - 4 \BL^5 - \BL^6 + 3 \BL^7 + 5 \BL^8 + 5 \BL^9 + \BL^{10})t^2 
+(\BL^4 + 2 \BL^5 + \BL^6 - 3 \BL^7 - 7 \BL^8 - 10 \BL^9 - 6 \BL^{10} - 2 \BL^{11} + 
 3 \BL^{12} + 2 \BL^{13} + 2 \BL^{14} + \BL^{15} + \BL^{16})t^3 
+(2 \BL^8 + 5 \BL^9 + 6 \BL^{10} + 3 \BL^{11} - \BL^{12} - 4\BL^{13} - 3 \BL^{14} - 3 \BL^{15} - 
 2 \BL^{16} - \BL^{17})t^4 
+(-\BL^{10} - \BL^{11} - 2 \BL^{12} + 2 \BL^{13} + \BL^{14} + 2 \BL^{15} + \BL^{16} + \BL^{17})t^5$
    \item[${d=8}:$] $1+ (3\BL^6 + 4\BL^5 + 4\BL^4 + 2\BL^3 + \BL^2)t + (3\BL^{12} + 9\BL^{11} + 12\BL^{10} + 10\BL^9 + 3\BL^8 - 3\BL^7 - 7\BL^6 - 7\BL^5 - 5\BL^4 - 2\BL^3 - \BL^2)t^2 + (\BL^{21} + \BL^{20} + 3\BL^{19} + 5\BL^{18} + 7\BL^{17} + 9\BL^{16} + 10\BL^{15} + 3\BL^{14} - 7\BL^{13} - 19\BL^{12} - 25\BL^{11} - 22\BL^{10} - 12\BL^9 - 2\BL^8 + 4\BL^7 + 4\BL^6 + 3\BL^5 + \BL^4)t^3 + (\BL^{26} + \BL^{25} + 2\BL^{24} + \BL^{23} + \BL^{22} - 2\BL^{21} - 5\BL^{20} - 11\BL^{19} - 15\BL^{18} - 20\BL^{17} - 18\BL^{16} - 11\BL^{15} + 4\BL^{14} + 15\BL^{13} + 21\BL^{12} + 17\BL^{11} + 10\BL^{10} + 2\BL^9 - \BL^8 - \BL^7)t^4 - (\BL^{27} + 2\BL^{26} + 3\BL^{25} + 3\BL^{24} + 3\BL^{23} + \BL^{22} - 2\BL^{21} - 8\BL^{20} - 12\BL^{19} - 15\BL^{18} - 13\BL^{17} - 8\BL^{16} + \BL^{15} + 7\BL^{14} + 8\BL^{13} + 5\BL^{12} + \BL^{11})t^5 +(\BL^{27} + \BL^{26} + 2\BL^{25} + \BL^{24} + 2\BL^{23} - \BL^{21} - 4\BL^{20} - 4\BL^{19} - 5\BL^{18} + \BL^{16} + 2\BL^{15})t^6.$
\end{itemize}

This completes the calculation of the series $\mathsf Z_d(t)$ for $d \leq 8$. 
Note that, in all these examples, the polynomial $\mathsf P_d(t)$ has degree exactly $d-2$, cf.~\Cref{conj:degree-of-P_d}.

%%%%%%%%%%%%%%%%%%%%%%%%%%%%%%%%%%%%%%%%%%%%%%
\subsection{The polynomials \texorpdfstring{$\mathsf{Q}_d(t)$ for \texorpdfstring{$d\leq 8$}{}}{}}\label{subsec:motiveomega}
We list here the  polynomials $\mathsf{Q}_d(t)$ (cf.~\Cref{conjecture-on-Omega-GF}) for $d\leq 8$ points, computed directly inverting the plethystic exponential. 

\begin{itemize}
    \item[${d=1}:$] $1$
    \item[${d=2}:$] $\BL$
    \item[${d=3}:$] $\BL^2$
    \item[${d=4}:$] $\BL^3+(-\BL^2+\BL^5)t $
    \item[${d=5}:$] $\BL^4+t(\BL^7 + \BL^6 - \BL^4 - \BL^3)+(\BL^9 + \BL^8 - \BL^6 - \BL^5)t^2 $
    \item[${d=6}:$] $\BL^5+(2\BL^9 + 3\BL^8 + 2\BL^7 - 2\BL^6 - 3\BL^5 - 2\BL^4)t+ (\BL^{12} + 3\BL^{11} + 2\BL^{10} - 2\BL^9 - 6\BL^8 - 3\BL^7 + 3\BL^5 + \BL^4)t^2+ (\BL^{14} + 2\BL^{13} - \BL^{11} - 2\BL^{10} + \BL^8)t^3 $
    \item[${d=7}:$] $\BL^6+(2\BL^{11} + 4\BL^{10} + 4\BL^9 - \BL^8 - 5\BL^7 - 4\BL^6 - \BL^5 + \BL^4)t+ (\BL^{16} + 4\BL^{15} + 9\BL^{14} + 10\BL^{13} + 4\BL^{12} - 10\BL^{11} - 17\BL^{10} - 13\BL^9 - \BL^8 + 5\BL^7 + 5\BL^6 + \BL^5)t^2+ (2\BL^{18} + 4\BL^{17} + 4\BL^{16} - 2\BL^{15} - 11\BL^{14} - 14\BL^{13} - 6\BL^{12} + 5\BL^{11} + 10\BL^{10} + 6\BL^9 + \BL^8)t^3+ (\BL^{20} + 2\BL^{19} + \BL^{18} - 2\BL^{16} + \BL^{15} + 2\BL^{14} + 2\BL^{13} - 2\BL^{12} - \BL^{11} - \BL^{10})t^4 $
    \item[${d=8}:$] $\BL^7+(3\BL^{13} + 7\BL^{12} + 9\BL^{11} + \BL^{10} - 8\BL^9 - 11\BL^8 - 4\BL^7 + \BL^6 + 2\BL^5)t+ ( \BL^{21} + \BL^{20} + 3\BL^{19} + 9\BL^{18} + 20\BL^{17} + 27\BL^{16} + 19\BL^{15} - 12\BL^{14} - 39\BL^{13} - 51\BL^{12} - 28\BL^{11} + \BL^{10} + 22\BL^9 + 19\BL^8 + 8\BL^7 - \BL^6 - \BL^5)t^2+ (\BL^{26} + \BL^{25} + 2\BL^{24} + 2\BL^{23} + 6\BL^{22} + 14\BL^{21} + 19\BL^{20} + 12\BL^{19} -20\BL^{18} - 52\BL^{17} -  64\BL^{16} - 37\BL^{15} + 9\BL^{14} + 43\BL^{13} + 44\BL^{12} + 21\BL^{11} - 9\BL^9 - 4\BL^8 - \BL^7)t^3+ (-\BL^{27} - 2\BL^{26} + 4\BL^{24} + 8\BL^{23} + 4\BL^{22} - 7\BL^{21} - 16\BL^{20} - 9\BL^{19} + 10\BL^{18} + 23\BL^{17} + 22\BL^{16} + 3\BL^{15} - 9\BL^{14} - 12\BL^{13} - 3\BL^{12} + \BL^{10} )t^4+ (2\BL^{27} + 3\BL^{26} + 4\BL^{25} + \BL^{24} - \BL^{23} - \BL^{22} + \BL^{21} - 6\BL^{19} - 6\BL^{18} - \BL^{17} + \BL^{16} + 2\BL^{15})t^5$.
\end{itemize}  

%%%%%%%%%%%%%%%%%%%%%%%%%%%%%%%%%%%%%%%%%%%%%%
\subsection{MacMahon's discrepancy for \texorpdfstring{$d\leq 26$}{}}\label{app:macmahon-polynomials}
We list here the irreducible polynomials $r_d(n)\in\BQ[n]$ (cf.~\Cref{conj:erroreMac}) for $d=6,\ldots,26$.

\begin{itemize}
    \item[${d=6}:$] $1$
    \item[${d=7}:$] $ n - 2$
    \item[${d=8}:$] $\frac{1}{15}( 8 n^2 - 15 n - 38)$
    \item[${d=9}:$] $\frac{1}{5}( n^3 - n^2 - 26 n + 51 )$
    \item[${d=10}:$] $\frac{1}{1680}(99 n^4 - 170 n^3 - 4223 n^2 + 1766 n + 24200 )$
    \item[${d=11}:$] $\frac{1}{5040}(73 n^5 - 597 n^4 - 1043 n^3 - 42051 n^2 + 235834 n - 281928)$
    \item[${d=12}:$] $\frac{1}{75600}(233 n^6 - 6387 n^5 + 23405 n^4 - 430245 n^3 + 1452722 n^2 + 2747472 n - 10278720)$
    \item[${d=13}:$] $\frac{1}{151200}( 88 n^7 - 6245 n^6 + 31097 n^5 - 236165 n^4 - 1469593 n^3 + 21899170 n^2 - 64976192 n + 53149440 )$
    \item[${d=14}:$] $\frac{1}{19958400}(1981 n^8 - 311988 n^7 + 1935414 n^6 - 3845172 n^5 - 193010871 n^4 + 1493337048 n^3 - 359131804 n^2 - 17294094288 n + 31388071680)$
    \item[${d=15}:$] $\frac{1}{6652800}( 103 n^9 - 32632 n^8 + 318242 n^7 - 599228 n^6 - 16650853 n^5 - 46403708 n^4 + 2470215868 n^3 - 13748769232 n^2 + 26565820320 n - 14059278720)$
    \item[${d=16}:$] $\frac{1}{1816214400}(4050 n^{10} - 2415235 n^9 + 44202720 n^8 - 198998202 n^7 + 543569502 n^6 - 33951478911 n^5 + 406407860400 n^4 - 886475686148 n^3 - 5844164792832 n^2  + \\ 29344150425216 n - 36495822424320 )$
    \item[${d=17}:$] $\frac{1}{9081072000}(2713 n^{11} - 2905932 n^{10} + 102936245 n^9 - 696362890 n^8 + 3912831339 n^7 - 60103579116 n^6 + 105375461075 n^5 + 7975305525090 n^4 - 72894798639452 n^3 + 241427250677248 n^2 - 287098838112720 n + 13072622068800)$
    \item[${d=18}:$] $\frac{1}{871782912000}(32647 n^{12} - 60701262 n^{11} + 4011851771 n^{10} - 35040317130 n^9 + \\ 168916525761 n^8 - 467520183306 n^7 - 30485333857327 n^6 + 537458552700810 n^5 - 2257598433391508 n^4 - 7856165277517032 n^3 + 88701471042013056 n^2 - \\ 249606482614519680 n + 236887198095744000)$
    \item[${d=19}:$] $\frac{1}{871782912000} (3847 n^{13} - 12105581 n^{12} + 1419885673 n^{11} - 16448633149 n^{10} + \\ 77399715691 n^9 + 148861849677 n^8 - 12298839695941 n^7 + \\ 69445773950033 n^6 + 1726606727061122 n^5 - 24456018469726196 n^4 + \\
    120768098399825768 n^3 - 234712358509584384 n^2 + 20565176385939840 n +\\ 334664360427801600)$
    \item[${d=20}:$] $\frac{1}{133382785536000}(65459 n^{14} - 341997999 n^{13} + 67993482881 n^{12} - 1137851310231 n^{11} + 6863878273427 n^{10} - 11397386114337 n^9 - 94916460499517 n^8 - 9137497186611453 n^7 + 219397451904763774 n^6 - 1394004307912199964 n^5 - 2536756760556061064 n^4 + 68645406748809408384 n^3 - 323391855641276684160 n^2 + 649398353859605376000 n - 483528616380614246400)$
    \item[${d=21}:$] $\frac{1}{29640619008000}(1532 n^{15} - 13095959 n^{14} + 4247443551 n^{13} - 109750132475 n^{12} + \\ 885524398299 n^{11} - 4126838161517 n^{10} + 35011946515013 n^9 - \\ 1008817082259145 n^8 + 8866308376448781 n^7 + 127237613967358136 n^6 - \\ 2741156531918400024 n^5 + 18625194630887394320 n^4 - 51388145341071878512 n^3 + 3220254428031661440 n^2 + 261708337903419912960 n - 378213429350798899200 )$
    \item[${d=22}:$] $\frac{1}{101370917007360000}(524097 n^{16} - 7247511368 n^{15} + 3717237160520 n^{14} - \\ 152184186299660 n^{13} + 1570134610133714 n^{12} - 9035640753821276 n^{11} + \\ 50458962657912320 n^{10} - 402805530960448180 n^9 - 13424282139388183019 n^8 + 439767764755867019156 n^7 - 3802421532685450383200 n^6 - 2095537058812293622960 n^5 + 
    234480780397771174644208 n^4 - 1620421765907622698032512 n^3 + \\
    5090557538890888381359360 n^2 - 7614777869397455680051200 n + \\
    4157088781168635992064000 )$
    \item[${d=23}:$] $\frac{1}{709596419051520000}(349455 n^{17} - 7748519514 n^{16} + 6128700383656 n^{15} - 396196709710600 n^{14} + 5160723726554410 n^{13} - 33056117183302108 n^{12} + 106247070348532972 n^{11} + \\1038864681729035240 n^{10} - 56225386232920907185 n^9 + 680070895686740877958 n^8 + 6320546409234867610388 n^7 - 209510306821027467249680 n^6 +\\ 1856164786465966273736720 n^5 - 6687218730778391669954336 n^4 -\\  626845029527650429502016 n^3 + 81776578356238362723671040 n^2 -\\ 241130010521642210056166400n + 230953469310039778735104000)$
    \item[${d=24}:$] $\frac{1}{23416681828700160000} ( 1048460 n^{18} - 37016385183 n^{17} + 44238216645036 n^{16} -\\ 4438506626037516 n^{15} + 74937699997317060 n^{14} - 562591452104653566 n^{13} + \\1958819321299862252 n^{12} + 8888255431431499608 n^{11} - 243421963109949028680 n^{10} - 6031308305222396144619 n^9 + 267803044865827855728888 n^8 - \\2966075995664738789286468 n^7  + 1622425560699019170248840 n^6 + \\
    227322542130051193367587968 n^5 - 2127700548933581500558150176 n^4 + \\
    9169544904833319462449093376 n^3 - 20256824429334656867442055680 n^2 +\\  19649874106231223185607270400 n - 3240290359379905083850752000)$
    \item[${d=25}:$] $\frac{1}{4683336365740032000} (18235 n^{19} - 1019399825 n^{18} + 1810365879804 n^{17} -\\ 275863055020407 n^{16} + 6287711297318214 n^{15} - 57901707711619446 n^{14} + \\
    289219084251460720 n^{13} - 1009283386616377538 n^{12} + 19676156331158908191 n^{11} - 899803001299779118113 n^{10} + 13526465786506229882988 n^9 + \\
    81138404334872119471977 n^8 - 4340287998236452397458904 n^7 +\\  48200906266152323175663784 n^6 - 214116222702857039223114912 n^5 -\\  96255084553962509937965232 n^4 + 5251379089177169319784011264 n^3 -\\  23362801119060581617375852800 n^2 + 45371842164302310664455168000 n - \\34189093301675630118532300800 )$
    \item[${d=26}:$] $\frac{1}{25852016738884976640000} ( 8388331 n^{20} - 739204394610 n^{19} + 1924250747704469 n^{18} -\\
    435800072551872714 n^{17} + 13867745325067836750 n^{16} - 157526730215754733908 n^{15} +
    1001864136149464848298 n^{14} - 5103149030391359492868 n^{13} +\\
    52400005207020194241479 n^{12} - 825597681149570318033226 n^{11} -\\
    12605867718739135261848543 n^{10}+  783086200071350660814554478 n^9 - \\
    10676618251495171555430595632 n^8 + 16369272655785444492325116816 n^7 + \\
    1011212771005471932089807675216 n^6 - 12110261856446073932069935204896 n^5 + 66250904426343966025992362067072 n^4 - 187864832852428838553998453187072 n^3 + 217829258262474632939407900354560 n^2 + 111832291754129290393687615488000 n - 368278371840755647668884078592000)$.
\end{itemize} 

\subsection{Generating polynomial for MacMahon's discrepancy} \label{subs:macmahondiscrepancy}
We exhibit polynomials  $\mathsf M_0, \mathsf M_1$ and $\mathsf M_2$ confirming \Cref{conj: Andrews} up to order $26,25,24$ respectively.
\begin{itemize}
    \item[${k=0}:$] $1$ 
    \item[${k=1}:$] $6t^5 - 34t^4 + 58t^3 - 20t^2 - 7t + 3$ 
    \item[${k=2}:$] $-144t^{11}+1356t^{10}-5770t^9+13965t^8-19993t^7+15064t^6-5170t^5+545t^4-206t^3+244t^2-79t+8$ .
\end{itemize} 

\bibliographystyle{amsplain-nodash}
\bibliography{The_Bible}

\bigskip
\noindent
{\small{Michele Graffeo \\
\address{SISSA, Via Bonomea 265, 34136, Trieste (Italy)} \\
\href{mailto:mgraffeo@sissa.it}{\texttt{mgraffeo@sissa.it}}
}}

\bigskip
\noindent
{\small{Sergej Monavari \\
\address{\'Ecole Polytechnique F\'ed\'erale de Lausanne (EPFL),  CH-1015 Lausanne (Switzerland)} \\
\href{mailto:sergej.monavari@epfl.ch}{\texttt{sergej.monavari@epfl.ch}}
}}

\bigskip
\noindent
{\small{Riccardo Moschetti \\
\address{Università di Torino, Via Carlo Alberto 10, 10123, Torino (Italy)} \\
\href{mailto:riccardo.moschetti@unito.it}{\texttt{riccardo.moschetti@unito.it}}
}}

\bigskip
\noindent
{\small Andrea T. Ricolfi \\
\address{SISSA, Via Bonomea 265, 34136, Trieste (Italy)} \\
\href{mailto:aricolfi@sissa.it}{\texttt{aricolfi@sissa.it}}}
\end{document}